\newcommand{\Z}{\ensuremath{\mathbb{Z}}\xspace}
\newcommand{\Q}{\ensuremath{\mathbb{Q}}\xspace}
\newcommand{\R}{\ensuremath{\mathbb{R}}\xspace}
\newcommand{\A}{\ensuremath{\mathbb{A}}\xspace}
\newcommand{\F}{\ensuremath{\mathbb{F}}\xspace}
\newcommand{\Qp}{\ensuremath{\mathbb{Q}_{p}}\xspace}
\newcommand{\Zp}{\ensuremath{\mathbb{Z}_{p}}\xspace}
\newcommand{\Fp}{\ensuremath{\mathbb{F}_{p}}\xspace}
\newcommand{\Sp}{\mathrm{Sp}\xspace}
\newcommand{\D}{\mathcal{D}}
\newcommand{\oQ}{\ensuremath{\overline{\mathbb{Q}}}\xspace}
\newcommand{\bG}{\ensuremath{\mathbb{G}}\xspace}
\renewcommand{\b}{\ensuremath{\mathfrak{b}}\xspace}
\newcommand{\m}{\ensuremath{\mathfrak{m}}\xspace}
\newcommand{\n}{\ensuremath{\mathfrak{n}}\xspace}
\newcommand{\OO}{\ensuremath{\mathcal{O}}\xspace}
\newcommand{\comment}[1]{}
\DeclareMathOperator{\Aut}{Aut}
\DeclareMathOperator{\Gal}{Gal}
\DeclareMathOperator{\End}{End}
\DeclareMathOperator{\Hom}{Hom}
\DeclareMathOperator{\MCM}{MCM}
\DeclareMathOperator{\RHom}{RHom}
\DeclareMathOperator{\Homi}{\underline{Hom}}
\DeclareMathOperator{\RHomi}{R\underline{Hom}}
\DeclareMathOperator{\Sym}{Sym}
\DeclareMathOperator{\Spec}{Spec}
\DeclareMathOperator{\Spf}{Spf}
\DeclareMathOperator{\Proj}{Proj}
\DeclareMathOperator{\Tor}{Tor}
\DeclareMathOperator{\Ker}{Ker}
\DeclareMathOperator{\Coker}{Coker}
\DeclareMathOperator{\Ext}{Ext}
\DeclareMathOperator{\Exti}{\underline{Ext}}
\DeclareMathOperator{\Rep}{Rep}
\DeclareMathOperator{\Ind}{Ind}
\DeclareMathOperator{\Coh}{Coh}
\DeclareMathOperator{\IndCoh}{IndCoh}
\DeclareMathOperator{\ProCoh}{ProCoh}
\DeclareMathOperator{\QCoh}{QCoh}
\DeclareMathOperator{\RMod}{RMod}
\DeclareMathOperator{\LMod}{LMod}
\DeclareMathOperator{\Perf}{Perf}
\DeclareMathOperator{\Pro}{Pro}
\DeclareMathOperator{\Ch}{Ch}
\newcommand{\T}{\ensuremath{\mathbb{T}}\xspace}
\newcommand{\GL}{\ensuremath{\mathrm{GL}}\xspace}
\newcommand{\PGL}{\ensuremath{\mathrm{PGL}}\xspace}
\newcommand{\SL}{\ensuremath{\mathrm{SL}}\xspace}
\newcommand{\G}{\ensuremath{\mathbb{G}}\xspace}
\newcommand{\fX}{\mathfrak{X}}
\newcommand{\cF}{\mathcal{F}}
\newcommand{\pf}{\ensuremath{\mathfrak{p}}\xspace}
\newcommand{\mbf}{\mathbf}
\newcommand{\mb}{\mathbb}
\newcommand{\mc}{\mathcal}
\newcommand{\ms}{\mathscr}
\newcommand{\mf}{\mathfrak}
\newcommand{\vp}{\varpi}
\newcommand{\sub}{\subseteq}
\newcommand{\oo}{\mathcal{O}}
\newcommand{\ol}{\overline}
\newcommand{\bu}{\bullet}
\newcommand{\ka}{\kappa}
\newcommand{\wh}{\widehat}
\newcommand{\wt}{\widetilde}
\newcommand{\Def}{\mathrm{Def}}
\newcommand{\Mod}{\mathrm{Mod}}
\newcommand{\vc}{\check{\mathbf{V}}}
\newcommand{\V}{\mathbf{V}}
\newcommand{\Fpbar}{\overline{\mathbb{F}}_p}
\newcommand{\ap}{a_1^\prime}
\newcommand{\PsR}{\mathrm{PsR}}
\newcommand{\lb}{[\![}
\newcommand{\rb}{]\!]}
\newcommand{\Ad}{\mathrm{Ad}}
\newcommand{\ad}{\mathrm{ad}}
\newcommand{\isoto}{{\buildrel\sim\over\rightarrow}}
\newcommand{\varep}{\varepsilon}
\newtheorem{theorem}{Theorem}[subsection]
\newtheorem{proposition}[theorem]{Proposition}
\newtheorem{corollary}[theorem]{Corollary}
\newtheorem{lemma}[theorem]{Lemma}
\theoremstyle{definition}
\newtheorem{definition}[theorem]{Definition}
\newtheorem{notation}[theorem]{Notation}
\newtheorem{remark}[theorem]{Remark}
\newtheorem{assumption}[theorem]{Assumption}
\mathchardef\mhyphen="2D
\title{Moduli Stacks of Galois representations and the $p$-adic Local Langlands correspondence for $\GL_2(\Qp)$}
\author{Christian Johansson, James Newton, and Carl Wang-Erickson}
\address{Department of Mathematical Sciences, Chalmers University of Technology and the University of Gothenburg, 412 96 Gothenburg, Sweden}
\email{chrjohv@chalmers.se}
\address{Mathematical Institute, Woodstock Road, Oxford OX2 6GG, UK}
\email{newton@maths.ox.ac.uk}
\address{Department of Mathematics, University of Pittsburgh, Pittsburgh, PA 15260, USA}
\email{carl.wang-erickson@pitt.edu}
\DeclareMathOperator{\im}{im}
\newcommand{\sm}[4]{\ensuremath{\big(\begin{smallmatrix}#1 & #2 \\ #3 & #4\end{smallmatrix}\big)}}
\begin{document}

\begin{abstract}
We give a categorical formulation of the $p$-adic local Langlands correspondence for $\GL_2(\Qp)$, as an embedding of the derived category of locally admissible representations into the category of Ind-coherent sheaves on the moduli stack of two-dimensional representations of $\Gal(\ol{\Q}_p/\Qp)$. Moreover, we relate our version of the $p$-adic local Langlands correspondence for $\GL_2(\Qp)$ to the cohomology of modular curves through a local-global compatibility formula.
\end{abstract}
\maketitle

\tableofcontents

\counterwithin{equation}{subsection}
\section{Introduction} The main goal of this paper is to give a categorical formulation of the $p$-adic local Langlands correspondence for $\GL_2(\Qp)$, in the spirit of the geometric Langlands program. Moreover, we relate our version of the $p$-adic local Langlands correspondence for $\GL_2(\Qp)$ to the cohomology of modular curves through a `local-global compatibility' formula. Throughout the paper, we let $p$ be a prime number and assume $p \ge 5$. 

\subsection{Local results}
\label{subsec: intro functor}
Before describing our results in detail, let us give some context for the shape of our results. Let $G$ be a connected reductive group over the global function field $F$ of a curve $X$, which we assume to be split for simplicity. Roughly speaking, the geometric Langlands program proposes a link between the quasicoherent sheaf theory on the moduli stack $\mf{X}_{\wh{G}}$ of $\wh{G}$-local systems on $X$ (the stack of Langlands parameters) and the `constructible' sheaf theory of the moduli stack $\mathrm{Bun}_G$ of $G$-torsors on $X$. Replacing $F$ by a nonarchimedean local field (of mixed or equal characteristic), these ideas have been transposed to the setting of the local Langlands correspondence in recent work of Fargues--Scholze \cite{fargues-scholze}, with $\mathrm{Bun}_G$ the stack of $G$-torsors on the Fargues--Fontaine curve\footnote{There is also the work of Zhu \cite{zhu-coherent,zhu-tame}, which instead uses the stack of $G$-isocrystals.}.

A consequence of the main conjecture in \cite{fargues-scholze}, which was conjectured independently by Hellmann \cite{hellmann-derived} and Ben-Zvi--Chen--Helm--Nadler \cite{bzchn} (who also proved it for $G=\GL_n$), is the existence of a fully faithful embedding
\begin{equation}\label{eq: main intro}
\D_{sm}(G) \to \IndCoh(\mf{X}_{\wh{G}}),
\end{equation}
where $\D_{sm}(G)$ is the ($\infty$-categorical) unbounded derived category of smooth $G(F)$-representations and $\IndCoh(\mf{X}_{\wh{G}})$ is the ind-completion of the bounded derived category $\D_{coh}^b(\mf{X}_{\wh{G}})$ of coherent sheaves on the moduli stack $\mf{X}_{\wh{G}}$ of $\wh{G}$-valued Weil--Deligne representations.

The main theorem of this paper is a version of the embedding (\ref{eq: main intro}) in the context of the $p$-adic local Langlands correspondence for $\GL_2(\Qp)$. To state it precisely, we need some more notation. Let $G=\GL_2(\Qp)$. We fix a finite extension $L/\Qp$ (which we think of as large) and let $\oo=\oo_L$ be its ring of integers with residue field $\F$. Furthermore, we fix a smooth character $\zeta : \Qp^\times \to \oo^\times$ and consider the abelian category $\Mod_{G,\zeta}^{lfin}(\oo)$ of smooth and locally finite (or, equivalently, locally admissible) representations of $G$ on $\oo$-modules, with central character $\zeta$. We let $\mf{X}_{\zeta\varep}$ denote the algebraized moduli stack of two-dimensional continuous representations of $\Gamma_{\Qp}:=\mathrm{Gal}(\ol{\Q}_p/\Qp)$ over $\oo$ with fixed determinant $\zeta \varep$ (where $\varep$ is the $p$-adic cyclotomic character); we refer to \S \ref{chap: stacks of Galois reps} for the precise definitions. Our main theorem is the following:

\begin{theorem}\label{intro main embedding}
There exists a fully faithful embedding $\D(\Mod_{G,\zeta}^{lfin}(\oo)) \to \IndCoh(\mf{X}_{\zeta\varep})$.
\end{theorem}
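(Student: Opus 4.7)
The plan is to work block-by-block, using Paskunas' decomposition $\Mod_{G,\zeta}^{lfin}(\oo) \simeq \prod_{\br} \mathfrak{B}_\br$ indexed by (isomorphism classes of) semisimple continuous $\br\colon \Gamma_{\Qp} \to \GL_2(\F)$ with $\det\br \equiv \zeta\varep \pmod{\vp}$, together with a parallel decomposition of $\IndCoh(\mf{X}_{\zeta\varep})$ into the formal neighborhoods of its residual points. First I would identify the completed local ring of $\mf{X}_{\zeta\varep}$ at each $\br$ with the universal fixed-determinant Galois deformation ring $R_\br^{\zeta\varep}$, promoted to a stacky formal completion encoding the $\Aut(\br)$-action in the irreducible case; in view of the paper's construction of the stack (see \S\ref{chap: stacks of Galois reps}) this should be essentially tautological. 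It is then enough to build, for each $\br$, a fully faithful embedding $\D(\mathfrak{B}_\br) \hookrightarrow \IndCoh(\widehat{\mf{X}}_{\zeta\varep,\br})$ and to check that the resulting product functor globalizes.

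On each block I would invoke Paskunas' equivalence $\Pro(\mathfrak{B}_\br) \simeq \RMod^{\mathrm{pc}}(E_\br)$, where $E_\br = \End(\widetilde{P}_\br)^{op}$ for a suitable pro-projective generator $\widetilde{P}_\br$ (a projective envelope of the Pontryagin dual of an injective hull of an irreducible). In the generic blocks one has $E_\br \simeq R_\br^{\zeta\varep}$, while in the non-generic blocks $E_\br$ is Morita-equivalent to a fixed-rank endomorphism algebra over $R_\br^{\zeta\varep}$ encoding the universal deformation together with its $\Aut(\br)$-action. Dualizing and transferring along the Morita equivalence, objects of $\mathfrak{B}_\br$ correspond to $\Aut(\br)$-equivariant discrete $R_\br^{\zeta\varep}$-modules, i.e.\ to Ind-coherent sheaves on the stacky formal neighborhood $\widehat{\mf{X}}_{\zeta\varep,\br}$. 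I would derive this construction in the natural $\infty$-categorical way, using $\widetilde{P}_\br$ as a compact generator of $\Pro(\mathfrak{B}_\br)$, and obtain a functor $\D(\mathfrak{B}_\br) \to \IndCoh(\widehat{\mf{X}}_{\zeta\varep,\br})$ that carries finite-length objects to coherent complexes.

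Full faithfulness then reduces to comparing $\RHom$-groups between compact objects on both sides, which is exactly the content of Paskunas' Ext computations in the blocks (matching Ext over the deformation ring by construction of $E_\br$). To assemble the global embedding into $\IndCoh(\mf{X}_{\zeta\varep})$, I would use that every coherent sheaf on the stack decomposes as a direct sum of its localizations at residual points, and that distinct blocks are orthogonal in both the source and the target, so the product of the block-wise embeddings is again fully faithful. A formal-to-algebraic argument, based on the presentation of $\mf{X}_{\zeta\varep}$ in \S\ref{chap: stacks of Galois reps}, should identify the product of the formal-local $\IndCoh$ categories with $\IndCoh(\mf{X}_{\zeta\varep})$.

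The main obstacles are twofold. First, the non-generic blocks—most notably the reducible block where $\br$ is a twist of $\sm{1}{*}{0}{\varep}$—require careful identification of the non-commutative algebra $E_\br$ with module-theoretic data on the stacky completion of $\mf{X}_{\zeta\varep}$, keeping track of the $\Aut(\br)$-action and of the potential non-reducedness of the stack at that point. Second, one must upgrade Paskunas' classical abelian-category equivalence to a stable $\infty$-category embedding, verifying that the derived functor genuinely lands in $\IndCoh$ rather than $\QCoh$—an essential distinction because $\mf{X}_{\zeta\varep}$ need not be regular, so that a finite-length representation should correspond to a coherent (but not necessarily perfect) complex of sheaves.
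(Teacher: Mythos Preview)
Your broad architecture matches the paper's: decompose both sides along blocks indexed by residual semisimple $\br$, use Pa\v{s}k\={u}nas' equivalence $\Mod_{G,\zeta}^{lfin}(\oo)_{\mf{B}} \simeq \LMod_{disc}(E_{\mf{B}})$, and then embed $E_{\mf{B}}$-modules into Ind-coherent sheaves on the corresponding component $\mf{X}_{\mf{B}}$. But the execution you sketch has genuine gaps that are precisely where the paper does its hard work.

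First, your description of $E_{\br}$ is incorrect outside the supersingular case. It is \emph{not} true that $E_{\br} \simeq R_{\br}^{\zeta\varep}$ in a generic (principal series) block: there $E_{\mf{B}}$ is a non-commutative $2\times 2$ generalized matrix algebra over $R$ with two non-isomorphic simple modules, and in non-generic case II it is a $3\times 3$ GMA with three simples. These are not Morita-equivalent to $R$, so the passage ``objects of $\mathfrak{B}_{\br}$ correspond to $\Aut(\br)$-equivariant discrete $R_{\br}$-modules'' cannot be right; indeed the functor the paper builds is a proper embedding, not an equivalence, and its essential image is generated by a specific object rather than being all of $\IndCoh(\mf{X}_{\mf{B}})$.

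Second, and more seriously, you are missing the central construction. The way the paper produces the embedding is Morita-theoretic, but in the direction opposite to what you suggest: one must \emph{exhibit} an explicit object $X_{\mf{B}} \in \MCM(\mf{X}_{\mf{B}})$ and prove that $\RHom(X_{\mf{B}}, X_{\mf{B}}) = E_{\mf{B}}$ (i.e.\ $\End(X_{\mf{B}}) \cong E_{\mf{B}}$ and all higher $\Ext^i$ vanish). For supersingular and generic principal series blocks $X_{\mf{B}}$ is the universal rank-2 vector bundle and this is relatively formal. For non-generic case I (scalar $\br$), the stack is a quotient by $\SL_2$, which is not linearly reductive in characteristic $p$; proving $E \cong \End(\mc{V})$ and the higher-Ext vanishing requires Donkin's invariant theory and good-filtration arguments, and is not a formality (the paper notes counterexamples due to Luo for other groups). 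For non-generic case II, the universal vector bundle alone has the wrong endomorphism ring (too small, two simples instead of three); one must augment it by a specific non-locally-free maximal Cohen--Macaulay sheaf $Q$ and carry out a lengthy computation of $\End(\mc{V}\oplus Q)$ against Pa\v{s}k\={u}nas' presentation of $\wt{E}_{\mf{B}}$. None of this is visible from your proposal, and ``Pa\v{s}k\={u}nas' Ext computations'' on the automorphic side do not by themselves supply an object on the Galois stack with matching Ext groups.

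Finally, your picture of $\mf{X}_{\mf{B}}$ as ``$[\Spf R_{\br}/\Aut(\br)]$'' is not the paper's. The components are algebraized stacks $[\Spec S/\bG_m]$ (reducible multiplicity-free) or $[\Spec A/\SL_2]$ (non-generic I) built from the universal Cayley--Hamilton algebra; $\Spec R$ is only the GIT quotient, and the stack carries strictly more information. This matters both for what $\IndCoh(\mf{X}_{\mf{B}})$ looks like and for why the embedding is not essentially surjective.
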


We also prove a `dual' version; we refer to \S \ref{subsec: main thm} for the precise statement of the main theorem and its dual version. Our results are related to conjectures discussed in \cite{egh} (with proofs announced in the case of $\GL_2(\Qp)$ \cite{dotto-emerton-gee}); see \S\ref{ssec:motives} for a discussion about the relation with \cite{egh}.

\subsection{The proof of local results} 
\label{subsec: intro proofs}
We will now give an outline of the proof, which is Morita-theoretic. The category $\Mod_{G,\zeta}^{lfin}(\oo)$ has been computed explicitly by Pa{\v s}k{\=u}nas \cite{paskunas-image}. In particular, it has a block decomposition
\[
\Mod_{G,\zeta}^{lfin}(\oo) = \prod_{\mf{B}} \Mod_{G,\zeta}^{lfin}(\oo)_{\mf{B}}
\]
and the blocks $\mf{B}$ are in bijection with $\Gal(\overline{\F}/\F)$-orbits of two-dimensional semisimple $\Gamma_{\Qp}$-representations over $\overline{\F}$ with determinant $\zeta\varepsilon$; we choose a representative $\rho_{\mf{B}}$ with minimal field of definition. Explicitly, there are four types of blocks containing absolutely irreducible representations\footnote{The remaining blocks can be handled by extending the coefficient field $L$.}:  
\begin{enumerate}
\item $\mf{B} = \{ \pi \}$, where $\pi$ is supersingular;

\item $\mf{B} = \{ \Ind_B^G (\delta_1 \otimes \delta_2 \omega^{-1}), \Ind_B^G (\delta_2 \otimes \delta_1 \omega^{-1}) \}$ with $\delta_2 \delta_1^{-1} \neq \mbf{1}, \omega^{\pm 1}$;

\item $\mf{B} = \{ \Ind_B^G (\delta \otimes \delta \omega^{-1}) \}$;

\item $\mf{B} = \{ \delta \circ \det, \mathrm{St} \otimes (\delta \circ \det), \Ind_B^G (\delta \omega \otimes \delta \omega^{-1}) \}$,
\end{enumerate}
where $\omega$ denotes the modulo $p$ cyclotomic character (and the corresponding character of $\Qp^\times$ under Artin reciprocity). Following \cite{paskunas-image}, we will refer to (1) as the \emph{supersingular} blocks, (2) as the \emph{generic principal series} blocks, and cases (3) and (4) as the \emph{non-generic} blocks, where (3) will be labelled as `non-generic case I' and (4) as `non-generic case II'. The $\rho_\mf{B}$ are in bijection with the connected components of $\mf{X}_{\zeta\varepsilon}$, and hence give a decomposition
\[
\mf{X}_{\zeta\varepsilon} = \bigsqcup_{\mf{B}} \mf{X}_{\mf{B}},
\]
which induces a decomposition $\IndCoh(\mf{X}_{\zeta\varepsilon}) = \prod_{\mf{B}} \IndCoh(\mf{X}_{\mf{B}})$. Thus, we may construct the functor block by block. 

Each $\Mod_{G,\zeta}^{lfin}(\oo)_{\mf{B}}$  has an injective generator $I_{\mf{B}}$ and $E_{\mf{B}} := \End_G(I_{\mf{B}})^{op}$ is a compact ring. The theory of locally finite categories \cite{gabriel} gives an equivalence
\[
\Mod_{G,\zeta}^{lfin}(\oo)_{\mf{B}} \cong \LMod_{disc}(E_{\mf{B}})
\]
between $\Mod_{G,\zeta}^{lfin}(\oo)_{\mf{B}}$ and the category $\LMod_{disc}(E_{\mf{B}})$ of discrete left $E_{\mf{B}}$-modules. The functor in one direction is given by sending a $G$-representation $\sigma$ to the left $E_{\mf{B}}$-module $\Hom_G(\sigma, I_{\mf{B}})^\vee$, where $(-)^\vee$ denotes the Pontryagin dual. The rings $E_{\mf{B}}$ have been computed explicitly by Pa{\v s}k{\=u}nas \cite{paskunas-image}, using Colmez's Montr\'{e}al functor \cite{colmez-functor}. In particular, the center of such $E_{\mf{B}}$ is the universal pseudodeformation ring of $\rho_{\mf{B}}$, and in fact the whole $E_{\mf{B}}$ is often (but not always) isomorphic to the universal Cayley--Hamilton algebra of $\rho_{\mf{B}}$ (see \S \ref{subsec: def theory generalities} for the precise definition). 

Thus, by Morita theory, constructing a fully faithful functor
\[
F_{\mf{B}} : \Mod_{G,\zeta}^{lfin}(\oo)_{\mf{B}} \to \IndCoh(\mf{X}_{\mf{B}})
\]
essentially amounts to exhibiting an object $X_{\mf{B}} \in \D_{coh}^b(\mf{X}_{\mf{B}})$ satisfying 
\[
\RHom(X_{\mf{B}},X_{\mf{B}}) = E_{\mf{B}},
\]
i.e.\ that $\End(X_{\mf{B}})=E_{\mf{B}}$ and $\Ext^i(X_{\mf{B}},X_{\mf{B}})=0$ for $i\geq 1$. The functor is then (essentially) given as the derived tensor product
\begin{equation}\label{intro: formula for functor}
\sigma \mapsto X_{\mf{B}}^\ast \otimes_{E_{\mf{B}}}^L \Hom_G(\sigma, I_{\mf{B}})^\vee,
\end{equation}
where $X_{\mf{B}}^\ast$ denotes the coherent dual of $X_{\mf{B}}$. We note that for generic blocks, the target category $\IndCoh(\mf{X}_{\mf{B}})$ is equivalent to the quasicoherent derived category (Lemma \ref{QCoh = IndCoh}), but this is not the case for non-generic blocks. The source category for $F_{\mathfrak{B}}$ is compactly generated by its full subcategory of finite length objects, so $\IndCoh(\mf{X}_{\mf{B}})$, which is compactly generated by $\D_{coh}^b(\mf{X}_{\mf{B}})$, is the natural target category. The functor $F_{\mathfrak{B}}$ will preserve compact objects.

Finding the objects $X_{\mf{B}}$ and verifying that they satisfy $\RHom(X_{\mf{B}},X_{\mf{B}}) = E_{\mf{B}}$ takes up the bulk of the work in this paper. In particular, we rely on being able to compute the stacks $\mf{X}_{\mf{B}}$ explictly, using the machinery developed in \cite{wang-erickson-algebraic,wang-erickson-A-inf} (building on work of Bella\"{i}che and Chenevier \cite{BC2009,chenevier-determinant}), explicit descriptions of the quotients of $\Gamma_{\Qp}$ relevant to the non-generic cases developed by B\"ockle and Pa{\v s}k{\=u}nas \cite{bockle-demuskin,paskunas-image}, invariant theory, and modular representation theory.

Let us describe the shape of $X_{\mf{B}}$ for the different blocks. We remark that the properties we require of $X_{\mf{B}}$ do not uniquely determine it. Nevertheless, they seem to be natural and we expect that further work on categorical $p$-adic local Langlands will clarify the situation. 

For supersingular blocks, $\rho_{\mf{B}}$ is irreducible and $\mf{X}_{\mf{B}}$ is the stack quotient $[\Spec R /\mu_2]$, where $R$ is a deformation ring of $\rho_{\mf{B}}$. The sheaf $X_{\mf{B}}$ is then the twisted structure sheaf of $\mf{X}_{\mf{B}}$ (i.e.\ $R$, viewed as a $\Z/2$-graded $R$-module in degree $1$), and verifying that this has the correct properties is immediate from the results of \cite{paskunas-image}. 

For the generic principal series blocks and non-generic case I, Pa{\v s}k{\=u}nas has shown that $E_{\mf{B}}$ is the universal Cayley--Hamilton algebra (cf.~Definition \ref{def:univCH}) associated to the universal pseudodeformation of $\rho_{\mf{B}}$. In these cases, we let $X_{\mf{B}}$ be the vector bundle underlying the universal Galois representation on $\mf{X}_{\mf{B}}$. The general theory of the stacks $\mf{X}_{\mf{B}}$ gives a canonical ring homomorphism
\[
E_{\mf{B}} \to \End(X_{\mf{B}}).
\]
In the generic principal series case, it is relatively straightforward to show that this homomorphism is an isomorphism and that $\Ext^i(X_{\mf{B}},X_{\mf{B}})=0$ for $i\geq 1$; this essentially goes back to \cite{BC2009}. We prove this in the non-generic I case as well, but the proof (given in \S \ref{subsec: non-generic I}) is more involved, using tools from modular representation theory and invariant theory together with the explicit nature of $\mf{X}_{\mf{B}}$. This complication is caused by the fact that non-generic case I is the only case in which $\rho_{\mf{B}}$ is not multiplicity free, which means that $\mf{X}_{\mf{B}}$ cannot be written as the quotient of an affine scheme by a \emph{linearly} reductive group. 

The final type of block, non-generic case II, has the most complicated $X_{\mf{B}}$. We construct it as the direct sum of the universal vector bundle and an explicit maximal Cohen--Macaulay (but not locally free!) coherent sheaf, and verifying that $\RHom(X_{\mf{B}},X_{\mf{B}})=E_{\mf{B}}$ is computationally demanding (a short glance at \S \ref{subsec: non-gen II coh sheaves}, where this is done, should convince the reader of this). On the other hand, this gives an explicit `Galois-theoretic' description of $E_{\mf{B}}$ in this case, something which is not done in \cite{paskunas-image} (although a less explicit Galois-theoretic description can be obtained easily from the results of \cite{paskunas-tung}). The non-projective part of $X_{\mf{B}}$ has an endomorphism algebra which matches the (opposite) endomorphism algebra of the injective envelope of an irreducible one-dimensional representation of $G$. This keeps track of information which is lost by applying the Montr\'{e}al functor, whose kernel in $\mf{B}$ is generated by this one-dimensional representation of $G$.

In the supersingular and generic principal series cases, our functors can be directly constructed already at the level of abelian categories, but this is not true for the non-generic cases. In non-generic case I, we show a posteriori that the functor is $t$-exact\footnote{While this means that $H_0(F_{\mf{B}})$ gives a fully faithful embedding at the level of abelian categories, $F_{\mf{B}}$ is \emph{not} simply the derived functor of $H_0(F_{\mf{B}})$ in this case, though it is closely related to it. See Remark \ref{rmk: abelian vs derived for ng1} for more details.}, but in non-generic case II we show that $F_{\mf{B}}$ sends the trivial representation to a complex concentrated in (homological) degree $1$. More generally, we compute $F_{\mf{B}}(\pi)$ explicitly for all blocks and all irreducible representations $\pi$. In particular, we show that $F_{\mf{B}}(\pi)$ is concentrated in homological degree $0$ (resp.\ degree $1$) when $\pi$ is infinite dimensional (resp.\ finite dimensional).

\subsection{The assumption that $p \ge 5$} We have made the running assumption that $p \ge 5$ so that we can appeal to the results of \cite{paskunas-image}. The authors expect (but haven't checked) that the results would extend smoothly to generic blocks for $p = 2,3$, using the results of \cite{paskunas-2}. More recent work of Pa\v{s}k\={u}nas--Tung \cite{paskunas-tung} reproves many of the main results of Pa\v{s}k\={u}nas's earlier work in a way which handles all blocks for all primes. However, they do not compute the ring $E_{\mathfrak{B}}$ (see their \S1.2), which we need in order to explicitly compare with an endomorphism algebra on the Galois side.

\subsection{The Montr\'{e}al functor} Colmez's Montr\'{e}al functor plays an essential role in proving the results of \cite{paskunas-image}. Having used Pa{\v s}k{\=u}nas's results to construct the functor of Theorem \ref{intro main embedding}, a natural question (asked of us by Pa{\v s}k{\=u}nas) is whether we can recover the Montr\'{e}al functor from the embedding of categories? The answer is yes: we show in \S\ref{subsec:montreal} that we can recover the Montr\'{e}al functor from our embedding by tensoring with the universal Galois representation on $\frak{X}_{\zeta\epsilon}$ and taking global sections. This says that the Montr\'{e}al functor is the `Whittaker coefficient' for the universal Galois representation in the sense of the geometric Langlands program (cf.~\cite[\S1.2.3]{faergeman2022nonvanishing}, for example).

\subsection{Local-global compatibility}\label{subsec: intro l-g} As an application, we connect our functors $F_{\mf{B}}$ to the (co)homology of modular curves through a `local-global compatibility' result. For this, we need to enlarge the domain of $F_{\mf{B}}$. Let $\oo \lb G \rb$ be the ring defined by Kohlhaase \cite{kohlhaase} (over a field; see \cite{shotton} for a definition over $\oo$) and let $\oo \lb G \rb_\zeta$ be the largest quotient of $\oo \lb G \rb$ on which the center of $G$ acts as $\zeta$. We show that the defining formula (\ref{intro: formula for functor}) for $F_{\mf{B}}$ can be rewritten as
\[
\sigma \mapsto X_{\mf{B}}^\ast \otimes_{E_{\mf{B}}}^L I_{\mf{B}}^\vee \otimes_{\oo \lb G \rb_\zeta}^L \sigma = (X_{\mf{B}}^\ast \otimes_{E_{\mf{B}}} I_{\mf{B}}^\vee ) \otimes_{\oo \lb G \rb_\zeta}^L \sigma
\]
and use this formula to extend the domain of $F_{\mf{B}}$ to all left $\oo \lb G \rb_\zeta$-modules (here the Pontryagin dual $I_{\mf{B}}^\vee$ of $I_{\mf{B}}$ is flat over $E_{\mf{B}}$). We note that the extended functor is no longer fully faithful.

The setup for our local-global compatibility result is then as follows. For simplicity, we work with $\PGL_{2/\Q}$, and write $G^{ad} := \PGL_2(\Qp)$ (in particular, we look at the trivial central character). Let $\Gamma_\Q := \Gal(\ol{\Q}/\Q)$ with decomposition subgroups $\Gamma_{\Q_\ell}$ for primes $\ell$. Let $r : \Gamma_\Q \to \GL_2(\Fpbar)$ be a continuous representation. We assume that $\det(r) = \omega$ and that
\begin{enumerate}

\item $r |_{\Gamma_{\Qp}}$ is indecomposable, and not a twist of an extension of the form $0 \to \omega  \to r'_p \to \mbf{1} \to 0$;

\smallskip 

\item if $r |_{\Gamma_{\Q_\ell}}$ is ramified for some $\ell \neq p$, then $\ell$ is not a vexing prime in the sense of \cite{diamond-ext};

\smallskip

\item $r |_{\Gamma_{\Q(\zeta_p)}}$ has adequate image, in the sense of \cite[Defn.~2.3]{thorne-adequate}.
\end{enumerate}
We let $N$ be the Artin conductor of $r$, choose a sufficiently large coefficient field $L$, and we let $\mf{B}$ be the block such that $\rho_{\mf{B}}$ is isomorphic to the semisimplification of $r |_{\Gamma_{\Qp}}$. We consider the algebraized moduli stack $\mf{X}_r$ of continuous $\Gamma_{\Q}$-representations with determinant $\varepsilon$, with reduction $r$, and which are minimally ramified at primes $\ell \neq p$. A key role is played by the restriction map
\[
f : \mf{X}_{r} \to \mf{X}_{\mf{B}}.
\]
We set $R_{\Q,N}$ to be the global sections of the structure sheaf of $\mf{X}_{r}$; this is simply the universal deformation ring of $r$ (with conditions as above).

Instead of formulating and proving our results for homology of $\PGL_{2/\Q}$-modular curves, it turns out to be better, both from a conceptual and practical point of view, to work with (adelic) $p$-arithmetic homology, as in e.g.\ \cite{tarrach2022sarithmetic}. Thus, letting $Y_{\infty} = \PGL_2(\R)/\mathrm{PSO}_2(\R)$ and letting $Y_p$ be the Bruhat--Tits tree of $G^{ad}$, we look at the double coset space
\[
Y_N := \PGL_2(\Q) \backslash Y_{\infty} \times Y_p \times \PGL_2(\A^{\infty}) / K_1^p(N) G^{ad},
\]
where $K_1^p(N) \sub \PGL_2(\wh{\Z}^p)$ consists of matrices whose second row is congruent to $(0 \,\, 1)$ modulo $N$ (modulo scalars). Every (abstract) left $\oo[G^{ad}]$-module $\sigma$ (and hence every left $\oo \lb G^{ad} \rb$-module) gives rise to a local system on $Y_N$. If $\sigma$ is the compact induction $\sigma = \mathrm{ind}_{K_p}^{G^{ad}}\tau$ of some $\oo[K_p]$-module $\tau$ for $K_p \sub G^{ad}$ a compact open subgroup, then the homology $H_\ast(Y_N, \sigma)$ is canonically isomorphic to the homology of $\tau$, viewed as a local system on the $\PGL_2$-modular curve of level $K_1^p(N)K_p$. If $M$ is an $\oo \lb G^{ad} \rb$-module, then the Hecke action on the homology $H_\ast(Y_N,\sigma)$ gives it an $R_{\Q,N}$-module structure. Our local-global compatibility theorem is then the following:

\begin{theorem}\label{lg thm intro}
Let $\mc{V}$ be the vector bundle underlying the universal Galois representation on $\mf{X}_r$. Then, if $\sigma$ is a $\oo\lb G^{ad} \rb$-module, we have an isomorphism 
\[
H_\ast(Y_N, \sigma)_r \cong H_\ast(R\Gamma(\mf{X}_r, \mc{V} \otimes f^!(F_{\mf{B}}(\sigma))[-2])
\]
of $R_{\Q,N}$-modules which is functorial in $\sigma$.
\end{theorem}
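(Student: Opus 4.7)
My plan is to reduce the claim to a statement at the level of bimodules on both sides, then identify these bimodules using Emerton's local-global compatibility theorem for completed cohomology of modular curves, combined with the sheaf-theoretic description of the Montr\'eal functor recorded in Section~\ref{subsec:montreal}.

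First I would check that both sides preserve filtered colimits in $\sigma$. Colimit preservation on the left is a standard property of homology with local system coefficients, while on the right it follows from the explicit formula
\[
F_{\mf{B}}(\sigma) = (X_{\mf{B}}^\ast \otimes_{E_{\mf{B}}} I_{\mf{B}}^\vee) \otimes^L_{\oo\lb G^{ad}\rb_\zeta} \sigma
\]
(a derived tensor product in $\sigma$) together with the fact that $f^!$, $\mc{V}\otimes(-)$, and $R\Gamma$ preserve colimits on $\IndCoh$ of the noetherian stack $\mf{X}_r$. It therefore suffices to establish the isomorphism for $\sigma = \mathrm{ind}_{K_p}^{G^{ad}}\tau$, where $K_p \sub G^{ad}$ is a sufficiently small compact open subgroup and $\tau$ is a finitely generated $\oo[K_p]$-module; in that case the left-hand side is the classical Hecke-localized homology of the modular curve $Y_{K_1^p(N)K_p}$ with coefficients in the local system attached to $\tau$.

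Factoring $\sigma$ out of both sides then reduces the theorem to an isomorphism of $(R_{\Q,N}, \oo\lb G^{ad}\rb_\zeta)$-bimodules $A \cong B$, where
\[
A := R\Gamma\bigl(\mf{X}_r,\, \mc{V} \otimes f^!(X_{\mf{B}}^\ast \otimes_{E_{\mf{B}}} I_{\mf{B}}^\vee)[-2]\bigr)
\]
and $B$ is a suitable completed localized $p$-arithmetic homology of $Y_N$ at $r$ (morally $\wt{H}_1(Y_N,\oo)_r$, with appropriate shifts); the passage from finite-level homology with coefficients to completed homology is a standard spectral sequence argument in the $p$-arithmetic framework.

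The main obstacle is the identification $A \cong B$. On the automorphic side, Emerton's local-global compatibility theorem for completed cohomology of modular curves identifies $B$ (up to Pontryagin duality) with the Colmez--Montr\'eal functor evaluated on the universal Galois deformation of $r$. On the geometric side, the Montr\'eal functor has been recovered in Section~\ref{subsec:montreal} by tensoring with the universal representation $\mc{V}$ and taking global sections, which is precisely the shape of $A$. The roles of $f^!$ and the shift $[-2]$ are to implement local Tate duality at $p$: the relative dualizing complex $\omega_f$ is controlled by the adjoint Galois representation via a local-global Galois cohomology calculation over $\Q$, and this is what exchanges cohomology with homology and the Montr\'eal functor with its dual. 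The matching is then verified block by block using the explicit descriptions of $X_{\mf{B}}$ from \S\ref{subsec: intro proofs} for the cases allowed by hypothesis~(1) of the theorem.
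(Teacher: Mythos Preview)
Your overall architecture---reduce to a bimodule statement by pulling $\sigma$ out, then identify the resulting bimodules using local-global compatibility for completed homology---matches the paper's strategy. The reduction step is essentially Proposition~\ref{completed homology is universal}, and the identification of completed homology is Theorem~\ref{emerton-style local-global} (which is \cite[Thm.~7.4]{ceggps2}, obtained by patching rather than Emerton's original method, though the statement is the same).

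However, there is a genuine gap in your treatment of $f^!$ and the shift. You write that ``the relative dualizing complex $\omega_f$ is controlled by the adjoint Galois representation via a local-global Galois cohomology calculation over $\Q$'', invoking local Tate duality. This does not work: Galois cohomology computations control the tangent and obstruction spaces of $R_{\Q,N}$ relative to $R_p$, hence the \emph{expected} relative dimension, but they do not by themselves prove that $R_p \to R_{\Q,N}$ is a relative complete intersection. Without that, you cannot compute $f^!$ (concretely, $\RHom_{R_p}(R_{\Q,N},R_p)$), nor do you obtain the clean shift $[1]$ in equation~\eqref{eq: pullback of kernel sheaf}. The paper obtains the complete intersection property from the Taylor--Wiles--Kisin patching construction: one shows the auxiliary variables $y_1,\dots,y_g$ form a regular sequence in $R_\infty$ (Proposition~\ref{big R=T}), which simultaneously gives the big $R=\mb{T}$ theorem and the dimension/CI statement for $R_{\Q,N}$. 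This step is essential and has no substitute coming from Galois cohomology alone.

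A second, less serious, point: routing the computation through the Montr\'eal functor description of \S\ref{subsec:montreal} is a detour. That section computes $R\Gamma(\mf{X}_{\mf{B}},\mc{V}\otimes(-))$ on the \emph{local} stack, whereas here you need global sections on $\mf{X}_r$ after applying $f^!$. The paper instead computes directly that $j^\ast(X_{\mf{B}}^\ast\otimes_{\wt{E}_{\mf{B}}}P_{\mf{B}}) \cong P(1)$ for the open immersion $j:\mf{X}_{r_p}\hookrightarrow\mf{X}_{\mf{B}}$ (Propositions~\ref{pullback formula ss}, \ref{pullback of universal module generic ps}, \ref{pullback of universal module non-gen II}), where $P$ is the projective envelope associated to $r_p$; this then plugs straight into the description $\wt{H}_1(N,\oo)_{\m}\cong P\otimes_{R_p} r^{univ}$ of completed homology. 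Your Montr\'eal-functor route could probably be made to work once the $f^!$ issue is fixed, but it adds an unnecessary layer.
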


The act of tensoring with $\mc{V}$ should be seen as `applying a Hecke operator' (on the spectral side) in the sense of \cite{fargues-scholze}. We further note that both sides may be given actions of $\Gamma_{\Q}$, and the isomorphism is equivariant with respect to these actions. For the proof, one reduces to the case $\sigma = \oo \lb G^{ad} \rb$, in which case we prove that $H_\ast(Y_N, \oo \lb G^{ad} \rb)_{r}$ is completed homology for $\PGL_{2/\Q}$ (with tame level $K_1^p(N)$, localized at $r$). The proof then amounts to computing the right hand side and comparing the result to the local-global compatibility results for completed homology from \cite{ceggps2,gee-newton}. A key step of this computation is to show that $f$ is relative complete intersection, which follows from the patching techniques of \emph{op.~cit.}. Along the way we also need to prove a big $R=\T$ theorem, which appears to be new when $r|_{G_{\Qp}}$ is a twist of an extension of $\omega$  by $\mbf{1}$.

Theorem \ref{lg thm intro} has many interesting special cases, concerning coefficient systems well known in the theory of modular forms. In particular, if $K_p \sub \PGL_2(\Zp)$ is a compact open subgroup, then setting $\sigma = \oo \lb G^{ad} \rb \otimes_{\oo \lb K_p \rb} (\Sym^{k-2}A^2)(\det)^{(2-k)/2}$ for $k\geq 2$ even recovers the usual (adelic) arithmetic homology of $\PGL_{2/\Q}$ at level $K^p_1(N)K_p$ with coefficients in $\Sym^{k-2}A^2 \otimes \det^{(2-k)/2}$ (where $A$ can be any $\oo$-algebra), and Poincar\'e duality relates this to cohomology. Another interesting case concerns the $\PGL_{2/\Q}$-eigencurves constructed in \cite{hansen-eigenvarieties,tarrach2022sarithmetic}; see Corollary \ref{local-global eigenvarieties}.

We note here that it should be possible to remove the restriction to trivial central character in our local--global statement, and indeed the restriction to fixed central character in Theorem \ref{intro main embedding}, by using the results of \cite[\S6]{ceggps2}.

\subsection{Motivation and relation to other work}\label{ssec:motives}
This project originated from an attempt to understand Ludwig's non-classical overconvergent eigenforms for $\SL_{2/\Q}$ \cite{ludwig}, the idea being that the structure of a hypothetical $p$-adic local Langlands correspondence in families for $\SL_2(\Qp)$ would explain the existence of such forms and their relation to non-automorphic members of $L$-packets\footnote{A more direct approach to the existence of non-classical overconvergent eigenforms and their relation to non-automorphic member of $L$-packets, still morally using ideas of geometrization of the $p$-adic local Langlands correspondence, was given in \cite{johansson-ludwig}.} (and could be used to show similar phenomena in the completed cohomology of $\SL_{2/\Q}$). However, direct attempts to formulate a $p$-adic local Langlands correspondence in families for $\SL_2(\Qp)$, in the spirit of \cite{kisin-def}, run into issues of dimensions of $\Ext$-groups not matching up. Instead, our calculations of the structure of supersingular blocks for $\SL_2(\Qp)$ (in the sense of \cite{paskunas-image}), together with the first version of \cite{hellmann-derived}, strongly suggested to us the formulation of $p$-adic local Langlands as an embedding of categories. Since our intended strategy for proving results about $\SL_2(\Qp)$ was to deduce them from the case of $\GL_2(\Qp)$, we decided to work those out first. The goal was to show that a categorical formulation of the $p$-adic local Langlands for $\GL_2(\Qp)$ was possible, and might point the way towards the long sought after generalization to other groups.

Since we started to develop these ideas, a lot has happened in the field. In particular, the notes \cite{egh} state a general $p$-adic local Langlands conjecture for $\GL_n$ over $p$-adic fields $F$, as (very roughly speaking) a categorical embedding
\[
\mf{A}=\mf{A}_{\GL_n(F)} : \D(\Mod^{sm}_{\GL_n(F)}(\oo)) \to \IndCoh(\mathrm{EG}_{n,F}),
\]
where $\mathrm{EG}_{n,F}$ denotes the Emerton--Gee stack of \'etale $(\varphi,\Gamma)$-modules of rank $n$ for $F$, with an announcement of a proof for $\GL_2(\Qp)$ \cite{dotto-emerton-gee}. See \cite[Conj. 6.1.14]{egh} for a more precise statement. Moreover, they also conjecture the existence of a similar functor $\mf{A}^{rig}=\mf{A}_{\GL_n(F)}^{rig}$ (perhaps not an embedding) linking the locally analytic representation theory of $\GL_n(F)$ to quasicoherent sheaves on moduli stacks of (not necessarily \'etale) $(\varphi,\Gamma)$-modules of rank $n$ over the Robba ring \cite[Conj. 6.2.4]{egh}. The extended version of the functor $F_\mf{B}$ that we discussed in \S \ref{subsec: intro l-g} should be related to both of these functors. In particular, we expect\footnote{Some evidence for this is given in the proof of \cite[Thm.~7.3.5]{egh}.} that $F_{\mf{B}}$, restricted to $\D(\Mod^{sm}_{\GL_2(\Qp),\zeta}(\oo))$, is equal (or at least very closely related) to the composition of the functor of \cite{dotto-emerton-gee} with pullback along 
\[
\mf{X}_{\mf{B}}^{\wedge} \to \mathrm{EG}_{2,\Qp}
\]
where $\mf{X}_{\mf{B}}^{\wedge}$ is the completion of $\mf{X}_{\mf{B}}$ along the maximal ideal of the universal pseudodeformation ring of $\rho_{\mf{B}}$\footnote{Note that a priori our $F_{\mf{B}}$ gives sheaves on $\mf{X}_{\mf{B}}$, not $\mf{X}_{\mf{B}}^\wedge$. However, pullback along the natural map $\mf{X}_{\mf{B}}^\wedge \to \mf{X}_{\mf{B}}$ induces an equivalence for all $\mf{B}$ except non-generic I, by \cite[Thm.~1.6]{alper-hall-rydh-etale-local}. For a non-generic I block, we expect that the methods of \S \ref{subsec: non-generic I} imply that pullback along $\mf{X}_{\mf{B}}^\wedge \to \mf{X}_{\mf{B}}$ is fully faithful on the essential image of $F_{\mf{B}}$.}. We also expect that $F_{\mf{B}}$, when applied to left modules for the distribution algebra $\ms{D}(\GL_2(\Qp))$, is closely related to the conjectural functor $\mf{A}_{\GL_2(\Qp)}^{rig}$ (or rather its version with a fixed determinant). Indeed, $\mf{A}_{\GL_n(F)}$ and $\mf{A}_{\GL_n(F)}^{rig}$ are expected to be related in general (see \cite[\S 6.2.11]{egh}), and $F_{\mf{B}}$ is related to the coherent sheaf on the $\PGL_2(\Qp)$-eigencurve via Theorem \ref{lg thm intro} in the same way that $\mf{A}^{rig}$ is expected to be \cite[Conj. 9.6.31]{egh}.

The main impact of \cite{egh,dotto-emerton-gee} on this paper is the focus on the functors $F_{\mf{B}}$, as opposed to their dual versions. We originally discovered the dual functors, which arise more naturally in our framework, but shifted our focus after discussions with Toby Gee on the image of irreducible representations under the functors of \cite{dotto-emerton-gee}. Moreover, we refer the reader to \cite{egh} for an excellent and thorough introduction to the $p$-adic Langlands program with a view towards categorification.  

We expect that Theorem \ref{intro main embedding} should have an extension to all $\GL_n(F)$. Part of this expectation is based on the observation that the relation between the Emerton--Gee stack and the moduli stack of Galois representations resembles the relation between the stacks of local systems and their versions with restricted variation in the geometric Langlands program \cite[\S 1]{agkrrv}. Moreover, the geometric Langlands correspondence has a version with restricted variation, which is very closely related to the ``standard'' version \cite[\S 21]{agkrrv}. Nevertheless, we refrain from attempting to formulate a precise conjecture generalizing Theorem \ref{intro main embedding}. The most subtle part appears to be to figure out the source category. Since the Galois stacks decompose according to residual pseudocharacters in full generality, one might expect the source category to have a corresponding block decomposition. A na\"{i}ve guess for such a category is the (ind-completion of the derived category of) smooth representations that are locally both of finite length and finitely presented. However, in general this category seems unlikely to contain irreducible supersingular representations (which are not of finite presentation \cite{Schraen-presentation, Wu-presentation}).

\subsection{Outline of the paper}
Let us briefly outline the contents of this paper. Section \ref{chap: stacks of Galois reps} recalls generalities of deformation and moduli theory of representations of profinite groups, mainly from \cite{BC2009,chenevier-determinant,wang-erickson-algebraic}, and gives our conventions on quasicoherent sheaves on stacks. In Section \ref{chap: stacks for GL2Qp}, we compute explicit presentations of the stacks $\mf{X}_{\mf{B}}$, construct all the $X_{\mf{B}}$, and prove all their relevant properties. Section \ref{chap: GL2preliminaries} then recalls the (absolutely irreducible) blocks for $\GL_2(\Qp)$ and sets up a category-theoretic framework for Theorem \ref{intro main embedding}. Section \ref{chap: geometric interpretation} proves our main results, by comparing our results from \S \ref{chap: stacks for GL2Qp} with those of \cite{paskunas-image}. Finally, Section \ref{chap: local-global} extends the domain of $F_{\mf{B}}$ to all left $\oo \lb G \rb_\zeta$-modules, discusses $p$-arithmetic homology, and proves Theorem \ref{lg thm intro}.

\subsection*{Notation and conventions}\label{subsec: notation} We collect some notation that will we used throughout this paper. We let $p$ be a prime number and assume $p\geq 5$ throughout the paper. If $K$ is a field, then $\Gamma_K$ will denote the absolute Galois group of $K$. Let $\varep$ denote the $p$-adic cyclotomic character of $\Gamma_K$ and let $\omega$ denote its reduction modulo $p$. We normalize Local Class Field Theory so that uniformizers correspond to geometric Frobenii; this is the same convention as in \cite{paskunas-image}. Moreover, for $K/\Qp$ a finite extension and $A$ a profinite ring, we will view any continuous character $\chi : \Gamma_K \to A^\times$ as a continuous character $\chi : K^\times \to A^\times$, by Local Class Field Theory, without changing the notation (and vice versa).

Many of our objects will be defined over $\OO$, the ring of integers in a finite extension $L/\Qp$, with a uniformizer $\varpi$. Its residue field will be denoted by $\F$.

If $A$ is a (not necessarily commutative) ring, then $\LMod(A)$ and $\RMod(A)$ denotes the abelian categories of left and right $A$-modules, respectively. If $A$ is commutative, we simply write $\Mod(A)$. If $A$ is a topological ring, then $\LMod_{disc}(A)$ and $\LMod_{cpt}(A)$ denotes the abelian categories of left discrete and compact $A$-modules, respectively, and we use $\RMod$ with similar decorations for right modules.

Since we will predominantly deal with left exact functors and homology of topological spaces, our conventions in homological algebra will be \emph{homological} (as opposed to cohomological). In particular, our complexes will mostly be \emph{chain} complexes, with $-_\bu$ denoting the index in a chain complex. Our shift convention is that if $C_\bu$ is a chain complex, then $C_\bu[d]$ is the chain complex satisfying $C_\bu[d]_n = C_{n+d}$. In particular, if $C_\bu$ is concentrated in degree $0$, then $C_\bu[d]$ is concentrated in (homological) degree $-d$. We will use the notation
\[
H_\ast(C_\bu)
\]
to denote the homology of $C_\bu$, where we regard $-_\ast$ as a generic index. Alternatively, the reader may interpret $H_\ast(C_\bu)$ as the total homology of $C_\bu$, viewed as a graded abelian group, and morphisms $H_\ast(C_\bu) \to H_\ast(D_\bu)$ as graded morphisms; either interpretation is fine.

Our conventions and notation for derived categories and their $\infty$-categorical enhancements are given mainly in \S \ref{sec: coh sheaves on stacks}, with some additions in \S \ref{sec: categorical constructions}. We do note that, despite using chain complexes throughout, our conventions for bounded below and bounded above follows that used for cochain complexes. Thus, for us $C_\bu$ is bounded above (resp. below) if $C_n=0$ for $n \ll 0$ (resp. $n \gg 0$) and the notation $-^-$ (resp. $-^+$) will be applied to categories of bounded above (resp. below) chain complexes, though we hasten to say that we will mainly work with categories of bounded or unbounded chain complexes.

Throughout the paper, we will write $-^\ast$ for linear duals, and $-^\vee$ for Pontryagin duals. The internal Hom in a monoidal category (if it exists) with be denoted by $\Homi$, and its (total and individual) derived functors will be denoted by $\RHomi$ and $\Exti^i$.

We will need to do many calculations with graded modules; these will either be $\Z$- or $\Z/2$-graded. If $M$ is a graded module then $M_k$ denotes its degree $k$ part. Moreover, $M(n)$ denotes the graded module defined by $M(n)_k = M_{n+k}$. If $R$ is a graded ring then the category of graded $R$-modules is symmetric monoidal under the tensor product (over $R$), and has an internal Hom. If $M$ is finitely generated as an $R$-module and $N$ is arbitrary, the internal Hom is given by $\Homi(M,N) = \Hom_R(M,N)$, with grading $\Homi(M,N)_k = \Hom(M,N(k))$.

\subsection*{Acknowledgments} C.J.\ wishes to thank David Hansen, Arthur-C\'{e}sar Le Bras and Judith Ludwig for conversations on the broader context of this paper. We also thank Toby Gee for discussions in relation to \cite{dotto-emerton-gee}. C.J. wishes to thank the Mathematical Institute of the University of Oxford and Merton College for their hospitality during a visit in April 2022. J.N.\ wishes to thank the Department of Mathematical Sciences at Chalmers University of Technology and the University of Gothenburg for its hospitality during a visit in February 2019. J.N.\ also wishes to thank the Hausdorff Research Institute for Mathematics for its hospitality during the Trimester Program ``The Arithmetic of the Langlands Program'', funded by the Deutsche Forschungsgemeinschaft (DFG, German Research Foundation) under Germany's Excellence Strategy--EXC-2047/1--390685813. The authors thank Toby Gee and Vytas Pa{\v s}k{\=u}nas for helpful discussions about an earlier version of this article. Finally, thanks to the anonymous referee for their careful reading of the paper and helpful comments.

C.J.\ was supported by Vetenskapsr\r{a}det Grant 2020-05016, \textit{Geometric structures in the $p$-adic Langlands program} during part of this project. C.W.E.\ was supported by the Simons Foundation through award TSM-846912 and by the National Science Foundation through award DMS-2401384. J.N.\ was supported by a UKRI Future Leaders Fellowship, grant MR/V021931/1. For the purpose of Open Access, the authors have applied a CC BY public copyright licence to any Author Accepted Manuscript version arising from this submission.

\section{Stacks of representations and coherent sheaves}
\label{chap: stacks of Galois reps}

The goal of this section is to recall generalities on the moduli theory and deformation theory of profinite groups, along with algebraizations of their moduli. We also include discussions of derived categories of coherent sheaves on algebraic stacks. 

\subsection{Deformation theory generalities}\label{subsec: def theory generalities}

Let $\Gamma$ be a profinite group satisfying the $\Phi_p$-finiteness condition of Mazur \cite[\S1.1]{mazur1989}. We recall fundamental facts about $\Spf \Z_p$-formal schemes and stacks of $2$-dimensional representations, following \cite{wang-erickson-algebraic} in part. The reader is presumed to be familiar with the theory of pseudorepresentations and their deformation theory, which is developed in \cite{chenevier-determinant}. Sometimes we take the liberty of discussing a pseudorepresentation as a ``trace function,'' using the theory of pseudocharacters, but these amount to the same thing by \cite[Prop.\ 1.29]{chenevier-determinant}. 

\begin{definition}
\label{defn: moduli}
Let $B$ denote a topologically finitely generated $\Z_p$-algebra. We establish the following moduli functors and stacks in groupoids, over topologically finite type $\Spf \Z_p$-formal schemes with the fppf topology, in terms of their value on $B$. 	
\begin{itemize}
\item Let $\widehat{\Rep}^{\square, \tilde \psi}$ denote the moduli functor of homomorphisms $\Gamma \to \GL_2(B)$. 
\item Let $\widehat{\Rep}^{\tilde \psi}$ denote the moduli groupoid of rank 2 projective $B$-modules $V$ equipped with a homomorphism $\Gamma \to \Aut_B(V)$ and a trivialization of the determinant of $V$, $\wedge^2 V \isoto B$. 
\item Let $\PsR^{\tilde \psi}$ denote the moduli functor of 2-dimensional pseudorepresentations $D : \Gamma \to B$. 
\end{itemize}
These moduli spaces admit natural morphisms $\widehat{\Rep}^{\square,\tilde\psi} \to \widehat{\Rep}^{\tilde\psi} \to \PsR^{\tilde\psi}$, where the first arrow is compatible with a presentation of the stack $\widehat{\Rep}^{\tilde\psi}$ as $[\widehat{\Rep}^{\square,\tilde\psi}/\SL_2]$. Here the action of $\SL_2$ arises from its adjoint action on $\GL_2$. The second arrow arises from associating a pseudorepresentation $D(\rho)$ to the action of $\Gamma$ on the $B$-module $V$ by $\rho$, using the characteristic polynomial coefficients of this action. 
\end{definition}

\begin{remark}
We are adopting the somewhat awkward notation with superscripts $(-)^{\tilde\psi}$ since we will reserve the unadorned notation for those with a fixed determinant $\psi: \Gamma \to \OO^\times$. Thus we are thinking of ``$\tilde\psi$'' as standing implicitly for the universal $p$-adic character of $\Gamma$.
\end{remark}

A 2-dimensional pseudorepresentation $D : \Gamma \to B$ is called \emph{reducible} when it has the form $D(\rho)$ for some $\rho$ of the form $\rho \simeq \nu_1 \oplus \nu_2$ for characters $\nu_i : \Gamma \to B^\times$. Reducibility is a Zariski closed condition on each of these moduli spaces. From now on, we drop ``2-dimensional'' from our terminology for pseudorepresentations.

The moduli functor $\PsR^{\tilde \psi}$ is known to be the disjoint union of formal spectra representing deformation functors of finite field-valued pseudorepresentations $D : \Gamma \to \F$ over their minimal field of definition $\F$, a finite extension of $\F_p$ \cite[Thm.\ F]{chenevier-determinant}. That is, if we write $\Def_{D}^{\tilde \psi} = \Spf R_{D}^{\tilde \psi}$ as the formal spectrum of the complete Noetherian local ring $R_{D}^{\tilde \psi}$ representing the deformation functor for $D$, the decomposition is expressable as 
\[
\PsR^{\tilde \psi} \cong \coprod_{D} \Def_{D}^{\tilde \psi}. 
\]
We write $\widehat{\Rep}^{\square,\tilde \psi}_{D}$ and $\widehat{\Rep}^{\tilde \psi}_{D}$ for the substack/subspace of $\widehat{\Rep}^{\square,\tilde \psi}$ and $\widehat{\Rep}^{\tilde \psi}$ over $\Def_{D}^{\tilde \psi}$. 

Any residual pseudorepresentation is induced by a unique (up to isomorphism) semisimple representation $\rho_D : \Gamma \to \GL_2(\F)$ over the same field of definition $\F$ as $D$. After a possible at most quadratic extension, we may assume that the irreducible summands of this semisimple representation are absolutely irreducible. In what follows, we replace $\F$ with a minimal such an extension. 

A residual pseudorepresentation $D : \Gamma \to \F$ is called \emph{multiplicity free} when the irreducible summands of $\rho_D$ are pairwise distinct. This includes the case that $\rho_D$ is irreducible, in which case we also call $D$ irreducible. 

Next we introduce \emph{Cayley--Hamilton algebras}; see \cite[\S1]{chenevier-determinant} for a reference. We refer to a Cayley--Hamilton algebra over $A$ (or with scalar ring $A$) as an $A$-algebra $E$ equipped with a pseudorepresentation $D_E : E \to A$ satisfying the Cayley--Hamilton property; concisely, this property means that every element of $E$ satisfies the characteristic polynomial determined by $D_E$. 

\begin{definition}
\label{def:univCH}
Let $E_D^{\tilde \psi}$ denote the \emph{universal Cayley--Hamilton algebra over $D$}, which is given by 
\[
E_D^{\tilde \psi} := \frac{R_D^{\tilde \psi}\lb \Gamma\rb}{\mathrm{CH}(D^{u,\tilde \psi})}
\]
where $\mathrm{CH}(D^{u,\tilde \psi})$ denotes the minimal two-sided ideal that factors the universal deformation $D^{u,\tilde \psi} : R_D^{\tilde \psi}\lb\Gamma\rb \to R_D^{\tilde \psi}$ and makes it satisfy the Cayley--Hamilton property. We also write $D_{E_D^{\tilde \psi}}: E_D^{\tilde\psi} \to R_D^{\tilde \psi}$ for the pseudorepresentation that $E_D^{\tilde\psi}$ is equipped with. The Cayley--Hamilton representation
\[
\rho^{u, \tilde\psi} : \Gamma \to E_D^{\tilde\psi}
\]
is universal in the sense that for any Cayley--Hamilton representation $\rho : \Gamma \to E$ with scalar ring $A$, if the induced pseudorepresentation $D_E \circ \rho: \Gamma \to A$ has constant residual pseudorepresentation $D$, then there exists a morphism of Cayley--Hamilton algebras $(f : E_D^{\tilde \psi} \to E$, $R_D^{\tilde \psi} \to A)$ such that $\rho = f \circ \rho^{u,\tilde \psi}$ and the map $R_D^{\tilde \psi} \to A$ equals the map coming from the moduli interpretation of $R_D^{\tilde \psi}$ applied to $D_E \circ \rho$. 
\end{definition}

In this paper, we will almost always want to restrict the determinant of representations and pseudorepresentations. Writing 
\begin{itemize}
\item $\psi : \Gamma \to \OO^\times$ for a character deforming $\det D : \Gamma \to \F^\times$, 
\item $\widehat{\Rep}^\square_D, \widehat{\Rep}_D, \PsR_D$ for moduli functors,
\end{itemize}
along with the following objects representing moduli problems with fixed determinant $\psi$:
\begin{itemize}
\item $E_D$ for the universal Cayley--Hamilton algebra 
\item with scalar ring $R_D$
\item and universal representation $\rho^u : \Gamma \to E_D^\times$ over $D$. 
\end{itemize}
In what follows, we continue with this convention as we introduce new moduli functors and rings.

\subsection{Algebraization of moduli functors and groupoids}

A main result of \cite[\S3]{wang-erickson-algebraic} is that all of the formal moduli spaces or groupoids of representations of $\Gamma$ with residual pseudorepresentation $D$ have a natural $R_D$-algebraic model of finite type. The source of this algebraization is the following finiteness result:

\begin{theorem}[{\cite[Prop.\ 3.6]{wang-erickson-algebraic}}]
$E_D$ is finitely generated as a $R_D$-module. 
\end{theorem}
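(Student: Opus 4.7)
The plan is a topological Nakayama argument, reducing the claim to finite-dimensionality of the special fiber. By the $\Phi_p$-finiteness hypothesis and Chenevier's representability theorem, $R_D$ is a complete Noetherian local ring with residue field $\F$. The algebra $E_D$, being a closed quotient of the pseudocompact ring $R_D\lb\Gamma\rb$, is itself pseudocompact and complete for the $\m_{R_D}$-adic topology. Thus, if the residue $\bar E_D := E_D \otimes_{R_D} \F$ is finite-dimensional over $\F$, topological Nakayama furnishes a finite $R_D$-generating set for $E_D$ by lifting any $\F$-basis of $\bar E_D$.

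The remaining task is to bound $\dim_\F \bar E_D$. The residue $\bar E_D$ is itself a Cayley--Hamilton $\F$-algebra for a pseudorepresentation inducing $D$, and I would split into cases on the semisimple representation $\rho_D : \Gamma \to \GL_2(\F)$. When $\rho_D$ is absolutely irreducible, the map $\F\lb\Gamma\rb \to M_2(\F)$ extending $\rho_D$ is Cayley--Hamilton for $D$ and surjective by the Jacobson density theorem; a standard argument using the CH identity on its kernel (on which $D$ vanishes) then forces this map to factor through an isomorphism $\bar E_D \isoto M_2(\F)$, so $\dim_\F \bar E_D = 4$. In the multiplicity-free reducible case $\rho_D \simeq \chi_1 \oplus \chi_2$ with $\chi_1 \neq \chi_2$, one invokes the generalized matrix algebra (GMA) machinery of Bella\"{\i}che--Chenevier: $\bar E_D$ is a GMA whose off-diagonal entries are controlled by $\Ext^1_\Gamma(\chi_i,\chi_j)$, and these are finite-dimensional by $\Phi_p$-finiteness.

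The main obstacle I anticipate is the non-multiplicity-free case $\rho_D \simeq \chi \oplus \chi$, where no direct GMA decomposition is available and the structure of CH algebras is considerably more subtle. Here, following the approach of \cite{chenevier-determinant}, one would embed $\bar E_D$ into a matrix algebra over a suitable commutative finite-dimensional $\F$-algebra, using the CH identity to bound word-lengths in elements of $\Gamma$ and $\Phi_p$-finiteness to control the number of generators needed. Finite-dimensionality of $\bar E_D$ then follows, and the topological Nakayama lemma concludes the proof in all cases, notably without requiring an explicit presentation of $E_D$.
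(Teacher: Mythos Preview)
The paper does not supply its own proof of this statement; it is recorded as a citation to \cite[Prop.\ 3.6]{wang-erickson-algebraic} with no accompanying argument. There is therefore nothing in the present paper to compare your attempt against.

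As a standalone sketch, your Nakayama-plus-special-fiber strategy is the correct shape and is essentially what the cited reference does. One comment on execution: the trichotomy on $\rho_D$ is more granular than the cited proof requires. The argument in \cite{wang-erickson-algebraic} proceeds uniformly, using the structure theory of Cayley--Hamilton algebras over local rings from \cite{chenevier-determinant} (nilpotent Jacobson radical, semisimple quotient a finite product of matrix algebras) together with $\Phi_p$-finiteness applied to the open kernel of $\rho_D$ to bound the image of $\Gamma$ in $\bar E_D^\times$. Your description of the off-diagonal GMA pieces in the multiplicity-free case as ``controlled by $\Ext^1_\Gamma(\chi_i,\chi_j)$'' is morally right but not literally what happens over the residue field, and in any case this identification is not needed for the finiteness statement. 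Your acknowledged difficulty in the scalar case $\rho_D \simeq \chi \oplus \chi$ is real, and is exactly why the uniform approach (avoiding GMA coordinates altogether) is cleaner here.
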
 

Using the universality of $E_D$, one can use the moduli $\Rep^\square(E_D), \Rep(E_D)$ of (non-topological) compatible representations of $E_D$ as a $R_D$-algebraic model for $\widehat{\Rep}_D^\square,\widehat{\Rep}_D$. That is, $\widehat{\Rep^\square(E_D)} \cong \widehat{\Rep}_D^\square$ and $\widehat{\Rep(E_D)} \cong \widehat{\Rep}_D$, completing with respect to the maximal ideal of $R_D$. 

\begin{definition}
Let $(E,B)$ be a Cayley--Hamilton algebra with scalar ring $B$ and pseudorepresentation $D_E : E \to B$. Let $C$ be a commutative $B$-algebra. A $C$-valued \emph{compatible representation} of $E$ is a homomorphism of $B$-algebras $E \to M_2(C)$ such that the following diagram commutes,
\[
\xymatrix{
E \ar[r] \ar[d]^{D_E} & M_2(C) \ar[d]^{\det} \\
B \ar[r] & C
}
\]
\begin{itemize}
\item Let $\Rep^\square(E)$ be the $\Spec B$-functor of compatible representations of $E$.
\item Let $\Rep(E)$ be the $\Spec B$-groupoid which associates to a $B$-algebra $C$ a projective rank 2 $C$-module $V$, an isomorphism $\wedge^2 V \isoto C$, and a compatible representation of $E$ on $V$, that is,
\[
\xymatrix{
E \ar[r] \ar[d]^{D_E} & \End_C(V) \ar[d]^{\det} \\
B \ar[r] & C
}
\]
\end{itemize}
As in Definition \ref{defn: moduli}, $\Rep(E) \cong [\Rep^\square(E)/\SL_2]$ under the adjoint action of $\SL_2$. 
\end{definition}

\begin{proposition}
Assume that $E$ is finitely generated as a $B$-algebra. $\Rep^\square(E)$ is an affine $B$-scheme of finite type. $\Rep(E)$ is a $\Spec B$-algebraic stack of finite type.
\end{proposition}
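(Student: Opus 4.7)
The plan is to realize $\Rep^\square(E)$ as a closed subscheme of affine space over $B$ by fixing generators of $E$. Choose $B$-algebra generators $e_1, \ldots, e_n$ of $E$; sending a compatible representation $\phi : E \to M_2(C)$ to the tuple $(\phi(e_i))_i \in M_2(C)^n$ gives a monomorphism of functors $\Rep^\square(E) \hookrightarrow \mathbb{A}^{4n}_B$. The image is cut out by two families of polynomial conditions on the matrix entries: (i) the multiplicative relations in $E$ must be satisfied, so that the tuple defines a well-defined $B$-algebra homomorphism; and (ii) the compatibility diagram with $\det$ must commute, i.e.\ the characteristic polynomial coefficients of $\phi(e)$ must match those prescribed by $D_E(e)$.

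The key step is to reduce both families to finitely many polynomial equations. The Cayley--Hamilton property forces each element of $E$ to satisfy a quadratic relation with coefficients given by $D_E$, and combined with $B$-algebra generation by $e_1, \ldots, e_n$ this implies that $E$ is spanned as a $B$-module by the words in the $e_i$ of length at most some bound $N$. In particular $E$ is finitely generated as a $B$-module, so the finite multiplication table of a $B$-module generating set suffices to imply (i), while matching the trace and determinant of $\phi(e_i)$ against $D_E$ on this finite set of generators suffices for (ii). Consequently $\Rep^\square(E)$ is a closed subscheme of $\mathbb{A}^{4n}_B$ defined by finitely many polynomial equations, hence affine and of finite type over $B$.

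For $\Rep(E)$, the presentation $\Rep(E) \cong [\Rep^\square(E)/\SL_2]$ recalled in Definition \ref{defn: moduli} exhibits it as the quotient of a finite type affine $B$-scheme by the smooth affine $B$-group scheme $\SL_2$, which is an algebraic stack of finite type over $\Spec B$ by standard results on quotient stacks. The main obstacle is the Cayley--Hamilton-driven reduction of an a priori infinite family of algebra relations to a finite set of polynomial conditions on $\mathbb{A}^{4n}_B$; this is precisely where the hypothesis that $E$ is a Cayley--Hamilton algebra (rather than merely a finitely generated $B$-algebra in the abstract) is essential for the finite type conclusion.
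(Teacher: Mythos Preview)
Your argument is correct and follows the same ``generic matrices'' approach the paper gestures at. One point is worth correcting, though: your final paragraph overstates the role of the Cayley--Hamilton hypothesis. You claim it is ``essential for the finite type conclusion'' because it lets you reduce to finitely many polynomial equations, but this is not needed. Once you have embedded $\Rep^\square(E)$ as a closed subscheme of $\mathbb{A}^{4n}_B$ (which only uses that $E$ is finitely generated as a $B$-algebra, so that a homomorphism is determined by the images of the $e_i$), finite type is automatic: any closed subscheme of $\mathbb{A}^{4n}_B$ is $\Spec\bigl(B[x_1,\dots,x_{4n}]/J\bigr)$ for some ideal $J$, and this ring is generated as a $B$-algebra by the images of the $x_i$ regardless of whether $J$ is finitely generated. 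So the possibly infinite families of relations in (i) and (ii) already cut out a finite type affine $B$-scheme; no reduction to a finite list is required.

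The Cayley--Hamilton property does give you module-finiteness of $E$ over $B$ (as you correctly argue), and this is genuinely useful elsewhere in the paper, but for the proposition at hand it is a convenience for making the equations explicit rather than a logical necessity.
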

\begin{proof}
A standard ``generic matrices'' argument shows that $\Rep^\square(E)$ is of finite type over $\Spec B$. See e.g.\ \cite[\S3.1]{BIP2023}. 
\end{proof}

We also record the self-duality of the universal vector bundle on $\Rep(E)$. 

\begin{proposition}
\label{selfduality of universal vb}
Let $\mc{V}$ be the vector bundle underlying the universal representation of $E$. There is a canonical isomorphism $\mc{V} \cong \mc{V}^\ast$. 
\end{proposition}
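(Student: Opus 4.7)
The plan is to build the isomorphism directly from the trivialization of the determinant that is part of the moduli data defining $\Rep(E)$. For any $B$-algebra $C$, a point of $\Rep(E)(C)$ consists of a rank $2$ projective $C$-module $V$, an isomorphism $\wedge^2 V \isoto C$, and a compatible representation $\rho: E \to \End_C(V)$. The canonical alternating pairing $V \otimes_C V \to \wedge^2 V$ composed with the trivialization yields a nondegenerate alternating form $\alpha: V \otimes_C V \to C$, and the map $v \mapsto \alpha(v, -)$ is a $C$-module isomorphism $V \isoto V^\ast$. This construction is manifestly functorial in the moduli data, so it defines a canonical isomorphism of vector bundles $\phi : \mc{V} \isoto \mc{V}^\ast$ on the stack $\Rep(E)$.

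This is essentially the entire argument for the statement as written. If one wishes to upgrade $\phi$ to an equivariance statement for the $E$-action, one equips $\mc{V}^\ast$ with the left $E$-action transported from its natural right $E$-module structure via the antiinvolution $\iota: E \to E$, $\iota(e) = T(e) - e$, where $T$ is the trace derived from $D_E$. Under any compatible representation, $\iota(e)$ corresponds to the adjugate $\mathrm{adj}(\rho(e)) = \mathrm{tr}(\rho(e)) \cdot I - \rho(e)$ on $M_2(C)$. The required identity $\alpha(\rho(e)v, w) = \alpha(v, \rho(\iota(e))w)$ then follows from the pointwise computation $\alpha(Av, w) + \alpha(v, Aw) = \mathrm{tr}(A)\, \alpha(v, w)$ for $A \in M_2(C)$, which is a direct $2 \times 2$ matrix check.

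The only step that requires more than a symbol manipulation is the verification that $\iota$ is a well-defined antiinvolution of the abstract Cayley--Hamilton algebra $E$; this ultimately reduces to the Cayley--Hamilton relation $e^2 - T(e)e + D_E(e) = 0$ in $E$ together with its polarization $ef + fe = T(e)f + T(f)e - (T(e)T(f) - T(ef)) \cdot 1$, both of which hold by definition of $E$ as a Cayley--Hamilton algebra over $R_D$. I do not anticipate any genuine obstacle here, since once the trivialization of $\wedge^2 \mc{V}$ is in hand the rest is formal; the Cayley--Hamilton algebra machinery gives the antiinvolution $\iota$ for free, and the descent from an atlas to the stack is automatic from canonicity.
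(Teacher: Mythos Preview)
Your argument is correct and matches the paper's proof exactly: both use the trivialization of $\wedge^2\mc{V}$ built into the moduli problem together with the fact that the wedge product is a perfect pairing on a rank $2$ bundle to obtain $\mc{V}\cong\mc{V}^\ast$. Your additional paragraph on $E$-equivariance via the adjugate antiinvolution $\iota(e)=T(e)-e$ goes beyond what the paper proves (the statement and its proof concern only the underlying vector bundle), but it is correct and harmless.
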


\begin{proof}
Since we have a trivialization of $\wedge^2\mc{V}$ over $\Rep(E)$, the proposition follows from the standard fact that any rank $2$ vector bundle $\mc{F}$ on any algebraic stack admits a canonical isomorphism $\mc{F}^\vee \otimes \wedge^2 \mc{F} \cong \mc{F}$ (this follows from the fact that the wedge product is a perfect pairing $\mc{F} \times \mc{F} \to \wedge^2\mc{F}$).
\end{proof}

\subsection{GMAs and adapted representations in the multiplicity free reducible case}
\label{subsec: GMAs} 

When $D$ is multiplicity free and reducible, arising from the representation $\chi_1 \oplus \chi_2 : \Gamma \to \GL_2(\F)$, any lift of the two canonical orthogonal ordered idempotents of $\F \times \F$ over $E_D \to \F \times \F$ amounts to a 2-by-2 generalized matrix $R_D$-algebra ($R_D$-GMA) structure on $E_{D}$ \cite[Thm.\ 2.22]{chenevier-determinant}. We will simply use the term ``GMA'' to refer to a 2-by-2 GMA. 

See \cite[\S1.3]{BC2009} for generalities on GMAs. In particular, using coordinates coming from these ordered idempotents, we get an isomorphism
\begin{equation}
\label{eq: E coordinates}
E_D = \begin{pmatrix}
(E_D)_{1,1} & (E_D)_{1,2} \\
(E_D)_{2,1} & (E_D)_{2,2}
\end{pmatrix}
\cong 
\begin{pmatrix}
R_D & B_D \\ 
C_D & R_D
\end{pmatrix},
\end{equation}
where there is an implicit $R_D$-bilinear cross-diagonal multiplication with corresponding map \[B_D \otimes_{R_D} C_D \to R_D\] giving rise to a $R_D$-algebra structure on $E_D$. The pseudorepresentation $E_D \to R_D$ naturally arising from the GMA structure is equal to $D_{E_D}: E_D \to R_D$ \cite[Prop.\ 2.23]{wang-erickson-algebraic} and is Cayley--Hamilton, making any generalized matrix algebra a Cayley--Hamilton algebra. And the \emph{reducibility ideal} of $R_D$, which cuts out the locus of reducible pseudorepresentations in $\Spec R_D$, equals the image of the cross-diagonal multiplication map. 

We recall the following general notions from \cite[\S1.3]{BC2009}, where $B$ is a commutative ring and $E$ is a $B$-GMA. 

\begin{definition}
Let $E$ be a $B$-GMA. An \emph{adapted representation} of $E$ valued in a $B$-algebra $C$ is an $B$-algebra homomorphism $E \to M_2(C)$ that preserves the GMA structure (that is, maps the idempotents defining the GMA structure on $E$ to the standard idempotents in $M_2(C)$). 
\begin{itemize}
\item Let $\Rep^{\square,\Ad}(E)$ denote the $\Spec B$-functor of adapted representations of $E$. 
\item Let $\Rep^\Ad(E)$ denote the $\Spec B$-groupoid whose value on $C$ consists of an ordered pair of rank 1 projective $C$-modules $(V_1, V_2)$ equipped with an isomorphism $V_1 \otimes V_2 \isoto C$ and a homomorphism of $B$-GMAs (so, in particular, they preserve the ordered idempotents) $E \to \End_C(V_1 \oplus V_2)$. 
\end{itemize}
\end{definition}

One may check that $\Rep^\Ad(E) \cong [\Rep^{\square,\Ad}(E)/T]$, where $T$ is the standard diagonal torus in $\SL_2$, acting via the adjoint representation on $M_2$. We fix the isomorphism $T \cong \bG_m$ that makes $\bG_m$ act on the B (upper right) coordinate by $2 \in X^*(\bG_m) \cong \Z$ and the C coordinate by $-2$. 

In the following theorem, we let $A$ denote the $B$-algebra representing $\Rep^\square(E)$ and likewise let $S$ represent $\Rep^{\square,\Ad}(E)$. 

\begin{theorem}[{\cite{wang-erickson-algebraic, BC2009}}]
\label{presentation of moduli stack}
Let $E$ be a $B$-GMA, hence also a Cayley--Hamilton algebra over $B$. 
\begin{enumerate}
\item Adapted representations of $E$ are compatible representations. 
\item The resulting map $\Rep^{\square,\Ad}(E) \hookrightarrow \Rep^\square(E)$ is a closed immersion of affine $B$-schemes; we have the corresponding surjection $A \twoheadrightarrow S$. 
\item The morphism of (2) descends to an isomorphism of algebraic stacks
\[
\Rep^\Ad(E) = [\Rep^{\square, \Ad}(E)/T] \cong [\Rep^\square(E)/\SL_2] = \Rep(E).
\]
\item The GIT quotient scheme $\Rep^{\square,\Ad}(E) /\!/ \bG_m$ is naturally isomorphic to $\Spec B$. Equivalently, in ring-theoretic terms, there are natural isomorphisms
\[
B \cong  A^{\SL_2} \cong S^{\bG_m}. 
\]
\item Moreover, the natural action of $\SL_2$ on $M_2(A)$ (resp.\ $\bG_m$ on $M_2(S)$) and the natural maps $E \to M_2(A) \twoheadrightarrow M_2(S)$ produce isomorphisms
\[
E \cong M_2(A)^{\SL_2} \cong M_2(S)^{\bG_m}. 
\]
\item If $E$ is finitely generated as a $B$-algebra, then all of these schemes and stacks are of finite type over $\Spec B$. 
\end{enumerate}
\end{theorem}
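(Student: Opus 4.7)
The plan is to unpack the definitions and then invoke invariant theory for the $\SL_2$ and $\bG_m$ actions. I would proceed in the order the statements are made.

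First I would dispatch (1) and (2) by a direct computation with the GMA coordinates of Equation~(\ref{eq: E coordinates}). An adapted representation $\rho : E \to M_2(C)$ sends the two canonical orthogonal idempotents $e_1, e_2 \in E$ of the GMA structure to the standard diagonal idempotents of $M_2(C)$. Since the pseudorepresentation $D_{E_D} : E \to B$ is by construction the characteristic-polynomial-coefficient function coming from the GMA (see \cite[Prop.\ 2.23]{wang-erickson-algebraic}), it agrees with $\det \circ \rho$ when post-composed with the natural map $B \to C$; this gives (1). For (2) I would observe that the closed condition ``$\rho(e_i)$ equals the $i$-th standard diagonal idempotent'' is cut out by finitely many explicit polynomial equations in the coordinates of $A$ (the off-diagonal entries of $\rho(e_1)$ and $\rho(e_2)$), and the quotient of $A$ by the ideal generated by these entries is $S$.

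Next, for (3), the key input is an orbit-stabilizer/slice argument. Given any compatible representation $\rho$ over a $B$-algebra $C$, the images $\rho(e_1),\rho(e_2)$ are orthogonal idempotents in $M_2(C)$ summing to $1$. Their traces are forced to be $1$ by compatibility with $D_E$, hence they are étale-locally conjugate to the standard diagonal idempotents via an element of $\SL_2(C)$: this uses that the scheme of such idempotent pairs is an $\SL_2/T$-torsor. Under this conjugation $\rho$ becomes adapted, and the stabilizer of the standard idempotent pair inside $\SL_2$ is precisely the diagonal torus $T$. This produces an $\SL_2$-equivariant map $\SL_2 \times^T \Rep^{\square,\Ad}(E) \to \Rep^\square(E)$ which is an isomorphism, giving the desired equivalence of stack quotients.

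For (4) and (5), I would reduce everything to computing invariants in $M_2$ of the two actions. For the $\bG_m$-action the grading on $M_2(S)$ has diagonal entries in weight $0$ and off-diagonal entries in weights $\pm 2$, while $S$ itself, as the coordinate ring of $\Rep^{\square,\Ad}(E)$, is generated over $B$ by images of $(E_D)_{1,2}$ and $(E_D)_{2,1}$ placed in weights $+2$ and $-2$ respectively, subject only to the GMA multiplication relations. Weight-by-weight,
\begin{equation*}
M_2(S)^{\bG_m} \;=\; S_0 \cdot e_{11} \oplus S_0 \cdot e_{22} \oplus S_{-2}\cdot e_{12} \oplus S_{+2}\cdot e_{21}
\end{equation*}
matches $E$ under the canonical map $E \to M_2(S)$, because $S_0 = B = R_D$, $S_{-2} \cdot e_{12}$ is identified with $B_D$, and $S_{+2} \cdot e_{21}$ with $C_D$; in particular $B \cong S^{\bG_m}$. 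Using the $\SL_2$-bundle description from (3) and the fact that $(\SL_2/T$-part of the) invariants transfer between the two presentations, the identifications $B \cong A^{\SL_2}$ and $E \cong M_2(A)^{\SL_2}$ follow formally from the $\bG_m$ statements. Finally, (6) is immediate once (3) and the generic-matrices presentation for $\Rep^\square(E)$ are in place, by explicitly presenting $S$ as a quotient of a finitely generated $B$-algebra.

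The main obstacle is the slice argument in step (3), specifically arranging the conjugation of an arbitrary compatible representation into adapted form globally enough to yield an isomorphism (not merely a bijection on points) of algebraic stacks. This requires knowing both that the idempotent pair has rank one on the nose (so its trace equals $1$, not just up to nilpotents) and that $\SL_2 \to \SL_2/T$ is a $T$-torsor in a sense that behaves well over the base $\Spec B$. Both of these are handled using the Cayley--Hamilton property of $E$ (ensuring that the idempotents genuinely deserve their name after any specialization) combined with the fact that $p$-invertibility is not needed because the relevant schemes are smooth. Once this step is in place, the invariant-theory computations are essentially graded linear algebra.
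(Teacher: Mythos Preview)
Your approach is essentially the one the paper takes: it proves (1)--(3) by citing \cite[Props.\ 2.23, 2.24]{wang-erickson-algebraic} (whose content is precisely your idempotent/slice argument), derives (4)--(5) from the graded description $E_{i,j}\cong S_{2(j-i)}$ of Proposition~\ref{prop: GMA description} (your ``weight-by-weight'' computation), and gets (6) from generic matrices. One slip: your displayed invariants are off by a sign. With the paper's convention that the generators coming from $E_{1,2}$ sit in $S_{+2}$, the correct combined action on $M_2(S)$ (the one for which $E\to M_2(S)$ lands in invariants) has the adjoint weight on $e_{12}$ cancelling the grading weight, so $M_2(S)^{\bG_m}=S_0 e_{11}\oplus S_0 e_{22}\oplus S_{+2}\,e_{12}\oplus S_{-2}\,e_{21}$, and it is $S_{+2}\cdot e_{12}$ (not $S_{-2}\cdot e_{12}$) that is identified with $B_D=E_{1,2}$.
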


\begin{proof}
See \cite[Prop.\ 2.23]{wang-erickson-algebraic} and the comments after its proof for the proof of part (1). Part (2) is easily checkable. Part (3) is \cite[Prop.\ 2.24]{wang-erickson-algebraic}, but $\SL_2$ replaces $\GL_d$ and $T$ replaces the diagonal torus in $\GL_2$. Since the invariant theory is reduced to a linearly reductive case, and $\GL_2$ and $\SL_2$ are each surjective onto $\PGL_2$ via the adjoint action, the result remains in this case. Part (4) is \cite[Cor.\ 2.25]{wang-erickson-algebraic}, and Part (5) follows quickly from Proposition \ref{prop: GMA description}. (The results above also closely follow after \cite[\S1.3]{BC2009}.) Part (6) follows from the standard construction using generic matrices. 
\end{proof}

We will also use this result of Bella\"iche--Chenevier. 

\begin{proposition}[{\cite[Prop.\ 1.3.13, Rem.\ 1.3.15]{BC2009}}]
\label{prop: GMA description}
Writing $E_{i,j}$ for the $R$-GMA coordinates of $E$ as in \eqref{eq: E coordinates}, $1 \leq i,j \leq 2$, there are canonical isomorphisms of graded $R$-modules
\[
E_{i,j} \simeq S_{2(j-i)}, 
\]
such that the coordinate-wise multiplication maps $E_{i,j} \otimes_R E_{j,k} \to E_{i,k}$ are compatible with the multiplication law of $S$. In particular, $S_0 = R = E_{1,1} = E_{2,2}$, and $S$ is generated as an $R$-algebra by $E_{1,2}$ and $E_{2,1}$. 
\end{proposition}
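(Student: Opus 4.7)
The plan is to deduce the proposition entirely from parts (4) and (5) of Theorem \ref{presentation of moduli stack}. That theorem identifies $E \cong M_2(S)^{\bG_m}$ as $R$-algebras, where $\bG_m = T \subset \SL_2$ acts on $M_2$ by conjugation through $\mathrm{diag}(t, t^{-1})$. My first step would be to compute this invariant subring entry by entry. Decompose $M_2(S) = \bigoplus_{1 \le i,j \le 2} S \cdot e_{i,j}$. The adjoint action of $\bG_m$ on $M_2$ fixes the diagonal matrix units and acts with opposite nonzero weights on the off-diagonal units; tracking the sign convention (the B-coordinate of $M_2$ has weight $+2$ on $S$), one obtains $(S \cdot e_{i,j})^{\bG_m} = S_{2(j-i)} \cdot e_{i,j}$. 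Matching this with the GMA decomposition in \eqref{eq: E coordinates} yields the canonical graded isomorphism $E_{i,j} \cong S_{2(j-i)}$; in particular $E_{1,1} = E_{2,2} = S_0$, which in turn equals $R$ by part (4) of the theorem.

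Next I would verify compatibility with multiplication. Matrix multiplication on $M_2(S)$ is $\bG_m$-equivariant and so descends to the invariants $M_2(S)^{\bG_m} \cong E$. The product $(s_1 e_{i,j})(s_2 e_{j,k}) = s_1 s_2 e_{i,k}$ transports, on the one hand, to the coordinate-wise GMA multiplications $E_{i,j} \otimes_R E_{j,k} \to E_{i,k}$, and on the other to the graded multiplication of $S$ in degrees $2(j-i) + 2(k-j) = 2(k-i)$; these must coincide. No separate verification is needed beyond the functoriality of $(-)^{\bG_m}$.

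For the generation assertion, I would argue from the universal property of $\Rep^{\square, \Ad}(E)$: an adapted representation $E \to M_2(C)$ is determined by its restrictions to the two off-diagonal GMA blocks $E_{1,2} \to S \cdot e_{1,2}$ and $E_{2,1} \to S \cdot e_{2,1}$, since the diagonal blocks are copies of $R$ and are forced onto $R \cdot e_{1,1}$ and $R \cdot e_{2,2}$, subject only to the cross-diagonal relations of the GMA. Hence $S$ is presented, as an $R$-algebra, by generators in $E_{1,2}$ and $E_{2,1}$ modulo these relations, and in particular is generated by its graded pieces $S_2$ and $S_{-2}$, which we have just identified with $E_{1,2}$ and $E_{2,1}$.

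I expect the only delicate point to be the bookkeeping of the $\bG_m$-weight conventions, specifically ensuring that the B-coordinate of $M_2$ ends up matched with $S_2$ (rather than $S_{-2}$) so that the sign in $S_{2(j-i)}$ comes out as stated; once that is pinned down against the convention fixed before Theorem \ref{presentation of moduli stack}, every step above is a direct consequence of the invariant-theoretic description and requires no further input.
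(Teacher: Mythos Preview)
Your argument is circular as written. You invoke part (5) of Theorem \ref{presentation of moduli stack} to identify $E \cong M_2(S)^{\bG_m}$, but if you look at the proof of that theorem in the paper, part (5) is deduced \emph{from} Proposition \ref{prop: GMA description}. So you are assuming the result (in a repackaged form) in order to prove it. Indeed, the isomorphism $E \cong M_2(S)^{\bG_m}$ and the entrywise identifications $E_{i,j} \cong S_{2(j-i)}$ are essentially the same statement; decomposing the former by matrix coordinates gives the latter, and assembling the latter gives the former.

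The paper itself does not supply a proof: the proposition is quoted directly from \cite[Prop.\ 1.3.13, Rem.\ 1.3.15]{BC2009}. The argument there proceeds in the opposite logical direction, starting from the universal property of $S = \OO(\Rep^{\square,\Ad}(E))$. An adapted representation $E \to M_2(C)$ is determined by the pair of $R$-linear maps $E_{1,2} \to C$, $E_{2,1} \to C$ (the diagonal blocks are forced to $R \hookrightarrow C$), subject only to the relation that the two composites $E_{1,2} \otimes_R E_{2,1} \to C$ and $E_{2,1} \otimes_R E_{1,2} \to C$ agree with the GMA cross-diagonal multiplication into $R$. This yields an explicit presentation of $S$ as a quotient of $\Sym_R(E_{1,2} \oplus E_{2,1})$, from which one reads off $S_{\pm 2} \cong E_{1,2}, E_{2,1}$ and $S_0 = R$; the general graded piece and the multiplicative compatibility then follow. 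Ironically, your final paragraph already sketches exactly this universal-property argument for the generation claim---that is the correct starting point for the \emph{entire} proposition, not just the last sentence. If you lead with it, you obtain the graded isomorphisms directly and can then \emph{deduce} part (5) as the paper does, rather than the other way around.
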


Next we apply these equivalences to the case of the universal Cayley--Hamilton algebra $E_D$ with scalar ring being the 	universal pseudorepresentation ring $R_D$. First, we set up notation. 

\begin{notation}
\label{notn: stacks}
Let $A_D$ denote the finitely generated $R_D$-algebra representing $\Rep^\square(E_D)$, with $\m_D$-adic completion $\hat A_D$. When $D$ is reducible and multiplicity free, let us write $S_D$ as the ring representing the $R_D$-algebraic moduli functor $\Rep^{\square,\Ad}(E_D)$, and let $\hat S_D$ denote its $\m_D$-adic completion. (We remark that $\hat A_D$ and $\hat S_D$ are not local, in general.) We have the following diagram of rings and moduli functors (and the top row of vertical arrows in the left diagram are pseudorepresentations). 
\[
\xymatrix{
& E_D \ar[r] \ar[d]^{D^u} & M_2(A_D) \ar[d]^{\det} \ar@{->>}[r] & M_2(S_D) \ar[d]^{\det} & & \Rep(E_D) \ar[dl] & \Rep^\Ad(E_D) \ar[l]_\sim \\
\OO \ar[r] & R_D \ar[r] \ar[d]^\wr & A_D \ar[d]^{(-)^\wedge_{\m_D}} \ar@{->>}[r] & S_D \ar[d]^{(-)^\wedge_{\m_D}}
 & \Spec R_D & \Rep^\square(E_D) \ar[l] \ar[u]_{\text{stack }/\SL_2} & \Rep^{\square,\Ad}(E_D) \ar[l] \ar[u]_{\text{stack }/T}\\
& R_D \ar[r] & \hat A_D \ar@{->>}[r] & \hat S_D & \Spf R_D \ar[u]_{(-)^\wedge_{\m_D}} & \widehat{\Rep}^\square_D \ar[l] \ar[u]_{(-)^\wedge_{\m_D}} & \widehat{\Rep}^{\square,\Ad}_D \ar[l] \ar[u]_{(-)^\wedge_{\m_D}}
}
\]

We will use the $T \cong \bG_m$-action on $\Spec S_D$ to consider $S_D$ to be a $\Z$-graded algebra $S_D = \bigoplus_{i \in \Z} S_{D,i}$. In fact, it is a $\Z$-graded $R_D$-algebra because characteristic polynomial functions are adjoint invariants, moreover $R_D \cong S_{D,0}$, by Theorem \ref{presentation of moduli stack}. 
\end{notation}

From now on, our notational convention is that the subscript ``$D$'' is dropped. 

\subsubsection{Irreducible case} 
\label{sssec: irred case}
When $D$ and $\rho = \rho_D$ are irreducible, it is well known that deformation theory of $\rho$ is identical to that of $D$ (see e.g.\ \cite[Thm.\ 2.22]{chenevier-determinant}). That is, the natural homomorphism $R \to R_\rho$, where $R_\rho$ is the universal deformation ring of $R_\rho$, is an isomorphism; also, $E \simeq M_2(R)$. 

In this case, the stacks above can be expressed in terms of the universal deformation ring $R_\rho$ and the universal lifting ring $R_\rho^\square$ as follows. 
\[
\widehat{\Rep} \cong  [\Spf R_\rho/\mu_2], \quad \widehat{\Rep}^\square \cong \Spf R_\rho^\square, \quad 
\Rep(E) \cong [\Spec R_\rho/\mu_2], 
\]
where the implicit adjoint action of $\mu_2 \subset \bG_m \cong T \subset \SL_2$ is trivial. In general, $R^\square_\rho$ is a further completion of $\hat A$ at the maximal ideal of $\hat A$ associated to $\rho$; in the irreducible case, $R^\square_\rho$ and $\hat A$ are isomorphic. 

\begin{remark}
The remaining trivial action of $\mu_2$ reflects the kernel of the adjoint action of $\SL_2$, making $\Rep(E)$ a $\mu_2$-gerbe. It reflects that the collection of rank 2 vector bundles $V$ (over scalar ring $C$) with a fixed isomorphism $\wedge^2 V \isoto C$ such that $\End_C(V) \simeq M_2(C)$, admits a twisting action by the group of isomorphism classes of line bundles whose squares are trivial. 
\end{remark}

\subsection{Coherent sheaves on stacks, and duality}\label{sec: coh sheaves on stacks}

In \S \ref{chap: stacks for GL2Qp} we will describe the stacks $\Rep(E)$ for the four different types of finite field-valued pseudorepresentations of $\Gamma_{\Qp}$ that are of interest to us, as well as the coherent sheaves on $\Rep(E)$ that we will use. For this reason, we record a few general recollections about coherent sheaves on stacks, specialized to the situation we encounter. 

\subsubsection*{Setup for stacks} 
Our basic setup is the following: let $G$ be a reductive group scheme over $\oo$ and let $A$ be a commutative Noetherian $\oo$-algebra with an action of $G$. Set $X^\square = \Spec(A)$ and let $X$ be the quotient stack $[X^\square/G]$. We let $\QCoh(X)$ and $\Coh(X)$ be the abelian categories of quasicoherent and coherent sheaves, respectively. These may be defined in different ways (for example, using the lisse-\'etale site on $X$), but they all coincide with the categories of $G$-equivariant $A$-modules and $G$-equivariant finitely generated $A$-modules, respectively; we will take this as our definition (see \cite[Ex. 2.3]{arinkin-bezrukavnikov}, for example). As a special case, if $G = \G_m$ (or $G=\mu_2$), then the $G$-action on $A$ is (equivalent to) a $\Z$-grading of $A$ as an $\oo$-algebra (or a $\Z/2$-grading), and $G$-equivariant $A$-modules are the same as $\Z$-graded $A$-modules (or $\Z/2$-graded modules). We will use this without further comment.

\subsubsection*{Conventions for derived categories and stable $\infty$-categories} 
We will also need to consider various derived categories, including their $\infty$-categorical enhancements. The stable $\infty$-categories that we will consider can be constructed from differential graded (dg) categories by means of the differential graded nerve construction, cf.\ \cite[\S 1.3.1]{lurie-ha}. All `usual' derived categories will be denoted by the letter $D$, and their $\infty$-categorical enhancements will be denoted by the letter $\D$. 
 
 For an abelian category $\mc{A}$ with enough injectives, the bounded below derived $\infty$-category $\D^+(\mc{A})$ is constructed in \cite[Variant 1.3.2.8]{lurie-ha} as the dg nerve of the dg category $\Ch^+(\mc{A}_{inj})$ of bounded below complexes of injectives in $\mc{A}$. It is also shown in (the dual version of) \cite[Prop.~1.3.4.6]{lurie-ha} that $\D^+(\mc{A})$ may be obtained by taking the dg nerve of the dg category $\Ch^+(\mc{A})$ of all bounded below complexes in $\mc{A}$, and then inverting quasi-isomorphisms. Dually, if $\mc{A}$ has enough projectives, $\D^-(\mc{A})$ can be constructed as the dg nerve of the dg category $\Ch^-(\mc{A})_{proj}$ of all bounded above complexes of projectives \cite[Definition 1.3.2.7]{lurie-ha}, or equivalently by taking the dg nerve of the dg category $\Ch^-(\mc{A})$ of all bounded below complexes and then inverting quasi-isomorphisms \cite[Prop.~1.3.4.6]{lurie-ha}. 
 
 When $\mc{A}$ is a Grothendieck abelian category, the unbounded derived $\infty$-category $\D(\mc{A})$ is constructed in \cite[Definition 1.3.5.8]{lurie-ha}, and it has $\D^+(\mc{A})$ sitting inside it as a full subcategory \cite[Rem.~1.3.5.10]{lurie-ha}. If, in addition, $\mc{A}$ has enough projectives, then $\D^-(\mc{A})$ sits inside $\D(\mc{A})$ fully faithfully as the full subcategory of complexes whose cohomology is bounded above \cite[Prop.~1.3.5.24]{lurie-ha}. We note that the definition of $\D(\mc{A})$ in \cite[Definition 1.3.5.8]{lurie-ha} is in terms of a model structure on the underlying category of the dg category $\Ch(\mc{A})$ of all chain complexes in $\mc{A}$ (see \cite[Prop.~1.3.5.3]{lurie-ha} for the definition of this model structure). Then $\D(\mc{A})$ is by definition the dg nerve of the full dg subcategory $\Ch(\mc{A})_{fib}$ of $\Ch(\mc{A})$ consisting of all fibrant objects. Alternatively, $\D(\mc{A})$ can be described as the underlying $\infty$-category associated with the model structure on the underlying category of $\Ch(\mc{A})$, see \cite[Prop.~1.3.5.15]{lurie-ha}.
 
We finish by making a remark about fully faithful functors of $\infty$-categories. By definition (see \cite[Def.\ 1.2.10.1]{lurie-htt}), a functor $F : \mc{C} \to \mc{D}$ of $\infty$-categories is fully faithful if it induces an equivalence of mapping spaces $\Hom(X,Y) \to \Hom(F(X),F(Y))$ for all objects $X,Y \in \mc{C}$. When $\mc{C}$ and $\mc{D}$ are stable and $F$ is exact, it suffices to check that $\pi_0(\Hom(X,Y)) \to \pi_0(\Hom(F(X),F(Y)))$ is a bijection for all $X,Y \in \mc{C}$, i.e.\ that $F$ induces a fully faithful functor of the underlying triangulated categories. When the stable $\infty$-categories arise as dg nerves (which they will in all cases of interest to us), this is easy to see from the alternative construction of the dg nerve in \cite[Const.\ 1.3.1.16]{lurie-ha}.

\subsubsection*{Categories of sheaves on $X$} 
We now apply this to coherent and quasicoherent sheaves on $X$. Recall that $\QCoh(X)$ is a Grothendieck abelian category; see e.g.\ \cite[\href{https://stacks.math.columbia.edu/tag/0781}{Tag 0781}]{stacks-project} (though one can give a much more direct proof in this special case). We then define $\D^+_{qcoh}(X)$ and $\D_{qcoh}(X)$ as $\D^+(\QCoh(X))$ and $\D(\QCoh(X))$, respectively. We also define $\D^b_{coh}(X)$ as the full subcategory of $\D^+(\QCoh(X))$ of complexes whose cohomology is bounded, and coherent in each degree.

\begin{remark}\label{issues with qc sheaves}
A different, perhaps more standard, definition of the unbounded derived category of quasicoherent sheaves on $X$ is as the unbounded derived category of complexes of lisse-\'etale $\oo_X$-modules with quasicoherent cohomology. Unlike the situation of abelian categories, these different definitions can produce genuinely different categories, as we now recall. Let us use $\D^\prime_{qcoh}(X)$ to denote the category constructed using the lisse-\'etale site; $\D^\prime_{qcoh}(X)$ will not be used anywhere else in this paper. The relationship between $\D_{qcoh}(X)$ and $\D^\prime_{qcoh}(X)$ is as follows: there is a natural functor $\D_{qcoh}(X) \to \D^\prime_{qcoh}(X)$ which identifies $\D^\prime_{qcoh}(X)$ as the left completion of $\D_{qcoh}(X)$; see \cite[Rem.~C.4]{hall-neeman-rydh}. This functor can fail to be an equivalence. Indeed, in the situation of \S \ref{subsec: non-generic I}, where $G=\SL_2$, the functor is not full by \cite[Thm.~1.3]{hall-neeman-rydh}. Moreover, in this situation, the category $\D_{qcoh}(X)$ is not compactly generated (this also follows from the results of \cite{hall-neeman-rydh}). These issues do not arise for the bounded below derived category, cf.\ e.g.\ \cite[Claim 2.7]{arinkin-bezrukavnikov}.
\end{remark}

\begin{remark}\label{remark:duality}
The following remarks about coherent duality will be useful. Assume that $A$ is Gorenstein. This is the case, for example, when $A$ is regular or equal to $B/(f)$ where $B$ is regular and $f$ is a nonzerodivisor \cite[Tags \href{https://stacks.math.columbia.edu/tag/0AWX}{0AWX} and \href{https://stacks.math.columbia.edu/tag/0BJJ}{0BJJ}]{stacks-project}; this covers all cases we will encounter. Assume further that $A$ has a dualizing complex (in all cases we consider, this easily follows from the explicit descriptions of the rings that we will give, together with \cite[\href{https://stacks.math.columbia.edu/tag/0BFR}{Tag 0BFR}]{stacks-project}). Since we assumed that $A$ is Gorenstein, $A$ itself (in degree $0$) is a dualizing complex for $A$ \cite[\href{https://stacks.math.columbia.edu/tag/0DW9}{Tag 0DW9}]{stacks-project}. If $M$ and $N$ are $G$-equivariant $A$-modules, let $\Hom_A(M,N)$ denote the (not necessarily $G$-equivariant) $A$-module homomorphisms from $M$ to $N$, with its induced $G$-action. This is the internal Hom in $\QCoh(X)$ and we will use the notation
\[
\Homi(M,N) := \Hom_A(M,N).
\]
Indeed, $\Homi(-,-)$ will denote the internal Hom in any category where it exists. Moreover, we let $\RHomi(M,N) :=\RHom_A(M,N)$ denote the derived functors of $\Homi$. Then
\[
\RHomi(-,\OO_{X}) = \RHom_A(-,A) : \D^b_{coh}(X) \to \D^b_{coh}(X)
\]
is an exact\footnote{In the sense of stable $\infty$-categories.} involution\footnote{In other words, $\oo_X$ is a dualizing complex for $X$; cf. \cite[Definition 2.16]{arinkin-bezrukavnikov}.}, i.e.\ an antiequivalence whose square is naturally isomorphic to the identity. Here $A$ is viewed as a $G$-equivariant $A$-module. In particular, we obtain an exact\footnote{In the sense of exact categories.} involution 
\[
\Homi(-,\oo_X) = \Hom_A(-,A) : \mathrm{MCM}(X) \to \mathrm{MCM}(X)
\]
where $\mathrm{MCM}(X)\sub \Coh(X)$ is the exact full subcategory of maximal Cohen--Macaulay modules (a $G$-equivariant finitely generated $A$-module is (maximal) Cohen--Macaulay if the underlying $A$-module is (maximal) Cohen--Macaulay). To simplify the notation, we will write
\[
M^\ast := \Homi(M,\oo_X)
\]
for the coherent dual of $M \in \mathrm{MCM}(X)$.
\end{remark}

\section{Stacks of Galois representations for $\GL_2/\Q_p$}\label{chap: stacks for GL2Qp}

The first goal of this section is to specify explicit presentations for the stack of Langlands parameters for $\GL_2/\Q_p$, which we take to be representations of $\Gamma := \Gamma_{\Q_p} = \Gal(\oQ_p/\Q_p)$. We put emphasis on the comparison of the moduli stack of representations to the moduli space of pseudorepresentations, which equals the coarse moduli space of representations in the sense of geometric invariant theory. Then, we compute all of these objects. We break up into cases according to block type. 

As far as notation, we make the following adjustments from Notation \ref{notn: stacks}, with the main adjustment being that we drop the subscript $D$ that denoted the choice of residual pseudorepresentation. 
\begin{notation}
Denote the algebraized stack of Galois representations with residual pseudorepresentation $D$ and constant determinant $\zeta\varep$ as $\fX := \Rep(E)$, where $E = E_D$ is the Cayley--Hamilton algebra associated to the semi-simple representation $\rho = \rho_D$. Likewise, $\hat \fX := \widehat{\Rep(E)}$, where the completions refer to $\m = \m_D$-adic completion; here $(R,\m) = (R_D,\m_D)$ is the (local) pseudodeformation ring of $D$, which is the scalar ring of $E$. We also drop the subscript $D$ from each object mentioned in Notation \ref{notn: stacks}. 
\end{notation}

The goal is then to explicitly describe $\fX$ for each of four types of $\rho$, which match the four types of block for $G = \GL_2(\Q_p)$ (enumerated in \S\ref{subsec: intro proofs}), 
\begin{enumerate}
\item $\rho$ is irreducible: the supersingular case
\item\label{type2} $\rho$ is reducible such that $\rho \simeq \chi_1 \oplus \chi_2$ with $\chi_1\chi_2^{-1} \not\simeq 1, \omega^{\pm 1}$: the generic principal series case
\item $\rho$ is a scalar representation: non-generic principal series case I (``non-generic I'') 
\item\label{type4} $\rho$ is a twist of $\omega \oplus 1$: non-generic principal series case II (``non-generic II''). 
\end{enumerate}

Using these explicit descriptions, for $\rho$ of type (\ref{type2})--(\ref{type4}) (no further work being required in the supersingular case), we describe certain coherent sheaves $X$ on $\fX$ and compute the $\Ext$ groups $\Ext^i(X,X)$ (in particular, showing they vanish for $i \ne 0$). These coherent sheaves will be used to define functors as sketched in \S\ref{subsec: intro proofs}. In order to compute these functors, we will use projective resolutions of the simple $\End(X,X)$-modules which we write down in this section.

We now proceed to consider each case (1)--(4) in turn.

\subsection{Supersingular case}\label{subsec: supersingular case}

In the supersingular case, we know from the discussion of \S\ref{sssec: irred case} that
\begin{itemize}
\item $R \cong R_{\rho} \hookrightarrow S \hookrightarrow \hat S$  
\item $A \hookrightarrow \hat A \hookrightarrow R^\square_\rho$
\item $E \simeq M_2(R)$ 
\item $\fX \cong [\Spec R/\mu_2]$, where $\mu_2$ acts trivially. 
\end{itemize}

\begin{theorem}
There are isomorphisms 
\[
R \cong R_{\rho} \simeq \OO \lb X_1, X_2, X_3\rb, 
\]
and a choice of isomorphism $E \cong M_2(R)$ gives rise to an isomorphism between the maps $R \to A \twoheadrightarrow S$ and 
\[
\OO \lb X_1, X_2, X_3\rb  \to \OO \lb X_1, X_2, X_3\rb[\PGL_2]  \twoheadrightarrow \OO \lb X_1, X_2, X_3\rb[\bG_m].
\]
where the implicit closed immersion $\bG_m \hookrightarrow \PGL_2$ is the standard torus. 
\end{theorem}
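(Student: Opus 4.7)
The plan has three parts, matching the three claims of the theorem. First, the identification $R \cong R_\rho$ was recalled already in \S\ref{sssec: irred case}: when $\rho$ is absolutely irreducible, any deformation of the residual pseudorepresentation $D$ comes from a deformation of $\rho$ unique up to conjugacy \cite[Thm.~2.22]{chenevier-determinant}, so the fixed-determinant pseudodeformation functor and the fixed-determinant deformation functor of $\rho$ coincide.

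Second, I would show $R_\rho \cong \OO\lb X_1,X_2,X_3\rb$ via standard Galois deformation theory with fixed determinant $\zeta\varep$. The tangent and obstruction modules are $H^1(\Gamma_{\Qp},\ad^0\rho)$ and $H^2(\Gamma_{\Qp},\ad^0\rho)$. Absolute irreducibility of $\rho$ gives $H^0(\Gamma_{\Qp},\ad^0\rho)=0$. By local Tate duality together with the self-duality of $\ad^0\rho$ under the trace pairing (valid since $p$ is odd), $H^2(\ad^0\rho)$ is the dual of $\Hom_\Gamma(\rho,\rho\otimes\omega)$. A nonzero intertwiner would force $\det\rho\cdot\omega^2=\det\rho$, i.e.\ $\omega^2=1$; this fails for $p\ge 5$, so $H^2=0$ and the deformation problem is smooth. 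The local Euler characteristic formula then yields $\dim H^1 = [\Qp:\Qp]\cdot\dim\ad^0\rho = 3$, so $R_\rho$ is a formal power series ring in three variables over $\OO$.

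Third, for the presentations of $A$ and $S$: because $E\cong M_2(R)$, a point of $\Rep^\square(E)$ over a commutative $R$-algebra $C$ is an $R$-algebra map $M_2(R)\to M_2(C)$ compatible with determinants. By Skolem--Noether applied to the Azumaya algebra $M_2$ (valid Zariski-locally on $\Spec C$ and then glued), every such map is inner, $x\mapsto gxg^{-1}$, for some $g\in\GL_2(C)$ unique modulo its center; the determinant compatibility is automatic for inner maps. This gives $\Rep^\square(E)\cong \Spec R\times_\OO\PGL_2$, so $A\cong R[\PGL_2]$ with $R\to A$ the natural inclusion. Equipping $M_2(R)$ with the evident GMA structure from the matrix idempotents $e_{11},e_{22}$, an adapted representation corresponds to such a $g$ centralizing $e_{11}$, i.e.\ a diagonal matrix, which modulo scalars is exactly the standard torus $\bG_m\subset\PGL_2$. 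Hence $\Rep^{\square,\Ad}(E)\cong \Spec R\times_\OO\bG_m$, so $S\cong R[\bG_m]$, and the surjection $A\twoheadrightarrow S$ is induced by the closed immersion $\bG_m\hookrightarrow\PGL_2$.

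The one subtle input is the vanishing of $H^2(\Gamma_{\Qp},\ad^0\rho)$, which genuinely requires the running hypothesis $p\ge 5$ (so that $\omega^2\ne 1$ rules out the self-twist $\rho\cong\rho\otimes\omega$); once that is in hand, the rest is essentially mechanical given the formalism of \S\ref{subsec: def theory generalities} and \S\ref{subsec: GMAs}.
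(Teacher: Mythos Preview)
Your proposal is correct and follows the same approach as the paper. The paper's proof is extremely terse, simply asserting that $H^2(\Q_p,\Ad^0\rho)=0$ and invoking ``standard deformation-theoretic arguments''; you have filled in precisely why $H^2$ vanishes via Tate duality and the impossibility of $\rho\cong\rho\otimes\omega$ when $p\ge 5$, and your Skolem--Noether argument for the $\PGL_2$ and $\bG_m$ presentations is exactly the content of the paper's informal remark following the theorem.
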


\begin{proof}
The first claimed isomorphism follows from the fact that $H^2(\Q_p, \Ad^0 \rho) = 0$ according to standard deformation-theoretic arguments, as this $H^2$ is the obstruction space \cite[\S1.6]{mazur1989}. Upon a choice of identification $E \isoto M_2(R)$, the isomorphism $A \isoto R[\PGL_2]$ arises from interpreting $\PGL_2$ as the group scheme of ring scheme automorphisms of $M_2$. Then the interpretation of $A \twoheadrightarrow S$ amounts to observation that the standard torus in $\PGL_2$ is cut out by the condition that an automorphism of $M_2$ fixes its two standard idempotents. 
\end{proof}

\begin{remark}
The proof relies on the running assumption that $p \geq 5$. For instance, when $p=3$ and $\rho$ is induced from a character of $\Gal(\oQ_3/\Q_3(\zeta_3))$, $R_{\rho}$ can be obstructed. 
\end{remark}

\subsection{Generic principal series}
\label{subsec: stacks generic ps GL2}

In this case, $\rho \simeq \chi_1 \oplus \chi_2$ where $\chi_1\chi_2^{-1} \neq 1, \omega^{\pm1}$. Because $\chi_1\neq \chi_2$, we may and do choose the additional structures discussed in \S\ref{subsec: GMAs}, such as a GMA structure on $E$ and the resulting adapted moduli functor represented by $S$. In particular, we use a $\Z$-grading of $S$ to represent the $T = \bG_m$-action on the moduli scheme $\Spec S = \Rep^{\Ad,\square}(E)$ of adapted representations. 
\begin{theorem}
\label{computation of rings generic ps}
There is an isomorphism of $\Z$-graded rings
\[
S \cong \oo \lb a_0, a_1, bc\rb [b, c],
\]
where $b$ has graded degree $2$, $c$ has graded degree $-2$, and the remaining generators have degree $0$. The pseudodeformation ring $R$ is the degree 0 subring $R = S_0 \subset S$, 
\[
R \cong \OO\lb a_0, a_1, bc\rb 
\]
and its reducibility ideal is generated by $bc$. The universal Cayley--Hamilton algebra admits $R$-GMA form
\[
E \cong 
\begin{pmatrix}
R & Rb \\
Rc & R
\end{pmatrix}
\]
where the cross-diagonal multiplication is given by 
\[
Rb \times Rc \to Rbc \subset R, \quad (xb,yc) \mapsto xybc. 
\]
\end{theorem}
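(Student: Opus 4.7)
The plan is to combine the abstract GMA structure of $E$ recalled in \S\ref{subsec: GMAs} with explicit local Galois cohomology computations; the genericity assumption $\chi_1\chi_2^{-1} \neq \mathbf{1}, \omega^{\pm 1}$ keeps all the relevant cohomology groups one-dimensional. Once $R$, $B$, $C$, and the cross-diagonal pairing are pinned down, the presentation of $E$ follows immediately from its GMA form, and the presentation of $S$ falls out of Proposition~\ref{prop: GMA description}.

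First I would analyze the cross-diagonal $R$-modules $B$ and $C$ in $E \cong \bigl(\begin{smallmatrix} R & B \\ C & R \end{smallmatrix}\bigr)$. The theory of \cite[\S 1.5]{BC2009} provides natural maps from $B/\m_R B$ and $C/\m_R C$ into the duals of $\Ext^1_\Gamma(\chi_2,\chi_1)$ and $\Ext^1_\Gamma(\chi_1, \chi_2)$ respectively. For these Ext groups, local Tate duality and the local Euler characteristic formula give dimension $1$: $H^0(\Gamma, \chi_1\chi_2^{-1}) = 0$ because $\chi_1 \neq \chi_2$, and $H^2(\Gamma, \chi_1\chi_2^{-1}) \cong H^0(\Gamma, \chi_2\chi_1^{-1}\omega)^* = 0$ because $\chi_1\chi_2^{-1} \neq \omega$, so the Euler characteristic formula forces $\dim_\F H^1 = 1$; symmetrically for the roles swapped. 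Nakayama's lemma then ensures that $B$ and $C$ are cyclic over $R$, with chosen generators $b$ and $c$.

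Next I would determine $R$. The image of the cross-diagonal multiplication $B \otimes_R C \to R$ equals the reducibility ideal $I$, so by the previous step $I = (t)$ with $t := bc$. The quotient $R^{\mathrm{red}} := R/I$ represents pseudodeformations of the form $\tilde\chi_1 + \tilde\chi_2$ subject to $\tilde\chi_1\tilde\chi_2 = \zeta\varep$; since $\dim_\F H^1(\Gamma, \F) = 2$, imposing the fixed-determinant relation on the pair of character deformations and using unobstructedness yields $R^{\mathrm{red}} \cong \OO\lb a_0, a_1 \rb$, formally smooth of relative dimension $2$ over $\OO$. To upgrade this to $R \cong \OO\lb a_0, a_1, t \rb$, I would verify that $R$ is itself formally smooth of Krull dimension $3$: the mod-$\varpi$ tangent space has dimension $3$ (two diagonal-character directions constrained by the fixed determinant, plus one reducibility direction along $t$), and the obstructions to pseudodeformations vanish thanks to the $H^2$ computations above. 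This forces $t$ to be a nonzerodivisor in $R$ and provides the third formal power series generator.

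Combining these steps gives the stated GMA description of $E$ with cross-diagonal pairing $(xb, yc) \mapsto xy\cdot bc$. For $S$, Proposition~\ref{prop: GMA description} provides graded $R$-module isomorphisms $E_{i,j} \cong S_{2(j-i)}$, so $S_0 = R$, $S_2 = Rb$, $S_{-2} = Rc$, and iterating multiplication by $b$ and $c$ identifies $S$ with the polynomial $R$-algebra $R[b,c]$; substituting $bc$ for the third power series generator $t$ of $R$ yields the presentation $S \cong \OO\lb a_0, a_1, bc \rb[b,c]$ with $\deg b = 2$, $\deg c = -2$. The main obstacle will be the tangent and obstruction analysis for $R$ itself: correctly identifying the three-dimensional tangent space of the pseudodeformation functor in the multiplicity-free reducible case is delicate, but is accessible via the standard machinery of \cite{chenevier-determinant, BC2009}.
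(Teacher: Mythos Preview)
Your route is correct in outline but differs from the paper's and leaves one step under-justified.

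The paper works with $S$ first rather than $R$: it invokes the abstract presentation of \cite{wang-erickson-A-inf} (Proposition~\ref{prop: abstract presentation}) to write $S/pS$ as a completed symmetric algebra on $H^1(\Gamma,\ad^0\rho)^*$ modulo the image of $H^2(\Gamma,\ad^0\rho)^*$; genericity gives $H^2(\ad^0\rho)=0$, so there are no relations. It then passes to mixed characteristic by noting that $H^2=0$ makes the framed deformation ring $A^\wedge_\rho$ formally smooth, transfers this along the smooth presentations $\Spec A \to \fX \leftarrow \Spec S$, and finally uses coherent completeness of $\fX$ at $\rho$ \cite{alper-hall-rydh-etale-local} to conclude for $S$ itself. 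The descriptions of $R=S_0$ and of $E$ are then read off. Your approach is the reverse --- analyze $E$ and $R$ via \cite{BC2009}, then recover $S$ --- and is more elementary in that it avoids both the $A_\infty$-presentation result and the Alper--Hall--Rydh input.

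The price is that you must control the pseudodeformation functor directly, and here your sketch is loose. The tangent space of $\Def_D$ is \emph{not} $H^1(\ad^0\rho)$ in the reducible case, and your ``one reducibility direction along $t$'' presupposes the answer. You should either invoke the explicit results in \cite[\S1.5]{BC2009} or Bella\"iche's pseudodeformation paper, which show that vanishing of the relevant $H^2$'s forces $B,C$ to be free of rank one and $R$ to be formally smooth of the expected dimension, or argue directly: the injection $\Hom_R(B,\F)\hookrightarrow \Ext^1_\Gamma(\chi_1,\chi_2)$ bounds $\dim B/\m_R B\le 1$, Nakayama gives $B$ cyclic, and then $B$ is free once $R$ is a domain and $bc\ne 0$ (since $r b=0$ forces $r\cdot bc=0$). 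Finally, your last step should appeal to the universal property of $S$ as the adapted moduli ring --- which, for $B=Rb$ and $C=Rc$ free of rank one with pairing $b\cdot c=t$, gives $S\cong R[x,y]/(xy-t)$ --- rather than to ``iterating multiplication'', since Proposition~\ref{prop: GMA description} by itself only identifies $S_0$ and $S_{\pm 2}$.
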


\begin{remark}
\label{rem: hatS}
For clarity in the proofs of Theorems \ref{computation of rings generic ps} and \ref{presentation of moduli stack non-generic 2}, we point out that $\hat S$ denotes the $\m$-adic completion of $S$ where $\m \subset R$ is the maximal ideal of the pseudodeformation ring of $D$. In terms of the usual variable names $a_i, b_j, c$ we use to denote generators of $R$, a set of generators of $\m$ comprises terms of the form $a_i$ and $b_j c$. 
\end{remark}

\begin{proof}
We know from Theorem \ref{presentation of moduli stack} that $R = S_0$. Straightforward calculations in local Galois cohomology yield that 
\[
H^2(\Q_p, \ad \rho) = 
\begin{pmatrix}
H^2(\Q_p, \F) & H^2(\Q_p, \chi_1\chi_2^{-1}) \\ 
H^2(\Q_p, \chi_2\chi_1^{-1}) & H^2(\Q_p, \F)
\end{pmatrix} = 0,
\]
by the genericity assumption $\chi_1\chi_2^{-1} \neq 1, \omega^{\pm 1}$. Therefore the deformation theory of $\rho$ is unobstructed. Likewise, the tangent space (mod $p$) of $\fX$ at $\rho$ decomposes as 
\[
H^1(\Q_p, \ad^0 \rho) \cong H^1(\Q_p, \chi_1\chi_2^{-1}) \oplus H^1(\Q_p, \F) \oplus H^1(\Q_p, \chi_2\chi_1^{-1}),
\]
whose summands have $\F$-dimensions $1 \oplus 2 \oplus 1$. Then the theorem \cite[Thm.\ 11.3.3]{wang-erickson-A-inf} (see Proposition \ref{prop: abstract presentation}) establishes an explicit presentation of $\hat S/p \hat S$ as 
\[
\hat S/p \hat S \cong \F[b,c] \lb \bar a_0, \bar a_1, \bar b \bar c\rb, 
\]
where $\{\bar b\}$, $\{\bar a_0, \bar a_1\}$, and $\{\bar c\}$ are dual bases to the summands of $H^1(\Q_p, \ad^0 \rho)$, respectively, and where these bases have graded degrees $2$, $0$, and $-2$ respectively. In this way, $\hat S/\varpi\hat S$ is the coordinate ring of a formal algebraic stack in which each element of its defining colimit is a quotient stack of $\hat S/(\varpi,(a_0, a_1, bc)^n)$ by the action of $\bG_m$. 

By Proposition \ref{prop: GMA description} and Nakayama's lemma, $E$ has off-diagonal summands generated by lifts $b$ of $\bar b$ and $c$ of $\bar c$  respectively, of identical graded degrees. Likewise, denote by $a_i \in R$ lifts of $\bar a_i \in \bar R$ for $i=0,1$. Thus (again using Proposition \ref{prop: GMA description}) we arrive at a presentation of graded rings $\oo\lb a_0, a_1, bc\rb [b,c] \twoheadrightarrow S$ that we wish to show is an isomorphism. 

The vanishing of $H^2$ implies that the completion $A^\wedge_\rho$ of $A$ at its maximal ideal corresponding to $\rho$ is formally smooth over $\Spf \Z_p$, since $A^\wedge_\rho$ is the framed deformation ring at $\rho$. Therefore $\fX^\wedge_\rho$ and $\Spec S^\wedge_\rho$ (the respective completions at the point corresponding to $\rho$) are formally smooth at $\rho$ as well, since these spaces are connected by smooth presentations $\Spec A \to \fX \leftarrow \Spec S$ as quotient stacks by $\SL_2$ and $\bG_m$, respectively. Because $S^\wedge_\rho$ is formally smooth, then so is $S$, because $\fX$ is coherently complete at $\rho$ (by \cite[Thm.\ 1.6]{alper-hall-rydh-etale-local}). Indeed, the equivalence of categories of coherent sheaves of \textit{loc.\ cit.}, i.e.\ a ``formal GAGA'' result, implies in an equivalence of ideal sheaves under completion and therefore an equivalence of closed subschemes; this implication is just like the classical case for formal schemes, in which an equivalence of closed subschemes \cite[Cor.\ 5.1.8]{ega3-1} is deduced from a formal GAGA result \cite[Cor.\ 5.1.3]{ega3-1}. Therefore the presentation map is an isomorphism, as desired. 
\end{proof}

We wish to discuss some line bundles on $\fX = \Rep(E)$, which we present, by Theorem \ref{presentation of moduli stack}, as $\fX \cong [\Spec S/\bG_m]$. 
In particular, coherent sheaves (resp.\ vector bundles) on $\fX$ are equivalent to finitely generated $\Z$-graded $S$-modules (resp.\ finitely generated $\Z$-graded $S$-modules which are projective as $S$-modules), where we regard $S$ as a graded ring as in Theorem \ref{computation of rings generic ps}. We refer back to \S \ref{subsec: notation} for our notational conventions regarding graded rings and modules. For $m\in \Z$, we define the graded $S$-module $L_m$ as $S(m)$, i.e.\ 
\[
(L_m)_k = S_{m+k}.
\]
This is a line bundle on $\fX$. If $\mc{V}$ is the vector bundle on $\fX$ underlying the universal representation, then we observe that its corresponding graded $S$-module is $L_1 \oplus L_{-1}$. From this, we get the following theorem.

\begin{theorem}\label{Ext groups of universal rep generic ps}
We have $\End(\mc{V}) = E$ as rings. Moreover, any locally free object of $\QCoh(\mf{X})$ is projective in $\QCoh(\mf{X})$. In particular, $\mc{V}$ is projective.
\end{theorem}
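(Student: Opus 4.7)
Using the presentation $\fX \cong [\Spec S/T]$ with $T \cong \G_m$ coming from Theorem \ref{presentation of moduli stack}, I would identify $\QCoh(\fX)$ with the category of $\Z$-graded $S$-modules. Since the chosen GMA structure diagonalizes the two residual characters $\chi_1,\chi_2$ via the adjoint torus $T$, the universal vector bundle $\mc{V}$ corresponds to the graded module $L_1 \oplus L_{-1}$, as already stated in the excerpt.

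For the identification $\End(\mc{V}) = E$, the plan is a direct matrix computation. A graded $S$-module map $S(m) \to S(n)$ is determined by the image of the canonical generator (in degree $-m$), which must lie in $S_{n-m}$; hence $\Hom_{\QCoh(\fX)}(L_m, L_n) = S_{n-m}$. Assembling the $2 \times 2$ matrix of such Hom groups for $\mc{V} = L_1 \oplus L_{-1}$ and substituting the explicit graded pieces $S_0 = R$, $S_2 = Rb$, $S_{-2} = Rc$ from Theorem \ref{computation of rings generic ps}, one reads off
\[
\End_{\QCoh(\fX)}(\mc{V}) \cong \begin{pmatrix} R & Rb \\ Rc & R \end{pmatrix},
\]
which matches the GMA presentation of $E$ given in Theorem \ref{computation of rings generic ps} (and one checks that the multiplication is compatible by Proposition \ref{prop: GMA description}).

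For projectivity, the key observation is that each line bundle $L_m = S(m)$ is already projective in $\QCoh(\fX)$: the functor $\Hom_{\QCoh(\fX)}(L_m, -)$ is precisely the degree $-m$ extraction $N \mapsto N_{-m}$, which is exact because exactness of a sequence of $\Z$-graded modules is tested in each graded degree. Hence $\mc{V}$ is projective as a finite direct sum of projectives. For an arbitrary locally free $\mc{F}$ on $\fX$ (which corresponds to a $\Z$-graded $S$-module whose underlying ungraded module is projective) I would invoke linear reductivity of $T \cong \G_m$: given a surjection $\mc{G} \twoheadrightarrow \mc{H}$ in $\QCoh(\fX)$, the induced $T$-equivariant ungraded surjection $\Hom_S(\mc{F},\mc{G}) \twoheadrightarrow \Hom_S(\mc{F},\mc{H})$ (surjective by projectivity of $\mc{F}$ as an $S$-module) remains surjective on $T$-invariants, and the $T$-invariants compute the graded homs in $\QCoh(\fX)$.

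The main obstacle I foresee is a subtlety in the last step: the linear reductivity argument needs the $T$-action on $\Hom_S(\mc{F},\mc{G})$ to be rational (a sum of weight spaces). This is automatic for finitely generated $\mc{F}$, which already handles $\mc{V}$. For locally free $\mc{F}$ of infinite rank one either needs to write $\mc{F}$ as a filtered colimit of finitely generated graded subsheaves and take limits carefully, or alternatively argue via the adjunction between $\QCoh(\Spec S)$ and $\QCoh(\fX)$, using exactness of the averaging right adjoint for the linearly reductive group $T$.
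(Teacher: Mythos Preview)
Your argument is correct and matches the paper's: identify $\QCoh(\fX)$ with graded $S$-modules, read off $\End(\mc{V})$ as the $2\times 2$ matrix of graded pieces $S_{n-m}$ (which is exactly the GMA form of $E$), and deduce projectivity of locally free sheaves from exactness of the degree-extraction functor $N \mapsto N_0$ (the paper phrases this as $\Ext^i(M,N)=\Exti^i(M,N)_0$). Your worry about rationality in the infinite-rank case is unnecessary---in the paper's conventions a vector bundle on $\fX$ is finitely generated by definition, and even otherwise, for $\bG_m$ one can simply take any ungraded lift $h:M\to N$ and replace it by the graded map $\tilde h$ with $\tilde h\vert_{M_d}=(h\vert_{M_d})_d$, which is still $S$-linear and still a lift.
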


\begin{proof}
We equate $\QCoh(\mf{X})$ with the category of graded $S$-modules. That $M$ is locally free means that $M$ is projective as an $S$-module, i.e.\ that $\Exti^i(M,N)=0$ for all $i\geq 1$ and all $N$. The global sections functor is then $M \mapsto M_0$, and is hence exact, so we see that $\Ext^i(M,N)=\Exti^i(M,N)_0=0$ for locally free $M$ (with $N$ arbitrary and $i\geq 1$), proving the assertions about projectivity. 

The claim about $\End(\mc{V})$ amounts to the conclusion of Theorem \ref{presentation of moduli stack}. To make this clear, we compute 
\[
\Hom_S(L_m, L_n)_0 = \Hom_S(S,L_{n-m})_0 = S_{n-m}
\]
for all $m,n\in \Z$ to see that
\[
\End_S(L_1 \oplus L_{-1})_0 = \begin{pmatrix} L_0 & L_2 \\ L_{-2} & L_0 \end{pmatrix}_0 = \begin{pmatrix} R & Rb \\ Rc & R \end{pmatrix}
\]
and one easily checks that the multiplication matches.
\end{proof}

\begin{remark}
The isomorphism $\End(\mc{V}) =E$ when the automorphism group is linearly reductive, another version of which is Theorem \ref{presentation of moduli stack}(5), goes back at least to Procesi \cite[Thm.\ 2.6]{procesi}. 
\end{remark}

\subsection{Non-generic case I}\label{subsec: non-generic I}

In this case, the underlying pseudorepresentations are deformations of the trivial pseudorepresentation, and the determinant is trivial, after twisting. The pseudodeformation ring $R$ and the Cayley--Hamilton algebra $E$ were studied by Pa{\v s}k{\=u}nas \cite[Appendix A and \S 9]{paskunas-image}, where it was shown that they are equal to the corresponding object for the maximal pro-$p$ quotient $\mc{G}$ of $\Gamma$ \cite[Cor.\ A.3, A.4]{paskunas-image}. It is well known that $\mc{G}$ is a free pro-$p$ group on two generators, which greatly helps in the study of $R$ and $E$. Continue denoting by $\mc{V}$ the vector bundle of the universal representation on $\fX$. We let $\Spec A$ be the affine scheme representing $\Rep^\square(E)$. In this section, we will prove the following result.

\begin{theorem}
\label{thm: good invariants}
The natural map $R \to A^{\SL_2}$ is an isomorphism, and $\Ext^i(\mc{V},\mc{V})=0$ for all $i\geq 1$. Moreover, the natural map
\[
E \to M_2(A)^{\SL_2} = \End(\mc{V})
\]
is an isomorphism.
\end{theorem}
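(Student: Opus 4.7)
The key difficulty in non-generic case I is that $\rho$ is not multiplicity-free, so Theorem \ref{presentation of moduli stack} does not apply, and we must work with the presentation $\fX \cong [\Spec A / \SL_2]$ where $\SL_2$ fails to be linearly reductive in characteristic $p$. My plan is to reduce to an explicit model for $A$, use classical invariant theory for the isomorphisms $R \cong A^{\SL_2}$ and $E \cong M_2(A)^{\SL_2}$, and handle the Ext-vanishing via Donkin's theory of good filtrations combined with Kempf vanishing.

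Following Pa\v{s}k\={u}nas \cite[Appendix A and \S 9]{paskunas-image}, I would begin by reducing $R$, $E$, and hence $A$ to the case of the maximal pro-$p$ quotient $\mc{G}$ of $\Gamma$, which is free pro-$p$ on two generators $g_1, g_2$. After twisting so that the residual representation is trivial, a compatible representation over a test algebra $C$ is specified by the images $(M_1, M_2)$ of $g_1, g_2$ alone, subject only to the determinant conditions $\det(M_i) = \zeta\varep(g_i)$; these conditions force $\Spec A$ to be a (trivializable) twist of $\SL_2 \times \SL_2$ on which $\SL_2$ acts by simultaneous conjugation.

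With this presentation, the identifications $R \cong A^{\SL_2}$ and $E \cong M_2(A)^{\SL_2}$ are consequences of the classical invariant theory for two $2\times 2$ matrices under simultaneous conjugation, due to Sibirskii and Procesi in characteristic zero and extended integrally by Donkin: the ring of scalar invariants is generated by traces of monomials in the $M_i$, and the ring of matrix concomitants is generated over the scalars by the $M_i$ themselves subject to the Cayley--Hamilton relations. Matching this with the descriptions of $R$ (generated by universal traces) and $E$ (universal Cayley--Hamilton algebra, generated as an $R$-algebra by $\rho^u(g_1), \rho^u(g_2)$) gives both isomorphisms.

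The main obstacle is the Ext-vanishing. Using the $\SL_2$-torsor $\Spec A \to \fX$ and the self-duality of $\mc{V}$ (Proposition \ref{selfduality of universal vb}), we obtain $\Ext^i(\mc{V}, \mc{V}) = H^i(\SL_2, M_2(A))$, where $M_2(A) \cong A \otimes_\oo M_2(\oo)$ carries the simultaneous conjugation action. To show these groups vanish for $i \geq 1$, the plan is to establish that $A$ admits a good filtration in Donkin's sense, i.e.\ a filtration by $\SL_2$-submodules whose associated graded is a direct sum of induced modules $\nabla(n) = \Sym^n V$ for $V$ the standard representation. The key ingredients are: (a) the coordinate ring of $\SL_2$ under conjugation has a good filtration, a classical fact for reductive groups; (b) tensor products of modules with good filtrations have good filtrations, which for $\SL_2$ in characteristic $p \geq 5$ is known by Donkin--Mathieu; and (c) $M_2(\oo) \cong \oo \oplus \mathfrak{sl}_2 \cong \nabla(0) \oplus \nabla(2)$ itself has a good filtration. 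Combining these, $M_2(A)$ inherits a good filtration, and Kempf vanishing delivers the desired $H^i(\SL_2, M_2(A)) = 0$ for $i \geq 1$. The bulk of the technical work will lie in verifying (a) for the twisted form of $\SL_2 \times \SL_2$ appearing as $\Spec A$, and in matching the invariant-theoretic descriptions of $A^{\SL_2}$ and $M_2(A)^{\SL_2}$ with Pa\v{s}k\={u}nas's explicit presentations of $R$ and $E$.
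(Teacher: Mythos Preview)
Your overall strategy aligns with the paper's: reduce to the free pro-$p$ group on two generators via \cite{paskunas-image}, invoke Donkin's invariant theory for pairs of matrices, and establish Ext-vanishing through good filtrations. The good-filtration argument you sketch is essentially what the paper carries out, citing \cite[Cor.~VIII.5.7]{fargues-scholze} for the good filtration on $\ol{\F}[\SL_2^2]$ and then transporting the vanishing to $A$ by a flat base-change argument.

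That base change is where your sketch is imprecise. Your description of $\Spec A$ as ``a (trivializable) twist of $\SL_2 \times \SL_2$'' elides a real issue: compatible representations of $E$ arise from \emph{continuous} representations of the pro-$p$ completion $\mc{G}$, not of the abstract free group $F$, so $A$ is not a priori related to the representation variety $A_F = \oo[\SL_2^2]$ of $F$. The paper proves an automatic-continuity lemma (any $R$-algebra representation of $F$ with the correct pseudorepresentation extends to $\mc{G}$) to obtain $A = A_F \otimes_{R_F} R$, and separately shows that the pseudorepresentation ring $R_F$ for $F$ is itself a polynomial ring in three variables --- this is what upgrades Donkin's surjection $R_F \twoheadrightarrow A_F^{\SL_2}$ to an isomorphism, and without it your ``matching'' of $R$ with trace functions only gives surjectivity of $R \to A^{\SL_2}$. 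All invariant and cohomology computations are done for $A_F$ and then carried to $A$ by flat base change along $R_F \to R$.

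For $E \cong M_2(A)^{\SL_2}$ the paper does \emph{not} appeal to an integral concomitant theorem. Instead it uses Procesi's result to get the isomorphism after inverting $\varpi$; since $E$ is free over $R$ (hence $\varpi$-torsion-free) this already gives injectivity. Surjectivity is then checked by an explicit computation: one passes to a rank-$4$ free quotient $C$ of $A$ over $R$ (obtained by specializing the off-diagonal entries of the universal matrices) and verifies by hand that any $X \in E[1/\varpi]$ with image in $M_2(A)$ must have its coordinates in the basis $1,u,v,uv-vu$ lying in $R$ rather than merely $R[1/\varpi]$. Your proposed route via Donkin's concomitants is plausible and perhaps more conceptual, but would require verifying that the integral concomitant algebra for $\SL_2$ (as opposed to $\GL_n$) coincides exactly with the universal Cayley--Hamilton algebra on two generators, and that this identification survives the base change $R_F \to R$; the paper's hands-on approach sidesteps these checks.
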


\begin{remark}
Bella\"iche--Chenevier highlighted the question of whether $E$ is always isomorphic to the adjoint invariants of $M_2(A)$ in general (an ``embedding problem'' \cite[\S1.3.4]{BC2009}). Recently, Jinyue Luo constructed an example in characteristic 2 where $\Gamma$ is a finite 2-group, $\rho$ is the trivial 2-dimensional representation, and $E \to M_2(A)^{\SL_2}$ has a non-zero kernel, showing that the answer is ``no'' \cite{luo-embedding}. More specifically, in contrast to the residually multiplicity free case described in Theorem \ref{presentation of moduli stack}, Luo identifies non-zero elements of the kernel of the map of scalar subrings $R \to A^{\SL_2}$, showing that the pseudodeformation ring is sometimes not isomorphic to the adjoint invariant subring of $A$ in non-multiplicity-free cases. In particular, we emphasize that the validity of Theorem \ref{thm: good invariants} does not follow from some general theory that applies to all groups $\Gamma$ and all residual representations $\rho$. 
\end{remark}

We will prove the statements in Theorem \ref{thm: good invariants} in the order they are mentioned. For the first and second part, we will make use of the notion of a \emph{good filtration} on algebraic representations of reductive groups over $\Fpbar$, which is summarized briefly in \cite[\S VIII.5.1]{fargues-scholze}. For details on standard constructions in the representation theory of algebraic groups we refer to \cite{jantzen}. 

Let $H/\Fpbar$ be a connected reductive group and let $T \sub B \sub H$ be a maximal torus and a Borel subgroup of $H$, respectively. For a dominant weight $\lambda$, let $\oo(\lambda)$ denote the corresponding standard line bundle on $H/B$ and set
\[
\nabla_\lambda := H^0(H/B,\oo(\lambda)).
\]
A descending filtration $(V_i)$ ($i \in \Z$) of $H$-subrepresentations of an $H$-representation $V$ is said to be \emph{good} if the successive quotients $V_i/V_{i-1}$ are isomorphic to direct sums of $\nabla_\lambda$s. Given a total ordering $0=\lambda_0,\lambda_1,\dots$ of the dominant weights, compatible with the dominance ordering, then we can choose $V_i$ to be the maximal subrepresentation of $V$ with weights $\lambda_j$ for $j\leq i$, and $V$ has a good filtration if and only $V_i/V_{i-1}$ is isomorphic to a direct sum of copies of $\nabla_{\lambda_i}$. An $H$-representation $V$ has a good filtration if and only if $H^i(H,V\otimes \nabla_\lambda) = 0$ for all $i\geq 1$ and all $\lambda$ \cite{donkin-filtration}. In particular, $H^i(H,V) = 0$ for all $i\geq 1$ if $V$ has a good filtration.

To prove that $R = A^{\SL_2}$, we begin by recalling the following result of Donkin \cite[\S 3.1]{donkin-invariants}. For simplicity, we specialize to $\GL_2$, which is the case we need. For any $r \in \Z_{\geq 1}$ and any function $\sigma : \{1,\dots,r \} \to \{1,2\}$, define a function 
\[
t_{r,\sigma}(g_1,g_2) := \mathrm{trace}(g_{\sigma(1)} \dots g_{\sigma(r)})
\]
on $\GL_2^2$. Moreover, set $d_i(g_1,g_2)= \det(g_i)$ for $i=1,2$.

\begin{theorem}[{Donkin \cite{donkin-invariants}}] 
\label{donkin}
Let $\GL_2$ act on $\GL_2^2$ by diagonal conjugation, and let $\oo[\GL_2^2]$ be the ring of functions of the group scheme $\GL_2^2$ over $\oo$. Then the ring of invariants $\oo[\GL_2^2]^{\GL_2}$ is generated by the functions $t_{r,\sigma}$ together with $d_1^{\pm 1}$ and $d_2^{\pm 1}$.
\end{theorem}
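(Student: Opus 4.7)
The plan is to deduce this theorem from Donkin's characteristic-free invariant theorem for several matrices \cite{donkin-invariants}, which computes the ring of simultaneous conjugation invariants $\oo[M_2^n]^{\GL_2}$. The passage from $M_2^2$ to $\GL_2^2$ is a localization argument based on the observation that $\GL_2^2 \subset M_2^2$ is the open subscheme on which $d_1 d_2$ is invertible, and that $d_1, d_2$ are themselves $\GL_2$-invariant.

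\textbf{Step 1: Reduce to matrix invariants.} I would first write
\[
\oo[\GL_2^2] \;=\; \oo[M_2^2][d_1^{-1}, d_2^{-1}]
\]
as $\GL_2$-equivariant $\oo$-algebras, where the action on both sides is diagonal conjugation. Since $d_1$ and $d_2$ are $\GL_2$-invariant, taking invariants commutes with inverting them: any element of the localized ring can be written as $a/(d_1 d_2)^n$ with $a \in \oo[M_2^2]$, and $\GL_2$-invariance forces $a$ itself to be invariant. Hence
\[
\oo[\GL_2^2]^{\GL_2} \;=\; \oo[M_2^2]^{\GL_2}[d_1^{-1}, d_2^{-1}].
\]

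\textbf{Step 2: Apply Donkin's matrix-invariants theorem.} Donkin's main theorem in \cite{donkin-invariants} asserts, for any commutative base ring, that $\oo[M_2^n]^{\GL_2}$ is generated as an $\oo$-algebra by the coefficients of the characteristic polynomials of all words $A_{i_1}\cdots A_{i_r}$ in the generic matrices $A_1,\dots,A_n$. For $2\times 2$ matrices these coefficients are exactly the trace $\mathrm{tr}(A_{i_1}\cdots A_{i_r})$ and the determinant $\det(A_{i_1}\cdots A_{i_r})$. Specialized to $n=2$, the trace generators give the functions $t_{r,\sigma}$. The determinant of a word $\det(A_{\sigma(1)}\cdots A_{\sigma(r)})$ equals $d_1^{a}d_2^{b}$ where $a,b$ count how many times each index occurs in $\sigma$, so the determinant generators are polynomials in $d_1, d_2$. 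Combining this with Step 1 yields the desired generating set $\{t_{r,\sigma}\}\cup\{d_1^{\pm 1}, d_2^{\pm 1}\}$ for $\oo[\GL_2^2]^{\GL_2}$.

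\textbf{Main obstacle.} The deep input is Step 2, Donkin's theorem, which I would use as a black box. Its proof requires genuinely characteristic-free methods in the representation theory of reductive groups: one shows that the coordinate ring $\oo[M_2^n]$ carries a good filtration as a $\GL_2$-module, from which it follows that the functor of $\GL_2$-invariants is exact and compatible with base change; one then identifies the invariants with the algebra spanned by trace-and-determinant monomials using a combinatorial/Schur-functor analysis. In characteristic zero the statement is classical (Sibirskii--Procesi--Razmyslov, proved by symmetrization), but the positive-characteristic case needed over an arbitrary $\oo$ requires the full strength of Donkin's good filtration technology, and is not accessible by a simple averaging or reductivity argument since $\GL_2$ is not linearly reductive in characteristic $p$.
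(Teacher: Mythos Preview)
Your proposal is correct. The paper does not give a proof of this theorem at all; it simply cites it as a result of Donkin \cite[\S 3.1]{donkin-invariants} and moves on. Your argument fills in the standard passage from Donkin's matrix-invariants theorem for $\oo[M_2^2]^{\GL_2}$ to the $\GL_2^2$ case by localizing at the invariant elements $d_1, d_2$, and your identification of the determinant-of-word generators with monomials in $d_1, d_2$ is correct. There is nothing to compare beyond noting that you have supplied a derivation where the paper is content to cite.
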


From this, we deduce the following corollary.

\begin{corollary}
Let $\SL_2$ act on $\SL_2^2$ by diagonal conjugation, and let $\oo[\SL_2^2]$ be the ring of functions of the group scheme $\SL_2^2$ over $\oo$. Then the ring of invariants $\oo[\SL_2^2]^{\SL_2}$ is generated by the functions $t_{r,\sigma}$.
\end{corollary}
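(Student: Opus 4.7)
The plan is to deduce this from Theorem \ref{donkin} by showing that the restriction map $\oo[\GL_2^2]^{\GL_2} \to \oo[\SL_2^2]^{\SL_2}$ along the closed immersion $\SL_2^2 \hookrightarrow \GL_2^2$ is surjective. Granted this, the restrictions of the generators $t_{r,\sigma}, d_1^{\pm 1}, d_2^{\pm 1}$ of the source generate the target, and since each $d_i$ restricts to the constant $1$ on $\SL_2^2$, only the $t_{r,\sigma}$ are needed. A preliminary observation which sets up the surjectivity question correctly is that $\oo[\GL_2^2]^{\GL_2} = \oo[\GL_2^2]^{\SL_2}$: the center $Z(\GL_2) = \bG_m \cdot I_2$ acts trivially on $\GL_2^2$ by conjugation, and $\SL_2 \cdot Z(\GL_2) = \GL_2$.

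To establish surjectivity, I would use the $\SL_2$-equivariant $\mu_2$-torsor $\phi \colon \SL_2 \times \bG_m \to \GL_2$ given by $(g,t) \mapsto tg$, where $\mu_2$ acts antidiagonally via $(g,t) \mapsto (-g,-t)$, $\SL_2$ acts by conjugation on the first factor, and trivially on the second. Squaring yields a $\mu_2^2$-torsor $\phi^2$, whose pullback identifies $\oo[\GL_2^2]$ with $(\oo[\SL_2^2][t_1^{\pm 1}, t_2^{\pm 1}])^{\mu_2^2}$. The pullback along $\phi^2$ of the closed subscheme $\SL_2^2 \hookrightarrow \GL_2^2$ is cut out by $\{t_1^2 = t_2^2 = 1\}$ and is identified with $\SL_2^2 \times \mu_2^2$, so at the level of covers restriction corresponds to reducing modulo $(t_1^2 - 1, t_2^2 - 1)$. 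Because the $\mu_2^2$- and $\SL_2$-actions commute (as $\pm I_2 \in Z(\SL_2)$) and $\SL_2$ acts trivially on $\bG_m^2$, taking further $\SL_2$-invariants yields $\oo[\GL_2^2]^{\SL_2} \cong (\oo[\SL_2^2]^{\SL_2}[t_1^{\pm 1}, t_2^{\pm 1}])^{\mu_2^2}$.

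The lifting step is then a short parity calculation. Given $f \in \oo[\SL_2^2]^{\SL_2}$, the $\mu_2 \times \mu_2$-action on $\oo[\SL_2^2]$ sending $g_i \mapsto -g_i$ commutes with diagonal conjugation (again using $\pm I \in Z(\SL_2)$) and so decomposes $f = \sum_{(\varepsilon_1, \varepsilon_2) \in \{0,1\}^2} f_{\varepsilon_1,\varepsilon_2}$ with each $f_{\varepsilon_1,\varepsilon_2} \in \oo[\SL_2^2]^{\SL_2}$. The element
\[
F := \sum_{\varepsilon_1, \varepsilon_2 \in \{0,1\}} f_{\varepsilon_1,\varepsilon_2}\, t_1^{\varepsilon_1} t_2^{\varepsilon_2}
\]
is $\mu_2^2$-invariant by construction and hence corresponds to an element of $\oo[\GL_2^2]^{\SL_2}$; a direct computation of the $\mu_2^2$-torsor descent map $\SL_2^2 \times \mu_2^2 \to \SL_2^2$ shows that $F$ restricts to $f$. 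The only conceptual hurdle is verifying that the cover-pullback of the closed immersion $\SL_2^2 \hookrightarrow \GL_2^2$ is $\SL_2^2 \times \mu_2^2$; once this is seen, the remaining steps are formal parity bookkeeping, and the claim follows by combining surjectivity with Theorem \ref{donkin}.
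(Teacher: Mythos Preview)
Your proof is correct and takes a genuinely different route from the paper's.

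Both arguments reduce to showing that the restriction $\oo[\GL_2^2]^{\GL_2} \to \oo[\SL_2^2]^{\SL_2}$ is surjective, but the mechanisms differ. The paper views $\SL_2^2 \hookrightarrow \GL_2^2$ as a complete intersection cut out by $d_1-1,\, d_2-1$, writes down the Koszul resolution, and deduces surjectivity on invariants from the vanishing of $H^i(\GL_2, \oo[\GL_2^2])$ for $i \ge 1$; this vanishing is proved by reducing to $\overline{\F}$-coefficients and invoking the good filtration on $\overline{\F}[\GL_2^2]$. Your approach instead exploits the \'etale $\mu_2$-torsor $\SL_2 \times \bG_m \to \GL_2$ to identify $\oo[\GL_2^2]^{\SL_2}$ with $(\oo[\SL_2^2]^{\SL_2}[t_1^{\pm 1}, t_2^{\pm 1}])^{\mu_2^2}$, and then lifts by an explicit parity splitting.

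Your argument is more elementary and self-contained: it needs no representation theory of reductive groups in positive characteristic, only the invertibility of $2$ in $\oo$ (implicit in the standing assumption $p \ge 5$) to form the isotypic projectors for $\mu_2^2$. The paper's argument, by contrast, works over $\Z$ and develops the good-filtration machinery that is reused immediately afterward in the proofs of Lemma~\ref{finite generation and flat base change} and Proposition~\ref{vanishing of exts non-generic I}. So your route is cleaner for this single corollary, while the paper's route is an investment that pays off in the rest of \S\ref{subsec: non-generic I}. It would be worth making explicit in your write-up that the parity decomposition uses $2 \in \oo^\times$.
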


\begin{proof}
We may regard $\oo[\SL_2^2]$ as a $\GL_2$-representation, acting by diagonal conjugation; clearly $\oo[\SL_2^2]^{\GL_2}=\oo[\SL_2^2]^{\SL_2}$. The restriction map $\oo[\GL_2^2]\to \oo[\SL_2^2]$ is surjective, and is the first part of a Koszul resolution
\[
0 \to \oo[\GL_2^2] \to \oo[\GL_2^2]^2 \to \oo[\GL_2^2] \to \oo[\SL_2^2] \to 0,
\]
since $\SL_2^2$ is a complete intersection in $\GL_2^2$ cut out by the equations $d_1=d_2=1$. If $\oo[\GL_2^2]$ has vanishing higher cohomology, the Koszul resolution together with elementary considerations of long exact sequences in cohomology shows that $\oo[\GL_2^2]^{\GL_2} \to \oo[\SL_2^2]^{\GL_2}$ is surjective, and the result then follows from Theorem \ref{donkin}. Therefore it remains to show that $\oo[\GL_2^2]$ has vanishing higher cohomology. By \cite[Thm.~10.5]{vanderkallen}, each $H^i(\GL_2,\oo[\GL_2^2])$ is a finitely generated module over the finitely generated $\oo$-algebra $\oo[GL_2^2]^{\GL_2}$, so it suffices to show that $H^i(\GL_2,\oo[\GL_2^2])\otimes_{\oo}\ol{L} = 0$ and $H^i(\GL_2,\oo[\GL_2^2])\otimes_{\oo}\ol{\F} = 0$ for $i\geq 1$. We have $H^i(\GL_2,\oo[\GL_2^2])\otimes_{\oo}\ol{L} = H^i(\GL_2,\ol{L}[\GL_2^2]) = 0$ for $i\geq 1$, where the first equality comes from \cite[I.4.18, Prop.]{jantzen} and the second comes from $\GL_2$ being reductive and $\ol{L}$ having characteristic $0$. Finally, we also have $H^i(\GL_2,\oo[\GL_2^2])\otimes_{\oo}\ol{\F} \hookrightarrow H^i(\GL_2,\ol{\F}[\GL_2^2]) = 0$ for $i\geq 1$, where the injection comes from \cite[I.4.18, Prop.]{jantzen} and the equality $H^i(\GL_2,\ol{\F}[\GL_2^2]) = 0$ holds because $\ol{\F}[\GL_2^2]$ has a good filtration, by \cite[Cor.~VIII.5.7]{fargues-scholze}.
\end{proof}

Let $F$ be the free group on two generators; its pro-$p$ completion is $\mc{G}$. Attached to $F$, we have its $\SL_2$-representation variety, which is isomorphic to $\SL_2^2 = \Spec A_F$, its character variety $\Spec A_F^{\SL_2}$ (that is, the GIT quotient $\SL_2^2 /\!/ \SL_2$), and its moduli variety of pseudorepresentations $\Spec R_F$, all taken over the base $\oo$. There is a canonical map $R_F \to A_F^{\SL_2}$, which is an adequate homeomorphism by \cite[Thm.~6.0.5(iv)]{emerson} (cf.\ the $\GL_d$ case in \cite[Thm.\ 2.20]{wang-erickson-algebraic}). By Theorem \ref{donkin}, it is also surjective. To show that $R_F \to A_F^{\SL_2}$ is an isomorphism, it therefore suffices to prove that $R_F$ is reduced. In fact, we may compute $R_F$. 

\begin{proposition}\label{pseudorep ring is GIT quotient}
$R_F$ is isomorphic to a polynomial ring over $\oo$ in three variables. In particular, $R_F$ is reduced and the map $R_F \to A_F^{\SL_2}$ is an isomorphism.
\end{proposition}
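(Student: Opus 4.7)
The plan is to factor the proof through a polynomial ring: construct a surjection $\phi:\oo[T_1,T_2,T_3]\twoheadrightarrow R_F$ from the theory of pseudorepresentations, and then use the existing surjection $\psi:R_F\twoheadrightarrow A_F^{\SL_2}$ together with algebraic independence in $A_F = \oo[\SL_2^2]$ to force $\phi$ to be injective (and hence an isomorphism).

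For the surjection, I invoke the theory of pseudorepresentations \cite{chenevier-determinant}: a $2$-dimensional pseudorepresentation of $F$ with trivial determinant valued in a commutative $\oo$-algebra $B$ is equivalent to a central function $T : F \to B$ satisfying $T(1)=2$, $T(g^{-1}) = T(g)$, and the Fricke identity
\[
T(gh)+T(gh^{-1})=T(g)T(h).
\]
Since $F = \langle x,y\rangle$ is free on two generators, a classical Horowitz-type induction on word length shows every $T(w)$ is an integer-coefficient polynomial in $T(x)$, $T(y)$, $T(xy)$. Applied to the universal trace function over $R_F$, this yields the desired surjection $\phi$ sending $T_1,T_2,T_3$ to the universal values of $T(x)$, $T(y)$, $T(xy)$.

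For algebraic independence, it suffices to show that the composite $\oo[T_1,T_2,T_3] \xrightarrow{\phi} R_F \xrightarrow{\psi} A_F^{\SL_2} \hookrightarrow A_F = \oo[\SL_2^2]$ is injective, since then so is $\phi$. Under this composite the $T_i$ map to trace-of-word functions on the universal pair of matrices $(g_1,g_2) \in \SL_2^2$. Restricting to the four-parameter subscheme of pairs
\[
g_1 = \begin{pmatrix} a & b \\ 0 & a^{-1} \end{pmatrix}, \qquad g_2 = \begin{pmatrix} c & 0 \\ d & c^{-1} \end{pmatrix},
\]
the three traces pull back to $a+a^{-1}$, $c+c^{-1}$ and $ac+a^{-1}c^{-1}+bd$ in the Laurent polynomial ring $\oo[a^{\pm 1}, b, c^{\pm 1}, d]$; these are manifestly algebraically independent, since only the third involves the variable combination $bd$.

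Hence $\phi$ is an isomorphism, so $R_F \cong \oo[T_1,T_2,T_3]$ is a polynomial ring in three variables, and in particular reduced. Since $\phi$ and $\psi$ are both surjective, $\psi \circ \phi$ is bijective, and therefore $\psi : R_F \to A_F^{\SL_2}$ is an isomorphism as well. The only substantive step is the Fricke--Vogt reduction yielding $\phi$; this is classical and purely combinatorial, while everything else is formal commutative algebra.
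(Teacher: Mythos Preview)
Your proof is correct and runs parallel to the paper's. Both arguments use the Fricke identity to show that a trace function $T$ on $F$ with $T(1)=2$ and determinant $1$ is determined by $T(x),T(y),T(xy)$; this gives surjectivity of your $\phi$ (equivalently, injectivity of $\Spec R_F \to \A^3$ on $B$-points in the paper's phrasing). The difference lies in how the remaining direction is handled. The paper constructs an explicit pseudorepresentation $T^{\mathrm{univ}}:F\to\oo[s_1,s_2,s_3]$ by restricting Pa\v{s}k\={u}nas's representation $\rho:\mc{G}\to\SL_2(C)$ and observing its trace takes values in the polynomial subring. You instead exploit the already-established surjection $\psi:R_F\twoheadrightarrow A_F^{\SL_2}$ and verify algebraic independence of the three trace functions directly in $\oo[\SL_2^2]$ via your Borel--opposite-Borel family. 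These are two realizations of the same idea (produce a representation whose three trace values are visibly free); your version is a bit more self-contained, avoiding the citation to \cite[Prop.~9.8]{paskunas-image}, while the paper's version has the mild advantage of building the representation that will be reused later in \S\ref{subsec: non-generic I}.
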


\begin{proof}
The second part follows from the first part and the discussion above, so it remains to prove the first part. We obtain a map $\phi : \oo[s_1,s_2,s_3] \to R_F$ by sending, at the level of $B$-points for $B$ an arbitrary $\oo$-algebra, a pseudorepresentation $T : F \to B$ to the tuple $(T(\gamma),T(\delta),T(\gamma\delta))$, where $\gamma$ and $\delta$ are generators of $F$. Since a two-dimensional pseudorepresentation $T$ over $\oo$ with trivial determinant satisfies the identity
\begin{equation}\label{eq: pseudorep identity}
T(g^{-1}h) - T(g)T(h) + T(gh) = 0
\end{equation}
(see \cite[Lem.~1.9]{chenevier-determinant}\footnote{This identity follows from the pseudorepresentation identity applied to the three elements $g$, $g$ and $g^{-1}h$.}) for any $g,h\in F$, \cite[Lem.~9.10]{paskunas-image} implies that $\phi$ induces an injection $f \mapsto f \circ \phi$ at the level of functors of points (cf.~\cite[Cor.~9.11]{paskunas-image}). Proving that $f \mapsto f \circ \phi$ is surjective on $B$-points for any $\oo$-algebra $B$ (and hence that $\phi$ is an isomorphism) is then equivalent to showing that there is a map $\psi : R_F \to \oo[s_1,s_2,s_3]$ such that $\psi \circ \phi$ is the identity. In terms of the moduli problem, this means that we need to construct a pseudorepresentation $T^{\mathrm{univ}} : F \to \oo[s_1,s_2,s_3]$ with $T(\gamma) = s_1$, $T(\delta) = s_2$ and $T(\gamma\delta) = s_3$.
 
For the sake of brevity\footnote{One can also construct $T^{\mathrm{univ}}$ directly, as the trace of an `algebraic' version of the representation $\rho$ from \cite[Prop.~9.8]{paskunas-image}.}, we construct $T^{\mathrm{univ}}$ from the representation $\rho : \mc{G} \to \SL_2(C)$ constructed in \cite[Prop.~9.8]{paskunas-image}. Here $C$ is a ring that is finite over $\oo \lb t_1, t_2, t_3 \rb$, and the trace $T_\rho = \mathrm{tr}(\rho)$ satisfies $T_\rho(\gamma) = 2 + 2t_1$, $T_\rho (\delta) = 2+2t_2$ and $T_\rho(\gamma\delta) = 2+2t_3$. Let $T^\prime$ denote the restriction of $T_\rho$ from $\mc{G}$ to $F$. By equation (\ref{eq: pseudorep identity}) and \cite[Lem.~9.10]{paskunas-image}, $T^\prime$ takes values in $\oo[t_1,t_2,t_3]$. A simple change of variables then gives the desired pseudorepresentation $T^{\mathrm{univ}}$.
\end{proof}

\begin{lemma}\label{automatic continuity}
\begin{enumerate}
\item The completed local ring of $R_F$ at the trivial pseudorepresentation is isomorphic to $R$. In particular, the natural map $R_F = A_F^{\SL_2} \to R$ is flat.

\smallskip

\item Let $B$ be an $R$-algebra and let $\rho_B : R[F] \to M_2(B)$ be a representation whose pseudorepresentation is equal to the universal pseudorepresentation of $F$ composed with the composition $R_F \to R \to B$. Then $\rho_B$ factors through the natural map $R[F] \to R\lb \mc{G} \rb$.
\end{enumerate}

\end{lemma}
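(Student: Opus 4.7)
Part (1) is proved by comparing two pro-representable functors. By Proposition \ref{pseudorep ring is GIT quotient}, $R_F \cong \oo[s_1, s_2, s_3]$, and the maximal ideal corresponding to the trivial pseudorepresentation is $\mathfrak{m} = (\varpi, s_1 - 2, s_2 - 2, s_3 - 2)$. The completed local ring is therefore isomorphic to $\oo\lb X_1, X_2, X_3\rb$ (with $X_i = s_i - 2$), which pro-represents the functor of pseudodeformations of the trivial residual pseudorepresentation of $F$ to complete local Noetherian $\oo$-algebras with residue field $\F$. Since $R$ pro-represents the corresponding functor for $\Gamma$, and this equals the functor for $\mc{G}$ by \cite[Cor.~A.3, A.4]{paskunas-image}, the plan is to show any pseudorepresentation of $F$ valued in such an algebra lifting the trivial one factors through $\mc{G}$. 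This should follow by applying \cite[Lem.~9.10]{paskunas-image} at each Artinian truncation $R/\mathfrak{m}_R^n$ and passing to the inverse limit. With $\hat R_F \cong R$ established, flatness of $R_F \to R$ is then immediate, since this map factors as a localization at a maximal ideal followed by an $\mathfrak{m}$-adic completion, both of which are flat operations for Noetherian rings.

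For part (2), the plan is to route $\rho_B$ through the universal Cayley--Hamilton algebra $E$ of $\mc{G}$ over $R$. The map $\rho_B : R[F] \to M_2(B)$ is automatically a Cayley--Hamilton representation because any representation into $2 \times 2$ matrices satisfies the Cayley--Hamilton identity, and the hypothesis on pseudorepresentations ensures that the trace and determinant of any $\rho_B(\gamma)$ lie in the image of $R$ in $B$. Hence the $R$-subalgebra $M \subseteq M_2(B)$ generated by $\rho_B(F)$ is a Cayley--Hamilton $R$-algebra with pseudorepresentation the universal one restricted to $F$ and composed with $R \to B$. I plan to invoke the universal property of the Cayley--Hamilton quotient of $R[F]$ by this pseudorepresentation---which should coincide with $E$ via an analogue of \cite[Cor.~A.3, A.4]{paskunas-image} comparing Cayley--Hamilton algebras for $F$, $\Gamma$, and $\mc{G}$---to factor $\rho_B$ as $R[F] \to R\lb \mc{G}\rb \to E \to M_2(B)$, as desired.

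The main obstacle lies in part (2): identifying the universal Cayley--Hamilton quotient of $R[F]$ (with respect to a pseudorepresentation factoring through $\mc{G}$) with the algebra $E$. While \cite[Cor.~A.3, A.4]{paskunas-image} treat the analogous statement for continuous representations of $\Gamma$ versus $\mc{G}$, the extension to the discrete group $F$ will require either adapting that argument directly, or working level-by-level with the Cayley--Hamilton identities at each Artinian truncation $R/\mathfrak{m}_R^n$ and passing to the limit---leveraging essentially the same kind of automatic continuity for pseudorepresentations that drives part (1).
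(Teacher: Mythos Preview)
Your outline for part (1) is more circuitous than necessary. You correctly compute that the completed local ring of $R_F$ at the trivial pseudorepresentation is $\oo\lb X_1,X_2,X_3\rb$ with $X_i = s_i - 2$. The paper simply pairs this with Pa\v{s}k\=unas's explicit presentation $R \cong \oo\lb t_1,t_2,t_3\rb$ (\cite[Cor.~9.13]{paskunas-image}), where $2+2t_i$ are the same three traces, and observes that the natural map $R_F \to R$ is given by $s_i \mapsto 2+2t_i$. This already identifies $R$ with the completion, with no need to show that pseudorepresentations of $F$ factor through $\mc{G}$. Your plan to invoke \cite[Lem.~9.10]{paskunas-image} and pass to Artinian truncations would work, but it duplicates effort that the explicit coordinate match makes unnecessary.

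For part (2), your strategy of routing $\rho_B$ through a universal Cayley--Hamilton quotient and then working modulo $\m_R^n$ is exactly the paper's approach. However, you have not isolated the crucial step: at each Artinian level, the image of $F$ in $(E_R \otimes_R R/\m_R^n)^\times$ must be shown to be a \emph{$p$-group}, not merely finite. ``Automatic continuity'' alone only yields factorization through the profinite completion of $F$, whereas $\mc{G}$ is the pro-$p$ completion. The paper supplies this step via the structure theory of Cayley--Hamilton algebras over Artinian local rings (\cite[Lem.~2.10, Thm.~2.16]{chenevier-determinant}, following the proof of \cite[Lem.~3.8]{chenevier-determinant}): the quotient of $E_R \otimes_R R/\m_R^n$ by its radical $\mathcal{R}$ is $M_2(k)$, on which $F$ acts trivially, so the image of $F$ lands in the $p$-group $1+\mathcal{R}$. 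Your proposal gestures toward this with the phrase ``working level-by-level with the Cayley--Hamilton identities,'' but the specific mechanism---that the residual triviality forces the image into a unipotent, hence pro-$p$, subgroup---is what makes the argument go through, and it is not visible in your sketch. The references \cite[Cor.~A.3, A.4]{paskunas-image} you cite concern the passage from $\Gamma$ to $\mc{G}$, not from $F$ to $\mc{G}$, so they do not directly cover this.
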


\begin{proof}
The first part follows from the proof of Proposition \ref{pseudorep ring is GIT quotient} and Pa{\v s}k{\=u}nas's analogous result for $R$ \cite[Cor.~9.13]{paskunas-image}.

For the second part, we first note that $\rho_B$ factors through the Cayley--Hamilton quotient $E_R$ of $R[F]$ with respect to the specialization of the universal pseudorepresentation to $R$. Since $R[F]$ is finitely generated over $R$, $E_R$ is a finite $R$-module \cite[Prop.~2.13]{wang-erickson-algebraic}, and is in particular $\m_R$-adically complete. For each $n \ge 1$, the quotient $E_R \otimes_R R/\m_R^n$ is a finite length $R$-module, so the map $R[F] \to E_R \otimes_R R/\m_R^n$ factors through  $R[F/H]$ for a finite quotient $F/H$ of $F$ (consider the induced map $F \to \left(E_R \otimes_R R/\m_R^n \right)^\times$). The proof of \cite[Lem.\ 3.8]{chenevier-determinant} now shows that we can take $F/H$ to be a $p$-group. Indeed, we have a Cayley--Hamilton pseudorepresentation $D_n: E_R \otimes_R R/\m_R^n \to R/\m_R^n$ and \cite[Lem.\ 2.10, Thm.\ 2.16]{chenevier-determinant} implies that the radical $\mathcal{R}$ of $E_R \otimes_R R/\m_R^n$ satisfies $(E_R \otimes_R R/\m_R^n)/\mathcal{R}\cong M_2(k)$, with the induced representation of $F$ equal to the trivial representation. In particular, the image of $F/H$ in $E_R \otimes_R R/\m_R^n$ lies in $1+\mathcal{R}$ which is a $p$-group. Taking the limit over $n$ shows that the map $R[F] \to E_R$ factors through $R\lb \mc{G} \rb$, and we are done.
\end{proof}

\begin{corollary}\label{base change for A}
We have $A = A_F \otimes_{R_F} R$.
\end{corollary}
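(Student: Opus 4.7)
The plan is to show that both $A$ and $A_F \otimes_{R_F} R$ represent the same functor on $R$-algebras. By the universal property of the Cayley--Hamilton algebra $E$ together with Pa{\v s}k{\=u}nas's identification $E \cong E_{\mc{G}}$ of \cite[Cor.~A.4]{paskunas-image}, the ring $A$ parametrizes, for an $R$-algebra $B$, compatible $R$-algebra homomorphisms $R\lb\mc{G}\rb \to M_2(B)$ whose induced pseudorepresentation is the pullback of the universal one along $R \to B$. On the other hand, the definition of $A_F$ together with base change along $R_F \to R$ shows that $A_F \otimes_{R_F} R$ parametrizes $R$-algebra maps $R[F] \to M_2(B)$ whose pseudorepresentation is the pullback of the universal pseudorepresentation of $F$ along $R_F \to R \to B$.

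To identify these two functors, I would argue as follows. Restriction along the canonical map $F \to \mc{G}$ (exhibiting $\mc{G}$ as the pro-$p$ completion of $F$) defines a natural transformation from the first functor to the second, and via the equality of pseudodeformation rings $R \cong R_{\mc{G}}$ of \cite[Cor.~A.3]{paskunas-image} the pseudorepresentation conditions match. In the opposite direction, Lemma \ref{automatic continuity}(2) asserts precisely that any compatible $F$-representation factors uniquely through $R\lb\mc{G}\rb$; this supplies the reverse natural transformation. The two procedures are mutually inverse because $F$ has dense image in $\mc{G}$, so the factorization through $R\lb\mc{G}\rb$ coming from Lemma \ref{automatic continuity}(2) is the unique one extending the given map on $R[F]$. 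By Yoneda the isomorphism $A_F \otimes_{R_F} R \cong A$ follows, with the explicit comparison map induced by restricting the universal compatible representation of $E = E_{\mc{G}}$ along $F \to \mc{G}$.

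The only real content of the argument is the automatic continuity/factoring provided by Lemma \ref{automatic continuity}(2), combined with Pa{\v s}k{\=u}nas's identifications $R \cong R_{\mc{G}}$ and $E \cong E_{\mc{G}}$; given these, the corollary amounts to a formal matching of universal properties, so I do not anticipate any further technical obstacle.
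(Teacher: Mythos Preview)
Your proposal is correct and follows essentially the same approach as the paper: both arguments identify the functors represented by $A$ and $A_F\otimes_{R_F}R$, using density of $F$ in $\mc{G}$ for injectivity of the comparison map and Lemma~\ref{automatic continuity}(2) for surjectivity. The paper phrases things via the Cayley--Hamilton quotients $E$ and $E_F\otimes_{R_F}R$ (invoking \cite[\S1.17]{chenevier-determinant} for base-change compatibility) rather than directly in terms of maps out of $R\lb\mc{G}\rb$ and $R[F]$, but this is only a cosmetic difference since compatible representations of the group algebras automatically factor through the Cayley--Hamilton quotients.
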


\begin{proof}
From the definitions, $A_F$ is the representation ring for the Cayley--Hamilton quotient $E_F$ of $R_F[F]$ with respect to the universal pseudorepresentation $R_F[F] \to R_F$, and $A$ is the representation ring for the Cayley--Hamilton quotient $E$ of $R \lb \mc{G} \rb \to R$. By compatibility of Cayley--Hamilton quotients with base change \cite[\S 1.17]{chenevier-determinant}, $A_F\otimes_{R_F} R$ is the representation ring for the Cayley--Hamilton quotient $E_F \otimes_{R_F}R$ of $R[F]$ with respect to the pseudorepresentation $R[F] \to R$. In particular, to show that $A = A_F \otimes_{R_F} R$ it suffices to show that the natural $R$-linear map $E_F \otimes_{R_F} R \to E$ induces bijections
\[
\iota_B : \Rep^\Box(E)(B) \to \Rep^\Box(E_F \otimes_{R_F}R)(B)
\]
for all $R$-algebras $B$. Since $F$ is dense in $\mc{G}$, the map $E_F \otimes_{R_F} R \to E$ has dense image, which implies that it is surjective since both sides are finite $R$-modules. This gives injectivity of $\iota_B$, and surjectivity then follows from Lemma \ref{automatic continuity}(2). 
\end{proof}

Next we record some properties of cohomology that we will need.

\begin{lemma}\label{finite generation and flat base change}
Let $V$ be a finitely generated $A_F$-module on which $\SL_2$ acts compatibly. Then $H^i(\SL_2,V\otimes_{R_F}R) = H^i(\SL_2,V)\otimes_{R_F}R$ for all $i \geq 0$. In particular, $R = A^{\SL_2}$. Moreover, $H^i(\SL_2,V)$ is a finitely generated $R_F$-module and $H^i(\SL_2,V\otimes_{R_F}R)$ is a finitely generated $R$-module. 
\end{lemma}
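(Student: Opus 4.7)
The plan is to handle the four assertions in the order (1) flat base change, (2) the identification $R = A^{\SL_2}$, (3) finite generation over $R_F$, (4) finite generation over $R$; the second and fourth follow quickly from the first and third.

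For (1), I would invoke the flat base change isomorphism for rational cohomology of an affine flat group scheme: if $R_F \to R$ is flat and $V$ is an $R_F[\SL_2]$-module, then $H^i(\SL_2,V)\otimes_{R_F}R \isoto H^i(\SL_2,V\otimes_{R_F}R)$. This is precisely \cite[I.4.18, Prop.]{jantzen}, which was already invoked in the proof of the preceding corollary. Flatness of $R_F \to R$ is given by Lemma \ref{automatic continuity}(1), since $R$ is the completion of the Noetherian ring $R_F$ at a maximal ideal.

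For (2), I would apply the base change of (1) with $V = A_F$. By Proposition \ref{pseudorep ring is GIT quotient} we have $R_F = A_F^{\SL_2} = H^0(\SL_2, A_F)$. Combining with Corollary \ref{base change for A}, which identifies $A_F \otimes_{R_F} R = A$, we obtain
\[
A^{\SL_2} = H^0(\SL_2, A_F \otimes_{R_F} R) = H^0(\SL_2, A_F) \otimes_{R_F} R = R_F \otimes_{R_F} R = R,
\]
as desired.

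For (3), the key input is van der Kallen's finite generation theorem \cite[Thm.~10.5]{vanderkallen}, already used above: since $A_F$ is a finitely generated $\oo$-algebra with a rational action of the reductive group $\SL_2$, and $V$ is a Noetherian $A_F$-module with a compatible $\SL_2$-action, each $H^i(\SL_2,V)$ is a finitely generated module over $A_F^{\SL_2} = R_F$. Then (4) follows formally: by (1) we have $H^i(\SL_2,V\otimes_{R_F}R) \cong H^i(\SL_2,V)\otimes_{R_F}R$, which is finitely generated over $R$ since $H^i(\SL_2,V)$ is finitely generated over $R_F$ and $R_F \to R$ is a (flat) map of Noetherian rings.

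The main subtlety, rather than obstacle, is simply making sure the hypotheses of the cited results are met: that $V$ together with its $A_F$-module structure genuinely gives rise to a module over the Hopf algebroid corresponding to $\SL_2$ acting on $\Spec A_F$ (so that \cite[Thm.~10.5]{vanderkallen} applies), and that Jantzen's flat base change is stated in sufficient generality for base change along $R_F \to R$ with coefficients in an $R_F[\SL_2]$-module rather than an $\oo[\SL_2]$-module. Both are standard, since $\SL_2$ is smooth over $\Spec \oo$ (hence over $\Spec R_F$) and rational cohomology can be computed via the Hochschild complex, which commutes with any flat extension of the coefficient ring.
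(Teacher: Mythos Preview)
Your proposal is correct and follows the same overall structure as the paper's proof: steps (2), (3), (4) are identical (same citations, same logic). The only difference lies in how you establish the flat base change in step (1). You invoke Jantzen I.4.18 directly and then patch the subtlety that $\SL_2$ is a priori a group scheme over $\oo$ rather than $R_F$ by appealing to the Hochschild complex. The paper instead uses Lazard's theorem to write $R = \varinjlim_j R_F^{m_j}$ as a filtered colimit of finite free $R_F$-modules, and then applies the fact that rational cohomology commutes with filtered colimits \cite[I.4.17, Lem.]{jantzen} to get
\[
H^i(\SL_2, V \otimes_{R_F} R) = \varinjlim_j H^i(\SL_2, V)^{m_j} = H^i(\SL_2, V) \otimes_{R_F} R.
\]
Both arguments are valid; the paper's route sidesteps entirely the question of which base ring $\SL_2$ lives over and whether the version of I.4.18 on hand covers base change in the coefficient ring rather than the ground ring, while your Hochschild-complex remark gives the more conceptual reason why flat base change holds in this generality.
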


\begin{proof}
Since $R$ is flat over $R_F$, we may write $R = \varinjlim_j R_F^{m_j}$ as a direct limit of finitely generated free modules by Lazard's theorem. Since cohomology commutes with direct limits \cite[I.4.17, Lem.]{jantzen}, we see that
\begin{gather*}
H^i(\SL_2,V \otimes_{R_F}R) = H^i(\SL_2, \varinjlim_j V \otimes_{R_F} R_F^{m_j}) = \varinjlim H^i(\SL_2, V \otimes_{R_F} R_F^{m_j}) \\
= \varinjlim H^i(\SL_2, V) \otimes_{R_F} R_F^{m_j} = H^i(\SL_2,V) \otimes_{R_F}R
\end{gather*}
as desired. That $R = A^{\SL_2}$ then follows by setting $V=A_F$, $i=0$, and using Proposition \ref{pseudorep ring is GIT quotient} and Corollary \ref{base change for A}. Finally, by \cite[Thm.~10.5]{vanderkallen}, $H^i(\SL_2,V)$ is a finitely generated $R_F$-module, and hence $H^i(\SL_2,V\otimes_{R_F}R) = H^i(\SL_2,V) \otimes_{R_F}R$ is a finitely generated $R$-module.
\end{proof}

This proves that $R = A^{SL_2}$ as desired. We can now prove that $\Ext^i(\mc{V},\mc{V}) = 0$ for $i \geq 1$. 
\begin{proposition}\label{vanishing of exts non-generic I}
We have $\Ext^i(\mc{V},\mc{V}) = 0$ for $i \geq 1$. 
\end{proposition}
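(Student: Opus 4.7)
The plan is to express $\Ext^i(\mc{V},\mc{V})$ as algebraic $\SL_2$-cohomology and kill it by a good-filtration argument. Under the presentation $\mf{X} \cong [\Spec A/\SL_2]$, the universal vector bundle $\mc{V}$ corresponds to the $\SL_2$-equivariant free $A$-module $A \otimes_\oo V_{\mathrm{std}}$, where $V_{\mathrm{std}}$ is the standard representation of $\SL_2$. Since $\mc{V}$ is locally free, the Ext groups on this quotient stack can be computed as
\[
\Ext^i_{\mf{X}}(\mc{V},\mc{V}) = H^i\bigl(\SL_2, \Hom_A(A \otimes V_{\mathrm{std}}, A \otimes V_{\mathrm{std}})\bigr) = H^i(\SL_2, M_2(A)),
\]
where $\SL_2$ acts on $M_2(A)$ diagonally by conjugation on $M_2$ and by the already-fixed action on $A$. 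The goal is then the vanishing of $H^i(\SL_2, M_2(A))$ for $i \geq 1$.

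Next I would strip off the factor of $R$. Using the isomorphism $M_2(A) \cong M_2(A_F) \otimes_{R_F} R$ from Corollary \ref{base change for A}, and the flat base change in Lemma \ref{finite generation and flat base change} (available since $R/R_F$ is flat by Lemma \ref{automatic continuity}(1)), one obtains
\[
H^i(\SL_2, M_2(A)) \cong H^i(\SL_2, M_2(A_F)) \otimes_{R_F} R,
\]
so it suffices to show $H^i(\SL_2, M_2(A_F)) = 0$ for $i \geq 1$. Now $M_2(A_F)$ is $\oo$-free, and $H^i(\SL_2, M_2(A_F))$ is a finitely generated $R_F$-module by Lemma \ref{finite generation and flat base change}. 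Arguing exactly as in the proof of the corollary following Theorem \ref{donkin} (combining Noetherianness over $R_F$ with an $\oo$-level Nakayama bookkeeping), it is enough to verify the vanishing after $\otimes_\oo \ol L$ and after $\otimes_\oo \ol\F$. In characteristic $0$ this is immediate: $\SL_2$ is linearly reductive over $\ol L$, so $H^i(\SL_2, M_2(A_F) \otimes_\oo \ol L) = 0$ for $i \geq 1$, and this agrees with $H^i(\SL_2, M_2(A_F)) \otimes_\oo \ol L$ by the Jantzen flat base change \cite[I.4.18, Prop.]{jantzen}.

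The substantive remaining step is to show $H^i(\SL_2, M_2(A_F) \otimes_\oo \ol\F) = 0$ for $i \geq 1$; by the Jantzen injection \cite[I.4.18, Prop.]{jantzen} this implies $H^i(\SL_2, M_2(A_F)) \otimes_\oo \ol\F = 0$. Writing $M_2(A_F) \otimes_\oo \ol\F \cong M_2 \otimes_{\ol\F} \ol\F[\SL_2^2]$, the plan is to exhibit a good filtration on this tensor product as an algebraic $\SL_2/\ol\F$-representation. For $p \geq 5$, the adjoint module $M_2 \cong \mathbf{1} \oplus \mathfrak{sl}_2 \cong \nabla(0) \oplus \nabla(2)$ has a good filtration. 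The coordinate ring $\ol\F[\SL_2^2]$ with simultaneous conjugation also has a good filtration, by the same argument cited for $\ol\F[\GL_2^2]$ in the proof of Lemma \ref{finite generation and flat base change} (applied directly, or pushed through the Koszul resolution of $\ol\F[\SL_2^2]$ over $\ol\F[\GL_2^2]$, noting that restriction of a good-filtration $\GL_2$-module to $\SL_2$ remains good-filtration). Mathieu's theorem that a tensor product of good-filtration modules over a reductive group again has a good filtration (a result of Wang in the $\SL_2$ case suffices here) then yields a good filtration on $M_2 \otimes \ol\F[\SL_2^2]$, whence the desired vanishing of its positive-degree $\SL_2$-cohomology. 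The main obstacle, and the least elementary input, is this tensor-product theorem; without it one would need a hands-on analysis of the conjugation action on $\ol\F[\SL_2^2]$.
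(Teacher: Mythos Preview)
Your proof is correct and follows the same overall architecture as the paper's: identify $\Ext^i(\mc{V},\mc{V})$ with $H^i(\SL_2, A \otimes \mathrm{ad})$, use flat base change along $R_F \to R$ to reduce to $A_F$, and finish with a good-filtration argument over $\ol\F$.

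The one genuine difference is in the final step. You show that $M_2 \otimes_{\ol\F} \ol\F[\SL_2^2]$ has a good filtration by invoking the tensor-product theorem (Mathieu, or Wang for $\SL_2$): both factors have good filtrations, hence so does their tensor product. The paper bypasses this. It observes that $\mathrm{ad} \cong \nabla_0 \oplus \nabla_2$ is already a direct sum of induced modules, so the vanishing of $H^i(\SL_2, \ol\F[\SL_2^2] \otimes \nabla_\lambda)$ for $i \geq 1$ is \emph{precisely} the Donkin--Friedlander cohomological characterization of $\ol\F[\SL_2^2]$ having a good filtration (the criterion recalled in \S\ref{subsec: non-generic I}). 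No tensor-product theorem is needed. Your route is a bit heavier in its black boxes; the paper's is slicker but relies on noticing that the characterization applies directly.

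A minor secondary point: you reduce to both $\ol L$ and $\ol\F$ working over $R_F$, mimicking the proof of the corollary to Theorem~\ref{donkin}. The paper instead first tensors up to the local ring $R$, so Nakayama with respect to $\varpi$ alone gives the reduction, and only the $\ol\F$ case is needed. This is cosmetic.
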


\begin{proof}
Let $\mathrm{ad}$ denote the adjoint representation of $\GL_2$, restricted to $\SL_2$, which is a direct sum of induced representations. Then we have 
\[
\Ext^i(\mc{V},\mc{V}) = H^i(\SL_2, A \otimes \mathrm{ad}) =  H^i(\SL_2, A_F \otimes \mathrm{ad}) \otimes_{R_F} R,
\]
where the last isomorphism follows from Lemma \ref{finite generation and flat base change}, and it is a finitely generated $R$-module. Since $R$ is local it suffices to prove that $H^i(\SL_2,A_F \otimes \mathrm{ad}) \otimes_{\oo}\ol{\F} = 0$ for $i \geq 1$. This cohomology group injects into $H^i(\SL_2,(A_F \otimes \mathrm{ad})\otimes_{\oo}\ol{\F})$ by \cite[I.4.18, Prop.]{jantzen}, and $H^i(\SL_2,(A_F \otimes \mathrm{ad})\otimes_{\oo}\ol{\F}) = 0$ since $A_F \otimes_{\oo}\ol{\F}$ has a good filtration by \cite[Cor.\ VIII.5.7]{fargues-scholze}.
\end{proof}

It remains to prove that $E \to \End(\mc{V})$ is an isomorphism. As above,
\[
\End(\mc{V}) = (A \otimes \mathrm{ad})^{\SL_2} = M_2(A)^{\SL_2}.
\]
We start by looking at the problem after inverting $\vp$. Then $A[1/\vp]$ is the representation ring for the Cayley--Hamilton algebra $E[1/\vp]$ and hence, by \cite[Thm.\ 2.6]{procesi}, the natural map
\[
E[1/\vp] \to M_2(A[1/\vp])^{\SL_2}
\]
is an isomorphism. Since $M_2(A[1/\vp])^{\SL_2} = M_2(A)^{\SL_2}[1/\vp]$, we see that $E \to M_2(A)^{\SL_2}$ is an isomorphism after inverting $\vp$. To prove that it is an isomorphism on the nose, we will need to study the map more explicitly.

We begin this by recalling the structure of $R$ and $E$ from \cite[\S 9.2]{paskunas-image}. Let $\gamma$ and $\delta$ be two generators of $F$. By \cite[Prop.\ 9.12, Cor.\ 9.13]{paskunas-image} we have $R = \oo \lb t_1, t_2, t_3 \rb$, where $2+2t_1$ is the trace of $\gamma$, $2+2t_2$ is the trace of $\delta$ and $2+2t_3$ is the trace of $\gamma \delta$. The ring $E$ is a free $R$-module of rank $4$ by \cite[Cor.~9.25]{paskunas-image}, with a basis given by elements $1$, $u$, $v$ and $uv-vu$, where
\[
u = \gamma -1 - t_1 , \,\,\,\,\,\,\, v = \delta - 1 - t_2;
\] 
recall that $E$ is a quotient of $R \lb \mc{G} \rb$. The ring $A$ may be described as a quotient of $R[a_i,b_i,c_i,d_i \mid i=1,2 \,]$, where the universal representation $E \to M_2(A)$ sends
\[
\gamma \mapsto \begin{pmatrix} 1+a_1 & c_1 \\ c_2 & 1+a_2 \end{pmatrix} , \,\,\,\,\,\,\, \delta \mapsto \begin{pmatrix} 1+b_1 & d_1 \\ d_2 & 1+b_2 \end{pmatrix}.
\]
We have five relations. The first four come from the trace and determinant of the image of $\gamma$ and $\delta$, and amount to 
\[
a_1 + a_2 = 2t_1, \,\,\, b_1 + b_2 = 2t_2, \,\,\, a_1 + a_2 + a_1 a_2 - c_1 c_2 = 0, \,\,\, b_1 + b_2 + b_1 b_2 - d_1 d_2 = 0.
\]
The fifth comes from the trace of $\gamma \delta$, and is
\[
a_1 + a_2 + b_1 + b_2 + a_1 b_1 + a_2 b_2 + c_1 d_2 + c_2 d_1 = 2 t_3.
\]

Let us now write the map $E \to M_2(A)$ explicitly as an $R$-module map, using the basis $1$, $\gamma-1$, $\delta-1$  and $uv-vu=\gamma \delta -\delta \gamma$. Clearly $1$ gets sent to the identity matrix, and from the descriptions above we see that
\[
\gamma -1  \mapsto \begin{pmatrix} a_1 & c_1 \\ c_2 & a_2 \end{pmatrix} , \,\,\,\,\,\,\, \delta -1  \mapsto \begin{pmatrix} b_1 & d_1 \\ d_2 & b_2 \end{pmatrix},
\]
and hence
\[
uv - vu \mapsto \begin{pmatrix} c_1 d_2 - c_2 d_1 & a_1 d_1 + b_2 c_1 - a_2 d_1 - b_1 c_1 \\ a_2 d_2 + b_1 c_2 - a_1 d_2 - b_2 c_2 & c_2 d_1 - c_1 d_2 \end{pmatrix}.
\]
Now consider a general element $X = \lambda_1 + \lambda_2 (\gamma -1) + \lambda_3 (\delta -1) + \lambda_4 (\gamma \delta - \delta \gamma) \in E[1/\vp]$. It gets sent to
\[
\begin{pmatrix} \lambda_1 + \lambda_2a_1 + \lambda_3 b_1  + \lambda_4 (c_1 d_2 - c_2 d_1) & \lambda_2 c_1 + \lambda_3 d_1 + \lambda_4 (a_1 d_1 + b_2 c_1 - a_2 d_1 - b_1 c_1) \\ \lambda_2 c_2 + \lambda_3 d_2 + \lambda_4 ( a_2 d_2 + b_1 c_2 - a_1 d_2 - b_2 c_2) & \lambda_1 + \lambda_2 a_2 + \lambda_3 b_2 + \lambda_4 (c_2 d_1 - c_1 d_2) \end{pmatrix}.
\]
These expressions are somewhat unwieldy to analyse. We will instead consider the quotient $C$ of $A$, introduced in \cite[Def.\ 9.7]{paskunas-image}, which is given by setting
\[
c_1 = 1, \,\,\, c_2 = 0, \,\,\, d_1 = 0, \,\,\, d_2 = 2t_3 - 2t_1 - 2t_2 - a_1 b_1 - a_2 b_2.
\]
With this, one gets the presentation 
\[
C = \frac{R[a_1,a_2,b_1,b_2]}{(a_1 + a_2 -2t_1, a_1 a_2 + 2t_1, b_1 + b_2 -2t_2, b_1 b_2 + 2t_2) },
\]
(we have changed some signs compared to \emph{loc.~cit.}, correcting apparent typos), which may be further simplified to
\[
C = \frac{R[a_1,b_1]}{(a_1^2 - 2t_1 a_1 - 2t_1, b_1^2 - 2t_2 b_2 - 2t_2)}.
\]
In particular, we see that $C$ is a biquadratic extension of $R$ and we get the following:

\begin{lemma}\label{freeness of C}
$C$ is a free $R$-module of rank $4$, with basis $1$, $a_1$, $b_1$, $a_1 b_1$.
\end{lemma}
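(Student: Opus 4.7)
The plan is to recognize the presentation of $C$ as a tensor product of two simpler rings and then apply the standard fact that quotients by monic polynomials produce free modules.

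First, I would note that the presentation
\[
C = \frac{R[a_1,b_1]}{(a_1^2 - 2t_1 a_1 - 2t_1,\ b_1^2 - 2t_2 b_1 - 2t_2)}
\]
naturally factors as a tensor product: since the two relations involve disjoint sets of variables $\{a_1\}$ and $\{b_1\}$, there is a canonical isomorphism of $R$-algebras
\[
C \;\cong\; \frac{R[a_1]}{(a_1^2 - 2t_1 a_1 - 2t_1)} \otimes_R \frac{R[b_1]}{(b_1^2 - 2t_2 b_1 - 2t_2)}.
\]

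Second, I would apply the elementary fact that for a commutative ring $R$ and a monic polynomial $f(x) \in R[x]$ of degree $d$, the quotient $R[x]/(f(x))$ is a free $R$-module of rank $d$ with basis $1, x, \ldots, x^{d-1}$ (use the division algorithm in $R[x]$, which works for monic divisors). Applied to the two monic quadratics here, each factor on the right-hand side is a free $R$-module of rank $2$, with bases $\{1, a_1\}$ and $\{1, b_1\}$ respectively.

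Finally, I would conclude by invoking the standard fact that the tensor product over $R$ of two free $R$-modules with bases $\{e_i\}$ and $\{f_j\}$ is a free $R$-module with basis $\{e_i \otimes f_j\}$. This gives that $C$ is free over $R$ of rank $4$ with basis $\{1, a_1, b_1, a_1 b_1\}$, as claimed. There is no serious obstacle here; the content is entirely in reading off the tensor product structure from the presentation.
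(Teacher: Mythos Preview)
Your proof is correct and is actually more elementary than the paper's own argument. You directly exploit that each relation is a monic polynomial in a single variable, so $R[a_1]/(a_1^2 - 2t_1 a_1 - 2t_1)$ and $R[b_1]/(b_1^2 - 2t_2 b_1 - 2t_2)$ are each free of rank $2$ by the division algorithm, and then tensor. The paper instead views $C$ as a quotient of the power series ring $R\lb a_1,b_1\rb$, observes that $a_1^2, b_1^2$ form a regular sequence in the special fibre $k\lb a_1,b_1\rb$, and invokes the local criterion for flatness (Stacks Project, Tag 00MG) to conclude that $C$ is flat, hence free, over $R$; the basis is then checked modulo the maximal ideal. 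Your approach is shorter and avoids the flatness machinery entirely; the paper's argument, while heavier, has the mild advantage of not needing to first isolate the simplified two-variable presentation (it would work just as well with the original four-variable presentation modulo the regular sequence).
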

\begin{proof}
	We can view $C$ as a quotient of the flat local $R$-algebra $R\lb a_1,b_1\rb$. Since $a_1^2,b_1^2$ is a regular sequence in $k\lb a_1,b_1\rb$, \cite[\href{https://stacks.math.columbia.edu/tag/00MG}{Tag 00MG}]{stacks-project} shows that $C$ is flat over $R$. $C$ is also clearly finite over $R$, so it is free and we can check for a basis modulo the maximal ideal of $R$.
\end{proof}

The composition of $E[1/\vp] \to M_2(A[1/\vp])$ with $M_2(A[1/\vp]) \to M_2(C[1/\vp])$ is then given by sending the general element $X = \lambda_1 + \lambda_2 (\gamma -1) + \lambda_3 (\delta -1) + \lambda_4 (\gamma \delta - \delta \gamma) \in E[1/\vp]$ to
\[
\begin{pmatrix} \lambda_1 + \lambda_2a_1 + \lambda_3 b_1  + \lambda_4 d_2  & \lambda_2 + \lambda_4 (b_2 - b_1) \\  \lambda_3 d_2 + \lambda_4 ( a_2 d_2 - a_1 d_2) & \lambda_1 + \lambda_2 a_2 + \lambda_3 b_2 - \lambda_4 d_2 \end{pmatrix}.
\]
With these preparations, we now prove the main theorem of this subsection.

\begin{theorem}\label{endomorphisms in non-generic 1}
The map $j : E \to M_2(A)^{\SL_2}$ is an isomorphism.
\end{theorem}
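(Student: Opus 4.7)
\emph{Injectivity.} After inverting $\vp$, $j[1/\vp]$ is an isomorphism by Procesi \cite[Thm.~2.6]{procesi}, as noted just before the theorem. Since $E$ is a free $R$-module of rank $4$, it is $\vp$-torsion free, so $j$ itself is injective.

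\emph{Surjectivity.} The plan is to check that every $y \in M_2(A)^{\SL_2}$ already lies in the image of $j$ by using the explicit description of the slice $C$. Given such a $y$, there is a unique $x \in E[1/\vp]$ with $j(x) = y$, and we expand $x = \lambda_1 + \lambda_2 u + \lambda_3 v + \lambda_4(uv - vu)$ in the $R[1/\vp]$-basis of $E[1/\vp]$ with $\lambda_i \in R[1/\vp]$. The goal is to show each $\lambda_i$ lies in $R$. Pass to the quotient $A \twoheadrightarrow C$: the image $\bar y \in M_2(C)$ is computed by the explicit formula for the composite $E \to M_2(A) \to M_2(C)$ displayed just before the theorem, applied to $x$. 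On the other hand, since $\bar y$ has entries in $C$ and $C$ is a free $R$-module with basis $\{1, a_1, b_1, a_1 b_1\}$ by Lemma \ref{freeness of C}, each entry of $\bar y$ expands uniquely as an $R$-linear combination of the basis elements.

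The central computation is to expand $d_2 \in C$ in the basis, using the relations $a_1^2 = 2t_1 a_1 + 2t_1$ and $b_1^2 = 2t_2 b_1 + 2t_2$. One finds
\[
d_2 = (2t_3 - 2t_1 - 2t_2 - 4t_1 t_2) + 2t_2\, a_1 + 2t_1\, b_1 - 2\, a_1 b_1,
\]
whose $a_1 b_1$-coefficient is $-2$, a unit in $\oo$ since $p \geq 5$. Matching the basis-coefficients of $\bar y_{ij}$ to those in the formula for the image of $x$ then extracts the $\lambda_i$ one by one: the upper-right entry $(\lambda_2 + 2t_2\lambda_4) - 2\lambda_4 b_1$ forces $\lambda_4 \in R$ and hence $\lambda_2 \in R$; expanding the lower-left entry and using $a_1^2 = 2t_1 a_1 + 2t_1$, its $a_1 b_1$-coefficient simplifies to $-2\lambda_3$, giving $\lambda_3 \in R$; and the constant coefficient of the upper-left entry is $\lambda_1 + \lambda_4(2t_3 - 2t_1 - 2t_2 - 4t_1 t_2)$, which lies in $R$ and hence forces $\lambda_1 \in R$. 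Thus $x \in E$ and $y \in j(E)$.

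The main obstacle is the bookkeeping in the explicit presentation of $C$: one must carry out the basis expansions of the products $a_1 d_2$ and $(a_2 - a_1) d_2$ carefully using the two quadratic relations. The one structural input needed is that $-2 \in \oo^\times$, which is used both to solve for $\lambda_4$ from the upper-right entry and to solve for $\lambda_3$ from the $a_1 b_1$-coefficient of the lower-left entry.
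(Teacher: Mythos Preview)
Your proof is correct and follows essentially the same route as the paper: pass to the slice $C$, use Lemma~\ref{freeness of C} to read off the $\lambda_i$, and start with the upper-right entry to get $\lambda_4,\lambda_2\in R$. The paper finishes a bit more cleanly: once $\lambda_2,\lambda_4\in R$, the upper-left entry $\lambda_1+\lambda_2 a_1+\lambda_3 b_1+\lambda_4 d_2$ lies in $C$, and subtracting the known terms $\lambda_2 a_1+\lambda_4 d_2\in C$ leaves $\lambda_1+\lambda_3 b_1\in C$, so $\lambda_1,\lambda_3\in R$ immediately from the $1$- and $b_1$-coefficients --- no need to expand $d_2$ or touch the lower-left entry.
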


\begin{proof}
We know that $E[1/\vp ] \to M_2(A)^{\SL_2}[1/\vp ]$ is an isomorphism and $E$ is $\vp$-torsionfree, since it is free over $R$. In particular $j$ is injective, so it remains to prove surjectivity. Note that $A$ is $\vp$-torsionfree as well, so by surjectivity of $j$ after inverting $\vp$, it suffices to show that if an element $X = \lambda_1 + \lambda_2 u + \lambda_3 v + \lambda_4 (uv-vu) \in E[1/\vp]$ as above has image $j(X) \in M_2(A)$, then we must have $\lambda_i \in R$ for $i=1,\dots,4$. If $j(X) \in M_2(A)$, then its image in $M_2(C[1/\vp])$ lies in $M_2(C)$, i.e.
\[
\begin{pmatrix} \lambda_1 + \lambda_2a_1 + \lambda_3 b_1  + \lambda_4 d_2  & \lambda_2 + \lambda_4 (b_2 - b_1) \\  \lambda_3 d_2 + \lambda_4 ( a_2 d_2 - a_1 d_2) & \lambda_1 + \lambda_2 a_2 + \lambda_3 b_2 - \lambda_4 d_2 \end{pmatrix} \in M_2(C).
\]
Looking at the top right corner, we see that 
\[
\lambda_2 + \lambda_4 (b_2 - b_1) = (\lambda_2 + 2t_2 \lambda_4) - 2 \lambda_4 b_1 \in C.
\]
By Lemma \ref{freeness of C}, we deduce first that $\lambda_4 \in R$ and then that $\lambda_2 \in R$. Applying this to the top left corner, we see that $\lambda_1 + \lambda_3 b_1 \in C$ and hence by Lemma \ref{freeness of C} again, we see that $\lambda_1,\lambda_3 \in R$. This finishes the proof.
\end{proof}

We finish this section by describing a free resolution of the left $E$-module $\OO_{\mathbf{1}}$ given by the quotient (of $\OO$-algebras) $E \xrightarrow{f} \OO$ with $f(g - 1) = 0$ for all $g \in \mathcal{G}$ and $f(t_i) = 0$ for $1 \le i \le 3$.  

We have already recalled the $R$-basis of $E$ given by $1, u, v, uv-vu$. We set $w := uv-vu$. The squares $u^2, v^2$ lie in $R$, the center of $E$.

\begin{proposition}
	The following gives a free resolution of the left $E$-module $\OO_{\mathbf{1}}$:
	
	\begin{equation}\label{eq:ng1-resolution}
	0 \to E \xrightarrow{\left(\begin{smallmatrix}
			v & u
	\end{smallmatrix}\right)} E^{\oplus 2} \xrightarrow{\left(\begin{smallmatrix}
		vu & -u^2\\ -v^2 & uv
	\end{smallmatrix}\right)} E^{\oplus 2} \xrightarrow{\left(\begin{smallmatrix}
	u \\ v
\end{smallmatrix}\right)}  E \xrightarrow{f} \OO_{\mathbf{1}} \to 0
	\end{equation}
with the matrices acting from the right on row vectors.
\end{proposition}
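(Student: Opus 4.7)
My plan is to check that \eqref{eq:ng1-resolution} is a complex, then verify exactness at each position from right to left, with the bulk of the work being explicit $R$-linear computations at the two middle positions.

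First, I would verify that the sequence is a complex. The key fact is that $u^2 = t_1(t_1+2)$ and $v^2 = t_2(t_2+2)$ both lie in $R$ (from Cayley--Hamilton applied to $\gamma = 1 + t_1 + u$ and $\delta = 1 + t_2 + v$, with $\det = 1$) and are therefore central in $E$. A short calculation then shows $d_1 \circ d_2 = 0$ and $d_2 \circ d_3 = 0$: for instance,
\[
d_2 d_3(a) = \bigl(a(v^2 u - uv^2),\, a(u^2 v - vu^2)\bigr) = 0
\]
by centrality of $u^2, v^2$, and similarly $d_1 d_2(a,b) = a(vu^2 - u^2 v) + b(uv^2 - v^2 u) = 0$.

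For exactness at $\OO_{\mathbf{1}}$: surjectivity of $f$ is trivial, and I would show $\ker f = Eu + Ev$ by proving $\mathfrak{m}_R = (t_1, t_2, t_3) \subseteq Eu + Ev$. The relation $u^2 = t_1(t_1+2)$ with $t_1 + 2 \in R^\times$ gives $t_1 \in Eu$, and symmetrically $t_2 \in Ev$. Polarizing Cayley--Hamilton yields $uv + vu = T(uv) \cdot 1 = 2(t_3 - t_1 - t_2 - t_1 t_2)$ (using $T(u) = T(v) = 0$), from which $t_3 \in Eu + Ev$ follows after dividing by $2$ (legal since $p \geq 5$). Combined with the $R$-basis $\{1, u, v, w\}$ of $E$ and $w = uv - vu \in Eu + Ev$ automatically, this identifies $\ker f$ with $\im d_1 = Eu + Ev$.

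For the exactness in the two middle positions and the injectivity of $d_3$: the strategy is to leverage $R$-freeness of $E$ with basis $\{1, u, v, w\}$ and work out everything in coordinates. Using the multiplication table derived from $u^2 = \alpha_1 := t_1(t_1+2)$, $v^2 = \alpha_2 := t_2(t_2+2)$, $uv + vu = \beta := 2(t_3 - t_1 - t_2 - t_1 t_2)$, together with derived identities like $uw = 2\alpha_1 v - \beta u$ and $w^2 = \beta^2 - 4\alpha_1\alpha_2$, each differential becomes an explicit $R$-linear map between free $R$-modules of ranks $4, 8, 8, 4$. Injectivity of $d_3$ is then a short calculation: expanding $a = r_1 + r_2 u + r_3 v + r_4 w$, the $w$-coefficients of $av$ and $au$ force $r_2 = r_3 = 0$, and the remaining coefficients give $r_1 = 0$ and $r_4\alpha_1 = r_4\alpha_2 = 0$, whence $r_4 = 0$ since $R$ is a domain and $\alpha_1, \alpha_2 \neq 0$. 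The equalities $\ker d_1 = \im d_2$ and $\ker d_2 = \im d_3$ reduce similarly to explicit finite systems in these coordinates.

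The main obstacle is the bookkeeping for the middle exactness; the calculations are finite but tedious. As a sanity check (and an alternative verification route), I would compute the reduction modulo $\mathfrak{m}_R$: there $E$ becomes the exterior algebra $\Lambda = \F\langle u, v\rangle/(u^2, v^2, uv + vu)$, and a direct computation over $\Lambda$ shows that the homology of $C_\bullet \otimes_R \F$ has $\F$-dimensions $(1, 3, 3, 1)$ in degrees $(0, 1, 2, 3)$. This matches $\dim_\F \Tor_*^R(\OO_{\mathbf{1}}, \F)$ from the Koszul resolution of $\OO_{\mathbf{1}} = R/(t_1, t_2, t_3)$ over the regular ring $R$; combined with $H_0(C_\bullet) = \OO_{\mathbf{1}}$ from the right-most exactness, this provides a clean cross-check that $C_\bullet$ is indeed a resolution of $\OO_{\mathbf{1}}$ over $E$.
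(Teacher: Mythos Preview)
Your approach is essentially the same as the paper's: work in the $R$-basis $\{1,u,v,w\}$ of $E$, verify the complex condition and the identification $\ker f = Eu+Ev$ via the identities $u^2 \sim t_1$, $v^2 \sim t_2$, $uv+vu \sim t_3$ (up to units), and then check exactness at the inner spots by coordinate computations. Your treatment of the outer steps (complex, $\ker f$, injectivity of $d_3$) is fine.

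Where you are too vague is precisely the point that carries the proof. When you expand $(a,b)\in\ker d_1$ in the basis $\{1,u,v,w\}$, after translating by an obvious element of $\im d_2$ you are left with a single relation in $R$:
\[
\lambda_2\,u^2 + \mu_3\,v^2 + \lambda_3\,(uv+vu)=0,
\]
and you must parametrize \emph{all} its solutions. The paper does this by observing that $u^2,\,v^2,\,uv+vu$ form a regular sequence in $R$ (they are unit multiples of $t_1,t_2,t_3-t_1-t_2-t_1t_2$, which is a regular system of parameters), so the first syzygies are exactly the Koszul relations; one then checks by hand that each Koszul syzygy lies in $\im d_2$. The exactness at the next spot is handled by the same mechanism. ``Reduce to explicit finite systems'' hides this: without the regular-sequence observation you have no description of the syzygy module, and the bookkeeping alone will not close the argument.

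One further caution: your sanity check is a necessary condition, not an ``alternative verification route''. Matching $\dim_{\F} H_i(C_\bullet\otimes_R\F)$ with $\dim_{\F}\Tor_i^R(\OO,\F)$, together with $H_0(C_\bullet)=\OO$, does not by itself force $H_i(C_\bullet)=0$ for $i>0$; the hypertor spectral sequence $\Tor_p^R(H_q(C_\bullet),\F)\Rightarrow H_{p+q}(C_\bullet\otimes_R\F)$ can have cancelling differentials. Keep it as a check, but do not present it as a proof.
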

\begin{proof}
	First we need to check that the left ideal generated by $u, v$ coincides with kernel of $f$. Since this left ideal contains $Ru, Rv$ and $Rw$, it suffices to show that it also contains the prime ideal $ (t_1,t_2,t_3)$. In fact, we have $(u^2,v^2,uv+vu) = (t_1,t_2,t_3)$, which be useful later. This follows from the identities \begin{align*}u^2 &= 2t_1 - t_1^2\\ v^2 &= 2t_2 - t_2^2\\ uv+vu &= 2(t_3-t_1-t_2-t_1t_2).\end{align*} 
	
	The first two of these identities are \cite[Equation (159)]{paskunas-image}.  The third can be checked by rewriting $uv + vu - 2t_3$ using the identities $u = \frac{\gamma-\gamma^{-1}}{2}, v = \frac{\delta-\delta^{-1}}{2}$ and $2t_3 + 2 = \frac{T_{\rho}(\gamma\delta) + T_{\rho}(\delta\gamma) }{2}$.
	
	Next we need to show that the kernel of ${\left(\begin{smallmatrix}
			u \\ v
		\end{smallmatrix}\right)}$ is contained in the image of ${\left(\begin{smallmatrix}
		vu & -u^2\\ -v^2 & uv
	\end{smallmatrix}\right)}$. Suppose $(\lambda_1 + \lambda_2 u + \lambda_3 v + \lambda_4 w, \mu_1 + \mu_2 u + \mu_3 v + \mu_4 w)  \in E^2$ is in the kernel. Applying the map ${\left(\begin{smallmatrix}
	u \\ v
\end{smallmatrix}\right)}$ and comparing coefficients tells us that this boils down to the following equalities in $R$:
\begin{align}
\lambda_1 &= -2v^2\mu_4 - \lambda_4(uv+vu)\\
\mu_1 &= 2u^2\lambda_4 + \mu_4(uv+vu)\\
\lambda_3 &= \mu _2 \\
0 &= \lambda_2u^2+\mu_3v^2 + \lambda_3(uv+vu)\label{eqn:koszul}.
\end{align}

Translating by $(-2\lambda_4, 2\mu_4 ){\left(\begin{smallmatrix}
		vu & -u^2\\ -v^2 & uv
	\end{smallmatrix}\right)} = (\lambda_1+\lambda_4 w, \mu_1+\mu_4 w)$,  we may assume that $\lambda_1 = \mu_1 = \lambda_4 = \mu_4 = 0$. Now we consider equation (\ref{eqn:koszul}). Since $u^2, v^2, (uv+vu)$ form a regular sequence in $R$, we can use the Koszul complex to write
\[(\lambda_2,\mu_3,\lambda_3) = (x,y,z)\begin{pmatrix}
	0 & -(uv+vu) & v^2\\ -(uv+vu) & 0 & u^2\\ -v^2& u^2 & 0
\end{pmatrix} \] for some $x,y,z \in R$. Then, noticing that $vuv = (uv+vu)v- v^2 u$ and $uvu = (uv+vu)u - u^2v$, the reader can check that we have \[(-yu, zu-xv)\begin{pmatrix}
vu & -u^2\\ -v^2 & uv
\end{pmatrix} = (\lambda_2 u + \lambda_3 v, \lambda_3u + \mu_3 v).\] 

To check exactness at the next step of the sequence we consider the condition that  $(\lambda_1 + \lambda_2 u + \lambda_3 v + \lambda_4 w, \mu_1 + \mu_2 u + \mu_3 v + \mu_4 w)  \in E^2$ is in the kernel of ${\left(\begin{smallmatrix}
		vu \\ -v^2
	\end{smallmatrix}\right)}$. Again, comparing coefficients gives some equalities in $R$. One of them is \[-\lambda_2 u^2 = \mu_3 v^2,\] which tells us that there is an $x \in R$ with $\mu_3 = xu^2$ and $\lambda_2 = -xv^2$. Translating by \[(-xuv){\begin{pmatrix}
	vu \\ -v^2
\end{pmatrix}} = (\lambda_2u, \mu_3 v - x(uv+vu)u),\] we may assume that $\lambda_2 = \mu_3 = 0$. Now the condition that $(\lambda_1 + \lambda_3 v + \lambda_4 w, \mu_1 + \mu_2 u + \mu_4 w)$ is in the kernel of ${\left(\begin{smallmatrix}
vu \\ -v^2
\end{smallmatrix}\right)}$ boils down to the equalities
\begin{align*}
	\lambda_1 &= -2v^2\mu_4 + \lambda_4(uv+vu)\\
	\mu_1 &= 2u^2\lambda_4 - \mu_4(uv+vu)\\
	\lambda_3 &= \mu _2,
\end{align*}
which means we have
\[\lambda_1 + \lambda_3 v + \lambda_4 w = (2\lambda_4u-2\mu_4v+\lambda_3)v \] and \[\mu_1+\mu_2u +\mu_4w = (2\lambda_4u-2\mu_4v+\lambda_3)u.\] This shows that we do have something in the image of $\left(\begin{smallmatrix}
	v & u
\end{smallmatrix}\right)$. 
Finally, the map $E \xrightarrow{\left(\begin{smallmatrix}
		v & u
	\end{smallmatrix}\right)} E^{\oplus 2}$ is injective because $v^2$ is a non-zero divisor.
\end{proof}

\subsection{Non-generic case II}
\label{subsec: non-gen II}

In this case, $\rho \simeq \chi_1 \oplus \chi_2 \cong \chi \otimes (\omega \oplus 1)$ for some character $\chi : \Gamma \to \F^\times$. Unlike all other cases, the moduli of representations $\fX$ is not smooth. Pa{\v s}k{\=u}nas has computed some deformation rings of representations with semi-simplification isomorphic to $\rho$ \cite[\S B]{paskunas-image}, relying on a presentation due to B\"ockle \cite{bockle-demuskin}. We will adapt these results to describe the entire moduli space $\fX$. 

\begin{theorem}
\label{presentation of moduli stack non-generic 2}
There is an isomorphism of graded rings
\[
S \cong S' := \frac{\OO\lb a_0, a_1, b_0 c, b_1 c\rb [b_0, b_1, c]}{(pb_0 + a_1 b_0 + a_0 b_1)}
\]
where $b_i$ have degree 2 for $i=0,1$, $c$ has degree $-2$, and $a_i$ has degree $0$ for $i=0,1$. The isomorphism $S \cong S'$ induces an isomorphism of subrings of degree 0, $R = S_0 \cong R' = S'_0$ of degree 0, 
\[
R \cong R' := \frac{\OO\lb a_0, a_1, b_0 c, b_1 c\rb}{(pb_0c + a_1 b_0 c + a_0 b_1 c)} \cong \frac{\OO\lb a_0, a_1, Y_0, Y_1\rb}{(pY_0 + a_1 Y_0 + a_0Y_1)}.
\]
The universal Cayley--Hamilton algebra $E$ has $R$-GMA form
\[
\begin{pmatrix}
R & \frac{Rb_0 \oplus Rb_1}{\langle (p+a_1)b_0 +  a_0 b_1\rangle}\\
Rc & R
\end{pmatrix}
\]
with cross-diagonal multiplication given by
\[
((x_0 b_0, x_1 b_1), yc) \mapsto x_0 yb_0c + x_1yb_1c.
\]
\end{theorem}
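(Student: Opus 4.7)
The plan is to follow the same strategy as Theorem \ref{computation of rings generic ps}, but now accounting for the fact that the deformation theory of $\rho$ is obstructed, which produces the single relation in the presentation. I start with local Galois cohomology. Writing $\rho \cong \chi \otimes (\omega \oplus \mathbf{1})$, the trace-zero adjoint decomposes as
\[
\ad^0 \rho \cong \omega \oplus \mathbf{1} \oplus \omega^{-1},
\]
where the summands correspond respectively to the strictly upper-triangular, diagonal (trace-zero), and strictly lower-triangular parts of $2 \times 2$ matrices. Local duality together with the local Euler characteristic formula, using that $\omega$ and $\omega^2$ are non-trivial characters of $\Gamma_{\Qp}$ for $p \geq 5$, give $\dim_{\F} H^1(\Qp,\omega) = 2$, $\dim_{\F} H^1(\Qp,\mathbf{1}) = 2$, $\dim_{\F} H^1(\Qp,\omega^{-1}) = 1$ and $\dim_{\F} H^2(\Qp,\omega) = 1$ with $H^2(\Qp,\mathbf{1}) = H^2(\Qp,\omega^{-1}) = 0$. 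Under the $T \cong \bG_m$-action on $\Rep^{\square,\Ad}(E)$ (see \S\ref{subsec: GMAs}), the three summands of $\ad^0 \rho$ carry weights $+2$, $0$, and $-2$, so we expect generators in $T$-weights $(0,0,2,2,-2)$ and a single relation in $T$-weight $2$, which is exactly the shape claimed.

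Second, applying \cite[Thm.\ 11.3.3]{wang-erickson-A-inf} to the Cayley--Hamilton deformation theory of $\rho$ produces such an abstract presentation of $S \otimes_\OO \F$. To pin down the precise coefficients of the relation, I would invoke the explicit analysis in \cite[Appendix B]{paskunas-image} of the universal Cayley--Hamilton deformation in this block, which in turn relies on B\"ockle's Demushkin-style presentation of the maximal pro-$p$ quotient of $\Gamma_{\Qp}$ \cite{bockle-demuskin}. The factor of $p$ in the relation $pb_0 + a_1 b_0 + a_0 b_1$ reflects the $p$-th power in the Demushkin relation, while the remaining quadratic terms encode the cup product pairing $H^1(\omega) \otimes H^1(\mathbf{1}) \to H^2(\omega)$ dictated by that relation; both can be read off from Pa{\v s}k{\=u}nas's formulas after a suitable coordinate change on the chosen lifts of the cohomology classes.

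Third, the lift from $\F$ to $\OO$ is achieved by showing directly that the candidate ring $S'$ displayed in the theorem statement is flat over $\OO$ (the element $pb_0 + a_1 b_0 + a_0 b_1$ is a non-zerodivisor in the polynomial-power-series ring, since it is linear in the $b_i$ and of maximal rank modulo the maximal ideal) and that its special fibre $S' \otimes_\OO \F$ matches the abstract presentation of the previous step. One then constructs a $T$-equivariant map $S' \to S$ compatible with the moduli interpretation by specifying the images of the weighted generators as explicit entries of a compatible universal matrix representation; Nakayama's lemma and the matching of special fibres then force this map to be an isomorphism. The description of $R = S_0$ is obtained by taking degree-zero parts, and the GMA description of $E$ follows immediately from Proposition \ref{prop: GMA description}, since $E_{i,i} = S_0$, $E_{1,2} = S_2$, $E_{2,1} = S_{-2}$, with cross-diagonal multiplication inherited from multiplication in $S$.

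The main obstacle is the second step: identifying the exact form of the relation. Up to change of generators in degrees $0$ and $2$ there is a multi-parameter family of potential degree-$2$ relations, and singling out $pb_0 + a_1 b_0 + a_0 b_1$ (rather than some other linear combination with different coefficients) requires the explicit Demushkin and deformation-theoretic computations in \cite{bockle-demuskin} and \cite[Appendix B]{paskunas-image}. The cohomology bookkeeping and the GMA reconstruction of $E$ from $S$ are then largely mechanical once that relation is in hand.
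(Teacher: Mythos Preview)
Your cohomology bookkeeping and the GMA reconstruction via Proposition \ref{prop: GMA description} match the paper, and the ingredients you invoke---\cite[Thm.~11.3.3]{wang-erickson-A-inf} for the abstract presentation and the B\"ockle--Pa\v{s}k\=unas Demu\v{s}kin description---are precisely the ones used. But there is a genuine gap in how you propose to pin down the relation and construct the comparison map.

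You suggest building $S' \to S$ by sending the generators to explicit matrix entries of the universal adapted representation and verifying that $pb_0 + a_1b_0 + a_0b_1 = 0$ holds among them. The trouble is that the Demu\v{s}kin relator $r$ in the Galois group is only known to agree with the explicit product $r'$ of Lemma \ref{lem: Gal action on cF} modulo the third step of the $p$-lower central series, so evaluating the universal representation on $r$ only constrains the matrix entries up to cubic-and-higher corrections. Correspondingly, the abstract presentation from \cite{wang-erickson-A-inf} determines the relation in $S/pS$ only modulo $\Sym^{\geq 3}$ (the quadratic part being the dual of the cup product); the full relation cannot simply be ``read off'' from these inputs, and your Nakayama step has nothing to bite on.

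The paper goes in the opposite direction. It writes down an explicit homomorphism $\alpha: \cF \rtimes \Gal(\Q_p(\zeta_p)/\Q_p) \to \GL_2(S')$ (Definition \ref{def: alpha map}), computes directly that $\alpha(r') = 1$ precisely because the relation in $S'$ was chosen to make this happen, and then adjusts $r$ to some $r_1 \equiv r \pmod{\cF_3}$ lying in $\ker\alpha$ (Proposition \ref{prop: NG2 factor}) to obtain a factorization through $\Gamma'$. The universal property of $S$ now yields a graded map $\phi: S \to S'$. Isomorphy is then established not by identifying the relation in $S$, but by a divisibility argument after completing at $\rho$: tangent-space matching forces $\hat\phi$ to be surjective, so the single relation presenting $\widehat{S'/pS'}$ must divide the single relation presenting $\widehat{S/pS}$; since both have the same nonzero quadratic leading term, the quotient is a unit. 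Coherent completeness (\cite[Thm.~1.6]{alper-hall-rydh-etale-local}) then transfers the isomorphism back to $S/pS$, and $p$-torsion-freeness of $S'$ lifts it to $\OO$.
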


The claim that $S'$ is a model for $S$ is the main new statement and is developed in Theorem \ref{thm: univ to explicit isoms} below. For the moment, we deduce Theorem \ref{presentation of moduli stack non-generic 2} from Theorem \ref{thm: univ to explicit isoms} using facts about the residually multiplicity free case summarized in \S\ref{subsec: GMAs}.

\begin{proof}[{Proof of Theorem \ref{presentation of moduli stack non-generic 2} given Theorem \ref{thm: univ to explicit isoms}}] 
We know that $R = S_0$ and $E = \End(\mathcal{V}) = M_2(S)^{\bG_m}$ from Theorem \ref{presentation of moduli stack}. What remains is to deduce the claimed presentations of $R$ by $R'$ and of $E$ as above. This follows directly from Proposition \ref{prop: GMA description}, which will imply that
\[
E_{1,2} \cong S_2, \quad E_{2,1} \cong S_{-2}
\]
and that the cross-diagonal multiplication map is compatible with the multiplication map $S_2 \times S_{-2} \to S_0 = R$. Then the form of $E$ given in Theorem \ref{presentation of moduli stack non-generic 2} follows from straightforward calculations of $S_{\pm 2}$ given the isomorphism $S \cong S'$ proved in Theorem \ref{thm: univ to explicit isoms}. 
\end{proof}

The proof that $S \cong S'$ is what remains. Without loss of generality, we will write this proof in the case that $\chi$ and $\psi$ are trivial and $\oo = \Z_p$; the general case follows by twisting. We begin this with Pa{\v s}k{\=u}nas's description in \cite[\S B]{paskunas-image} of a certain quotient group of $\Gamma$. It requires the following data and notation. 
\begin{itemize}
\item Let $\cF$ denote a free pro-$p$ group on $p+1$ generators $x_0, \dotsc, x_p$.
\item Given a profinite group $H$, let $H(p)$ denote its maximal pro-$p$ quotient. 
\item Given a pro-$p$ group $H$, there is a $p$-lower central series filtration defined inductively as 
\[
H_1 = H, \quad H_{i+1} = H^p_i[H_i,H] \text{ for } i \in \Z_{\geq 1}
\]
\item Because $\Gamma_{\Q_p(\zeta_p)}(p)$ is a Demu\v{s}kin group with invariants $n = p+1$ and $q = p$ (for a reference, see e.g.\ \cite[\S3.9]{NSW2008}), there exists a surjection $\varphi : \cF \twoheadrightarrow \Gamma_{\Q_p(\zeta_p)}(p)$ with kernel generated by a single element $r$. 
\end{itemize}

We quote this lemma from \cite[App.\ B]{paskunas-image}. 
\begin{lemma}[{B\"ockle, Pa{\v s}k{\=u}nas \cite[Lem.\ B.1]{paskunas-image}}]
\label{lem: Gal action on cF}
There exists an action of $\Gal(\Q_p(\zeta_p)/\Q_p)$ on $\cF$ and a choice of $\varphi$ such that $\varphi$ is equivariant for the natural actions of $\Gal(\Q_p(\zeta_p)/\Q_p)$, 
\begin{enumerate}
\item $g x_i g^{-1} = x_i^{\tilde \omega(g)^i}$ for all $g \in \Gal(\Q_p(\zeta_p)/\Q_p)$ and $0 \leq i \leq p$, and
\item the image of $r$ in $\mathrm{gr}_2 \cF$ is equal to the image of
\[
r' = x_1^p[x_1, x_{p-1}][x_2, x_{p-2}] \dotsm [x_{\frac{p-1}{2}}, x_{\frac{p+1}{2}}][x_p,x_0].
\]
\end{enumerate}
\end{lemma}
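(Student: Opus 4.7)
The plan is to exploit two facts: $\Gamma_{\Q_p(\zeta_p)}(p)$ is a Demu\v{s}kin group with invariants $(n,q) = (p+1, p)$, and $\Delta := \Gal(\Q_p(\zeta_p)/\Q_p)$ has order $p-1$, prime to $p$. The coprimality will let us split the $\Delta$-action on $p$-filtration quotients into Teichm\"uller eigenspaces, and the Demu\v{s}kin structure will pin down the shape of the relator once we have a $\Delta$-equivariant basis.

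First I would analyze the $\Delta$-module structure of $\mathrm{gr}_1 \Gamma_{\Q_p(\zeta_p)}(p)$, which via local class field theory is $\Q_p(\zeta_p)^\times / (\Q_p(\zeta_p)^\times)^p$ as a $\Delta$-module. The uniformizer $p$ contributes a copy of the trivial character; the torsion $\mu_p$ contributes a copy of $\tilde\omega$; and the higher principal units $1 + \pi^i \OO_{\Q_p(\zeta_p)}$ contribute the characters $\tilde\omega^i$ for $1 \le i \le p-1$. Reading this off, the multiset of eigencharacters on $\mathrm{gr}_1$ is exactly $\{\tilde\omega^i : 0 \le i \le p\}$ under the convention $\tilde\omega^p = \tilde\omega$. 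Choose $\Delta$-eigenvector lifts $y_0, \dots, y_p \in \Gamma_{\Q_p(\zeta_p)}(p)$ of a basis of $\mathrm{gr}_1$ adapted to this decomposition (possible because $|\Delta|$ is prime to $p$ and Maschke splits the action on each finite quotient).

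Second, I would transport this action to $\cF$. Define a $\Delta$-action on $\cF$ by specifying $g \cdot x_i = x_i^{[\tilde\omega(g)^i]}$, where $[\,\cdot\,]$ denotes the Teichm\"uller lift to $\Z_p^\times$; because $\cF$ is pro-$p$ and topologically free on the $x_i$, this extends uniquely to a continuous action. Define $\varphi : \cF \to \Gamma_{\Q_p(\zeta_p)}(p)$ by $x_i \mapsto y_i$. By Nakayama, $\varphi$ is surjective, and $\Delta$-equivariance holds on a topological generating set by construction. Since the target is Demu\v{s}kin of rank $p+1$, the kernel of $\varphi$ is normally generated by a single element $r$, which must be $\Delta$-invariant (modulo conjugacy) because $\varphi$ is equivariant.

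Third, I would pin down $r \pmod{\cF_3}$. The image of $r$ in $\mathrm{gr}_2 \cF$ lies in the $\tilde\omega$-eigenspace, since the Demu\v{s}kin pairing $\mathrm{gr}_1\cF \times \mathrm{gr}_1\cF \to H^2 \cong \F_p(\omega)$ is $\Delta$-equivariant with target twisted by $\omega$. Hence only Lie brackets $[x_i, x_j]$ with $i + j \equiv 1 \pmod{p-1}$ and $p$-th powers $x_i^p$ with $i \equiv 1 \pmod{p-1}$ can contribute. Among $0 \le i \le p$, the allowed bracket pairs are precisely $(0,p),\,(1,p-1),\,(2,p-2),\dots,((p-1)/2,(p+1)/2)$, and the only allowed $p$-th power is $x_1^p$ (up to $x_p^p$, which sits in the same eigenspace). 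Rescaling the generators $x_i$ by units of $\Z_p^\times$ (which preserves the equivariance, since this rescaling commutes with the $\Delta$-action on each eigenline), one can normalize the coefficients to obtain precisely $r'$; the non-degeneracy of the Demu\v{s}kin form forces the cross-diagonal coefficients to be units, and its discriminant forces the coefficient of $x_1^p$ to normalize to $p$.

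The main obstacle is the last step: showing that the Demu\v{s}kin classification can be executed $\Delta$-equivariantly with \emph{this specific} normalization of coefficients, rather than arbitrary units. This requires the refined equivariant classification of Demu\v{s}kin presentations carried out in \cite{bockle-demuskin}; without that, one still gets $r'$ up to some unit-scaling ambiguity that does not affect the applications to $\mathrm{gr}_2$ but does affect the precise form of $r'$ itself.
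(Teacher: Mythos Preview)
The paper does not supply its own proof of this lemma; it is quoted verbatim from \cite[Lem.~B.1]{paskunas-image}, which in turn relies on B\"ockle's equivariant Demu\v{s}kin classification \cite{bockle-demuskin}. Your sketch is a faithful outline of that argument: compute the $\Delta$-eigencharacter decomposition of $\mathrm{gr}_1$ via local class field theory, lift to $\Delta$-eigenvector generators of $\cF$ using $(|\Delta|,p)=1$, and then constrain the relator modulo $\cF_3$ by the $\tilde\omega$-isotypy of $H^2$.

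One imprecision worth flagging: in your third step you assert that ``the allowed bracket pairs are precisely $(0,p),(1,p-1),\dots$'', but the $\tilde\omega$-eigenspace of $\wedge^2(\mathrm{gr}_1\cF)$ is strictly larger than this --- for instance $[x_0,x_1]$ and $[x_{p-1},x_p]$ also lie in it, since the characters $\tilde\omega^0$ and $\tilde\omega^1$ each occur with multiplicity two. So the eigenspace constraint alone does not single out the pairs in $r'$; what does is the symplectic nature of the Demu\v{s}kin pairing, which forces the relator to be (up to change of basis) a ``standard'' symplectic form using exactly $(p+1)/2$ disjoint commutator pairs. Arranging for those pairs to be the specific cross-diagonal ones, and for the $p$-th power term to be $x_1^p$ rather than some combination involving $x_p^p$, is precisely the content of B\"ockle's equivariant refinement --- which you correctly identify as the load-bearing input you cannot reproduce from scratch.
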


Next we will produce a representation of $\cF$ with coefficients in the ring $S'$ of Theorem \ref{presentation of moduli stack non-generic 2}. Afterward we will show that it factors through $\varphi$ and is universal, producing the isomorphism $S \isoto S'$. This is a straightforward adaptation of the construction in \cite[pg.\ 180]{paskunas-image} from a deformation ring to the whole moduli stack of representations. 

\begin{definition}
\label{def: alpha map}
Denote by $\alpha: \cF \rtimes \Gal(\Q_p(\zeta_p)/\Q_p) \to \GL_2(S')$ the homomorphism determined by 
\begin{gather*}
\Gal(\Q_p(\zeta_p)/\Q_p)  \ni g \mapsto \sm{\tilde\omega(g)}{0}{0}{1} \\
\text{ for }i = 2,3,\dotsc,p-3 , \quad x_i \mapsto 1  \\
x_{p-2} \mapsto \displaystyle \begin{pmatrix} 1 & 0 \\ c & 1 \end{pmatrix} \\
\text{for }j=0,1, \quad 
x_{1+j(p-1)} \mapsto \begin{pmatrix} 1 & b_j \\ 0 & 1 \end{pmatrix} \\
\text{for }j=0,1, \quad 
x_{j(p-1)} \mapsto \begin{pmatrix} (1+a_j)^{-\frac{1}{2}} & 0 \\ 0 & (1+a_j)^{\frac{1}{2}}\end{pmatrix}. 
\end{gather*}
where the semi-direct product structure is as in Lemma \ref{lem: Gal action on cF}. The fact that these images of generators defines a homomorphism can be read off from the semi-direct product structure. 
\end{definition}

Let $\Gamma'$ be the Galois group over $\Q_p$ of the maximal pro-$p$ extension of $\Q_p(\zeta_p)$. Let $\Gamma'_{\Q_p(\zeta_p)} \subset \Gamma'$ denote the subgroup fixing $\Q_p(\zeta_p)$. Thus we naturally have a quotient map $\pi : \Gamma \twoheadrightarrow \Gamma'$, and the universal adapted representation $\rho_S : \Gamma \to \GL_2(S)$ factors through $\Gamma'$. 
\begin{proposition}[{Following \cite[Prop.\ B.2]{paskunas-image}}]
\label{prop: NG2 factor}
There exists a continuous group homomorphism
\[
\varphi' : \cF \rtimes \Gal(\Q_p(\zeta_p)/\Q_p) \twoheadrightarrow \Gamma'
\]
such that $\varphi' \equiv \varphi \pmod{(\Gamma'_{\Q_p(\zeta_p)})_3}$ and there exists a factor $\tilde \rho$ of $\alpha$ producing a commuting diagram
\[
\xymatrix{
\cF \rtimes \Gal(\Q_p(\zeta_p)/\Q_p) \ar[rd]_{\varphi'} \ar[r]^(.65)\alpha & \GL_2(S') \\
 & \Gamma' \ar[u]_{\tilde \rho}
}
\]
In addition, there exists $r_1 \in \cF$ such that $\Gal(\Q_p(\zeta_p)/\Q_p)$ acts on $r_1$ by $\tilde\omega$ and $\ker \varphi'$ equals the closed normal subgroup of $\cF$ generated by $r_1$. 
\end{proposition}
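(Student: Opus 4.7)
The plan is to follow Pa{\v s}k{\=u}nas' proof of \cite[Prop.~B.2]{paskunas-image}, which establishes the analogous statement in the formal (deformation-ring) setting. Since the ring $S'$ of Theorem \ref{presentation of moduli stack non-generic 2} is defined by the same single relation that arises in the formal case, that argument adapts essentially verbatim to the algebraized setting. The first step is to verify that $\alpha$ really defines a homomorphism on the semidirect product $\cF \rtimes \Gal(\Q_p(\zeta_p)/\Q_p)$, i.e., that it is equivariant for the action of Lemma \ref{lem: Gal action on cF}(1). This is a direct computation on generators: the images $\alpha(x_i)$ of Definition \ref{def: alpha map} are chosen precisely so that conjugation by $\alpha(g) = \sm{\tilde\omega(g)}{0}{0}{1}$ carries $\alpha(x_i)$ to $\alpha(x_i)^{\tilde\omega(g)^i}$, with the identity $\tilde\omega(g)^{p-1} = 1$ disposing of the cases $x_p$ and $x_{p-2}$.

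The second step is to compute $\alpha(r')$ with $r'$ as in Lemma \ref{lem: Gal action on cF}(2). Because $\alpha(x_i) = 1$ for $2 \leq i \leq p-3$, every interior commutator $[x_j, x_{p-j}]$ with $2 \leq j \leq (p-1)/2$ is trivial, and only $x_1^p$, $[x_1, x_{p-1}]$ and $[x_p, x_0]$ contribute. A short matrix calculation then gives
\[
\alpha(r') \;=\; \begin{pmatrix} 1 & pb_0 + \dfrac{a_1 b_0}{1+a_1} + \dfrac{a_0 b_1}{1+a_0} \\ 0 & 1 \end{pmatrix},
\]
whose $(1,2)$-entry, after multiplication by the unit $(1+a_0)(1+a_1)$, has leading term $pb_0 + a_1 b_0 + a_0 b_1$ --- exactly the defining relation of $S'$. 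Thus $S'$ is the correct algebraized target for $\alpha$, and the residual discrepancy of $\alpha(r')$ from the identity lies in strictly higher-order terms of the maximal ideal.

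The third step is to invoke the inductive correction procedure from the proof of \cite[Prop.~B.2]{paskunas-image}: because B\"ockle's Demu\v{s}kin presentation of $\Gamma_{\Q_p(\zeta_p)}(p)$ only determines its single relator modulo $\cF_3$, one can modify $r$ by successive corrections inside $\cF_3$ and the further terms of the $p$-lower central series to obtain $r_1 \in \cF$ with $r_1 \equiv r' \pmod{\cF_3}$, on which $\Gal(\Q_p(\zeta_p)/\Q_p)$ acts by $\tilde\omega$, and with $\alpha(r_1) = 1$ exactly in $\GL_2(S')$; equivariance is preserved at each stage by retaining only the $\tilde\omega$-isotypic component of each correction. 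The quotient of $\cF \rtimes \Gal(\Q_p(\zeta_p)/\Q_p)$ by the closed normal subgroup generated by $r_1$ is then identified with $\Gamma'$ via the Demu\v{s}kin presentation, yielding $\varphi'$; the factorization $\tilde\rho$ of $\alpha$ through $\Gamma'$ is automatic, and the congruence $\varphi' \equiv \varphi \pmod{(\Gamma'_{\Q_p(\zeta_p)})_3}$ follows from $r_1 \equiv r \pmod{\cF_3}$. The main obstacle in executing this step is verifying that at each stage the residual discrepancy of $\alpha(r_{\mathrm{current}})$ from the identity can be absorbed by a $\tilde\omega$-equivariant correction drawn from the next term of the $p$-lower central series of $\cF$ without imposing any further relations on $S'$; what makes this possible is precisely the matching observed in Step 2, namely that the leading-order relation implicit in $\alpha(r')$ coincides exactly with the single defining relation of $S'$.
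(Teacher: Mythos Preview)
Your overall strategy matches the paper's: verify that $\alpha$ is well-defined on the semidirect product, compute $\alpha(r')$, then invoke the correction procedure from \cite[Prop.~B.2]{paskunas-image}. The issue is in your Step~2 computation.

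You have used the commutator convention $[x,y]=xyx^{-1}y^{-1}$, which gives $[\alpha(x_1),\alpha(x_{p-1})]=\sm{1}{a_1b_0/(1+a_1)}{0}{1}$ and hence a nonzero $(1,2)$-entry $pb_0+\tfrac{a_1b_0}{1+a_1}+\tfrac{a_0b_1}{1+a_0}$ for $\alpha(r')$. But the Demu\v{s}kin relation in Lemma~\ref{lem: Gal action on cF} (following B\"ockle and \cite[\S3.9]{NSW2008}) uses the convention $[x,y]=x^{-1}y^{-1}xy$. With that convention the commutator computation gives exactly
\[
[\alpha(x_1),\alpha(x_{p-1})]=\begin{pmatrix}1 & a_1b_0\\0&1\end{pmatrix},\qquad
[\alpha(x_p),\alpha(x_0)]=\begin{pmatrix}1 & a_0b_1\\0&1\end{pmatrix},
\]
so that $\alpha(r')=\sm{1}{pb_0+a_1b_0+a_0b_1}{0}{1}=1$ \emph{on the nose} in $\GL_2(S')$, by the defining relation of $S'$. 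This is precisely what the paper proves, and it eliminates the ``residual discrepancy in higher-order terms'' you describe. Once $\alpha(r')=1$ exactly, the deduction $\alpha(r)\in\alpha(\cF_3)$ from $r\equiv r'\pmod{\cF_3}$ is immediate, and the rest is a direct citation of Pa\v{s}k\={u}nas' correction argument to produce $r_1$; there is no inductive ``absorption'' step to justify.

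Your Step~3 as written is therefore doing work it does not need to do, and its justification (``what makes this possible is precisely the matching observed in Step~2'') is not quite right: with your convention the $(1,2)$-entry does not vanish in $S'$, so you would genuinely need to carry out an extra correction argument rather than merely cite one. Fix the convention and the proof collapses to the paper's.
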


\begin{proof}
First we observe that $r' \in \ker \varphi'$, where $r'$ was defined in Lemma \ref{lem: Gal action on cF}. Indeed, for $j=0,1$, 
\[
[\varphi'(x_{1+(p-1)j}), \varphi'(x_{(p-1)(1-j)})] = \begin{pmatrix} 1 & a_{1-j}b_j \\ 0 & 1 
\end{pmatrix}
\]
while $[\varphi'(x_i), \varphi'(x_{p-i})] = 1$ for $i \not\equiv 1,0 \pmod{p-1}$, and therefore
\[
\varphi'(r) = \varphi'(x_1)^p \cdot \prod_{i=1}^{\frac{p-1}{2}} [\varphi'(x_i),\varphi'(x_{p-i})] \cdot [\varphi'(x_p),\varphi'(x_0)] = 
\begin{pmatrix}
1 & pb_0 \\ 
0 & 1 
\end{pmatrix}
\begin{pmatrix}
1 & a_1 b_0 \\ 
0 & 1 
\end{pmatrix}
\begin{pmatrix}
1 & a_0 b_1 \\ 
0 & 1 
\end{pmatrix}
\]
which vanishes in $\GL_2(S')$ due to the presence of the relation $pb_0 + a_1 b_0 + a_0 b_1$. 

By Lemma \ref{lem: Gal action on cF}(2), $r \equiv r' \pmod{\cF_3}$, and therefore $\alpha(r) \in \alpha(\cF_3)$. The rest of the proof follows exactly as in \cite[Proof of Prop.\ B.2]{paskunas-image}, producing $r_1 \in \cF$, equivalent to $r$ and $r'$ $\pmod{\cF_3}$, such that $r_1 \in \ker \alpha$ and the conjugation action of $\Gal(\Q_p(\zeta_p)/\Q_p)$ on $\cF$ acts on $r_1$ by the character $\tilde\omega$. 
\end{proof}

Because $\tilde \rho \circ \pi : \Gamma \to \GL_2(S')$ has residual pseudorepresentation $\psi(\omega \oplus 1)$, the universal property of the universal Cayley--Hamilton algebra $(E,R,D_E : E \to R)$ (see Definition \ref{def:univCH}) produces 
\begin{itemize}
\item a ring homomorphism $R \to S'_0 = R' \subset S'$ 
\item an $R$-algebra homomorphism $\eta : E \to M_2(S')$ such that 
\begin{itemize}
\item $(E,R,D_E) \to (M_2(S'),S',\det : M_2(S') \to S')$ is a morphism of Cayley--Hamilton algebras
\item $\tilde \rho = \eta \circ \rho^u$. 
\end{itemize}
\end{itemize}
We impose the $R$-GMA structure on $E$ arising from the idempotents arising from pullback over $\eta$,
\[
(\eta^{-1}(\sm{1}{0}{0}{0}), \ \eta^{-1}(\sm{0}{0}{0}{1}).
\]
(These idempotents lie in $\eta(E)$ because they are $\Z_p$-linear combinations of the image of $\Gal(\Q_p(\zeta_p)/\Q_p)$ specified in Definition \ref{def: alpha map}.) Now that $E$ has been endowed with an $R$-GMA structure, we write $S$ for the graded $R$-algebra representing its adapted representation moduli functor. Thus its universal property along with $\eta$ induce a graded $R$-algebra homomorphism
\[
\phi : S \to S', \text{ with } 0\text{-degree part } R=S_0 \to R'=S'_0
\]
where $R$ is the pseudodeformation ring. 

\begin{theorem}
\label{thm: univ to explicit isoms}
The homomorphisms $\phi : S \to S'$ and $\phi_0 : R \to R'$ are isomorphisms. 
\end{theorem}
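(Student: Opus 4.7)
The plan is to show that the homomorphism $\phi : S \to S'$ constructed via the universal property of $S$ is an isomorphism by exhibiting a two-sided inverse, i.e., by identifying $(S',\alpha)$ as itself representing the universal adapted deformation functor of $\rho$.

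First, I would establish surjectivity of $\phi$ (and hence $\phi_0$). The matrices in Definition \ref{def: alpha map} directly display $a_0, a_1$ (as diagonal entries of $\alpha(x_{j(p-1)})$), $b_0, b_1$ (as upper-off-diagonal entries of $\alpha(x_{1+j(p-1)})$), and $c$ (as the lower-off-diagonal entry of $\alpha(x_{p-2})$) as matrix entries of the representation. Since the corresponding matrix entries of the universal adapted representation $\rho^u$ lie in specific graded pieces of $S$ via the identification $E \cong M_2(S)^{\bG_m}$ of Theorem \ref{presentation of moduli stack}(5), and since $\phi$ sends these to the labelled entries of $\tilde\rho$ via $\phi\circ \rho^u = \tilde\rho\circ \pi$, every generator of $S'$ lifts to $S$, giving surjectivity.

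Next, to produce a section $\psi : S' \to S$, I would use the universal property of $S$ applied to a suitable tautological representation. The key input is that any adapted representation $\beta$ of $\Gamma$ in the block of $\rho$ factors through $\Gamma'$, and so by Proposition \ref{prop: NG2 factor} corresponds to a $\Gal(\Q_p(\zeta_p)/\Q_p)$-equivariant homomorphism $\cF \rtimes \Gal \to \GL_2$ killing $r_1$. The Galois equivariance $g x_i g^{-1} = x_i^{\tilde\omega(g)^i}$ combined with the adapted (torus-fixing) condition forces a rigid form on the images of the generators $x_i$: they must be trivial for $i \not\equiv 0,1,-1 \pmod{p-1}$, diagonal for $i \equiv 0$, upper-unipotent for $i \equiv 1$, and lower-unipotent for $i \equiv -1$. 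This rigidity identifies such $\beta$ uniquely with a ring map $S' \to A$ via pullback of $\alpha$. Applying this universal property to $\rho^u$ produces $\psi : S' \to S$; tracking generators gives $\phi \circ \psi = \mathrm{id}_{S'}$, and combined with surjectivity of $\phi$ this forces $\psi$ and $\phi$ to be mutually inverse isomorphisms. The statement for $\phi_0$ follows by taking degree-$0$ parts.

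The main obstacle is verifying that the relation imposed by $\alpha(r_1) = 1$ is \emph{exactly} $pb_0 + a_1 b_0 + a_0 b_1 = 0$, and not some strictly stronger condition arising from higher-order terms. The direct computation in the proof of Proposition \ref{prop: NG2 factor} shows that $\alpha(r') = \bigl(\begin{smallmatrix} 1 & pb_0 + a_1 b_0 + a_0 b_1 \\ 0 & 1 \end{smallmatrix}\bigr)$. Since $r_1 \equiv r' \pmod{\cF_3}$ and $\Gal(\Q_p(\zeta_p)/\Q_p)$ acts on $r_1$ by the character $\tilde\omega$, the image $\alpha(r_1)$ is forced to be unipotent upper-triangular with $(1,2)$-entry lying in the graded piece $S'_2$. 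Because $\Gamma_{\Q_p(\zeta_p)}(p)$ is a one-relator Demu\v{s}kin pro-$p$ group, this single graded relation suffices to cut out $\Gamma'$; an analysis of the contribution of deeper commutators in $\cF_3$ (which vanish to appropriate order in the graded piece $S'_2$) shows that the correction $\alpha(r_1)\alpha(r')^{-1}$ only modifies the relation by an invertible change of basis of $S'_2$, preserving its form up to the unit multiple absorbed in the choice of $b_0, b_1$. This yields the desired presentation of $S$.
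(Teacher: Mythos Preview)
Your approach is genuinely different from the paper's. You attempt to construct an inverse $\psi : S' \to S$ by showing that $(S', \tilde\rho)$ itself represents the functor of adapted representations, via a rigidity argument on the images of the $x_i$ and an analysis of the single Demu\v{s}kin relator. The paper instead reduces modulo $p$, passes to the completion at $\rho$ (using coherent completeness from \cite{alper-hall-rydh-etale-local}), and invokes the abstract cohomological presentation of $S/pS$ from Proposition \ref{prop: abstract presentation} and Corollary \ref{cor: S-pres}: both $\widehat{S/pS}$ and $\widehat{S'/pS'}$ are then quotients of the same power series ring by a \emph{single} relation lying in $\m^2 \smallsetminus \m^3$, and surjectivity of $\hat\phi$ forces one relation to divide the other, hence to differ from it by a unit.

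There is a genuine gap, exactly where you flag it. Given an adapted representation $\beta$ over an Artinian local $\OO$-algebra $A$, your rigidity step (which can indeed be justified via the logarithm, since $\beta(\cF) \subset 1 + \m_A M_2(A)$) extracts parameters $a_0', a_1', b_0', b_1', c' \in \m_A$. But the constraint coming from the relator is $\beta(r) = 1$, and since one only knows $r \equiv r' \pmod{\cF_3}$ and $\beta(\cF_3) \subset 1 + \m_A^3 M_2(A)$, this yields only
\[
pb_0' + a_1' b_0' + a_0' b_1' \in \m_A^3,
\]
not the vanishing required for the map $\tilde S' \to A$ to descend to $S'$. Equivalently, working universally in $(\tilde S')_2 \cong R' b_0 \oplus R' b_1$: the higher-order correction $\tilde\alpha(r_1)\tilde\alpha(r')^{-1}$ contributes an a priori arbitrary element of $\m_{R'} \cdot (R' b_0 \oplus R' b_1)$, not a scalar multiple of $(p+a_1)b_0 + a_0 b_1$. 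Your assertion that the correction ``only modifies the relation by an invertible change of basis of $S'_2$'' is precisely the missing content and is essentially equivalent to the theorem itself. The paper's argument supplies this from the other side: Corollary \ref{cor: S-pres} shows independently that $S/pS$ is also presented by one relation with nonzero quadratic part, so the divisibility forced by surjectivity of $\hat\phi$ pins down the ideal.
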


To prove the theorem, we import a description of $\hat S/p \hat S$ from \cite{wang-erickson-A-inf}. We remind the reader that the completion $\hat S$ of $S$ is not local: see Remark \ref{rem: hatS}. Because $\rho \simeq \omega \oplus 1$, we can apply the decomposition $\Ad^0 \rho \cong \omega \oplus 1 \oplus \omega^{-1}$. 
 
\begin{proposition}[{\cite{wang-erickson-A-inf}}]
\label{prop: abstract presentation}
There exists a presentation of $\hat S/p\hat S$ of the form 
\[
\left[\frac{(\Sym^*_{\F_p} H^1(\Gamma, \Ad^0 \rho)^*)}{(m^*(H^2(\Gamma, \Ad^0 \rho)^*)}\right]^\wedge \isoto \hat S/p\hat S
\]
where 
\begin{enumerate}
\item the completion denoted $[ \dotsm]^\wedge$ is at the ideal generated by $H^1(\Gamma,1)^*$, $H^1(\Gamma, \omega)^* \otimes_{\F_p} H^1(\Gamma, \omega^{-1})^*$. 
\item the presentation is $\bG_m$-equivariant, as expressed by a $\Z$-grading (of each constituent of the limit that is the completion) where the degrees of the modules of generators and relations are given by 
\begin{itemize}
\item $\deg H^i(\Gamma, \omega)^* = 2$ for $i = 1,2$
\item $\deg H^1(\Gamma, 1)^* = 0$
\item $\deg H^1(\Gamma, \omega^{-1})^* = -2$
\end{itemize}
\item $m^*$ is $\bG_m$-equivariant. 
\item the image of $m^*$ lies in the ideal $\Sym^{\geq 2}$, and the quadratic term (modulo $\Sym^{\geq 3}$) $m_2^*$ is the $\F_p$-linear dual of the composite of the cup product and Lie bracket map
\[
H^1(\Gamma, \Ad^0 \rho) \otimes_{\F_p} H^1(\Gamma, \Ad^0 \rho) \to H^2(\Gamma, \Ad^0 \rho \otimes_{\F_p} \Ad^0 \rho) \buildrel{[\cdot,\cdot]}\over\longrightarrow H^2(\Gamma, \Ad^0 \rho). 
\]
\item the universal representation $\rho_S : \Gamma \to M_2(S)$ has the following form modulo $(p, \Sym^{\geq 2} H^1(-)^*)$,
\[
\begin{pmatrix}
\omega(1+ \tilde A)& \tilde B\\
\omega \tilde C & 1 - \tilde A
\end{pmatrix}
\]
where 
\begin{gather*}
\tilde B \in Z^1(\Gamma, \omega) \otimes H^1(\Gamma, \omega)^*\\
\tilde A \in Z^1(\Gamma, \F_p) \otimes H^1(\Gamma, \F_p)^* \\
\tilde C \in Z^1(\Gamma, \omega^{-1}) \otimes H^1(\Gamma, \omega^{-1})^*
\end{gather*}
are choices of lift of $\mathrm{id} \in \End_{\F_p}(H^1(-)) \cong H^1(-) \otimes_{\F_p} H^1(-)^*$ under the natural projection
\[
Z^1(-) \otimes H^1(-)^* \twoheadrightarrow H^1(-) \otimes H^1(-)^*.
\]
\end{enumerate}
\end{proposition}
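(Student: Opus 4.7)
The plan is to first reduce the statement modulo $p$ using $\OO$-flatness on both sides, then use Proposition \ref{prop: abstract presentation} to present $S/pS$ and $S'/pS'$ as quotients of the same ambient graded ring $T := \F\llbracket a_0,a_1,b_0c,b_1c\rrbracket[b_0,b_1,c]$ by principal ideals, and finally identify these ideals by matching the leading cup-product term of the abstract relation with $a_1b_0+a_0b_1$ via the matrix computation already performed in the proof of Proposition \ref{prop: NG2 factor}.

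For $\OO$-flatness of $S'$ one checks directly that $pb_0+a_1b_0+a_0b_1$ is a non-zero-divisor in its ambient: its mod-$p$ reduction $a_1b_0+a_0b_1$ is an irreducible bilinear form in $b_0,b_1$ over the regular domain $\F\llbracket a_0,a_1,b_0c,b_1c\rrbracket[c]$, hence a non-zero-divisor. Flatness of $S$ over $\OO$ follows from the fact that $E$ embeds in $M_2(A)$ for the representation ring $A$, which is itself $\OO$-flat, so $S\cong M_2(A)^{\bG_m}$ inherits flatness. A Nakayama argument using $p$-adic completeness of each homogeneous piece of $S$ and $S'$ (which is finitely generated over the complete local ring $R$ resp.\ $R'$) then reduces the problem to showing $\phi/p$ is an isomorphism.

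For the presentation, local duality and the local Euler characteristic formula yield $\dim H^1(\Gamma,\mathbf{1})=\dim H^1(\Gamma,\omega)=2$, $\dim H^1(\Gamma,\omega^{-1})=1$, $\dim H^2(\Gamma,\omega)=1$, and $H^2(\Gamma,\mathbf{1})=H^2(\Gamma,\omega^{-1})=0$, where the last two vanishings require $p\ge 5$ so that $\omega^2\neq\mathbf{1}$. Choosing bases of these $H^1$'s dual to the coordinates $a_0,a_1,b_0,b_1,c$ of $S'$, Proposition \ref{prop: abstract presentation} presents $S/pS=T/(m^*)$ for a single graded element $m^*\in T$ of $\bG_m$-weight $2$ and $\Sym$-filtration $\geq 2$; separately, Theorem \ref{presentation of moduli stack non-generic 2} gives $S'/pS'=T/(a_1b_0+a_0b_1)$. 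Inspection of $\alpha$ in Definition \ref{def: alpha map} shows that the matrix entries of $\tilde\rho$ realize the chosen cocycles in parallel fashion (upper-right for the $\omega$-cocycles $b_0,b_1$, diagonal for the trivial-cocycles $a_0,a_1$, lower-left for the $\omega^{-1}$-cocycle $c$), matching Proposition \ref{prop: abstract presentation}(5). Consequently, $\phi/p$ sends each generator of $T$ in $S/pS$ to the same-named generator of $T$ in $S'/pS'$, and its well-definedness forces $m^*\in (a_1b_0+a_0b_1)T$.

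The remaining, and main, step is to identify the leading $\Sym$-quadratic part of $m^*$ with $a_1b_0+a_0b_1$ up to nonzero scalar. By Proposition \ref{prop: abstract presentation}(4), this leading part is dual to the composite of cup product and Lie bracket $H^1(\Gamma,\mathbf{1})\otimes H^1(\Gamma,\omega)\to H^2(\Gamma,\omega)$. The most economical evaluation in the chosen bases appeals to the matrix calculation in the proof of Proposition \ref{prop: NG2 factor}: the product $\varphi'(r')=(1+pb_0 E_{12})(1+a_1b_0 E_{12})(1+a_0b_1 E_{12})$, arising from the relators $x_1^p,[x_1,x_{p-1}],[x_p,x_0]$ appearing in $r'$, computes the cup-product/bracket pairing on the chosen basis of $H^1$'s via the classical Labute--Koch theory for pro-$p$ presentations, exhibiting the pairing matrix with $1$'s in the $(1,0)$ and $(0,1)$ positions and zeros elsewhere; this is exactly the bilinear form $a_1b_0+a_0b_1$. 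Combined with $m^*\in (a_1b_0+a_0b_1)T$, writing $m^*=(a_1b_0+a_0b_1)\cdot u$ forces $u\equiv 1$ modulo the maximal ideal of $T$, hence $u\in T^\times$. The two principal ideals therefore coincide, $\phi/p$ is an isomorphism, and the integral statement follows by the lift argument of the second paragraph. The main technical obstacle is precisely the cup-product matching in this last paragraph; every other step is formal given the construction of $\tilde\rho$ and the dimension counts of local Galois cohomology.
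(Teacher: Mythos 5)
Your proposal does not prove the statement at hand: it \emph{assumes} it. Proposition \ref{prop: abstract presentation} is the assertion that $S/pS$ admits a presentation whose generators are dual to $H^1(\Gamma,\Ad^0\rho)$, whose relations are dual to $H^2(\Gamma,\Ad^0\rho)$ via a $\bG_m$-equivariant map $m^*$ with quadratic term the dual of cup product followed by Lie bracket, together with the stated normal form of $\rho_S$ modulo $(p,\Sym^{\geq 2})$. Your argument invokes exactly this ("use Proposition \ref{prop: abstract presentation} to present $S/pS$ \dots as quotients of the same ambient graded ring \dots by principal ideals") and then deduces that $\phi/p\colon S/pS\to S'/pS'$ is an isomorphism; that is a (sketch of a) proof of Theorem \ref{thm: univ to explicit isoms}, a different and downstream statement in the paper, and as a treatment of the proposition itself it is circular. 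Nothing in the proposal constructs the presentation, establishes the identification of the relation module with $H^2(\Gamma,\Ad^0\rho)^*$, proves the $\bG_m$-equivariance of $m^*$, identifies its quadratic term with the cup-product/bracket pairing, or produces the explicit first-order form of $\rho_S$ — these are the actual content of the statement.

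The paper's proof is an explication of how the proposition is the main theorem of the reference \cite{wang-erickson-A-inf} (Thm.\ 11.3.3 there), with part (1) coming from Definition 11.3.1, parts (2)--(3) from the idempotent/GMA grading (the torus acts by $2,0,-2$ on the coordinates $e_1Se_2$, $e_iSe_i$, $e_2Se_1$), part (4) from the description of the quadratic term of $m^*$ in Corollary 5.2.6 (passing from the associative to the Lie version, since $\Ad^0\rho\subset\End\rho$ and the bracket is the commutator), and part (5) from the construction of $\rho_S$ in Corollary 7.4.5. A self-contained proof would require the obstruction-theoretic/$A_\infty$ (Massey product) machinery of that reference relating the adapted deformation functor of the GMA $E$ to the cohomology of $\Ad^0\rho$; your dimension counts, the cocycle bookkeeping from Definition \ref{def: alpha map}, and the Demu\v{s}kin-relation computation from Proposition \ref{prop: NG2 factor} do not substitute for it. (Incidentally, even for Theorem \ref{thm: univ to explicit isoms} the paper deliberately avoids the explicit cup-product evaluation you lean on — see Remark \ref{rem: identify generators} and Corollary \ref{cor: S-pres}, where only non-degeneracy of the pairing is used, followed by a tangent-space surjectivity and divisibility-of-power-series argument — so that computation is not needed there either.)
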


\begin{proof}
This is mostly a description of the objects set up to state the main theorem \cite[Thm.\ 11.3.3]{wang-erickson-A-inf}. The following references in this proof refer to \cite{wang-erickson-A-inf}. Part (1) appears in Definition 11.3.1. The $\bG_m$-equivariance of parts (2) and (3) corresponds to the $\F_p \times \F_p$-algebra structure of the presentation: letting $e_1 = \sm{1}{0}{0}{0}$, $e_2 = \sm{0}{0}{0}{1}$, the $\bG_m$-action (i.e.\ the adjoint action of conjugation of the torus of $\SL_2$) is by the character $2 \in X^*(\G_m)$ on $e_1 S e_2$, the character $0$ on $e_i S e_i$ (for $i = 1,2$), and $-2$ on $e_2 S e_1$. Part (4) follows from a description of the quadratic term $m_2^*$ of $m^*$ appearing in Corollary 5.2.6, where here we use the Lie algebra version produced by the associative version there. (Indeed, $\Ad^0 \rho \subset \End \rho$ and the Lie bracket is the commutator.)  Part (5) appears in the construction of $\rho_S$ appearing in Corollary 7.4.5. In particular, the section of $Z^1(-) \to H^1(-)$ appearing in part (5) is denoted by $f_1$ in Corollary 7.4.5. 
\end{proof}

In order to work explicitly with this presentation of $S/pS$, we will use the following choice of basis of $H^1(\Gamma, \Ad^0 \rho)$. To specify this basis, we use generators $x_i \in \Gamma'$, $0 \leq i \leq p$, produced by Definition \ref{def: alpha map} and Proposition \ref{prop: NG2 factor}. (This is a slight abuse of notation, since this generators are actually in $\cF$ and we use $x_i$ to refer to $\varphi'(x_i) \in \Gamma'$.) The basis is labeled so that it matches the deformations to $\F_p[\varepsilon]/(\varepsilon^2)$ of $\rho$ that arise from using each non-identity matrix listed in Definition \ref{def: alpha map}, as follows. 

\begin{lemma}
\label{lem: cohom bases}
There exists a set of choices of bases of the $\F_p$-vector spaces 
\begin{itemize}
\item $\{\bar b_0^*, \bar b_1^*\} \subset H^1(\Gamma, \omega)$
\item $\{\bar a_0^*, \bar a_1^*\} \subset H^1(\Gamma, \F_p)$
\item $\{\bar c^*\} \subset H^1(\Gamma, \omega^{-1})$
\end{itemize}
characterized by the property that for each $y \in Y = \{b_0, b_1, a_0, a_1, c\} \subset S'$, the lift $\rho_y : \Gamma \to \GL_2(\F_p[\epsilon]/(\epsilon^2))$ of $\rho$ given by specializing the coefficients of $\tilde \rho \circ \pi: \Gamma \to \GL_2(S')$ along the map $\nu_y : S \to \F_p[\epsilon]/(\epsilon^2)$ given by
\[
y \mapsto \epsilon, \quad z \mapsto 0 \ \text{ for all } z \in Y \smallsetminus \{y\}
\]
realizes the cohomology class $\bar y^*$ under the standard bijection between lifts of $\rho$ to $\F_p[\epsilon]/(\epsilon^2)$ and $Z^1(\Gamma, \Ad^0 \rho)$. 
\end{lemma}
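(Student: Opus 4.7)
The plan is to define, for each $y \in Y$, the class $\bar y^*$ as the cohomology class of the cocycle associated to the lift $\rho_y = \nu_y \circ \tilde\rho \circ \pi$, and then verify that these cohomology classes (i) lie in the claimed summands of $\Ad^0 \rho$, and (ii) form bases of the respective $H^1$'s of the correct cardinality.

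For (i), I would compute each $\rho_y$ directly. Since $\tilde\rho = \alpha$ composed with a section of $\varphi'$ (Proposition \ref{prop: NG2 factor}), and $\nu_y$ kills every variable except $y$, each $\rho_y$ is given on $\Gal(\Q_p(\zeta_p)/\Q_p)$ by $\rho$ itself and on the generators $x_i$ of $\cF$ by the formulas in Definition \ref{def: alpha map} specialized at $y \mapsto \epsilon$. By direct inspection, for $y \in \{b_0,b_1\}$ the resulting deformation only affects the $(1,2)$-entry, for $y \in \{a_0,a_1\}$ only the diagonal, and for $y=c$ only the $(2,1)$-entry; the $\Gal(\Q_p(\zeta_p)/\Q_p)$-equivariance from Lemma \ref{lem: Gal action on cF}(1) then places the associated cocycle in the indicated subrepresentation of $\Ad^0\rho \cong \omega \oplus \mathbf{1} \oplus \omega^{-1}$. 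Note that $\det\tilde\rho = \tilde\omega$, so each $\rho_y$ is determinant-preserving and the resulting cocycle is trace-free as required.

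For (ii), I would first carry out a dimension count via the local Euler characteristic formula and Tate local duality. Using the standing hypothesis $p \geq 5$, this yields $\dim_{\F_p} H^1(\Gamma, \omega) = \dim_{\F_p} H^1(\Gamma, \F_p) = 2$ and $\dim_{\F_p} H^1(\Gamma, \omega^{-1}) = 1$, matching the claimed cardinalities. For the substantive step, linear independence, I would invoke Proposition \ref{prop: abstract presentation}(5): the matrix entries $\tilde A, \tilde B, \tilde C$ of $\rho_S$ are chosen lifts of the identity in $\End_{\F_p}(H^1(-))$. This makes the assignment $\nu \mapsto [\nu \circ \rho_S]$ the standard identification between the $\F_p$-tangent space of $S/pS$ and $H^1(\Gamma, \Ad^0\rho)$. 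Pulling back along $\phi : S \to S'$, it suffices to show that $b_0, b_1, a_0, a_1, c$ descend to linearly independent elements of $\m_{S'/pS'}/\m_{S'/pS'}^2$; this is immediate because the defining relation $pb_0 + a_1 b_0 + a_0 b_1$ is of degree two modulo $p$.

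The main obstacle is the bookkeeping required to distinguish the two classes $\bar b_0^*, \bar b_1^*$ inside $H^1(\Gamma,\omega)$, and analogously $\bar a_0^*, \bar a_1^*$ inside $H^1(\Gamma,\F_p)$. Concretely, these pairs are separated by their values on $x_1$ versus $x_p$ (for the $\omega$ case) and on $x_0$ versus $x_{p-1}$ (for the trivial case), and this separation is precisely what is hardwired into the B\"ockle--Pa{\v s}k{\=u}nas formula for $\alpha$. A second point requiring care is the avoidance of circularity with Theorem \ref{thm: univ to explicit isoms}: the full isomorphism $S \cong S'$ is not yet established at this point, but the argument only uses the map $\phi : S \to S'$ and the linear independence of the variables in the cotangent space of $S'/pS'$, both of which are already available.
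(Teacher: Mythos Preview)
Your treatment of (i) and the dimension count in (ii) are in line with the paper. The gap is in your linear-independence argument. You claim that, after invoking Proposition~\ref{prop: abstract presentation}(5), it suffices to check that $b_0, b_1, a_0, a_1, c$ are linearly independent in $\m_{S'/pS'}/\m_{S'/pS'}^2$. But this reduction is valid only if the map on tangent spaces induced by $\phi : S \to S'$ is injective. At this point nothing is known about $\phi$ beyond its existence; both tangent spaces have dimension $5$, but that alone gives neither injectivity nor surjectivity. In fact, the isomorphism of tangent spaces is exactly what the paper later deduces \emph{from} the present lemma in the proof of Theorem~\ref{thm: univ to explicit isoms}. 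So the circularity you flagged is real, and your dismissal of it (``only uses the map $\phi$ and linear independence in the cotangent space of $S'/pS'$'') overlooks the missing injectivity hypothesis.

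The paper avoids this by checking linear independence on the free side. The cocycles attached to the $\rho_y$ are inflated from $\Gamma'$, and via $\varphi'$ they correspond to cocycles on $\cF \rtimes \Gal(\Q_p(\zeta_p)/\Q_p)$ given explicitly by the formulas in Definition~\ref{def: alpha map} specialized along $\nu_y$. There the cocycles are supported on disjoint generators $x_i$ and are transparently independent. Since a linear map cannot send a dependent set to an independent one, independence in $H^1(\cF \rtimes \Gal(\Q_p(\zeta_p)/\Q_p), -)$ forces independence in $H^1(\Gamma, -)$. This direct verification on the free group is the idea you are missing; it replaces any appeal to properties of $\phi$.
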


\begin{proof}
As is well known, lifts of $\rho$ to $\F_p[\epsilon]/(\epsilon^2)$ with fixed determinant biject with $Z^1(\Gamma,\Ad^0 \rho)$, and they have non-trivial projection to $H^1(\Gamma, \Ad^0 \rho)$ if and only if they are not conjugate by $1 + \epsilon \cdot M_2(\F_p)$ to the trivial lift. By Proposition \ref{prop: NG2 factor}, and in particular by applying $\varphi'$, the specified lifts $\rho_y$ of $\rho$ produce the three subsets $\{\bar b_0^*, \bar b_1^*\}$, $\{\bar a_0^*, \bar a_1^*\}$, and $\{\bar c^*\}$ of $Z^1(\Gamma, \Ad^0\rho)$. Viewing Definition \ref{def: alpha map}, we observe that they are 
\begin{itemize}
\item concentrated in the summand of $Z^1(\Ad^0 \rho)$ named in the lemma (e.g.\ $\rho_{b_0} \in Z^1(\Gamma, \omega)$) under the standard decomposition $\Ad^0 \rho \simeq \omega \oplus 1 \oplus \omega^{-1}$
\item linearly independent after projection to $H^1(\cF \rtimes \Gal(\Q_p(\zeta_p)/\Q_p), -)$, and therefore also linearly independent subsets of the cohomology groups $H^1(\Gamma,-)$ named in the lemma. 
\end{itemize}
Finally, by standard Tate local duality and Euler characteristic formulas using the assumption $p \geq 5$, the dimension of these $H^1(\Gamma,-)$ equals the cardinality of each linearly independent subset named in the lemma. 
\end{proof}

In the following, ``$\mathrm{Kum}$'' refers to a Kummer class (under the standard bijection of Kummer theory between first cohomology valued in a cyclotomic character and the unit group of $\Q_p$), and $\Q_{p^p}/\Q_p$ denotes the unique unramified degree $p$ extension of $\Q_p$. 
\begin{remark}
\label{rem: identify generators}
It is possible, but not necessary for the proof, to directly prove the following equalities up to $\F_p^\times$-scalar.
\begin{itemize}
\item $\bar b_0^* = \mathrm{Kum}(1+p) \in H^1(\Q_p, \omega)$
\item $\bar b_1^* = \mathrm{Kum}(p) \in H^1(\Q_p, \omega)$
\item $\bar a_0^* \in \Hom(\Gal(\Q_{p^p}(\zeta_p)/\Q(\zeta_p)), \F_p) \subset H^1(\Q_p(\zeta_p), \F_p)^{\omega^0} \cong H^1(\Gamma, \F_p)$
\item $\bar a_1^* \in \Hom(\Gal(\Q_p(\zeta_{p^2})/\Q_p(\zeta_p)), \F_p)  \subset H^1(\Q_p(\zeta_p), \F_p)^{\omega^0} \cong H^1(\Gamma, \F_p)$
\end{itemize}
In particular, the perfect Tate duality pairing is realized by the standard cup product $H^1(\Gamma, \F_p) \times H^1(\Gamma,\omega) \to H^2(\Gamma, \omega) \cong \F_p$ and satisfies $\langle a^*_i, b^*_{1-j}\rangle = \delta_{ij}$ for $i,j \in \{0,1\}$, which explains the form ``$a_0b_1 + a_1b_0$'' of the relation (mod $p$) presenting $S'$ in Theorem \ref{presentation of moduli stack non-generic 2}: it arises by evaluating the $m^*$ of Proposition \ref{prop: abstract presentation}. 
\end{remark}

We will only need the following weaker implication of Remark \ref{rem: identify generators}. Let 
\[
\{\bar a_0, \bar a_1\} \subset H^1(\Gamma, \F_p)^*, \quad \{\bar b_0, \bar b_1\} \subset H^1(\Gamma, \omega)^*,  \quad \{\bar c\} \subset H^1(\Gamma, \omega^{-1})^*
\]
denote dual bases to the bases listed in Lemma \ref{lem: cohom bases}. 

\begin{corollary}
\label{cor: S-pres}
There is an isomorphism of limits of graded rings (where the graded degree of $H^1(\Gamma, \omega)^*$  is 2, the graded degree of $H^1(\Gamma,\F_p)$ is $0$, and the graded degree of $H^1(\Gamma, \omega^{-1})$ is $-2$) 
\[
\frac{\F_p[\bar b_0, \bar b_1, c] \lb \bar a_0, \bar a_1, \bar b_0 \bar c, \bar b_1 \bar c\rb}{\left(F+ \displaystyle{\sum_{0 \leq i,j \leq 1}} \alpha_{i,j}\bar a_i \bar b_j\right)} \ \isoto \ \hat S/p \hat S,
\]
where $F \in \left(\F_p[\bar b_0, \bar b_1, c] \lb \bar a_0, \bar a_1, \bar b_0 \bar c, \bar b_1 \bar c\rb\right)_2 \  \cap \  (\bar a_0, \bar a_1, \bar b_0, \bar b_1 ,\bar c)^3$  and $(\alpha_{i,j}) \in \GL_2(\F_p)$. 
\end{corollary}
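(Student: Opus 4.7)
The plan is to apply Proposition \ref{prop: abstract presentation} to our specific situation and then make everything explicit using Lemma \ref{lem: cohom bases} together with local duality under the assumption $p \geq 5$.

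First, I would assemble the relevant local cohomology computations. By Tate local duality, $H^2(\Gamma, \chi) \cong H^0(\Gamma, \chi^{-1}\omega)^\vee$, which gives $\dim_{\F_p} H^2(\Gamma, \omega) = 1$ while $H^2(\Gamma, \mathbf{1}) = 0$ (using $p \neq 2$) and $H^2(\Gamma, \omega^{-1}) = 0$ (using $p \neq 2,3$). Combined with the local Euler characteristic formula, this also reproduces the $H^1$-dimensions $2,2,1$ already implicit in Lemma \ref{lem: cohom bases}. Thus $H^2(\Gamma, \Ad^0 \rho)$ is one-dimensional, concentrated entirely in the $\omega$-summand (graded degree $2$), so the presenting ideal in Proposition \ref{prop: abstract presentation} is generated by a single element $\mathrm{rel}$ lying in graded degree $2$ of $\Sym^* H^1(\Gamma, \Ad^0 \rho)^*$. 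The dual bases of Lemma \ref{lem: cohom bases} identify the ambient completed symmetric algebra with $\F_p\lb \bar a_0, \bar a_1, \bar b_0\bar c, \bar b_1\bar c\rb[\bar b_0, \bar b_1, \bar c]$, since the completion in Proposition \ref{prop: abstract presentation}(1) is at the ideal generated exactly by the $\bar a_i$'s and the $\bar b_j \bar c$'s.

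The main step is to pin down the quadratic part $\mathrm{rel}_2$ of the single relation. By Proposition \ref{prop: abstract presentation}(4), $\mathrm{rel}_2$ is the $\F_p$-linear dual of the composite
\[
H^1(\Gamma, \Ad^0 \rho) \otimes H^1(\Gamma, \Ad^0 \rho) \xrightarrow{\cup} H^2(\Gamma, \Ad^0 \rho \otimes \Ad^0 \rho) \xrightarrow{[\cdot,\cdot]} H^2(\Gamma, \omega).
\]
Under the splitting $\Ad^0 \rho \cong \omega \oplus \mathbf{1} \oplus \omega^{-1}$, the only tensor summands whose Lie-bracket image lies in $\omega$ are $\mathbf{1} \otimes \omega$ (via $[h,e] = 2e$) and $\omega \otimes \mathbf{1}$ (via $[e,h] = -2e$), both non-zero since $p$ is odd; the potential contribution from $\omega \otimes \omega^{-1}$ maps into $H^2(\Gamma, \mathbf{1}) = 0$ and hence is irrelevant. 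Therefore $\mathrm{rel}_2$ is a linear combination of the monomials $\bar a_i \bar b_j$ only, say $\sum_{i,j}\alpha_{i,j}\bar a_i \bar b_j$, and the matrix $(\alpha_{i,j})$ equals (up to the non-zero scalar $\pm 2 \in \F_p^\times$) the Gram matrix of the cup product pairing $H^1(\Gamma, \mathbf{1}) \otimes H^1(\Gamma, \omega) \to H^2(\Gamma, \omega) \cong \F_p$ in the chosen bases. This pairing is perfect by Tate local duality, so $(\alpha_{i,j}) \in \GL_2(\F_p)$. Setting $F := \mathrm{rel} - \sum \alpha_{i,j}\bar a_i \bar b_j$ gives a power series in monomials of total degree $\geq 3$ in the generators and graded degree $2$, yielding the presentation. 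The main obstacle is the verification that $(\alpha_{i,j})$ is non-degenerate; it rests on combining the perfectness of Tate duality with the non-vanishing of the Lie-bracket scalar $2 \in \F_p^\times$, for which $p \geq 5$ is more than enough.
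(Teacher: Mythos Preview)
Your proposal is correct and follows essentially the same approach as the paper's proof: apply Proposition~\ref{prop: abstract presentation}, compute that $H^2(\Gamma,\Ad^0\rho)$ is one-dimensional concentrated in the $\omega$-summand (so there is a single relation of graded degree $2$), and then use that the quadratic part is governed by the cup-product-then-Lie-bracket map, whose only nonzero component landing in $\omega$ is $H^1(\Gamma,\mathbf{1})\otimes H^1(\Gamma,\omega)\to H^2(\Gamma,\omega)$, which is a nonzero scalar times the perfect Tate pairing. One minor remark: your aside that the $\omega\otimes\omega^{-1}$ contribution ``maps into $H^2(\Gamma,\mathbf{1})=0$'' is redundant for the argument---that pair has graded degree $0$, so it is already excluded from the degree-$2$ relation by $\bG_m$-equivariance alone---but it does no harm.
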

In words, the condition on $F$ means that it is of graded degree 2 and is a power series in monomials of degree at least 3 in the variables $a_i, b_i, c$. 

\begin{proof}
This is a particular application of our knowledge of the dimensions of the Galois cohomology groups arising in Proposition \ref{prop: abstract presentation}, along with the appearance of the Lie bracket and cup product in Proposition \ref{prop: abstract presentation}(4). As mentioned in Remark \ref{rem: identify generators}, the only non-trivial summand of this cup product is non-degenerate as a bilinear form; the dual of its factorization through the tensor product is $m_2^*:  H^2(\Gamma, \omega)^* \to H^1(\Gamma, \F_p)^* \otimes_{\F_p} H^1(\Gamma, \omega)^*$. This non-degeneracy is reflected, equivalently, in the conclusion that $\det (\alpha_{i,j}) \neq 0$. 
\end{proof}

Now we can prove Theorem \ref{thm: univ to explicit isoms}. 
\begin{proof}[{Proof of Theorem \ref{thm: univ to explicit isoms}}]
We begin with some reduction steps. Because $S'$ is $p$-torsion free, it will suffice to prove that $\phi/p : S/pS \to S'/pS'$ is an isomorphism. We fix some presentation of $S/pS$ as in Corollary \ref{cor: S-pres}. Like the end of the proof of Theorem \ref{computation of rings generic ps}, we appeal to \cite[Thm.\ 1.6]{alper-hall-rydh-etale-local} to say that it will suffice prove that the local homomorphism $\hat \phi_\rho:  (S/pS)^\wedge_\rho \to (S'/pS')^\wedge_\rho$, defined to be the completion of $\phi/p$ at the maximal ideals $\m_\rho \subset S/pS$ and $\m'_\rho \subset S'/pS'$ corresponding to $\rho$, is an isomorphism of limits of graded rings. 

By Lemma \ref{lem: cohom bases} and Proposition \ref{prop: abstract presentation}(5), and the fact that $\hat \phi_\rho$ arises from applying the moduli interpretation of $S$ to $\tilde \rho \circ \pi$, we see that $\hat \phi_\rho$ gives rise to an isomorphism of cotangent spaces $\m_\rho/\m_\rho^2 \isoto \m'_\rho/\m'^2_\rho$ preserving the bases of these cotangent spaces that arise from the generators we have designated. Namely, $\hat\phi_\rho$ sends 
\[
\hat \phi_\rho :\  \bar a_0 \mapsto a_0, \ \bar a_1 \mapsto a_1, \ \bar b_0 \mapsto b_0, \ \bar b_1 \mapsto b_1, \ \bar c \mapsto c \quad \pmod{\m'^2_\rho}. 
\]
In particular, we now know that $\hat \phi_\rho$ is surjective. 

We can say, equivalent to the matching of bases above, that $\hat \phi_\rho(\bar a_0) - a_0 \in \m'^2_S$, and similarly for each of the other four matching pairs of basis elements. Therefore, by reading off the presentation of $S'$ in Theorem \ref{presentation of moduli stack non-generic 2}, the kernel of the composite map
\[
\F_p\lb \bar a_0, \bar a_1, \bar b_0, \bar b_1, \bar c\rb \buildrel\eta\over\twoheadrightarrow (S/pS)^\wedge_\rho \buildrel{\hat\phi_\rho}\over\twoheadrightarrow (S'/pS')^\wedge_\rho
\]
is a principal ideal with generator $\bar a_0 \bar b_1 + \bar a_1 \bar b_0$. Consequently, this generator divides any generator of the kernel of the surjection labeled $\eta$, which by Corollary \ref{cor: S-pres} is 
\[
(\bar a_0 \bar b_1 + \bar a_1 \bar b_0) \,\Big\vert \left(F+ \sum_{0 \leq i,j \leq 1} \alpha_{i,j}\bar a_i \bar b_j \right)
\]
in $\F_p\lb \bar a_0, \bar a_1, \bar b_0, \bar b_1, \bar c\rb$, a divisibility of power series that are in $\m^2$ and non-zero modulo $\m^3$. Therefore the quotient is a unit and $\hat\phi_\rho$ is an isomorphism. 
\end{proof}

\subsection{Coherent sheaves on $\Rep(E)$ in case non-generic II}\label{subsec: non-gen II coh sheaves}

We wish to describe some coherent sheaves on $\Rep(E)$ and compute their $\Ext$-groups. Computationally, the situation is most similar to \S \ref{subsec: stacks generic ps GL2}, but we will need more than line bundles, so the computations will become far more involved. Nevertheless, we start as in \S \ref{subsec: stacks generic ps GL2}. We simplify the notation by letting $\fX$ denote the stack $\Rep(E)$. By Theorem \ref{presentation of moduli stack non-generic 2}, we may present $\fX$ as 
\[
\fX \cong [ \Rep^{\mathrm{Ad},\Box}(E)/T] \cong [\Spec S / T],
\]  
and coherent sheaves on $\fX$ are equivalent to finitely generated graded $S$-modules. We write $\Hom_\fX$, $\Ext_\fX$ for $\Hom$ and $\Ext$ groups in $\Coh(\fX)$.

As in \S \ref{subsec: stacks generic ps GL2} we define $L_m = S(m)$ for $m\in \Z$. This is a line bundle on $\fX$ and again the vector bundle $\mc{V}$ on $\fX$ underlying the universal representation corresponds to the graded $S$-module $L_1 \oplus L_{-1}$. As in Theorem \ref{Ext groups of universal rep generic ps}, we obtain the following theorem.

\begin{theorem}\label{line bundles nongen 2}
All vector bundles on $\fX$ are projective objects in the category of quasicoherent sheaves. Moreover, $\End_{\fX}(\mc{V}) = E$ as rings.
\end{theorem}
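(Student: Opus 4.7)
The plan is to mimic the proof of Theorem \ref{Ext groups of universal rep generic ps} essentially verbatim, taking advantage of the presentation $\fX \cong [\Spec S/T]$ established in Theorem \ref{presentation of moduli stack non-generic 2}. Under this presentation, $\QCoh(\fX)$ is equivalent to the category of graded $S$-modules, $\Coh(\fX)$ corresponds to finitely generated graded $S$-modules, and the global sections functor is the exact functor $M \mapsto M_0$.

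For the first assertion, I would observe that a vector bundle on $\fX$ corresponds to a finitely generated graded $S$-module $M$ that is locally free over $\Spec S$, hence projective as an $S$-module. Then $\Exti^i(M,N) = 0$ for all $i\geq 1$ and all graded $N$. Since the degree-zero functor is exact, $\Ext^i(M,N) = \Exti^i(M,N)_0 = 0$, so $M$ is projective in $\QCoh(\fX)$. This argument is insensitive to the fact that $S$ is no longer regular in this case, since only projectivity of $M$ itself (not a finite global dimension of $S$) is used.

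For the second assertion, since $\mc{V}$ corresponds to the graded $S$-module $L_1 \oplus L_{-1}$, the standard computation
\[
\End(\mc{V}) \cong \Hom_S(L_1 \oplus L_{-1}, L_1 \oplus L_{-1})_0 \cong \begin{pmatrix} S_0 & S_2 \\ S_{-2} & S_0 \end{pmatrix}
\]
reduces the problem to identifying the graded pieces $S_0, S_2, S_{-2}$ together with their multiplication maps, and matching them with the $R$-GMA presentation of $E$ recorded in Theorem \ref{presentation of moduli stack non-generic 2}. Reading the explicit presentation of $S$ off yields $S_0 = R$ and $S_{-2} = Rc$ immediately, while $S_2 \cong (Rb_0 \oplus Rb_1)/\langle (p+a_1)b_0 + a_0 b_1 \rangle$ drops out from the single defining relation of $S$, which is homogeneous of degree $2$. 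The multiplication $S_2 \times S_{-2} \to S_0$ induced by multiplication in $S$ then matches the cross-diagonal multiplication in $E$ on the nose.

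No serious obstacle is anticipated: the substantial computational work is already packaged in Theorem \ref{presentation of moduli stack non-generic 2}, and the only novelty relative to the generic principal series case (Theorem \ref{Ext groups of universal rep generic ps}) is that the degree-$2$ piece $S_2$ is a nontrivial quotient of a free $R$-module rather than a free module, which is a direct consequence of the relation defining $S$.
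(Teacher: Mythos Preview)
Your proposal is correct and follows exactly the approach the paper takes: the paper simply says ``As in Theorem \ref{Ext groups of universal rep generic ps}, we obtain the following theorem'' without giving a separate proof, so mimicking that earlier argument verbatim is precisely what is intended. Your observation that only projectivity of $M$ (and not regularity of $S$) is needed is on point, and your identification of $S_0,S_2,S_{-2}$ with the GMA entries of $E$ via Theorem \ref{presentation of moduli stack non-generic 2} is the concrete version of the paper's appeal to Theorem \ref{presentation of moduli stack}(5).
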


In addition to $L_1$ and $L_{-1}$, we will need a third coherent sheaf $Q$ on $\fX$, which we now describe, and which is not a vector bundle. To shorten the notation somewhat, we set $\ap = a_1 +p$; the presentation of $S$ in Theorem \ref{presentation of moduli stack non-generic 2} then becomes
\[
S \cong \frac{\oo \lb a_0, \ap, b_0c, b_1c \rb [b_0,b_1,c] }{(\ap b_0 + a_0 b_1)}.
\]
Suppose now that $\wt{T}$ is any integral domain, that $f \in \wt{T}$ is nonzero, and that $\wt{M}$ and $\wt{N}$ are $n\times n$-matrices with entries in $\wt{T}$ satisfying $\wt{M}\wt{N}=\wt{N}\wt{M}=fI$ (where $I$ is the identity matrix). Set $T=\wt{T}/(f)$ and let $M$ and $N$ be the reductions of $\wt{M}$ and $\wt{N}$ modulo $f$, respectively. Then $MN=NM=0$ and one easily checks that 
\[
T^n \overset{M}{\to} T^n \overset{N}{\to} T^n \,\,\,\,\,\, \text{and} \,\,\,\,\,\, T^n \overset{N}{\to} T^n \overset{M}{\to} T^n
\]
are both exact, where we view $T^n$ as column vectors\footnote{Or row vectors; the choice does not matter.}. Having said this, we consider the matrices
\[
\wt{M} = \begin{pmatrix} b_0 & b_1 \\ -a_0 & \ap \end{pmatrix} \,\,\,\,\,\, \text{and} \,\,\,\,\,\, \wt{N} = \begin{pmatrix} \ap & -b_1 \\ a_0 & b_0 \end{pmatrix}
\] 
with entries in $\oo\lb a_0, \ap \rb [b_0,b_1,c]$. We have $\wt{M}\wt{N} = \wt{N} \wt{M} = (\ap b_0 + a_0 b_1)I$, so the discussion above applies for the reductions $M$ and $N$ to $S$. We can even view $M$ and $N$ as homomorphisms of graded $S$-modules in the following way: We have
\[
M : L_n \oplus L_n \to L_{n+2} \oplus L_n
\]
and
\[
N : L_n \oplus L_{n-2} \to L_n \oplus L_n,
\]
for any $n\in \Z$. Here, and in the rest of $\S\ref{subsec: non-gen II coh sheaves}$, when writing homomorphisms between graded $S$-modules we view elements of direct sums as column vectors and represent maps as matrices acting from the left. 

The graded $S$-module $Q$ is defined as
\[
Q = \Coker\left( M : L_{-1} \oplus L_{-1} \to L_1 \oplus L_{-1} \right)
\]
and there is a `periodic' projective resolution of $Q$ of period $2$ given by
\begin{equation}\label{resolution of Q}
\dots \overset{N}{\to} L_{-3} \oplus L_{-3} \overset{M}{\to} L_{-1} \oplus L_{-3} \overset{N}{\to} L_{-1} \oplus L_{-1} \overset{M}{\to}  L_1 \oplus L_{-1} \to Q \to 0.
\end{equation}
The definition of $Q$ was originally motivated by considering the short exact sequence (234) in \cite{paskunas-image}, see Remark \ref{rem:what are we doing} for more details.

In the rest of this section, we will compute various $\Hom$'s and $\Ext$'s involving $Q$. Our first goal is to show that
\[
\Ext^i_{\fX}(L_{-1} \oplus L_1 \oplus Q, L_{-1} \oplus L_1 \oplus Q) = 0
\]
for all $i\geq 1$. We start by observing that $\Ext^i_{\fX}(L_{-1} \oplus L_1, L_{-1} \oplus L_1 \oplus Q) = 0$ since the $L_n$ are projective, so it remains to show that $\Ext^i_{\fX}(Q, L_{-1}) = \Ext^i_{\fX}(Q,L_1) = \Ext^i_{\fX}(Q,Q) = 0$. We then have:

\begin{proposition}\label{ext calcs 1 non-gen 2}
As (ungraded) $S$-modules, $\Ext^i_S(Q,S)=0$ for all $i\geq 1$ (so $Q$ is a maximal Cohen-Macaulay module, since $S$ is Gorenstein). In particular, $\Ext^i_{\fX}(Q,L_n)=0$ for all $i\geq 1$ and all $n\in \Z$.
\end{proposition}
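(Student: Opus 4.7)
The plan is to apply the functor $\Hom_S(-,S)$ to the explicit periodic free resolution \eqref{resolution of Q} and read off $\Ext^i_S(Q,S)$ as the cohomology of the resulting complex. Since each $L_n$ is isomorphic to $S$ as an ungraded $S$-module, the dualised complex takes the form
\[
0 \to S^2 \xrightarrow{M^t} S^2 \xrightarrow{N^t} S^2 \xrightarrow{M^t} S^2 \xrightarrow{N^t} \cdots,
\]
with the maps alternating between the transposes $M^t$ and $N^t$. Thus the vanishing of $\Ext^i_S(Q,S)$ for $i \geq 1$ reduces to verifying exactness of both of the two-term sequences $S^2 \xrightarrow{M^t} S^2 \xrightarrow{N^t} S^2$ and $S^2 \xrightarrow{N^t} S^2 \xrightarrow{M^t} S^2$.

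The key observation is that matrix factorisations are closed under transposition: setting $\wt S := \OO\lb a_0, \ap, b_0c, b_1c\rb[b_0,b_1,c]$ and $f = \ap b_0 + a_0 b_1 \in \wt S$, the identities $\wt M \wt N = \wt N \wt M = fI$ immediately yield $\wt M^t \wt N^t = (\wt N \wt M)^t = fI$ and $\wt N^t \wt M^t = (\wt M \wt N)^t = fI$. Hence $(\wt M^t, \wt N^t)$ is itself a matrix factorisation of $f$ over $\wt S$, and the general matrix-factorisation exactness statement recalled at the beginning of this subsection (which was applied to $(\wt M, \wt N)$ to verify that \eqref{resolution of Q} is a resolution) applies verbatim to the transposed pair. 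This delivers the required exactness of both two-term sequences modulo $f$, and therefore $\Ext^i_S(Q,S) = 0$ for all $i\geq 1$.

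The fact that $Q$ is then maximal Cohen--Macaulay is the standard characterisation of MCM modules over a Gorenstein ring. For the final assertion, since $L_n \cong S$ as ungraded $S$-modules, the ungraded $\Ext^i(Q,L_n)$ coincides with $\Ext^i_S(Q,S)$ and hence vanishes; the graded Ext computed using the graded free resolution is a graded refinement of the same group, so it vanishes as well. I do not expect any genuine obstacle: the entire content of the argument is the self-duality of matrix factorisations under transposition, which lets us re-run the very exactness argument that produced the resolution in the first place.
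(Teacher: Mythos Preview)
Your proof is correct and follows essentially the same approach as the paper: dualise the periodic resolution \eqref{resolution of Q} and observe that the transposed pair $(M^t,N^t)$ is again a matrix factorisation of $f$, so the same exactness argument used to build the resolution shows the dual complex is exact in positive degrees. The paper's proof is terser (it simply asserts the exactness after writing down the dual complex), but your explicit remark that matrix factorisations are closed under transposition is precisely the point being used.
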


\begin{proof}
The resolution (\ref{resolution of Q}), viewed as ungraded $S$-modules, is simply
\[
\dots \overset{N}{\to} S^2 \overset{M}{\to} S^2 \overset{N}{\to} S^2 \overset{M}{\to} S^2 \to Q \to 0.
\]
Applying $\Hom_S(-,S)$ to the resolution, we get
\[
S^2 \overset{M^t}{\to} S^2 \overset{N^t}{\to} S^2 \overset{M^t}{\to} S^2 \overset{N^t}{\to} \dots,
\]
where we are still regarding $S^2$ as column vectors, and $-^t$ denotes matrix transpose. This is exact in degrees $i\geq 1$, as desired.
\end{proof}

It remains to show that $\Ext^i_{\fX}(Q,Q) = 0$. To do this, we start by considering the graded morphism $L_{-1} \to Q$ which is the composition $L_{-1} \to L_{1}\oplus L_{-1} \to Q$, where the first map sends $x$ to $\binom{0}{x}$ and the second map is the quotient map from the definition of $Q$. The composite is injective and the cokernel $\ol{Q}$ is isomorphic to $L_1/(b_0,b_1)L_{-1}$, i.e.\ $S/(b_0,b_1)$ with grading shifted by $1$. In particular, $(\ol{Q})_k = 0$ for $k\geq 0$. As a consequence, $\Hom_{\fX}(L_n,\ol{Q})=0$ for $n\leq 0$.

\begin{proposition}\label{vanishing of Exts non-gen 2}
We have $\Ext^i_{\fX}(Q,Q) = 0$ for $i\geq 1$.
\end{proposition}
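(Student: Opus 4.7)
The plan is to reduce the computation of $\Ext^i(Q,Q)$ to that of $\Ext^i(Q,\ol{Q})$, using the short exact sequence $0 \to L_{-1} \to Q \to \ol{Q} \to 0$ introduced immediately before the proposition, and then to show the latter vanishes by a simple graded-degree argument on the resolution \eqref{resolution of Q}. Applying $\Hom(Q,-)$ to this short exact sequence, Proposition \ref{ext calcs 1 non-gen 2} gives $\Ext^i(Q,L_{-1}) = 0$ for every $i \geq 1$, so the long exact sequence collapses to isomorphisms $\Ext^i(Q,Q) \cong \Ext^i(Q,\ol{Q})$ for all $i \geq 1$. It therefore suffices to prove that $\Ext^i(Q,\ol{Q}) = 0$ for $i \geq 1$.

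To compute $\Ext^i(Q,\ol{Q})$, I would apply the functor $\Hom(-,\ol{Q})$ (graded Homs, i.e.\ Homs in $\QCoh(\fX)$) to the projective resolution \eqref{resolution of Q} of $Q$; recall that its terms are projective by Theorem \ref{line bundles nongen 2}. Inspecting \eqref{resolution of Q} term by term shows that every summand occurring in homological degree $i \geq 1$ is a line bundle $L_m$ with $m \leq -1$. Since $\Hom(L_m,\ol{Q}) = \ol{Q}_{-m}$ as a graded $S$-module computation, and $\ol{Q}_k = 0$ for $k \geq 0$ (as recorded just before the proposition), all of these Hom groups vanish. Hence the complex computing $\Ext^{\bullet}(Q,\ol{Q})$ is zero in every degree $\geq 1$, yielding $\Ext^i(Q,\ol{Q}) = 0$ for $i \geq 1$, and combined with the previous paragraph this gives the result.

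There is no real obstacle to this strategy; the whole argument rests on three ingredients already established before the proposition—the explicit periodic resolution \eqref{resolution of Q}, the maximal Cohen--Macaulayness of $Q$ as in Proposition \ref{ext calcs 1 non-gen 2} (which supplies the key vanishing $\Ext^i(Q,L_n) = 0$), and the concentration of $\ol{Q}$ in strictly negative graded degrees. The only substantive observation is that the indices appearing in higher terms of \eqref{resolution of Q} are exactly those for which no nonzero graded morphism into $\ol{Q}$ can exist, so both the reduction to $\ol{Q}$ and the subsequent vanishing are essentially formal.
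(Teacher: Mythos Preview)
Your proposal is correct and follows essentially the same argument as the paper: reduce $\Ext^i(Q,Q)$ to $\Ext^i(Q,\ol{Q})$ via the short exact sequence and Proposition~\ref{ext calcs 1 non-gen 2}, then observe that applying $\Hom(-,\ol{Q})$ to the resolution \eqref{resolution of Q} yields zero in all homological degrees $\geq 1$ because the line bundles $L_m$ appearing there have $m\leq -1$ and $\Hom(L_m,\ol{Q})=\ol{Q}_{-m}=0$.
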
 

\begin{proof}
Consider the short exact sequence $0 \to L_{-1} \to Q \to \ol{Q} \to 0$. Taking the long exact sequence for $\Ext_{\fX}(Q,-)$ and using Proposition \ref{ext calcs 1 non-gen 2} we see that $\Ext^i_{\fX}(Q,Q) = \Ext^i_{\fX}(Q,\ol{Q})$ for $i\geq 1$, so it suffices to prove that $\Ext^i_{\fX}(Q,\ol{Q})=0$. But since $\Hom_{\fX}(L_n,\ol{Q})=0$ for $n\leq 0$, applying $\Hom_{\fX}(-,\ol{Q})$ to the resolution of $Q$ from (\ref{resolution of Q}) we simply get 
\[
\Hom_{\fX}(L_1,\ol{Q}) \to 0 \to 0 \to \dots.
\]
In particular, $\Ext^i_{\fX}(Q,\ol{Q})=0$ for $i\geq 1$ as desired.
\end{proof}

Our remaining task in this section is to compute $\End_{\fX}(L_{-1} \oplus L_1 \oplus Q)$ as an $R$-algebra, which we will treat as a $3\times 3$ generalilzed matrix algebra with scalar ring $R$, 
\begin{equation}\label{eq: 3x3 presentation 1}
\begin{pmatrix}
\End_{\fX}(L_{-1}) & \Hom_{\fX}(L_1,L_{-1}) & \Hom_{\fX}(Q,L_{-1}) \\ \Hom_{\fX}(L_{-1},L_1) & \End_{\fX}(L_1) & \Hom_{\fX}(Q,L_1) \\ \Hom_{\fX}(L_{-1},Q) & \Hom_{\fX}(L_1,Q) & \End_{\fX}(Q)
\end{pmatrix}.
\end{equation}
As we will see later, it will turn out to coincide with the endomorphism algebra $\widetilde{E}_{\mathfrak{B}}$ computed in \cite[\S10.5]{paskunas-image}. We already know that $\End_{\fX}(L_{-1} \oplus L_1) = E$, so we will start by computing the remaining entries as $R$-modules. We start by computing $\Hom_{\fX}(Q,L_1)$. Applying $\Hom_{\fX}(-,L_1)$ to the presentation
\[
L_{-1} \oplus L_{-1} \overset{M}{\to} L_1 \oplus L_{-1} \to Q \to 0
\]
we get a left exact sequence
\[
0 \to \Hom_{\fX}(Q,L_1) \to \Hom_{\fX}(L_1 \oplus L_{-1},L_1) \to \Hom_{\fX}(L_{-1} \oplus L_{-1},L_1),
\]
so upon identifying $\Hom_{\fX}(L_1 \oplus L_{-1},L_1) = R \oplus (b_0 R + b_1 R)$ and $\Hom_{\fX}(L_{-1} \oplus L_{-1},L_1)= (b_0 R + b_1 R) \oplus (b_0 R + b_1 R)$ as row vectors, we see that $\Hom_{\fX}(Q,L_1)$ is the kernel of the map $R \oplus (b_0 R + b_1 R) \to (b_0 R + b_1 R) \oplus (b_0 R + b_1 R)$ given by
\[
\begin{pmatrix} x & y \end{pmatrix} \mapsto \begin{pmatrix} x & y \end{pmatrix}\begin{pmatrix} b_0 & b_1 \\ -a_0 & a_1^\prime \end{pmatrix}.
\]
The kernel is isomorphic to $b_0 R + b_1 R$ via $(x,y)\mapsto y$; if $y=b_0 \alpha + b_1 \beta$ (with $\alpha,\beta \in R$) and $(x,y)$ is in the kernel, then one sees easily that $x=a_0 \alpha -\ap \beta$, and one can check that $x$ only depends on $y$ and not the choice of $\alpha$ and $\beta$ (e.g by computing in the fraction field of $S$). In particular, when viewed as a subspace of $R \oplus (b_0 R + b_1 R)$, $\Hom_{\fX}(Q,L_1)$ is generated by $(a_0,b_0)$ and $(-\ap,b_1)$ as an $R$-module.

The computation of $\Hom_{\fX}(Q,L_{-1})$ is entirely analogous; we see that it is the kernel of the map $\Hom_{\fX}(L_1 \oplus L_{-1},L_{-1}) \to \Hom_{\fX}(L_{-1} \oplus L_{-1},L_{-1})$ given by the same formula as above when identifying $\Hom_{\fX}(L_1 \oplus L_{-1},L_{-1})$ with $cR \oplus R$. By computations analogous to those above, we see that $y \mapsto (x,y)$ defines an injection of $b_0 cR + b_1 c R$ into the kernel where, if $y=b_0 c \alpha + b_1 c\beta$, $x= a_0 c \alpha - \ap c \beta$ (and, as above, $x$ only depends on $y$). It remains to show that this is the entire kernel. One of the conditions for $(x,y)$ to be in the kernel is $b_0 x = a_0 y$; we wish to show that this forces $y \in b_0 c R + b_1 c R$. As $x\in cR$, we may write $x=zc$ with $z\in R$ and consider $zb_0 c = a_0 y$ as an identity in $R$. Lifting $y$ and $z$ to $\wt{z}, \wt{y} \in \oo \lb a_0, \ap , b_0 c , b_1 c \rb$, the identity becomes an identity
\[
b_0 c \wt{z} = a_0 \wt{y} + (a_0b_1 c + \ap b_0 c)f
\]
in $\oo \lb a_0, \ap , b_0 c , b_1 c \rb$, which we may rewrite as $a_0(\wt{y} + b_1 c f) = b_0 c (\wt{z} - \ap f)$. Since $b_0 c$ is a prime in $\oo \lb a_0, \ap , b_0 c , b_1 c \rb$, we deduce that $b_0 c$ divides $\wt{y} + b_1 c f$, which implies that $y \in b_0 c R + b_1 c R$ as desired. Summing up, we see that when viewed as a subspace of $cR \oplus R$, $\Hom_{\fX}(Q,L_1)$ is generated by $(a_0 c,b_0 c)$ and $(-\ap c,b_1 c)$ as an $R$-module.

Next, we compute $\Hom_{\fX}(L_{-1},Q)$. Before Proposition \ref{vanishing of Exts non-gen 2}, we constructed an inclusion map $L_{-1} \to Q$. Let us call this map $\iota$; we wish to show that $\Hom_{\fX}(L_{-1},Q)$ is a free $R$-module of rank $1$ generated by $\iota$. Applying $\Hom_{\fX}(L_{-1},-)$ to $L_{-1} \oplus L_{-1} \overset{M}{\to} L_1 \oplus L_{-1} \to Q \to 0$ we get
\[
\Hom_{\fX}(L_{-1},L_{-1}\oplus L_{-1}) \to \Hom_{\fX}(L_{-1},L_1 \oplus L_{-1}) \to \Hom_{\fX}(L_{-1},Q) \to 0,
\]
so $\Hom_{\fX}(L_{-1},Q)$ is the cokernel of the map $R^2 \to (b_0 R + b_1 R) \oplus R$ given by 
\[
\begin{pmatrix} x \\ y \end{pmatrix} \mapsto \begin{pmatrix} b_0 & b_1 \\ -a_0 & a_1^\prime \end{pmatrix} \begin{pmatrix} x \\ y \end{pmatrix}.
\]
Since $\iota$ is the map in the cokernel represented by $\binom{0}{1}$, it is clear that $\iota$ generates the cokernel, and it is then clear that $\Hom_{\fX}(L_{-1},Q)$ is free since $\iota$ is an inclusion and $L_{-1}$ is $R$-torsionfree. This gives the desired result.

Next up is $\Hom_{\fX}(L_1,Q)$. The strategy is similar to the previous case; we have a right exact sequence
\[
\Hom_{\fX}(L_1,L_{-1}\oplus L_{-1}) \to \Hom_{\fX}(L_1,L_1 \oplus L_{-1}) \to \Hom_{\fX}(L_1,Q) \to 0
\]
which identifies $\Hom_{\fX}(L_1,Q)$ with the cokernel of the map $(cR)^2 \to R \oplus cR$ given by the same formula as above. This means that the cokernel is generated by $\binom{1}{0}$ and $\binom{0}{c}$ , with relations $b_0 c \binom{1}{0} = a_0 \binom{0}{c}$ and $b_1 c \binom{1}{0} = -\ap \binom{0}{c}$.

Finally, we come to $\End(Q)$. Applying $\Hom_{\fX}(-,Q)$ to the short exact sequence $0 \to L_{-1} \to Q \to \ol{Q} \to 0$ we get an injection $\End(Q) \to \Hom_{\fX}(L_{-1},Q)$, since $\Hom_{\fX}(\ol{Q},Q) = 0$ (even as ungraded $S$-modules, since $Q$ is torsionfree, being maximal Cohen-Macaulay). But $\Hom_{\fX}(L_{-1},Q)$ is freely generated by $\iota$ (by above) and $\End(Q) \to \Hom_{\fX}(L_{-1},Q)$ maps the identity on $Q$ to $\iota$, so we conclude that $\End(Q)=R$.

We now summarize the results above (including the fact that $E = \End_{\fX}(L_{-1}\oplus L_1)$) in a theorem, where we also give names to the generators, foreshadowing the comparison of our results here with those of \cite[\S 10]{paskunas-image}.

\begin{theorem}\label{module structure nongen 2}
The following holds:
\begin{enumerate}
\item $\End_{\fX}(L_{-1})$, $\End_{\fX}(L_1)$ and $\End_{\fX}(Q)$ are all free $R$-modules of rank $1$ generated by the respective identity functions.

\smallskip

\item We have $\Hom_{\fX}(L_1,L_{-1})=cR$, and we let $\varphi_{12}$ denote the generator $c \in cR$.

\smallskip

\item $\Hom_{\fX}(Q,L_{-1})$ is the subspace of $cR \oplus R$ generated by $\varphi_{13}^0 = (a_0 c, b_0 c)$ and $\varphi_{13}^1 = (-\ap c, b_1 c)$. The map $b_0 c R + b_1 c R \to \Hom_{\fX}(Q,L_{-1})$ given by $b_0 c x + b_1 c y \mapsto (a_0 c x -\ap c y, b_0 c x + b_1 c y)$ is an isomorphism.

\smallskip

\item We have $\Hom_{\fX}(L_{-1},L_1) = b_0 R + b_1 R$, and we let $\varphi_{21}^0 = b_0$ and $\varphi_{21}^1 = b_1$.

\smallskip

\item $\Hom_{\fX}(Q,L_1)$ is the subspace of $R \oplus (b_0 R + b_1 R)$ generated by $\varphi_{23}^0 = (a_0, b_0)$ and $\varphi_{23}^1 = (-\ap, b_1)$. The map $b_0 R + b_1 R \to \Hom_{\fX}(Q,L_1)$ given by $b_0 x + b_1 y \mapsto (a_0 x -\ap y, b_0 x + b_1 y)$ is an isomorphism.

\smallskip

\item $\Hom_{\fX}(L_{-1},Q)$ is a free $R$-module of rank $1$, generated by the inclusion $\varphi_{31} = \iota$.

\smallskip

\item $\Hom_{\fX}(L_1,Q)$ is a quotient of $R \oplus cR$, generated by $\beta = \binom{1}{0}$ and $\varphi_{32} = \binom{0}{c}$ under the relations $b_0 c \beta = a_0 \varphi_{32}$ and $b_1 c \beta = - \ap \varphi_{32}$. The map $R\oplus cR \to R$ given by $(x,y)\mapsto a_0x+b_0y$ gives an isomorphism $\Hom_{\fX}(L_1,Q) \cong (a_0,b_0c)$. 

\end{enumerate}
\end{theorem}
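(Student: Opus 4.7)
My plan is to organize the six $\Hom$ computations and three $\End$ computations into three groups, according to the shape of source and target, since all the key machinery (the presentation of $S$ from Theorem \ref{presentation of moduli stack non-generic 2}, the periodic resolution \eqref{resolution of Q} of $Q$, and the MCM property from Proposition \ref{ext calcs 1 non-gen 2}) is already in place. Throughout I will use the equivalence between coherent sheaves on $\fX$ and finitely generated graded $S$-modules, and the fact that $\Hom(L_m, L_n)$ in $\QCoh(\fX)$ is the degree-zero part of $\Hom_S(S(m), S(n)) = S(n-m)$, i.e.\ $S_{n-m}$.

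First I would dispatch the line-bundle pieces. The identities $\End(L_{\pm 1}) = S_0 = R$, $\Hom(L_1, L_{-1}) = S_{-2} = cR$, and $\Hom(L_{-1}, L_1) = S_2 = b_0 R + b_1 R$ (modulo the relation $\ap b_0 + a_0 b_1 = 0$) follow by reading off graded pieces of $S$ using its presentation in Theorem \ref{presentation of moduli stack non-generic 2}. The named generators $\varphi_{12}, \varphi_{21}^0, \varphi_{21}^1$ are then the obvious ones.

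Next I would compute the $\Hom$'s involving $Q$ using the beginning of the resolution \eqref{resolution of Q}, namely $L_{-1} \oplus L_{-1} \xrightarrow{M} L_1 \oplus L_{-1} \to Q \to 0$. Applying $\Hom(L_{\pm 1}, -)$ to this right-exact sequence presents $\Hom(L_{-1}, Q)$ and $\Hom(L_1, Q)$ as explicit cokernels; the generator $\iota$ of $\Hom(L_{-1}, Q)$ comes from $\binom{0}{1}$ in $\Hom(L_{-1}, L_1 \oplus L_{-1})$, and freeness will follow from the injectivity of $\iota$. For $\Hom(L_1, Q)$ the cokernel description yields the two generators $\beta, \varphi_{32}$ with the claimed relations, and the alternative isomorphism with $(a_0, b_0 c) \subset R$ is obtained by identifying the relation module. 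Dually, applying $\Hom(-, L_{\pm 1})$ yields $\Hom(Q, L_{\pm 1})$ as the kernel of the transpose map. Here the main obstacle is surjectivity onto the claimed submodule: one sees easily that $(a_0 c, b_0 c)$ and $(-\ap c, b_1 c)$ lie in the kernel, but verifying these generate requires a non-trivial argument. The idea is to take $(x, y)$ in the kernel, lift to $\wt T := \oo\lb a_0, \ap, b_0 c, b_1 c\rb [b_0,b_1,c]$, and exploit the primality of $b_0 c$ in a suitable subring to conclude that $y$ must lie in $b_0 c R + b_1 c R$ (and similarly for $\Hom(Q, L_1)$ with $b_0, b_1$ replacing $b_0 c, b_1 c$). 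This divisibility step, carried out inside the polynomial-ring lift rather than in $S$ itself, is the essential calculation.

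Finally, to compute $\End(Q) = R$, I would use the short exact sequence $0 \to L_{-1} \xrightarrow{\iota} Q \to \ol Q \to 0$, where $\ol Q \cong L_1/(b_0, b_1)L_{-1}$ is concentrated in strictly positive graded degrees. Since $Q$ is maximal Cohen--Macaulay by Proposition \ref{ext calcs 1 non-gen 2}, it is $S$-torsion free, so any map $\ol Q \to Q$ must vanish (the support of $\ol Q$ lies in the zero locus of $b_0, b_1$, which is positive-codimensional). Applying $\Hom(-, Q)$ to the sequence gives an injection $\End(Q) \hookrightarrow \Hom(L_{-1}, Q) = R \cdot \iota$, and since $\mathrm{id}_Q$ pulls back to $\iota$, this injection identifies $\End(Q)$ with $R$. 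Of the three groups the second is the most involved, and within it the surjectivity argument for $\Hom(Q, L_{-1})$ is where I expect the real work to lie.
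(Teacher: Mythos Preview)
Your proposal is correct and follows essentially the same path as the paper: the line-bundle pieces are read off from the graded pieces of $S$, the four mixed $\Hom$'s are computed by applying $\Hom(L_{\pm 1},-)$ or $\Hom(-,L_{\pm 1})$ to the presentation $L_{-1}\oplus L_{-1}\xrightarrow{M}L_1\oplus L_{-1}\to Q\to 0$, and $\End(Q)$ is handled via $0\to L_{-1}\to Q\to \ol Q\to 0$ together with torsion-freeness of the MCM module $Q$. One small simplification relative to your plan: the lifting/primality argument is only needed for $\Hom(Q,L_{-1})$, since for $\Hom(Q,L_1)$ the second coordinate already lies in $\Hom(L_{-1},L_1)=S_2=b_0R+b_1R$ automatically (also, $\ol Q$ is concentrated in strictly \emph{negative} degrees, though this sign is irrelevant to your torsion argument).
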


It remains to compute the ring structure on $\End_{\fX}(L_{-1}\oplus L_1 \oplus Q)$. We will do this by computing the individual composition maps
\[
\Hom_{\fX}(B,C) \times \Hom_{\fX}(A,B) \to \Hom_{\fX}(A,C), \,\,\,\,\,\, (f,g) \mapsto f \circ g,
\] 
for $A,B,C \in \{L_{-1},L_1,Q\}$. These are mostly straightforward but somewhat tedious computations. By the description in Theorem \ref{module structure nongen 2}(1), when $A=B$ the composition map is simply the $R$-module structure on $\Hom_{\fX}(B,C)$, and similarly when $B=C$, so that leaves the cases when $A\neq B$ and $B\neq C$. By Theorem \ref{line bundles nongen 2}, we also have $\End_{\fX}(L_{-1}\oplus L_1) = E$. In terms of the generators in Theorem \ref{module structure nongen 2}, this means that 
\[
\varphi_{12} \circ \varphi_{21}^i = b_i c\in R=\End_{\fX}(L_{-1})
\]
and 
\[
\varphi_{21}^i \circ \varphi_{12} = b_i c\in R=\End_{\fX}(L_1),
\]
for $i \in \{0,1\}$. Let us now move on to the composition maps involving $Q$, starting with 
\[
\Hom_{\fX}(Q,L_{-1}) \times \Hom_{\fX}(L_{-1},Q) \to \End_{\fX}(L_{-1}) \,\,\, \text{and} \,\,\, \Hom_{\fX}(Q,L_1) \times \Hom_{\fX}(L_{-1},Q)) \to \Hom_{\fX}(L_{-1},L_1).
\]
$\Hom_{\fX}(L_{-1},Q)$ is generated by the inclusion $\varphi_{31}$, and $\Hom_{\fX}(Q,L_{-1})$ has two generators $\varphi_{13}^0$ and $\varphi_{13}^1$, whose compositions with $L_1 \oplus L_{-1} \to Q$ are given by $(a_0 c, b_0 c)$ and $(-\ap c, b_1 c)$ in $cR \oplus R = \Hom_{\fX}(L_1 \oplus L_{-1},L_{-1})$. From this we see that
\[
\varphi_{13}^i \circ \varphi_{31} = b_i c \in R=\End_{\fX}(L_{-1})
\] 
for $i=0,1$. Next, $\Hom_{\fX}(Q,L_1)$ is the subspace of $R \oplus (b_0 R + b_1 R) = \Hom_{\fX}(L_1 \oplus L_{-1}, L_1)$ generated by $\varphi_{23}^0 = (a_0,b_0)$ and $\varphi_{23}^1 = (-\ap, b_1)$, so we see that
\[
\varphi_{23}^i \circ \varphi_{31} = \varphi_{21}^i \in \Hom_{\fX}(L_{-1},L_1)
\]
for $i=0,1$. Using that pre-composition with $\varphi_{31}$ is an isomorphism $\End_{\fX}(Q) \to \Hom_{\fX}(L_{-1},Q)$, we can now compute 
\[
\Hom_{\fX}(L_{-1},Q) \times \Hom_{\fX}(Q,L_{-1}) \to \End_{\fX}(Q) \,\,\, \text{and} \,\,\, \Hom_{\fX}(L_1,Q) \times \Hom_{\fX}(Q,L_1) \to \End_{\fX}(Q).
\]
Starting with $\Hom_{\fX}(L_{-1},Q) \times \Hom_{\fX}(Q,L_{-1}) \to \End_{\fX}(Q)$, consider the diagram
\[
\xymatrix{
\Hom_{\fX}(L_{-1},Q) \times \Hom_{\fX}(Q,L_{-1}) \ar[r] \ar[d] &  \End_{\fX}(Q) \ar[d] \\
\Hom_{\fX}(L_{-1},Q) \times \End_{\fX}(L_{-1}) \ar[r] & \Hom_{\fX}(L_{-1},Q).
}
\]
Since $\varphi_{13}^i \circ \varphi_{31} = b_i c$, we see that $\varphi_{31} \circ \varphi_{13}^i \circ \varphi_{31} = b_i c \varphi_{31}$ and hence
\[
\varphi_{31} \circ \varphi_{13}^i = b_i c \in \End(Q),
\]
for $i=0,1$. For $\Hom_{\fX}(L_1,Q) \times \Hom_{\fX}(Q,L_1) \to \End_{\fX}(Q)$, consider the diagram
\begin{equation}\label{eq: diagram}
\xymatrix{
\Hom_{\fX}(L_1,Q) \times \Hom_{\fX}(Q,L_1) \ar[r] \ar[d] &  \End_{\fX}(Q) \ar[d] \\
\Hom_{\fX}(L_1,Q) \times \Hom_{\fX}(L_{-1},L_1) \ar[r] & \Hom_{\fX}(L_{-1},Q).
}
\end{equation}
By Theorem \ref{module structure nongen 2}, $\Hom_{\fX}(L_1,Q)$ is a quotient of $\Hom_{\fX}(L_1, L_1 \oplus L_{-1}) = R \oplus cR$, generated by $\beta = \binom{1}{0}$ and $\varphi_{32} = \binom{0}{c}$, $\Hom_{\fX}(L_{-1},L_1) = b_0 R + b_1 R$, with $\varphi_{21}^i = b_i$ by definition. Also, recall that $\Hom_{\fX}(L_{-1},Q)$ is a quotient of $\Hom_{\fX}(L_{-1}, L_1 \oplus L_{-1}) = (b_0 R + b_1 R) \oplus R$, that $\varphi_{31} = \binom{0}{1}$ and that $\binom{b_0}{-a_0} = \binom{b_1}{\ap}=0$ in $\Hom_{\fX}(L_{-1},Q)$. In particular, we see that
\[
\varphi_{32} \circ \varphi_{21}^i = b_i c \varphi_{31}
\]
and
\[
\beta \circ \varphi_{21}^0 = a_0 \varphi_{31}, \,\,\,\,\,\, \beta \circ \varphi_{21}^1 = -\ap \varphi_{31}. 
\]
From diagram (\ref{eq: diagram}) we then see that 
\[
\beta \circ \varphi_{23}^0 = a_0, \,\,\,\,\,\, \beta \circ \varphi_{23}^1 = -\ap \,\,\, \text{and} \,\,\, \varphi_{32} \circ \varphi_{23}^i = b_i c,
\]
using that $\varphi_{23}^i \circ \varphi_{31} = \varphi_{21}^i$. Next, let us consider 
\[
\Hom_{\fX}(L_{-1},Q) \times \Hom_{\fX}(L_1,L_{-1}) \to \Hom_{\fX}(L_1,Q).
\]
Since $\varphi_{31} = \binom{0}{1}$ and $\varphi_{12} = c$ are the generators we see that we only need
\[
\varphi_{31} \circ \varphi_{12} = \varphi_{32}
\]
to describe this composition. We next consider
\[
\Hom_{\fX}(L_1,L_{-1}) \times \Hom_{\fX}(Q,L_1) \times \Hom_{\fX}(Q,L_{-1});
\]
from the descriptions of these $\Hom$-sets one sees directly that
\[
\varphi_{12} \circ \varphi_{23}^i = \varphi_{13}^i.
\]
Similarly one sees that the composition
\[
\Hom_{\fX}(L_{-1},L_1) \times \Hom_{\fX}(Q,L_{-1}) \times \Hom_{\fX}(Q,L_1) 
\]
is given by the relations
\[
\varphi_{21}^i \circ \varphi_{13}^j = b_i c \varphi_{23}^j,
\]
for $i,j=0,1$. Finally, we compute 
\[
\Hom_{\fX}(Q,L_{-1}) \times \Hom_{\fX}(L_1,Q) \to \Hom_{\fX}(L_1,L_{-1}) \,\,\, \text{and} \,\,\, \Hom_{\fX}(Q,L_1) \times \Hom_{\fX}(L_1,Q) \to \End_{\fX}(L_1).
\]
For the first one, by looking at the composition
\[
\Hom_{\fX}(L_1 \oplus L_{-1} , L_{-1}) \times \Hom_{\fX}(L_1, L_1 \oplus L_{-1}) \to \Hom_{\fX}(L_1,L_{-1})
\]
we see that
\[
\varphi_{13}^0 \circ \beta = a_0 \varphi_{12}, \,\,\,\,\,\, \varphi_{13}^1 \circ \beta = -\ap \varphi_{12} \,\,\, \text{and} \,\,\, \varphi_{13}^i \circ \varphi_{32} = b_i c \varphi_{12}.
\]
For the second one, we look at the composition
\[
\Hom_{\fX}(L_1 \oplus L_{-1} , L_1) \times \Hom_{\fX}(L_1, L_1 \oplus L_{-1}) \to \End_{\fX}(L_1)
\]
and see that
\[
\varphi_{23}^0 \circ \beta = a_0, \,\,\,\,\,\, \varphi_{23}^1 \circ \beta = -\ap \,\,\, \text{and} \,\,\, \varphi_{23}^i \circ \varphi_{32} = b_i c.
\]
This finishes the computation of the ring structure of $\End_{\fX}(L_{-1}\oplus L_1 \oplus Q)$. For ease of reference, we summarize the result in the following theorem.

\begin{theorem}\label{ring structure non-gen II}
With notation as in Theorem \ref{module structure nongen 2}, the $R$-algebra structure on $\widetilde{E}:=\End_{\fX}(L_{-1} \oplus L_1 \oplus Q)$ is determined by the following relations (for $i,j = 0,1$):
\begin{enumerate}
\item $\varphi_{12} \circ \varphi_{21}^i = b_i c$;

\item $\varphi_{12} \circ \varphi_{23}^i = \varphi_{13}^i$;

\item $\varphi_{13}^i \circ \varphi_{31} = b_i c$;

\item $\varphi_{21}^i \circ \varphi_{12} = b_i c$;

\item $\varphi_{21}^i \circ \varphi_{13}^j = b_i c \varphi_{23}^j$;

\item $\varphi_{23}^i \circ \varphi_{31} = \varphi_{21}^i$;

\item $\varphi_{31} \circ \varphi_{12} = \varphi_{32}$;

\item $\varphi_{31} \circ \varphi_{13}^i = b_i c$;

\item $\varphi_{13}^i \circ \varphi_{32} = b_i c \varphi_{12}$, $\varphi_{13}^0 \circ \beta = a_0 \varphi_{12}$ and $\varphi_{13}^1 \circ \beta = -\ap \varphi_{12}$;

\item $\varphi_{23}^i \circ \varphi_{32} = b_i c$, $\varphi_{23}^0 \circ \beta = a_0$ and $\varphi_{23}^1 \circ \beta = - \ap$;

\item $\varphi_{32} \circ \varphi_{21}^i = b_i c \varphi_{31}$, $\beta \circ \varphi_{21}^0 = a_0 \varphi_{31}$ and $\beta \circ \varphi_{21}^1 = -\ap \varphi_{31}$;

\item $\varphi_{32} \circ \varphi_{23}^i = b_i c$, $\beta \circ \varphi_{23}^0 = a_0$ and $\beta \circ \varphi_{23}^1 = -\ap$.

\end{enumerate}
\end{theorem}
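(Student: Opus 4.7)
The plan is to assemble the twelve relations by carrying out all nine bilinear composition maps
\[
\Hom(B,C) \times \Hom(A,B) \to \Hom(A,C), \qquad A,B,C \in \{L_{-1}, L_1, Q\},
\]
using the explicit presentations of the Hom-spaces provided by Theorem \ref{module structure nongen 2}. The cases where $A=B$ or $B=C$ are trivial (they reduce to the $R$-module structure), and the cases $A,B,C \in \{L_{-1},L_1\}$ already follow from the identification $\End(L_{-1}\oplus L_1) = E$ of Theorem \ref{line bundles nongen 2} together with the GMA description of $E$ in Theorem \ref{presentation of moduli stack non-generic 2}; this gives relations (1) and (4). Thus, the real content consists of the compositions in which $Q$ appears at least once.

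First I would handle the compositions out of $L_{-1}$ into $Q$ and back. Since $\varphi_{31} : L_{-1}\hookrightarrow Q$ sits inside $L_1 \oplus L_{-1} \twoheadrightarrow Q$ as $\binom{0}{1}$, and $\varphi_{13}^i, \varphi_{23}^i$ are given by explicit elements of $\Hom(L_1\oplus L_{-1},L_{-1})$ and $\Hom(L_1\oplus L_{-1},L_1)$, the relations (3), (6) and $\varphi_{23}^i\circ\varphi_{31}=\varphi_{21}^i$ are read off directly by evaluating these Hom representatives on $\binom{0}{1}$. This also gives (2) and $\varphi_{31}\circ\varphi_{12}=\varphi_{32}$ (relation (7)) by tracking the generator $c\in \Hom(L_1,L_{-1})$ through the composition.

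Next, the key trick for computations involving $\End(Q)$ is that pre-composition with $\varphi_{31}$ defines an isomorphism $\End(Q) \xrightarrow{\sim} \Hom(L_{-1},Q) = R\cdot\varphi_{31}$ (as established just before Theorem \ref{module structure nongen 2}, using $\Hom(\ol Q, Q)=0$). So to identify $\varphi_{31}\circ\varphi_{13}^i$ or $\varphi_{32}\circ\varphi_{23}^i$ in $\End(Q)$, I only need to verify the image after further post-composition with $\varphi_{31}$ (more precisely, pre-composition with $\varphi_{31}$), and this reduces, via the diagram
\[
\xymatrix{
\Hom(L_1,Q)\times \Hom(Q,L_1) \ar[r]\ar[d] & \End(Q) \ar[d] \\
\Hom(L_1,Q)\times \Hom(L_{-1},L_1) \ar[r] & \Hom(L_{-1},Q),
}
\]
to computations already performed in $E$. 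This yields relations (8), (11), (12) and the $\Hom(L_1,Q)$ half of (12).

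Finally, the compositions $\Hom(Q,L_{-1})\times\Hom(L_1,Q)\to\Hom(L_1,L_{-1})$ and $\Hom(Q,L_1)\times\Hom(L_1,Q)\to \End(L_1)$ are computed by lifting $\varphi_{32}$ and $\beta$ to their representatives $\binom{0}{c}$ and $\binom{1}{0}$ in $\Hom(L_1, L_1 \oplus L_{-1}) = R\oplus cR$, and then evaluating the row-vector representatives of $\varphi_{13}^i$ and $\varphi_{23}^i$ on these columns. This directly yields (9), (10) and the remaining pieces of (5). The main obstacle is purely bookkeeping: one must consistently use the identifications from Theorem \ref{module structure nongen 2}, in particular that $\Hom(L_1,Q)$ is the quotient of $R\oplus cR$ by the relations $b_0c\beta = a_0\varphi_{32}$ and $b_1c\beta = -\ap\varphi_{32}$, so that some identities (e.g.\ those involving $\beta$) are only well-defined modulo these relations, and one should verify that the resulting formulas indeed give a well-defined $R$-algebra structure. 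No new ideas beyond those already developed are required.
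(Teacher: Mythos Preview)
Your proposal is correct and follows essentially the same approach as the paper: reduce to the nontrivial compositions involving $Q$, read off relations by evaluating explicit row/column representatives in $\Hom(L_1\oplus L_{-1},-)$ and $\Hom(-,L_1\oplus L_{-1})$, and handle compositions landing in $\End(Q)$ via the isomorphism $\End(Q)\xrightarrow{\sim}\Hom(L_{-1},Q)$ given by precomposition with $\varphi_{31}$, together with the commutative square you drew. This is exactly what the paper does in the paragraphs preceding the theorem; the theorem itself is just a summary of those computations.

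One small bookkeeping slip: relation (5), namely $\varphi_{21}^i\circ\varphi_{13}^j = b_ic\,\varphi_{23}^j$, is the composition $\Hom(L_{-1},L_1)\times\Hom(Q,L_{-1})\to\Hom(Q,L_1)$ and does not involve $\Hom(L_1,Q)$ at all, so it does not belong with (9) and (10) in your final paragraph. It is handled directly by multiplying the row representatives $(a_0c,b_0c)$, $(-a_1'c,b_1c)$ of $\varphi_{13}^j$ by $b_i$ inside $\Hom(L_1\oplus L_{-1},L_1)$, as the paper does. Similarly, your phrase ``the $\Hom(L_1,Q)$ half of (12)'' is redundant. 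These are harmless organizational points; the substance of your argument matches the paper.
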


Later on we will also need to consider the dual $Q^\ast = \Homi(Q,S)$, so we will now compute $Q^\ast$ explicitly. Since $Q$ is the cokernel of $M : L_{-1} \oplus L_{-1} \to L_1 \oplus L_{-1}$, $Q^\ast$ is the kernel of $M^t : L_{-1} \oplus L_1 \to L_1 \oplus L_1$ by duality. Consider the projective resolution (\ref{resolution of Q})
\[
\dots \overset{N}{\to} L_{-3} \oplus L_{-3} \overset{M}{\to} L_{-1} \oplus L_{-3} \overset{N}{\to} L_{-1} \oplus L_{-1} \overset{M}{\to}  L_1 \oplus L_{-1} \to Q \to 0.
\]
of $Q$. We can remove $Q$ and continue the resolution to the right to obtain an acyclic complex
\[
\dots \overset{N}{\to} L_{-1} \oplus L_{-1} \overset{M}{\to} L_1 \oplus L_{-1} \overset{N}{\to} L_1 \oplus L_1 \overset{M}{\to}  L_3 \oplus L_1 \overset{N}{\to} \dots.
\]
Dualizing this complex we obtain an acyclic complex
\[
\dots \overset{N^t}{\to} L_{-3} \oplus L_{-1} \overset{M^t}{\to} L_{-1} \oplus L_{-1} \overset{N^t}{\to} L_{-1} \oplus L_1 \overset{M^t}{\to}  L_1 \oplus L_1 \overset{N^t}{\to} \dots.
\]
From the acyclicity of this complex, we see that
\[
\Ker( L_{-1} \oplus L_1 \overset{M^t}{\to}  L_1 \oplus L_1 ) \cong \mathrm{Im}(L_{-1} \oplus L_{-1} \overset{N^t}{\to} L_{-1} \oplus L_1 ) \cong \Coker(L_{-3} \oplus L_{-1} \overset{M^t}{\to} L_{-1} \oplus L_{-1} ),
\]
showing that $Q^\ast$ is the cokernel of $M^t : L_{-3} \oplus L_{-1} \to L_{-1} \oplus L_{-1}$. We record this as a proposition.

\begin{proposition}\label{prop:Qvee description}
$Q^\ast$ may be explicitly described as the cokernel of $M^t : L_{-3} \oplus L_{-1} \to L_{-1} \oplus L_{-1}$, where we recall that $M^t = \left( \begin{smallmatrix} b_0 & -a_0 \\ b_1 & \ap \end{smallmatrix} \right)$ and that we view $L_{-3} \oplus L_{-1}$ and $L_{-1} \oplus L_{-1}$ as column vectors.
\end{proposition}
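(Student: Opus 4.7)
The approach is to exploit the matrix factorization structure encoded in the pair $(M,N)$. Recall from the paragraph preceding Proposition~\ref{prop:Qvee description} that the lifts $\wt M, \wt N$ of $M,N$ to the polynomial ring $\oo\lb a_0,\ap\rb[b_0,b_1,c]$ satisfy $\wt M \wt N = \wt N \wt M = (\ap b_0 + a_0 b_1)\cdot I$, so that over $S$ we have $MN = NM = 0$. This is precisely the condition for the complex \eqref{resolution of Q}, which has so far been written only as a resolution to the left of $Q$, to extend to a doubly infinite, $2$-periodic acyclic complex of line bundles (a ``matrix factorization'' / ``totally acyclic'' complex in the sense familiar from singularity theory over hypersurface rings).

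The plan is first to extend \eqref{resolution of Q} to the right, producing the acyclic complex
\[
\dots \overset{N}{\to} L_{-1}\oplus L_{-1} \overset{M}{\to} L_1\oplus L_{-1} \overset{N}{\to} L_1\oplus L_1 \overset{M}{\to} L_3\oplus L_1 \overset{N}{\to} \dots
\]
displayed in the excerpt (this is the complex obtained from \eqref{resolution of Q} by removing the augmentation $\to Q \to 0$ and continuing with alternating copies of $M$ and $N$, which is exact by the same ``matrix factorization'' exactness invoked in the construction of $Q$). Next, I would apply the coherent duality functor $\Homi(-,S)$ termwise. Since each term is a finitely generated locally free graded $S$-module (a sum of line bundles) and $L_n^\ast \cong L_{-n}$, duality preserves exactness, and the operation of multiplying by the matrix $M$ (resp.\ $N$) dualizes to multiplication by $M^t$ (resp.\ $N^t$) after an appropriate identification of row/column conventions. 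This produces a doubly infinite acyclic complex
\[
\dots \overset{N^t}{\to} L_{-3}\oplus L_{-1} \overset{M^t}{\to} L_{-1}\oplus L_{-1} \overset{N^t}{\to} L_{-1}\oplus L_1 \overset{M^t}{\to} L_1\oplus L_1 \overset{N^t}{\to} \dots
\]
as written in the excerpt.

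Finally, I would identify $Q^\ast$ inside this complex. On the one hand, since $Q$ has projective resolution beginning $L_{-1}\oplus L_{-1} \overset{M}{\to} L_1\oplus L_{-1} \to Q \to 0$ and since $\Ext^{\geq 1}_S(Q,S)=0$ by Proposition~\ref{ext calcs 1 non-gen 2}, applying $\Homi(-,S)$ shows that $Q^\ast$ is the kernel of $M^t : L_{-1}\oplus L_1 \to L_1\oplus L_1$. On the other hand, acyclicity of the dualized periodic complex at that spot identifies this kernel with the image of $N^t : L_{-1}\oplus L_{-1}\to L_{-1}\oplus L_1$, and acyclicity one step further identifies that image with the cokernel of $M^t : L_{-3}\oplus L_{-1}\to L_{-1}\oplus L_{-1}$, which is the claimed description.

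There is no genuine obstacle here: once the matrix factorization structure is in hand, the argument is essentially bookkeeping, and the only small care required is to keep track of the grading shifts so that $M^t$ and $N^t$ are genuine morphisms of graded modules between the correct twists of $S$. The maximal Cohen--Macaulay property of $Q$ (Proposition~\ref{ext calcs 1 non-gen 2}) is what guarantees that dualization commutes with the cokernel description and produces a coherent sheaf in the expected degree.
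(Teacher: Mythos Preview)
Your proposal is correct and follows essentially the same route as the paper: extend the periodic resolution \eqref{resolution of Q} to the right into a doubly infinite acyclic complex, dualize termwise (using that the terms are sums of line bundles), and then read off $Q^\ast$ as $\Ker(M^t) = \mathrm{Im}(N^t) = \Coker(M^t)$ at the appropriate spots. The only minor remark is that identifying $Q^\ast$ with $\Ker(M^t)$ needs only left exactness of $\Homi(-,S)$, not the full $\Ext$-vanishing of Proposition~\ref{ext calcs 1 non-gen 2}; the latter (or rather the local freeness of the terms) is what ensures the dualized complex remains acyclic.
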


\subsection{Resolutions of simple modules in case non-generic II}\label{subsec:ng2 resolutions}
There are three isomorphism classes of simple left modules for the algebra $\widetilde{E}$. They are of the form $\widetilde{E}/(\varpi,J_i)$ for $i = 1,2,3$ where $J_i$ denotes one of the following two-sided ideals in $\widetilde{E}$:

\begin{align*}
		J_1&:=\begin{pmatrix}
		(a_0,a_1',b_0c,b_1c)\End_{\fX}(L_{-1}) & \Hom_{\fX}(L_1,L_{-1}) & \Hom_{\fX}(Q,L_{-1}) \\ \Hom_{\fX}(L_{-1},L_1) & \End_{\fX}(L_1) & \Hom_{\fX}(Q,L_1) \\ \Hom_{\fX}(L_{-1},Q) & \Hom_{\fX}(L_1,Q) & \End_{\fX}(Q)
	\end{pmatrix} \\
	J_2&:=\begin{pmatrix}
	\End_{\fX}(L_{-1}) & \Hom_{\fX}(L_1,L_{-1}) & \Hom_{\fX}(Q,L_{-1}) \\ \Hom_{\fX}(L_{-1},L_1) & (a_0,a_1',b_0c,b_1c)\End_{\fX}(L_1) & \Hom_{\fX}(Q,L_1) \\ \Hom_{\fX}(L_{-1},Q) & \Hom_{\fX}(L_1,Q) & \End_{\fX}(Q)
\end{pmatrix} \\
	J_3 &:=\begin{pmatrix}
	\End_{\fX}(L_{-1}) & \Hom_{\fX}(L_1,L_{-1}) & \Hom_{\fX}(Q,L_{-1}) \\ \Hom_{\fX}(L_{-1},L_1) & \End_{\fX}(L_1) & \Hom_{\fX}(Q,L_1) \\ \Hom_{\fX}(L_{-1},Q) & \Hom_{\fX}(L_1,Q) & (a_0,a_1',b_0c,b_1c)\End_{\fX}(Q)
\end{pmatrix}.
\end{align*}

We also consider the columns (from left to right) $C_1, C_2, C_3$ of $\widetilde{E}$, which are projective left $\widetilde{E}$-modules.

\begin{proposition}\label{prop:projresofsimple}
	The following complexes give projective resolutions of $\widetilde{E}/J_i$ for $i = 1,2,3$. The maps are written as matrices acting by right multiplication on row vectors (we act on the right so that we get maps of \emph{left} $\widetilde{E}$-modules).
	
	\begin{equation}\label{eq:res1}	C_3 \xrightarrow{\left(\begin{smallmatrix}\varphi_{31} &a_1'&-a_0\end{smallmatrix}\right)}  C_1\oplus C_3 \oplus C_3 \xrightarrow{\left(\begin{smallmatrix}
			-a_1' & a_0 & 0\\
			\varphi_{31}&0&-a_0\\
			0&\varphi_{31}&-a'_1
		\end{smallmatrix}\right)} C_1\oplus C_1 \oplus C_3 \xrightarrow{ \left(\begin{smallmatrix}a_0\\a_1'\\\varphi_{31}\end{smallmatrix}\right)} C_1 \to \widetilde{E}/J_1 \end{equation}

	\begin{equation}\label{eq:res2} C_3 \xrightarrow{\left(\begin{smallmatrix}\beta &\varphi_{31}\end{smallmatrix}\right)}  C_2\oplus C_1\xrightarrow{\left(\begin{smallmatrix}\varphi_{21}^0 & \varphi_{21}^1\\-a_0&a_1'\end{smallmatrix}\right)} C_1\oplus C_1 \xrightarrow{\left(\begin{smallmatrix}
			a_1' & -\varphi_{13}^1\\a_0&\varphi_{13}^0
		\end{smallmatrix}\right)} C_1\oplus C_3 \xrightarrow{ \left(\begin{smallmatrix}\varphi_{12}\\-\beta\end{smallmatrix}\right)} C_2 \to \widetilde{E}/J_2 \end{equation}
		
	\begin{equation}\label{eq:res3} C_2^{\oplus 2} \xrightarrow{\left(\begin{smallmatrix}1 &0&\varphi_{23}^1\\0&1&-\varphi_{23}^0\end{smallmatrix}\right)}  C_2^{\oplus 2}\oplus C_3\xrightarrow{\left(\begin{smallmatrix}a_1' & -b_1c\\a_0&b_0c\\\beta &\varphi_{32}\end{smallmatrix}\right)} C_2^{\oplus 2} \xrightarrow{M':=\left(\begin{smallmatrix}
			b_0c & b_1c\\-a_0&a_1'
		\end{smallmatrix}\right)} C_2^{\oplus 2} \xrightarrow{\pi := \left(\begin{smallmatrix}-\varphi_{23}^1\\\varphi_{23}^0\end{smallmatrix}\right)} C_3 \to \widetilde{E}/J_3  \end{equation}
\end{proposition}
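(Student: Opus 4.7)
The proof strategy proceeds in two stages, applied to each of the three resolutions.

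\textbf{Stage 1 (Complex verification).} For each of (\ref{eq:res1}), (\ref{eq:res2}), (\ref{eq:res3}), I would verify that consecutive maps compose to zero by direct matrix multiplication. Since these are matrices with entries in $\widetilde{E}$ acting on row vectors by right multiplication, each composition reduces to finitely many identities in $\widetilde{E}$. All such identities can be read off from the multiplication table in Theorem \ref{ring structure non-gen II}, together with the fact that $R$ embeds centrally into each of the three diagonal factors $\End(L_{-1}), \End(L_1), \End(Q)$ (all equal to $R$), and the fundamental ring relation $a_1'(b_0 c) + a_0(b_1 c) = 0$ in $R$. The ``cross'' terms such as $\varphi_{31} \cdot a_0$ vs.\ $a_0 \cdot \varphi_{31}$ in $\widetilde{E}$ must be computed carefully: products of matrix units collapse according to $e_{ij}e_{kl}=\delta_{jk}e_{il}$, so, for instance, $a_0 \cdot \varphi_{31}=0$ in $\widetilde{E}$ whereas $\varphi_{31}\cdot a_0 = a_0\varphi_{31}$ (the element of $\Hom(L_{-1},Q)$ obtained by $R$-scaling $\varphi_{31}$).

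\textbf{Stage 2 (Exactness).} The key observation is that $1 = e_{11}+e_{22}+e_{33}$ in $\widetilde{E}$, where $e_{ii}$ is the projection to the $i$-th summand of $P = L_{-1}\oplus L_1\oplus Q$. Any left $\widetilde{E}$-module $M$ decomposes as $M = e_{11}M \oplus e_{22}M \oplus e_{33}M$, and a sequence of left $\widetilde{E}$-modules is exact if and only if each of the three component sequences of abelian groups is exact. Applying $e_{ii}$ to $C_j=\widetilde{E}e_{jj}$ gives the $(i,j)$ entry $e_{ii}\widetilde{E}e_{jj} = \Hom(P_j,P_i)$, which is computed explicitly in Theorem \ref{module structure nongen 2}: each is one of $R$, $cR$, $(b_0,b_1)R \cong \Hom(L_{-1},L_1)$, $(b_0 c, b_1 c)R \cong \Hom(Q,L_{-1})$, or a quotient of $R\oplus cR$. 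Thus each resolution reduces to three concrete complexes of $R$-modules whose exactness I would verify in turn.

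\textbf{Ring-theoretic input.} The core computation is a Koszul-type argument using the structure $R \cong \oo\lb a_0, a_1', Y_0, Y_1\rb/(a_1' Y_0 + a_0 Y_1)$ with $Y_i = b_i c$. This single relation is the sole obstruction to $(a_0,a_1',Y_0,Y_1)$ being a regular sequence, and it is precisely this relation that accounts for the ``extra'' generators $\varphi_{31}, \varphi_{32}, \beta$ that appear in the differentials beyond what a naive Koszul complex would predict. At each position of each complex, the kernel/image comparison reduces to showing that all syzygies among the relevant generators are accounted for by this single relation together with its derived consequences (e.g.\ the resolution (\ref{resolution of Q}) of $Q$).

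\textbf{Expected main obstacle.} The most delicate verification is the exactness of (\ref{eq:res3}) for $\widetilde{E}/J_3$, since this corresponds to the simple top of $C_3 = \Hom(Q,P)$ and $Q$ is not projective in $\QCoh(\fX)$. A priori one might expect an infinite resolution, so the finite length of (\ref{eq:res3}) is the substantive content. The key input is Proposition \ref{ext calcs 1 non-gen 2}, which establishes that $Q$ is maximal Cohen--Macaulay, so that $\Ext^i_S(Q,S)=0$ for $i\geq 1$; combined with the periodic resolution (\ref{resolution of Q}) and Theorem \ref{module structure nongen 2}, this controls the interactions between $\Hom(L_1,Q)$, $\Hom(Q,L_1)$ and $\End(Q)$ enough to verify the exactness of each of the three component complexes via direct computation using the explicit form of the matrices $M'$, $\pi$ and the entries $\beta, \varphi_{32}, \varphi_{23}^i$.
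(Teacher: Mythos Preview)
Your proposal is correct and follows essentially the same approach as the paper. The paper also decomposes each complex ``row-by-row'' via the idempotents $e_{ii}$, reducing to three explicit complexes of $R$-modules whose exactness is checked by hand using the descriptions in Theorem~\ref{module structure nongen 2}; the paper spells out the computation for (\ref{eq:res3}) in detail (particularly the delicate third row involving $\Hom(L_1,Q)$) and declares the other cases analogous, which matches your identification of (\ref{eq:res3}) as the main obstacle. One small remark: your aside that ``$a_0\cdot\varphi_{31}=0$ in $\widetilde{E}$'' is only true if $a_0$ is read as $a_0 e_{11}$ rather than the central element $a_0(e_{11}+e_{22}+e_{33})$; in the matrix entries the scalars always carry the idempotent of the relevant summand, so the compositions simplify more uniformly than your Stage~1 suggests (since $R$ is central, $\varphi_{31}\cdot(a_0 e_{11}) = (a_0 e_{33})\cdot\varphi_{31} = a_0\varphi_{31}$).
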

\begin{proof}
	The computations required to check that these complexes are acyclic are similar in the three cases. We just explain the third one, as an example. To check that the image of $\pi$ is equal to the kernel in $C_3$ of the projection to $\widetilde{E}/J_3$, we use the facts that $\Hom_{\fX}(L_1,Q)\varphi_{23}^0 = (a_0,b_0c)\End_{\fX}(Q)$ and $\Hom_{\fX}(L_1,Q)\varphi_{23}^1 = (a'_1,b_1c)\End_{\fX}(Q)$, $\Hom_{\fX}(Q,L_1)$ is spanned by the maps $\varphi_{23}^i$, and $\Hom_{\fX}(Q,L_{-1})$ is spanned by the maps $\varphi_{12}\circ\varphi_{23}^i = \varphi_{13}^i$.
	
	We next check that the image of $M'$ is the kernel of $\pi$. Thinking about the various columns row-by-row, we need to check exactness of the following:
	
	\begin{align*}cR\oplus cR \xrightarrow{M'} cR\oplus cR \xrightarrow{\left(\begin{smallmatrix}
				a_1' & -b_1\\a_0 & b_0
			\end{smallmatrix}\right)} cR\oplus R \\ 
		R\oplus R \xrightarrow{M'} R\oplus R \xrightarrow{\left(\begin{smallmatrix}
				a_1' & -b_1\\a_0 & b_0
			\end{smallmatrix}\right)} R \oplus (b_0 R + b_1 R)\\ 
		\Hom_{\fX}(L_1,Q)^{\oplus 2} \xrightarrow{M'}  \Hom_{\fX}(L_1,Q)^{\oplus 2} \xrightarrow{\left(\begin{smallmatrix}
				-\varphi_{23}^1\\\varphi_{23}^0
			\end{smallmatrix}\right)} R   \end{align*}	The exactness of the first two rows can be shown in exactly the same way as for the resolution (\ref{resolution of Q}). For the first row, we use the fact that $c$ is not a zerodivisor. 
	
	For the third row, we first compute the kernel of the final map, recalling that $\Hom_{\fX}(L_1,Q)$ is spanned by $\varphi_{32}$ and $\beta$. The kernel is given by things of the form $(x_1\varphi_{32}+y_1\beta,x_2\varphi_{32}+y_2\beta)$ with $x_i, y_i \in R$ and $-x_1b_1c+y_1a_1'+x_2b_0c+y_2a_0 = 0$. Considering the relations in $\Hom_{\fX}(L_1,Q)$, we may assume that $x_i \in \OO\lb b_0c,b_1c\rb \subset R$ for $i=1,2$. But then the element $y_1a_1'+y_2a_0 = x_1b_1c - x_2b_0c \in \OO\lb b_0c,b_1c\rb \cap(a_0,a_1')=\{0\}$. We deduce from this that $x_1 = xb_0c$ and $x_2 = xb_1c$ for some $x \in \OO\lb b_0c,b_1c\rb $, and $(y_1,y_2) = (f,g)M'$ for some $f, g \in R$. Putting things together, we see that \[(x_1\varphi_{32}+y_1\beta,x_2\varphi_{32}+y_2\beta) = (x\varphi_{32}+f\beta,g\beta)M'\] is in the image of $M'$. 
	
	To show exactness of the third row in the next degree, we argue similarly: assume that $x_i \in \OO\lb b_0c,b_1c\rb $, and suppose $v = (x_1\varphi_{32}+y_1\beta,x_2\varphi_{32}+y_2\beta)$ is in the kernel of $M'$. We quickly deduce that $x_1 = 0$ and $(x_2-y_1,-y_2)M' = 0$, so $(y_1-x_2,y_2) = (x,y)N'$ for $x,y \in R$, where $N' = \left(\begin{smallmatrix}
		a_1' & -b_1c\\a_0 & b_0c
	\end{smallmatrix}\right)$. Now we have $v = (x\beta,y\beta)N'+ (x_2\beta,x_2\varphi_{32})$, as desired. Checking exactness everywhere else is straightforward.
\end{proof}

\section{Representation theory preliminaries}\label{chap: GL2preliminaries}
\subsection{Blocks for $\GL_2(\Q_p)$}\label{sec: blocks for GL2}

In this subsection we recall some material regarding the classification of smooth admissible irreducible $\Fpbar$-representations of $\GL_2(\Qp)$, describing only what we will need in this paper. The irreducibles fall into two groups, one consisting of the subquotients of principal series representations, and the other consisting of the supersingular representations. For simplicity of notation, set $G= \GL_2(\Qp)$. 

We begin by describing the former. Let $B$ be the subgroup of upper triangular matrices in $\GL_2(\Qp)$. Given two smooth characters $\chi_1,\chi_2 : \Qp^\times \to \Fpbar^\times$, we obtain a character $\chi_1 \otimes \chi_2 : B \to \Fpbar^\times$ by the formula
\[
\begin{pmatrix} a & b \\ 0 & d \end{pmatrix} \mapsto \chi_1(a)\chi_2(d).
\]
We let $\mbf{1} : \Qp^\times \to \Fpbar^\times$ denote the trivial character. The smooth parabolic induction $\Ind_B^G (\chi_1 \otimes \chi_2)$ is irreducible unless $\chi_1 = \chi_2$, in which case it is a non-split extension
\[
0 \to \chi \to \Ind_B^G (\chi \otimes \chi) \to \mathrm{St} \otimes (\chi \circ \det) \to 0,
\]
of irreducible representations, where $\mathrm{St} = \Ind_B^G (\mbf{1} \otimes \mbf{1}) / \mbf{1}$ is the Steinberg representation, and $\det : G \to \Qp^\times$ denotes the determinant. This describes all irreducibles that arise as subquotients of principal series representations. We remark that this parametrization is unique --  $\Ind_B^G (\chi_1 \otimes \chi_2)$ and $\Ind_B^G (\chi_1^\prime \otimes \chi_2^\prime)$ have no common irreducible subquotients unless $\chi_i = \chi_i^\prime $ for $i=1,2$.

The remaining irreducibles are the supersingular ones, which may be constructed as follows. Let $\F^2$ be the standard representation of $\GL_2(\Zp)$, and let $\sigma_r = \Sym^r \F^2$ for $r \in \{0,1,\dots,p-1\}$. Extend $\sigma_r$ to a representation of $K = Z.\GL_2(\Zp)$, where $Z$ denotes the center of $G$, by letting $\left( \begin{smallmatrix} p & 0 \\ 0 & p \end{smallmatrix}\right)$ act trivially. The compact induction $\mathrm{ind}_K^G \sigma_r$ has endomorphism ring isomorphic to a polynomial ring $\F[T]$, with $T$ being a certain Hecke operator, and the quotient
\[
\pi_r = (\mathrm{ind}_K^G \sigma_r)/(T)
\]
is an irreducible supersingular representation. More generally, we can consider $\pi_r \otimes (\chi \circ \det)$ for some $r \in \{0,1,\dots,p-1\}$ and some smooth $\chi : \Qp^\times \to \Fpbar^\times$. In this case, the data $(r,\chi)$ is not uniquely determined by the (isomorphism class of the) representation $\pi_r \otimes (\chi \circ \det)$. Firstly, twisting the character $\chi$ by the unique non-trivial unramified quadratic character does not change the isomorphism class. Secondly, we have $\pi_r \cong \pi_{p-1-r}\otimes (\omega^r \circ \det)$.

Next, we recall from the introduction the partition of irreducible representations of $\GL_2(\Qp)$ into blocks, as described in \cite{paskunas-blocks} (recall that we are assuming that $p\geq 5$). The blocks of $\Mod_{G,\zeta}^{lfin}(\oo)$ containing absolutely irreducible representations are:
\begin{enumerate}
\item $\mf{B} = \{ \pi \}$, where $\pi$ is supersingular;

\item $\mf{B} = \{ \Ind_B^G (\delta_1 \otimes \delta_2 \omega^{-1}), \Ind_B^G (\delta_2 \otimes \delta_1 \omega^{-1}) \}$ with $\delta_2 \delta_1^{-1} \neq \mbf{1}, \omega^{\pm 1}$;

\item $\mf{B} = \{ \Ind_B^G (\delta \otimes \delta \omega^{-1}) \}$;

\item $\mf{B} = \{ \delta \circ \det, \mathrm{St} \otimes (\delta \circ \det), \Ind_B^G (\delta \omega \otimes \delta \omega^{-1}) \}$;
\end{enumerate} 

In accordance with the terminology used in \cite{paskunas-image}, we refer to the blocks of type (1) as \emph{supersingular}, blocks of type (2) as \emph{generic principal series} (or \emph{generic residually reducible}), blocks of type (3) as \emph{non-generic case I} and blocks of type (4) as \emph{non-generic case II}. These blocks are in bijective correspondence with   isomorphism classes of {semisimple} continuous Galois representations $\Gamma_{\Qp} \to \GL_2({\F})$ which are either reducible or absolutely irreducible. We recall this briefly, using Colmez's Montr\'{e}al functor \cite{colmez-functor}; we will follow the notation in \cite[\S 5.7]{paskunas-image}. Let $\Mod_{\Gamma_{\Qp}}^{fin}(\oo)$ be the category of continuous $\Gamma_{\Qp}$-representations on finite length $\oo$-modules. Let $\Mod_{G,Z}^{fin}(\oo)$ be the full subcategory of
$\Mod_{G}^{sm}(\oo)$ consisting of representations of finite length with a central character. The Montr\'{e}al functor is an exact functor
\[
\mbf{V} : \Mod_{G,Z}^{fin}(\oo) \to \Mod_{\Gamma_{\Qp}}^{fin}(\oo).
\] 
If $\delta : \Qp^\times \to \oo^\times$ is a continuous character, then $\mbf{V}(\pi \otimes (\delta \circ \det)) \cong \mbf{V}(\pi) \otimes \delta $ naturally for all $\pi$ in $\Mod_{G,Z}^{fin}(\oo)$. Following Pa{\v s}k{\=u}nas, we will also use the renormalization 
\[
\vc (\pi) = \mbf{V}(\pi)^\vee \otimes \varepsilon \zeta_\pi,
\]
where $\zeta_\pi$ denotes the central character of $\pi$. The functor $\vc$ is contravariant, exact and still satisfies $\vc(\pi \otimes (\delta \circ \det)) \cong \vc(\pi) \otimes \delta $ . It has the following values:
\[
\vc(\pi_r) = \Ind_{\Gamma_{\mb{Q}_{p^2}}}^{\Gamma_{\Qp}} \omega_2^{r+1}, \,\,\, \vc(\Ind_B^G(\delta_1 \otimes \delta_2 \omega^{-1})) = \delta_1.
\] 
Here $\omega_2 : \Q_{p^2}^\times \to \F_{p^2}^\times$ is given by $\omega_2(x) = x \cdot |x| \mod p$. Note that the induced representation $ \Ind_{\Gamma_{\mb{Q}_{p^2}}}^{\Gamma_{\Qp}} \omega_2^{r+1}$ descends to a representation defined over $\Fp$. 

We can then define a map
\[
\mf{B} \mapsto \rho_{\mf{B}}
\]
from blocks containing an absolutely irreducible representation  to semisimple reducible or absolutely irreducible $2$-dimensional representations of $\Gamma_{\Qp}$ over $\F$, by sending a supersingular block $\mf{B} = \{ \pi_r \}$ to $\vc(\pi_r)$ and sending a block $\mf{B}$ of type (2), (3 or (4) above to
\[
\delta_1 \oplus \delta_2 = \vc(\Ind_B^G(\delta_1 \otimes \delta_2 \omega^{-1}) \oplus \Ind_B^G(\delta_2 \otimes \delta_1 \omega^{-1})),
\]
where $\delta_1$ and $\delta_2$ are the two characters defining the block (with $\delta_1 = \delta_2$ for blocks of type (3), i.e.\ non-generic case I). This map is a bijection. Extending scalars to a splitting field \cite[Prop.~5.3]{paskunas-image} shows that we have a bijection between arbitrary blocks in $\Mod_{G,Z}^{fin}(\oo)$ and $\Gal(\overline{\F}/\F)$-orbits of isomorphism classes of {semisimple} continuous Galois representations $\Gamma_{\Qp} \to \GL_2({\F})$. We can moreover identify this set with the set of 2-dimensional residual pseudorepresentations (relative to $\OO$), defined as in \cite[Defn.~3.11]{chenevier-determinant}.

\subsection{Categorical constructions}\label{sec: categorical constructions}

We now prove some results that will allow us to interpret the $p$-adic local Langlands correspondence for $\GL_2(\Qp)$ as a fully faithful embedding of derived (and sometimes abelian) categories, by abstracting the main properties of the situation. In this subsection, a \emph{finite} module will always mean a module with finite cardinality. 

Our starting point is a (not necessarily commutative) $\oo$-algebra $E$.

\begin{assumption}\label{assumptions on E}
Throughout this subsection, we make the following assumptions on $E$:
\begin{enumerate}

\item The center of $E$, which we denote by $R$, is a complete Noetherian local ring whose maximal ideal we denote by $\m$, and whose residue field we assume to be finite;

\item $E$ is a finitely generated $R$-module;

\item Every simple right $E$-module has finite projective dimension.

\end{enumerate}
\end{assumption}
These properties abstract the main properties of the endomorphism rings appearing in \cite{paskunas-image}. Note that every simple right $E$-module is killed by $\m$, by Nakayama's lemma, and is therefore finite. We equip every finitely generated $R$-module (and hence every finitely generated left or right $E$-module) with its $\m$-adic topology. With respect to this all finitely generated $R$-modules are profinite, and all $R$-linear maps are automatically continuous and closed. Note that the first two assumptions imply that $E$ is (left and right) Noetherian. We start by noting that $E$ has finite global dimension.

\begin{proposition}\label{finite global dimension}
$E$ has finite global dimension. 
\end{proposition}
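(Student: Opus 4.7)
My plan is to combine the structural consequences of $E$ being finite over the complete Noetherian local center $R$ with a symmetric two-step bootstrap through the minimal-resolution formula for $\mathrm{Tor}$.

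I would begin by recording the general facts about $E$ that are needed. Because $E$ is finite over the Noetherian ring $R$ it is Noetherian on both sides; because $\m$ is central and kills every simple right $E$-module (Nakayama), we have $\m E \subseteq J := \mathrm{rad}(E)$. Moreover $E/J$ is a quotient of the Artinian ring $E/\m E$ (which is finite-dimensional over the finite residue field $k = R/\m$), so there are only finitely many isomorphism classes of simple right $E$-modules $S_1,\ldots,S_r$, and symmetrically finitely many simple left $E$-modules. Put $N := \max_j \mathrm{pd}_E(S_j)$, which is finite by hypothesis. Finally, the radical of the Artinian ring $E/\m E$ is nilpotent, so $J^n \subseteq \m E$ for some $n$; combined with the $\m$-adic completeness of $E$ inherited from $R$, this makes $E$ $J$-adically complete and hence semiperfect. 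In particular every finitely generated right or left $E$-module admits a minimal projective resolution by finitely generated projectives.

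The computational engine of the proof will be the standard minimal-resolution identity: for a minimal projective resolution $P_\bu \to M$ of a finitely generated right $E$-module, the differentials lie in $J P_{\bu-1}$, so tensoring over $E$ with any left $E$-module killed by $J$ makes them all vanish. Applied to $E/J$ itself, this yields $\mathrm{Tor}^E_i(M, E/J) \cong P_i/JP_i$, which by Nakayama (applied to the finitely generated projective $P_i$ over the semiperfect ring $E$) is nonzero iff $P_i$ is nonzero. Hence
\[
\mathrm{pd}_E(M) \;=\; \sup\{\, i : \mathrm{Tor}^E_i(M, E/J) \ne 0\,\},
\]
and the symmetric identity holds for finitely generated left $E$-modules, computing $\mathrm{pd}_{E^{op}}$ via $\mathrm{Tor}^E_i(E/J, -)$.

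With these identities in hand I bootstrap in two steps. As a right $E$-module, $E/J$ is a direct sum of copies of the $S_j$, so $\mathrm{pd}_E(E/J) = N$ on the right. Using a length-$N$ right projective resolution of $E/J$ to compute $\mathrm{Tor}^E_i(E/J, T)$ for any finitely generated left $E$-module $T$ forces vanishing for $i > N$; the left-module version of the formula then gives $\mathrm{pd}_{E^{op}}(T) \le N$, and in particular $\mathrm{pd}_{E^{op}}(E/J) \le N$. Reversing the roles, a length-$N$ left projective resolution of $E/J$ now bounds $\mathrm{Tor}^E_i(M, E/J)$ to vanish for $i > N$ for any finitely generated right $E$-module $M$, whence $\mathrm{pd}_E(M) \le N$. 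Since $E$ is right Noetherian, $\mathrm{gldim}(E) \le N$. The only a priori obstacle I see—that the hypothesis directly controls $\mathrm{Ext}^i_E(S_j,-)$ rather than the opposite variable—is dissolved by this symmetric manoeuvre, in which the finite projective dimension of $E/J$ is transferred between its two $E$-module structures through the symmetry of $\mathrm{Tor}$.
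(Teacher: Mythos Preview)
Your argument is correct and takes a genuinely different route from the paper's. The paper reduces to the weak global dimension (using that $E$ is two-sided Noetherian), then for finitely generated right $M$ and left $N$ it chooses a resolution of $N$ by finite free modules and writes $M = \varprojlim_n M/\m^n M$; exactness of inverse limits in compact Hausdorff abelian groups gives $\Tor^E_i(M,N) = \varprojlim_n \Tor^E_i(M/\m^n M, N)$, and each $M/\m^n M$ has projective dimension $\le d$ by d\'evissage from the simples. Your proof instead exploits that $E$ is semiperfect (via $\m E \subseteq J$, $J^n \subseteq \m E$, and $\m$-adic completeness) to access minimal projective resolutions and the identity $\mathrm{pd}_E(M) = \sup\{i : \Tor^E_i(M, E/J) \neq 0\}$, then transfers the bound $\mathrm{pd}_E(E/J) = N$ from the right to the left side of $\Tor$ and back.

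What each approach buys: the paper's argument is self-contained from the stated hypotheses and uses only the ambient profinite topology already in play throughout \S\ref{sec: categorical constructions}, at the cost of a somewhat ad hoc inverse-limit computation. Your argument is purely algebraic and is the standard one in the theory of semilocal Noetherian rings; it also makes the bound $\mathrm{gldim}(E) = \max_j \mathrm{pd}_E(S_j)$ transparent, whereas the paper only obtains $\le d$ for the same $d$. The one extra ingredient you invoke is the existence of minimal projective resolutions by finitely generated projectives over a semiperfect ring, which is classical but not otherwise used in the paper.
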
 

\begin{proof}
Since $E$ is Noetherian, the left and right global dimensions agree and are equal to the weak global dimension (cf.~e.g.~\cite[Cors.~2.6.7 and 2.6.8]{cohn}), so it suffices to show that the weak global dimension is finite. Since $\Tor^E_n(\varinjlim_i M_i, \varinjlim_j N_j) = \varinjlim_{i,j} \Tor^E_n(M_i,N_j)$, it suffices to consider finitely generated left and right $E$-modules.

Since $E/\m E$ is finite (as a set), there are only finitely many simple right $E$-modules. In particular, we may find a $d\in \Z_{\geq 1}$ such that every simple right $E$-module has projective dimension $\leq d$. By d\'{e}vissage, it follows that any finite right $E$-module has projective dimension $\leq d$. Now let $M$ be a finitely generated right $E$-module and let $N$ be a finitely generated left $E$-module. Choose a (possibly infinite) resolution $P_\bu \to N$ by finitely generated free left $E$-modules, and set $M_n = M/\m^n M$; we then have $M = \varprojlim_n M_n$. By exactness of inverse limits in the abelian category of compact Hausdorff abelian groups, we see that
\[
\Tor^E_i(M,N) = H_i(\varprojlim_n M_n \otimes_E P_\bu) = \varprojlim_n H_i(M_n \otimes_E P_\bu) = \varprojlim_n \Tor_n^E(M_n,N)
\]
(we use finite freeness of the terms in $P_\bu$ to equate $\varprojlim_n (M_n \otimes_E P_\bu)$ and $(\varprojlim_n M_n) \otimes_E P_\bu$). Since each $M_n$ is a finite right $E$-module and hence has projective dimension $\leq d$, we may deduce that $\Tor^E_i(M,N) = 0$ for all $i > d$. It follows that the weak global dimension of $E$ is $\leq d$, as desired, finishing the proof. 
\end{proof}

We will consider the abelian categories $\LMod_{disc}(E)$ and $\RMod_{cpt}(E)$ of discrete topological left $E$-modules and compact topological right $E$-modules, respectively. Note that $\LMod_{disc}(E)$ and $\RMod_{cpt}(E)$ are anti-equivalent to each other via Pontryagin duality (where $M^\vee = \Hom^{cts}_{\OO}(M,L/\OO)$). We have the following well known descriptions of $\LMod_{disc}(E)$ and $\RMod_{cpt}(E)$:

\begin{proposition}\label{description of top E-modules}
Any discrete left $E$-module is the direct limit of its finite $E$-submodules. Dually, every compact right $E$-module is an inverse limit of finite $E$-modules. In addition, this holds categorically, i.e.\ if $\LMod_{fin}(E)$ and $\RMod_{fin}(E)$ are the categories of finite\footnote{Note that finite $E$-modules are automatically discrete, since they are Artinian finitely generated $R$-modules and hence killed by a power of $\m$. In particular, an $E$-module is finite if and only if it is finitely generated and discrete.} left and right $E$-modules (respectively), then $\LMod_{disc}(E) = \Ind(\LMod_{fin}(E))$ and $\RMod_{cpt}(E) = \Pro(\RMod_{fin}(E))$.
\end{proposition}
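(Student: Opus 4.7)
The plan is to reduce everything to one concrete observation, namely that open left ideals of $E$ all contain some $\m^n E$, and then to deduce the categorical statements by standard formal arguments. Everything for right compact modules will follow from the left discrete case via Pontryagin duality.

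First I would prove the first sentence: every discrete left $E$-module $M$ is the filtered union of its finite $E$-submodules. Given $m \in M$, discreteness forces the annihilator $\mathrm{Ann}_E(m) \subset E$ to be an open left ideal. Since the ideals $\{\m^n E\}_{n \ge 1}$ form a neighborhood basis of $0$ in $E$ (because $E$ is finitely generated over $R$ and $R$ is $\m$-adically complete), we have $\m^n E \subset \mathrm{Ann}_E(m)$ for some $n$. Then $Em$ is a cyclic quotient of $E/\m^n E$; but $E/\m^n E$ is finite since $E$ is a finitely generated $R$-module and $R/\m^n$ has finite cardinality (the residue field is finite). Hence every finitely generated $E$-submodule of $M$, being a sum of finitely many such cyclic modules, is finite. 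The set of finite $E$-submodules is upward directed under inclusion and their union is $M$, giving $M$ as a filtered colimit in $\LMod_{disc}(E)$.

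Next, I would transfer this to the compact right-module side by Pontryagin duality $(-)^\vee = \Hom^{cts}_\OO(-, L/\OO)$, which is an exact anti-equivalence $\LMod_{disc}(E) \leftrightarrow \RMod_{cpt}(E)$ and identifies the full subcategories of finite modules on both sides (a finite abelian group is its own Pontryagin dual as a set). A filtered colimit presentation $M = \varinjlim_i S_i$ with $S_i$ finite dualizes to a cofiltered limit presentation $M^\vee = \varprojlim_i S_i^\vee$ with $S_i^\vee$ finite, giving the compact-module statement.

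For the categorical claims, I would verify three standard points. (a) Filtered colimits in $\LMod_{disc}(E)$ agree with ordinary module-theoretic colimits: the underlying module-theoretic colimit is still discrete because any element factors through some finite stage where its annihilator is already open. (b) Every finite $E$-module $S$ is a compact object of $\LMod_{disc}(E)$: because $E$ is left Noetherian (being finite over the Noetherian ring $R$), any finitely generated left $E$-module is finitely presented, and $\Hom_E(S,-)$ commutes with filtered colimits of $E$-modules whenever $S$ is finitely presented. (c) The canonical functor $\Ind(\LMod_{fin}(E)) \to \LMod_{disc}(E)$ is essentially surjective by the first step and fully faithful by the universal property of Ind together with (b). The dual statement $\RMod_{cpt}(E) = \Pro(\RMod_{fin}(E))$ is then immediate from Pontryagin duality, which sends Ind-completions to Pro-completions.

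The main obstacle, such as it is, is purely the first step: once one knows that open left ideals of $E$ are controlled by powers of the central ideal $\m$, everything else is a routine exercise in Ind/Pro-categories and duality. This control in turn relies essentially on the running hypotheses that $R$ is $\m$-adically complete Noetherian with finite residue field and that $E$ is a finitely generated $R$-module; no further input from the structure of $E$ is needed.
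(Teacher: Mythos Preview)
Your proposal is correct and follows essentially the same approach as the paper: the annihilator of an element in a discrete module is open and hence contains some $\m^n E$, so cyclic submodules are finite; sums of finite submodules are finite; and the categorical statement follows from compactness of finite modules, with the compact right-module side obtained by Pontryagin duality. The paper's proof is terser (it calls the compactness of finite modules ``obvious'' where you invoke Noetherianity and finite presentation), but the logical structure is identical.
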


\begin{proof}
Since Pontryagin duality preserves finiteness the statements about compact $E$-modules follow from those for discrete $E$-modules. To prove the first part, let $M\in \LMod_{disc}(E)$ and $m\in M$. By discreteness, the annihilator of $m$ is open, and hence the $E$-submodule of $M$ generated by $m$ is finite. It is also clear that if $M_1,M_2 \sub M$ are finite submodules, then $M_1+M_2$ is finite as well. This finishes the proof at the level of objects, and at the level of morphisms the first assertion follows from (categorical) compactness of finite $E$-modules (which is obvious).
\end{proof}

If $M$ is an abstract $E$-module, we let $M[\m^\infty]$ denote the submodule of $\m^\infty$-torsion elements, i.e.\ those elements that are killed by some power of $\m$. Proposition \ref{description of top E-modules} then shows that $\LMod_{disc}(E)$ is the full subcategory of $\m^\infty$-torsion modules in the category $\LMod(E)$ of all left $E$-modules. In particular, $\LMod_{disc}(E)$ is a Grothendieck abelian category (a generator is given by $\bigoplus_n E/\m^n E$).

As mentioned, our goal is to produce embeddings of $\LMod_{disc}(E)$ and $\RMod_{cpt}(E)$ into categories of quasicoherent sheaves (roughly speaking), as well as derived analogues. For quasicoherent sheaves, we will use the setup of \S \ref{sec: coh sheaves on stacks}, with a few additional assumptions. In particular, we let $G$ be a reductive group scheme over $\oo$ and let $A$ be a commutative Noetherian $\oo$-algebra with an action of $G$. Moreover, we also assume that $A^G$ is isomorphic to $R$ (which we treat as an equality $R=A^G$) and that $A$ is Gorenstein. As in \S \ref{sec: coh sheaves on stacks} we set $T = \Spec A$ and let $\mf{X}$ be the quotient stack $[T/G]$. We then have $\Coh(\mf{X})$ and $\QCoh(\mf{X})$ as defined in \S \ref{sec: coh sheaves on stacks}. We let $\Coh_{\m}(\mf{X})$ and $\QCoh_{\m}(\mf{X})$ denote the full subcategories of $\Coh(\mf{X})$ and $\QCoh(\mf{X})$, respectively, whose objects are $\m^\infty$-torsion.

In the abelian category setting, our starting point to produce functors is an object $V$ in $\Coh(\mf{X})$. 

\begin{assumption}[Abelian setting]\label{assumptions abelian}

We make the following assumptions on $V$ in the abelian setting:
\begin{enumerate}
\item $E = \End(V)$;

\smallskip

\item $V$ and its coherent dual $V^\ast = \Homi(V,\OO_{\mathfrak{X}})$ are projective in $\QCoh(\mf{X})$ (cf.~Remark \ref{remark:duality} on duality);

\smallskip

\item $V$ is a flat left $E$-module and  $V^\ast$ is a flat right $E$-module.
\end{enumerate}
\end{assumption}
Recall that we have defined $\QCoh(\mf{X})$ as the category of $G$-equivariant $A$-modules, so its objects may be viewed as $A$-modules (and in particular abelian groups). This allows us to view $V$ above as a left $E$-module, and $V^\ast$ (whose underlying $A$-module is $\Hom_A(V,A)$) as a right $E$-module. We may then define functors $F : \LMod(E) \to \QCoh(\mf{X})$ and $F^\prime : \RMod(E) \to \QCoh(\mf{X})$ by
\[
N \mapsto F(N) := V^\ast \otimes_E N, \,\,\, \text{and}\,\,\, M \mapsto F^\prime(M) := M \otimes_E V.
\]
Both functors are exact by flatness of $V$ and $V^\ast$. 

\begin{theorem}\label{abelian embedding}
The functors $F$ and $F^\prime$ are fully faithful. Moreover, $F$ sends $\LMod_{fin}(E)$ to $\Coh_{\m}(\mf{X})$ and $F^\prime$ sends $\RMod_{fin}(E)$ into $\Coh_{\m}(\mf{X})$. In particular, restriction of $F$ gives a fully faithful functor $F_{disc} : \LMod_{disc}(E) \to \QCoh_{\m}(\mf{X})$ and $F^\prime$ induces a fully faithful functor $F_{cpt} : \RMod_{cpt}(E) \to \Pro(\Coh_{\m}(X))$.
\end{theorem}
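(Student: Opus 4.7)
The plan is to establish full faithfulness of $F$ via the tensor-Hom adjunction, deduce the corresponding statement for $F'$ symmetrically, and then read off the remaining assertions.

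First, I would identify the right adjoint $G : \QCoh(\mf{X}) \to \LMod(E)$ of $F$, given by $G(M) = \Hom_{\mf{X}}(V^\ast, M)$, where the left $E$-action on $\Hom_{\mf{X}}(V^\ast, M)$ comes from the right $E$-action on $V^\ast$ (which in turn comes from identifying $\End(V^\ast) = E^{op}$: by assumption (2), $V^\ast$ is projective and coherent, hence locally free, hence reflexive, so $\End(V^\ast) = \End(V)^{op} = E^{op}$). The crux is to show that the unit $\eta_N : N \to \Hom_{\mf{X}}(V^\ast, V^\ast \otimes_E N)$ is an isomorphism for every $N \in \LMod(E)$. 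For $N = E$, both sides identify with $E$ and $\eta_E$ is the canonical such identification (after carefully tracing the left $E$-module structure on $\End(V^\ast) = E^{op}$). Next, both the source and the target of $\eta$ are exact functors in $N$ that commute with filtered colimits: for the target, $V^\ast \otimes_E (-)$ is exact by the flatness assumption (3) and commutes with all colimits as a left adjoint, while $\Hom_{\mf{X}}(V^\ast, -)$ is exact by the projectivity assumption (2) and commutes with filtered colimits because $V^\ast$ is a summand of a finitely generated free $G$-equivariant $A$-module, reducing the question to filtered-colimit-commutation of $\Hom_{\OO}(W, -)^G$ for $W$ a finite free $\OO$-module, which is standard (the $G$-invariants functor is an equalizer and hence commutes with filtered colimits in $\Mod(\OO)$). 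Since $E$ is Noetherian, every $E$-module is a filtered colimit of finitely presented ones, each of which is a cokernel of a map between finite free $E$-modules; so $\eta_E$ being an isomorphism extends to $\eta_N$ for all finitely presented $N$ by right exactness, and then to all $N$ by passage to filtered colimits. The argument for $F'$ is verbatim parallel, using $\End(V) = E$.

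For the preservation statements: a finite $E$-module $N$ is finitely presented (as $E$ is Noetherian), so $V^\ast \otimes_E N$ is a cokernel of a map between finite direct sums of copies of the coherent sheaf $V^\ast$, hence coherent; moreover, $N$ is killed by some power of $\m$, so $V^\ast \otimes_E N$ is $\m^\infty$-torsion. The same holds for $F'$. For $F_{disc}$, full faithfulness is inherited from $F$ by restriction, and to see that the image lies in $\QCoh_{\m}(\mf{X})$ one uses Proposition \ref{description of top E-modules} to write any discrete module as a filtered colimit of its finite submodules, then invokes that $F$ preserves filtered colimits and that $\m^\infty$-torsion is closed under filtered colimits. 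For $F_{cpt}$, the restriction of $F'$ to $\RMod_{fin}(E)$ is a fully faithful functor into $\Coh_{\m}(\mf{X})$, and its canonical pro-extension $\Pro(\RMod_{fin}(E)) \to \Pro(\Coh_{\m}(\mf{X}))$ is automatically fully faithful; the source is identified with $\RMod_{cpt}(E)$ via Proposition \ref{description of top E-modules}.

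The main point of care is not conceptual but bookkeeping: one must trace carefully through the various one-sided $E$-module structures on $V$, $V^\ast$, $\Hom_{\mf{X}}(V^\ast, -)$, and the relevant tensor products, since neither $V$ nor $V^\ast$ is naturally an $E$-bimodule. Beyond that, the argument is the standard Morita-theoretic template, and the hypotheses (1)--(3) have been arranged precisely to make it run; no deeper obstacle arises.
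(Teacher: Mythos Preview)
Your proposal is correct and follows the same Morita-theoretic template as the paper: verify the claim on free modules using $\End(V)=E$, extend to finitely presented modules via presentations and exactness (projectivity of $V^\ast$ and flatness over $E$), then pass to all modules via filtered colimits, with the $F_{disc}$ and $F_{cpt}$ statements read off from Proposition~\ref{description of top E-modules} and Ind/Pro-extension. The only cosmetic difference is that you organize the argument around the unit of the $(F,G)$ adjunction, whereas the paper works directly with the map $\Hom(M,N)\to\Hom(F(M),F(N))$ and reduces to $\LMod_{fg}(E)$ via $\LMod(E)=\Ind(\LMod_{fg}(E))$ and $\QCoh(\mf{X})=\Ind(\Coh(\mf{X}))$; the adjunction itself is established separately in Proposition~\ref{abelian adjoints}.
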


\begin{proof}
We prove the first two statements for $F$: the proofs for $F^\prime$ are exactly the same. We prove full faithfulness first. Let $\LMod_{fg}(E) \sub \LMod(E)$ be the full subcategory of finitely generated left $E$-modules. Note that $F$ sends $\LMod_{fg}(E)$ into $\Coh(\mf{X})$, that it commutes with direct limits, that $\LMod(E) = \Ind(\LMod_{fg}(E))$ and finally that $\QCoh(\mf{X}) = \Ind(\Coh(\mf{X}))$ by \cite[Lem.~2.9]{arinkin-bezrukavnikov}. It therefore suffices to prove that $F$ is fully faithful on $\LMod_{fg}(E)$. So, let $M,N \in \LMod_{fg}(E)$ and consider the map
\[
\Hom(M,N) \to \Hom(F(M),F(N)).
\]
For objects of the form $M=E^m$, $N=E^n$ we obtain an isomorphism by the assumption that $\End(V)=E$. Next, assume that $M=E^m$ but let $N$ be arbitrary and choose a presentation $E^r \to E^s \to N \to 0$. Applying $F$ we get a presentation $(V^\ast)^r \to (V^\ast)^s \to F(N) \to 0$ and an induced commutative diagram
\[
\xymatrix{
\Hom(E^m,E^r) \ar[r] \ar[d] & \Hom(E^m,E^s) \ar[r] \ar[d] & \Hom(E^m,N) \ar[r] \ar[d] & 0 \ar[d] \\
\Hom((V^\ast)^m,(V^\ast)^r) \ar[r] &  \Hom((V^\ast)^m,(V^\ast)^s) \ar[r] & \Hom((V^\ast)^m,F(N)) \ar[r] & 0. 
}
\]
The bottom row is exact by projectivity of $V^\ast$ and the two leftmost vertical arrows are isomorphism, so by the five lemma the third vertical arrow is an isomorphism as desired. It remains to deal with the case when both $M$ and $N$ are arbitrary. This is proved by the same type of argument, choosing a presentation for $M$. This finishes the proof of fully faithfulness.

For the final part, it is clear that $\m^\infty$-torsion $E$-modules are sent to $\m^\infty$-torsion sheaves, so $F$ and $F^\prime$ send $\LMod_{fin}(E)$ and $\RMod_{fin}(E)$) fully faithfully into $\Coh_{\m}(X)$. The final part is then proved by Ind-extension and Pro-extension, respectively.
\end{proof}

From now on, we will no longer talk about $F^\prime$ (it is entirely parallel to $F$, and its main purpose was just to define $F_{cpt}$). For completeness, we will record that the functors we construct have adjoints (see also Proposition \ref{derived adjoints} and the discussion following it). While we will not make use of these adjoints in this paper, they should play an interesting role in the categorical $p$-adic local Langlands program. For more motivation and a sample of this, we refer to \cite[\S 7.8]{egh}.
\begin{proposition}\label{abelian adjoints}
The functors $F$, $F_{disc}$ and $F_{cpt}$ have the following adjoints:
\begin{enumerate}
\item $F$ has a right adjoint $G : \QCoh(X) \to \LMod(E)$ given by $G(W) = \Hom(V^\ast,W)$. Moreover, $G$ is exact and commutes with limits and colimits.

\smallskip

\item $F_{disc}$ has a right adjoint $G_{disc} : \QCoh_{\m}(X) \to \LMod_{disc}(E)$ given by $G_{disc}(W) = \Hom(V^\ast,W)$. Moreover, $G_{disc}$ is exact and commutes with limits and colimits.

\smallskip

\item $F_{cpt}$ has a left adjoint $G_{cpt} : \Pro(\Coh_{\m}(X)) \to \RMod_{cpt}(E)$. Moreover, $G_{cpt}$ commutes with colimits and cofiltered limits.
\end{enumerate}
\end{proposition}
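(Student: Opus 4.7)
The plan is to prove the three parts in order: Part (1) follows from the standard tensor--Hom adjunction, Part (2) is a straightforward restriction with support conditions, and Part (3) will be obtained by transferring the adjunction of Part (2) through Pontryagin duality.

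For Part (1), the tensor--Hom adjunction exhibits $G(W) = \Hom_{\QCoh(\mf{X})}(V^\ast, W)$ as a right adjoint to $F$, with the left $E$-module structure on $G(W)$ coming from the right $E$-action on $V^\ast$ via $\End(V^\ast) = E^{\mathrm{op}}$. Exactness of $G$ follows from the projectivity of $V^\ast$; preservation of limits is automatic; preservation of colimits follows from $V^\ast$ being coherent, hence compact in $\QCoh(\mf{X}) = \Ind(\Coh(\mf{X}))$, which gives preservation of filtered colimits, combined with exactness. For Part (2), I will verify that $F$ and $G$ restrict to the $\m^\infty$-torsion subcategories: if $N$ is $\m^\infty$-torsion then so is $V^\ast \otimes_E N$, since $\m \subset R$ lies in the center of $E$; conversely, if $W \in \QCoh_\m(\mf{X})$ then any $\phi : V^\ast \to W$ has finitely generated image (by coherence of $V^\ast$) and is therefore killed by some $\m^n$, so $G(W)$ is discrete. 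The adjunction and properties transfer verbatim from Part (1).

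For Part (3), the strategy is to identify $F_{cpt}$ as the Pontryagin dual of $F_{disc}$. Pontryagin duality gives an anti-equivalence $(-)^\vee : \RMod_{cpt}(E) \xrightarrow{\sim} \LMod_{disc}(E)^{\mathrm{op}}$, and there is an analogous anti-equivalence $\mathrm{PD}: \Pro(\Coh_\m(\mf{X})) \xrightarrow{\sim} \QCoh_\m(\mf{X})^{\mathrm{op}}$ obtained by extending the $\OO$-linear Matlis-type duality $W \mapsto \Hom_\OO(W, L/\OO)$ on coherent $\m^\infty$-torsion sheaves to the Ind- and Pro-completions. The key compatibility is the natural isomorphism $\mathrm{PD}(F_{cpt}(M)) \cong F_{disc}(M^\vee)$, which, for finite $M$, amounts to $(M \otimes_E V)^\vee \cong V^\ast \otimes_E M^\vee$. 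With this in hand, I will set
\[
G_{cpt}(W) := \bigl(G_{disc}(\mathrm{PD}(W))\bigr)^\vee \in \RMod_{cpt}(E),
\]
and verify $G_{cpt} \dashv F_{cpt}$ by a formal computation chaining the two anti-equivalences with the adjunction from Part (2). Preservation of colimits by $G_{cpt}$ is automatic for a left adjoint; preservation of cofiltered limits follows from the construction, since Pontryagin duality intertwines filtered colimits with cofiltered limits and $G_{disc}$ preserves filtered colimits by Part (2).

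The main obstacle will be establishing the compatibility $(M \otimes_E V)^\vee \cong V^\ast \otimes_E M^\vee$ for finite right $E$-modules $M$, and more generally the anti-equivalence $\mathrm{PD}$ on the geometric side. The former reflects the mutual duality of $V$ and $V^\ast$ as coherent sheaves carrying compatible $E$-actions; the latter uses the Gorenstein hypothesis on $A$ together with the Matlis-type finiteness properties of $\m^\infty$-torsion coherent sheaves.
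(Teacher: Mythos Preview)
Your treatment of Parts (1) and (2) is essentially the same as the paper's: tensor--Hom adjunction, projectivity of $V^\ast$ for exactness, and compactness of $V^\ast$ in $\QCoh(\mf{X})$ for preservation of colimits; then the restriction to $\m^\infty$-torsion subcategories goes through as you say.

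Part (3), however, has a genuine gap. The anti-equivalence $\mathrm{PD}: \Pro(\Coh_\m(\mf{X})) \xrightarrow{\sim} \QCoh_\m(\mf{X})^{\mathrm{op}}$ that you propose does not exist, because Pontryagin (or Matlis-type) duality does not send $\Coh_\m(\mf{X})$ to itself. Objects of $\Coh_\m(\mf{X})$ are $G$-equivariant finitely generated $A$-modules that are $\m^\infty$-torsion, but $A$ is typically \emph{not} finite over $R = A^G$, so such modules need not be finite as $\OO$-modules. Concretely, in the generic principal series case $S = \OO\lb a_0,a_1,bc\rb[b,c]$, the sheaf $S/\m S \cong k[b,c]/(bc)$ lies in $\Coh_\m(\mf{X})$ but is infinite-dimensional over $k$; its Pontryagin dual is a profinite $\OO$-module and cannot be made into a quasicoherent sheaf. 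The same issue kills your key compatibility $(M\otimes_E V)^\vee \cong V^\ast\otimes_E M^\vee$: for a simple right $E$-module $M$, the paper computes $V^\ast\otimes_E M^\vee \cong k[c]$ (or $k[b]$), which is coherent but infinite over $k$, so it cannot be the Pontryagin dual of anything in $\Coh_\m(\mf{X})$.

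The paper avoids this entirely by an abstract argument: $F_{cpt}$ is exact and commutes with cofiltered limits by construction, hence with all limits, so the special adjoint functor theorem produces $G_{cpt}$. The claim that $G_{cpt}$ commutes with cofiltered limits is then checked by a direct Yoneda-style computation, testing against finite (hence cocompact) modules $M \in \RMod_{fin}(E)$ and using that $F_{cpt}$ preserves cocompact objects. No duality on the geometric side is needed.
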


\begin{proof}
We start with (1). The adjunction between $F$ and $G$ is the usual Hom-Tensor adjunction (one checks easily that it is compatible with $G$-equivariance). Exactness of $G$ is then precisely projectivity of $V^\ast$. Finally, $G$ commutes with limits by definition, and it commutes with colimits since $V^\ast$ is compact in $\QCoh(X)$ and projective.

Given part (1), the statement in part (2) is that the restriction of $G$ to $\QCoh_{\m}(X)$ lands inside $\LMod_{disc}(E)$. Since $G$ commutes with colimits, it suffices to check that if $W \in \Coh_{\m}(X)$ is $\m^n$-torsion, then $\Hom(V,W)$ is $\m^n$-torsion, but this is clear (by compatibility of the two $R$-module structures we have on $V$).

Finally, for part (3), the existence of $G_{cpt}$ follows from the (special) adjoint functor theorem (see e.g.\ \cite[\S V.8, Cor.]{maclane}), since $F_{cpt}$ commutes with limits (it is exact, and commutes with cofiltered limits by definition). Being a left adjoint $G_{cpt}$ automatically commutes with colimits. We now show that it commutes with cofiltered limits. Let $(W_i)$ be a cofiltered diagram in $\Pro(\Coh_{\m}(X))$. Consider the natural map $G_{cpt}(\varprojlim_i W_i) \to \varprojlim_i G_{cpt}(W_i)$. To prove that it is an isomorphism, it suffices to show that the induced map
\[
\Hom(\varprojlim_i G_{cpt}(W_i), M) \to \Hom(G_{cpt}(\varprojlim_i W_i), M)
\]
is an isomorphism for all $M\in \RMod_{cpt}(E)$. Since $\RMod_{cpt}(E) = \Pro(\RMod_{fin}(E))$, we may assume that $M \in \RMod_{fin}(E)$ and hence is cocompact. Then, observing that $F_{cpt}$ preserves cocompact objects (since $\Coh_{\m}(X) \sub \Pro(\Coh_{\m}(X))$ are precisely the cocompact objects by construction), we see that
\[
\Hom(\varprojlim_i G_{cpt}(W_i), M) = \varinjlim_i \Hom( G_{cpt}(W_i), M) = \varinjlim_i \Hom(W_i, F_{cpt}(M)) =
\]
\[
=\Hom(\varprojlim_i W_i, F_{cpt}(M)) = \Hom(G_{cpt}(\varprojlim_i W_i), M),
\]
as desired.
\end{proof}

\begin{remark}
Note that, by the adjoint functor theorem, $G$ and $G_{disc}$ also have right adjoints. It is not clear to us if $G_{cpt}$ has a left adjoint (but its derived analogue will have a left adjoint, see Remark \ref{rmk on adjoints derived}).
\end{remark}

This gives us what we need for embeddings of abelian categories, and this setup will allow us to construct functors for the supersingular and generic blocks. For the non-generic blocks, we can only produce embeddings of derived categories (at least a priori), using objects $V$ with weaker properties than projectivity (and flatness). 

For the formulation we want, we need some more categorical preliminaries. We start by observing that, by \cite[Cor.~B.1.16]{egh}, injective objects in $\LMod_{disc}(E)$ are also injective in $\LMod(E)$\footnote{The ``Artin--Rees property'' needed to apply \cite[Cor.~B.1.16]{egh} just reduces to the usual Artin--Rees lemma for $R$-modules in our setup.}. We will use the conventions for derived ($\infty$-)categories that we set up in \S \ref{sec: coh sheaves on stacks}, with the following additions: set $\D^L(E) := \D(\LMod(E))$, $\D^{L,+}(E) := \D^+(\LMod(E))$ and $\D^R(E) := \D(\RMod(E))$. By \cite[Prop.~B.1.17]{egh} the natural map $\D^+(\LMod_{disc}(E)) \to \D^{L,+}(E)$ is fully faithful and its essential image, which we denote by $\D_{disc}^{L,+}(E)$, has objects the complexes in $\D^{L,+}(E)$ whose cohomology groups are in $\LMod_{disc}(E)$. In fact, we may extend full faithfulness to unbounded derived categories:

\begin{lemma}\label{embdding of unbounded deriv cat}
The natural map $\D(\LMod_{disc}(E)) \to \D^L(E)$ is fully faithful.
\end{lemma}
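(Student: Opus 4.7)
The plan is to extend the bounded-below full faithfulness of $\D^+(\LMod_{disc}(E)) \to \D^{L,+}(E)$ (cited from \cite[Prop.~B.1.16]{egh}) to unbounded complexes by constructing K-injective resolutions that simultaneously represent objects in both derived categories, using the finite global dimension of $E$ (Proposition \ref{finite global dimension}) to control the necessary limits. First, I would recall that $\iota : \LMod_{disc}(E) \hookrightarrow \LMod(E)$ is exact ($\LMod_{disc}(E)$ is the Serre subcategory of $\m^\infty$-torsion modules) and carries injectives to injectives by \cite[Cor.~B.1.15]{egh}.

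Given $D \in \D(\LMod_{disc}(E))$, take the Postnikov tower $D_n := \tau^{\geq -n} D$ together with compatible bounded-below K-injective resolutions $D_n \to I_n$ in $\LMod_{disc}(E)$, chosen (by a standard inductive construction) so that the transitions $I_{n+1} \to I_n$ are degreewise split surjections of injective objects. Set $I := \varprojlim_n I_n$, which is then a complex of injectives in $\LMod_{disc}(E)$, hence in $\LMod(E)$. Each bounded-below $I_n$ is automatically K-injective in \emph{both} categories. The Milnor sequence computing the cohomology of $I$ has vanishing $\varprojlim^1$-term, since for each fixed degree $k$ the tower $\{H^k(I_n)\}_n$ stabilizes at $H^k(D)$ once $n \geq -k$, so $D \to I$ is a quasi-isomorphism in $\LMod_{disc}(E)$ and therefore also in $\LMod(E)$.

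Next, I would verify that $I$ is K-injective in $\LMod(E)$, not merely in $\LMod_{disc}(E)$. For any acyclic complex $A \in \Ch(\LMod(E))$, the total Hom complex $\Hom^\bu_E(A, I) = \varprojlim_n \Hom^\bu_E(A, I_n)$ is a limit of acyclic complexes, because each $I_n$ is bounded-below with components injective in $\LMod(E)$ and so K-injective in $\LMod(E)$. The degreewise-split surjectivity of the tower is preserved by $\Hom^\bu_E(A, -)$, ensuring the limit has vanishing $\varprojlim^1$ in each cohomological degree and is therefore acyclic. Finite global dimension of $E$ enters by guaranteeing left-completeness of the standard $t$-structures in both derived categories, so that all these termwise limits do compute the intended homotopy limits. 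With $I$ serving as a K-injective resolution of $D$ in both categories, both $\Hom_{\D(\LMod_{disc}(E))}(C, D)$ and $\Hom_{\D^L(E)}(\iota_\infty C, \iota_\infty D)$ are computed as $H^0 \Hom^\bu_E(C, I)$ for any $C \in \D(\LMod_{disc}(E))$, yielding full faithfulness.

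The main obstacle is the verification of K-injectivity of the unbounded limit $I$ in the ambient category $\LMod(E)$, where acyclic test complexes need not be $\m^\infty$-torsion. The degreewise-split-surjective construction of the tower and the finite global dimension of $E$ are both essential: without them, a $\varprojlim^1$-obstruction could appear in the limit Hom complex and destroy K-injectivity in unbounded degrees.
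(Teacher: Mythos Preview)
Your approach is different from the paper's, which invokes a clean abstract criterion (\cite[Prop.~A.7.3]{egh}): unbounded full faithfulness follows once the right adjoint of the inclusion has bounded cohomological dimension. The paper identifies this right adjoint as $M \mapsto M[\m^\infty] = \varinjlim_n \Hom_E(E/\m^n E, M)$, whose derived functors $\varinjlim_n \Ext^i_E(E/\m^n E, M)$ vanish for $i$ larger than the global dimension of $E$.

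Your direct K-injective argument has a genuine gap at the step where you claim $I := \varprojlim_n I_n$ is simultaneously a K-injective resolution in both categories. The inclusion $\LMod_{disc}(E) \hookrightarrow \LMod(E)$ does not preserve limits (its right adjoint is the $\m^\infty$-torsion functor), so the inverse limit computed in $\LMod(E)$ need not be $\m^\infty$-torsion: already an infinite product of copies of $E^\vee$ taken in $\LMod(E)$ contains elements of unbounded $\m$-order. Hence if $I$ is formed in $\LMod(E)$ it will generally not lie in $\Ch(\LMod_{disc}(E))$ and cannot serve as a K-injective resolution there; if instead $I$ is formed in $\LMod_{disc}(E)$, your key identity $\Hom^\bu_E(A,I) = \varprojlim_n \Hom^\bu_E(A,I_n)$ fails for acyclic $A$ not in $\LMod_{disc}(E)$, and the K-injectivity argument in the ambient category breaks down. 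Your appeal to left-completeness is too vague to bridge this.

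The gap can be repaired: since injectives in $\LMod_{disc}(E)$ remain injective in $\LMod(E)$ and $E$ has finite global dimension $d$, every object of $\LMod_{disc}(E)$ has injective dimension at most $d$ in $\LMod_{disc}(E)$, and one may then choose the $I_n$ so that the tower $\{I_n^k\}_n$ stabilises in each fixed degree $k$; the two limits then agree degreewise and your argument goes through. But this is exactly the substantive input, and it is essentially equivalent to the bounded-cohomological-dimension fact the paper uses directly.
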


\begin{proof}
Consider the inclusion $\LMod_{disc}(E) \sub \LMod(E)$. As noted above, $\D^+(\LMod_{disc}(E)) \to \D^{L,+}(E)$ is fully faithful. To check that $\D(\LMod_{disc}(E)) \to \D^L(E)$ is fully faithful, it suffices to check that the derived functors of the right adjoint of $\LMod^{disc}(E) \sub \LMod(E)$ have bounded cohomological dimension by \cite[Prop.~A.7.3]{egh}. This right adjoint is $M \mapsto M[\m^\infty]$. It can be written as
\[
M \mapsto M[\m^\infty] = \varinjlim_n \Hom_E(E/\m^n E,M),
\]
so its derived functors are $M \mapsto \varinjlim_n \Ext^i_E(E/\m^n E,M)$. Since $E$ has finite global dimension by Proposition \ref{finite global dimension}, the derived functors vanish for $i$ sufficiently large, as desired.
\end{proof}

We will denote the essential image of $\D(\LMod_{disc}(E)) \to \D^L(E)$ by $\D_{disc}^L(E)$, and conflate it with $\D(\LMod_{disc}(E))$. Now consider the dg category $\Proj^L(E)$ consisting of bounded complexes of finitely generated projective left $E$-modules. Its dg nerve $\Perf^L(E)$ is the stable $\infty$-category of perfect (left) complexes. We recall that $\Perf^L(E)$ is equal to the full subcategory of compact objects of $\D^L(E)$. Moreover, it generates $\D^L(E)$, so we have $\Ind\Perf^L(E) \cong \D^L(E)$. We define $\Perf_{disc}^L(E)$ to be the full subcategory of $\D^L(E)$ whose objects are contained in both $\D_{disc}^L(E)$ and $\Perf^L(E)$.

\begin{proposition}\label{unbounded disc category}
We have an equivalence $\Ind(\Perf_{disc}^L(E)) \cong \D_{disc}^L(E)$.
\end{proposition}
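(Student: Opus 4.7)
The plan is to show that $\D_{disc}^L(E)$ is a compactly generated presentable stable $\infty$-category whose full subcategory of compact objects is precisely $\Perf_{disc}^L(E)$; the desired equivalence then follows from the standard fact that any compactly generated $\mathcal{D}$ satisfies $\mathcal{D} \cong \Ind(\mathcal{D}^c)$. Presentability is immediate since $\LMod_{disc}(E)$ is Grothendieck abelian.

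First, I would analyse the inclusion $\iota : \D_{disc}^L(E) \hookrightarrow \D^L(E)$. At the abelian level, $\LMod_{disc}(E) \hookrightarrow \LMod(E)$ has right adjoint $M \mapsto M[\m^\infty]$, which derives to give $\iota$ a right adjoint $R\Gamma_\m$ computable by the formula $R\Gamma_\m(M) = \varinjlim_n R\Hom_E(E/\m^n E, M)$ (valid because sequential filtered colimits are exact and pass through cohomology). Since $E$ is Noetherian with finite global dimension (Proposition \ref{finite global dimension}), each $E/\m^n E$ is a finitely presented left $E$-module of finite projective dimension, hence perfect. Therefore each $R\Hom_E(E/\m^n E, -)$ commutes with filtered colimits, so $R\Gamma_\m$ does as well, and its left adjoint $\iota$ preserves compact objects. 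Since $\iota$ is fully faithful (Lemma \ref{embdding of unbounded deriv cat}) and itself preserves filtered colimits (being a left adjoint), an object $X \in \D_{disc}^L(E)$ is compact if and only if $\iota X$ is perfect in $\D^L(E)$. This identifies the compact objects of $\D_{disc}^L(E)$ as $\Perf_{disc}^L(E)$.

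Next, I would verify that $\{E/\m^n E\}_{n \geq 1}$ is a set of compact generators. Compactness follows from the previous step, since each $E/\m^n E$ is perfect and discrete. For generation, suppose $Y \in \D_{disc}^L(E)$ satisfies $\Hom_{\D_{disc}^L(E)}(E/\m^n E, Y[k]) = 0$ for all $n, k \in \Z$. Fully faithfulness of $\iota$ translates this to $R\Hom_E(E/\m^n E, \iota Y) = 0$ in $\D^L(E)$ for every $n$. Because $\iota$ is fully faithful we have $Y \cong R\Gamma_\m(\iota Y)$, and the formula above then gives $Y \cong \varinjlim_n R\Hom_E(E/\m^n E, \iota Y) = 0$. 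Combining this with the identification of the compact objects yields the claimed equivalence. The main technical point, and the one most sensitive to the standing hypotheses, is the commutation of $R\Gamma_\m$ with filtered colimits; this rests on the perfectness of each $E/\m^n E$, which in turn relies crucially on the finite global dimension of $E$ established in Proposition \ref{finite global dimension}.
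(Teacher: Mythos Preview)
Your proof is correct and follows the same overall strategy as the paper: show that $\Perf_{disc}^L(E)$ consists of compact objects in $\D_{disc}^L(E)$ and generates it. The difference lies in the generation step. The paper argues directly: given a nonzero $C^\bu \in \D_{disc}^L(E)$, choose a representative with terms in $\LMod_{disc}(E)$, pick a cycle $x \in C^m$ mapping to a nonzero class in $H^m(C^\bu)$, and note that $x$ is killed by some $\m^n$, producing a nonzero map $E/\m^n E[-m] \to C^\bu$. Your argument instead routes through the local cohomology formula $Y \cong R\Gamma_\m(\iota Y) = \varinjlim_n R\Hom_E(E/\m^n E,\iota Y)$, which is more categorical but equally valid and in fact packages the same information. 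Your additional observation that the compact objects of $\D_{disc}^L(E)$ are \emph{exactly} $\Perf_{disc}^L(E)$ (using that $R\Gamma_\m$ commutes with filtered colimits since each $E/\m^n E$ is perfect) is stronger than what is needed for the proposition, but it is correct and a nice sharpening.
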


\begin{proof}
This follows if we show that $\Perf_{disc}^L(E)$ generates $\D_{disc}^L(E)$ and consists of compact objects. First, note that the objects of $\Perf_{disc}^L(E)$ are compact in $\D_{disc}^L(E)$ since they are compact in the larger stable $\infty$-category $\D^L(E)$. It remains to show that $\Perf_{disc}^L(E)$ generates $\D_{disc}^L(E)$. To show this, first note that $E/\m^n E \in \Perf_{disc}^L(E)$ by Proposition \ref{finite global dimension}. It suffices to show that for any nonzero $C^\bu \in \D_{disc}^L(E)$, there exists an $n\in \Z_{\geq 0}$ and $m \in \Z$ such that $\Hom(E/\m^n E[-m],C^\bu)\neq 0$. Since $C^\bu \neq 0$, there is an $m$ such that $H^m(C^\bu)\neq 0$. Then we can find an element $x\in C^m$ which maps to a nonzero element in $H^m(C^\bu)$. Since $C^m$ is an $\m^\infty$-torsion $E$-module (we can always choose $C^\bu$ to have terms in $\LMod_{disc}(E)$), we can find an $n$ and a map $E/\m^n E \to C^m$ sending $1$ to $x$. This induces a nonzero map $E/\m^nE[-m] \to C^\bu$ in $\D_{disc}^L(E)$, as desired.
\end{proof}

We can now start the construction of the derived analogue of Theorem \ref{abelian embedding}. Our starting point is now a maximal Cohen--Macaulay sheaf $V$ on $\mf{X}$ and an assumption analogous to Assumption \ref{assumptions abelian}. 

\begin{assumption}[Derived setting]\label{assumptions derived}

We make the following assumptions on $V \in \MCM(\mf{X})$ in the derived setting:
\begin{enumerate}
\item $E=\End(V)$;

\smallskip

\item $\Ext^i(V,V)=0$ for all $i\geq 1$.
\end{enumerate}
\end{assumption}
By duality, $E^{op}=\End(V^\ast)$ and $\Ext^{i}(V^\ast,V^\ast)=0$ as well; note that $V^\ast \in \mathrm{MCM}(\mf{X})$ as well. Let $\Proj^L(E)$ and $\Proj^R(E)$ be the (strongly pretriangulated) dg categories of bounded chain complexes of finitely generated projective left and right $E$-modules, respectively, and let $\Ch^b(\Coh(\mf{X}))$ be the (strongly pretriangulated) dg category of bounded chain complexes in $\Coh(\mf{X})$. The sheaves $V$ and $V^\ast$ give dg functors
\[
F : \Proj^L(E) \to \Ch^b(\Coh(\mf{X})), \,\,\, F(P_\bu) = V^\ast \otimes_E P_\bu;
\]
\[
F^\prime : \Proj^R(E) \to \Ch^b(\Coh(\mf{X})), \,\,\, F^\prime(Q_\bu) = Q_\bu \otimes_E V.
\]
Taking dg nerves and inverting the quasi-isomorphisms on the right hand side, we get induced exact functors
\begin{equation}\label{eqn: derived F Fprime}
F : \Perf^L(E) \to \D^b_{coh}(\mf{X}) \,\,\,\, \text{and} \,\,\, F^\prime : \Perf^R(E) \to \D^b_{coh}(\mf{X}).
\end{equation}
To check full faithfulness, by a standard argument (cf.~the proof of \cite[Thm.~4.30]{hellmann-derived}) it suffices to check that these functors induce isomorphisms on $\Ext$ groups when applied to finite projective $E$-modules. This follows from our assumptions on $V$. 

Note that this construction does give us $\D_{coh}^b(\mf{X})$ on the right hand side; this follows from \cite[Cor.~2.11]{arinkin-bezrukavnikov}. Here (and throughout this subsection) we have used the remark in \S \ref{sec: coh sheaves on stacks} that, for the $\infty$-categories we consider here, full faithfulness can be checked on the underlying homotopy category.

\begin{proposition}
The functors $F$ and $F^\prime$ map $\Perf^L_{disc}(E)$ and $\Perf^R_{disc}(E)$ into the full sub-$\infty$-category $\D_{coh,\m}^b(\mf{X})$ of $\D_{coh}^b(\mf{X})$ whose objects are those whose cohomology groups are $\m^\infty$-torsion.
\end{proposition}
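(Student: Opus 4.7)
Since the constructions of $F$ and $F'$ are symmetric, I would focus on $F$; the argument for $F'$ proceeds identically with left and right modules exchanged. Let $P_\bu \in \Perf^L_{disc}(E)$: a bounded complex of finitely generated projective left $E$-modules with $\m^\infty$-torsion cohomology. The fact that $F(P_\bu) \in \D^b_{coh}(\mf{X})$ is already built into the definition of $F$, so I only need to verify that each cohomology sheaf of $F(P_\bu) = V^\ast \otimes_E P_\bu$ is $\m^\infty$-torsion. Because $P_\bu$ consists of projectives, this tensor product represents the derived tensor product $V^\ast \otimes^L_E P_\bu$, and $V^\ast \otimes^L_E(-)$ defines an exact functor of the relevant derived $\infty$-categories.

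The plan is to reduce to the cohomology modules $H^q(P_\bu)$ one at a time by means of the canonical truncation. The filtration by $\tau^{\leq q} P_\bu$ yields finitely many distinguished triangles
\[
\tau^{\leq q-1} P_\bu \to \tau^{\leq q} P_\bu \to H^q(P_\bu)[-q] \to +1
\]
in $\D^b(\LMod(E))$. Applying $V^\ast \otimes^L_E (-)$ exhibits $F(P_\bu)$ as an iterated extension (through finitely many triangles) of the objects $V^\ast \otimes^L_E H^q(P_\bu)[-q]$. Each of these is bounded and coherent: $E$ has finite global dimension by Proposition \ref{finite global dimension}, so only finitely many $\Tor^E_i(V^\ast, H^q(P_\bu))$ are nonzero, and each is coherent since $H^q(P_\bu)$ is finitely generated over the Noetherian ring $E$ and $V^\ast$ is coherent on $\mf{X}$.

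The key observation is that, because $R$ sits in the center of $E$, the natural $R$-action on $\Tor^E_*(V^\ast, H^q(P_\bu))$ induced by functoriality on either argument agrees. Consequently any $r \in R$ annihilating $H^q(P_\bu)$ also annihilates the full Tor-group, since multiplication by $r$ factors through the zero map on $H^q(P_\bu)$. Because $H^q(P_\bu)$ is $\m^\infty$-torsion by hypothesis, each $\Tor^E_i(V^\ast, H^q(P_\bu))$ is therefore $\m^\infty$-torsion. Taking the long exact sequences from the finite tower of triangles above, each $H^i(F(P_\bu))$ is an iterated extension of subquotients of $\m^\infty$-torsion coherent sheaves, and hence itself $\m^\infty$-torsion (a uniform annihilating power of $\m$ is obtained by adding the annihilating exponents layer by layer). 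No substantial obstacle is expected: the argument rests only on the centrality of $R$ in $E$, the finite global dimension of $E$, and the boundedness of $P_\bu$, all of which are already in hand.
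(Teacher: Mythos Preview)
Your proposal is correct and takes essentially the same approach as the paper: both reduce to showing that $\Tor^E_i(V^\ast, H_j(P_\bu))$ is $\m^\infty$-torsion whenever $H_j(P_\bu)$ is, using centrality of $R$ in $E$. The only difference is packaging: the paper invokes the hypertor spectral sequence $E^2_{ij} = \Tor^E_i(V^\ast, H_j(P_\bu)) \Rightarrow H_{i+j}(V^\ast \otimes_E P_\bu)$ directly, whereas you unfold that spectral sequence by hand via the canonical truncation filtration and its distinguished triangles. (A cosmetic point: the paper works homologically throughout, so you may wish to rewrite your $H^q$, $\tau^{\leq q}$ in homological indexing to match.)
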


\begin{proof}
We prove it for $F$; the argument for $F^\prime$ is identical. Assume that $P_\bu \in \Proj^L(E)$ has $\m^\infty$-torsion cohomology; we need to show that $V^\ast \otimes_E P_\bu$ has $\m^\infty$-torsion cohomology. This follows from the hypertor spectral sequence (see e.g.\ \cite[Application 5.7.8]{weibel}): We have a spectral sequence
\[
E^2_{ij} = \Tor_{i}^E(V^\ast,H_j(P_\bu)) \implies H_{i+j}(V^\ast \otimes_E P_\bu),
\]
and hence if all $H_j(P_\bu)$ are $\m^\infty$-torsion it follows that all $H_{i+j}(V^\ast \otimes_E P_\bu)$ will be $\m^\infty$-torsion as well. This finishes the proof.
\end{proof}

Summing up, we have fully faithful embeddings $F : \Perf^L(E) \to \D^b_{coh}(\mf{X})$ and $F^\prime : \Perf^R(E) \to \D_{coh}^b(\mf{X})$ which restrict to fully faithful embeddings $F : \Perf_{disc}^L(E) \to \D^b_{coh,\m}(\mf{X})$ and $F^\prime : \Perf_{disc}^R(E) \to \D_{coh,\m}^b(\mf{X})$. Taking $\Ind$-completions of the functors $F$, we obtain a fully faithful embeddings
\[
F : \D^L(E) = \Ind(\Perf^L(E)) \to \IndCoh(\mf{X}) := \Ind \D^b_{coh}(\mf{X}).
\]
and (using Proposition \ref{unbounded disc category})
\[
F_{disc} : \D_{disc}^L(E) = \Ind(\Perf_{disc}^L(E)) \to \IndCoh_{\m}(\mf{X}) := \Ind \D^b_{coh,\m}(\mf{X}).
\]
This gives two out the three functors that we want. For the third one, we need some more preparation.

\begin{lemma}
Every $P_\bu \in \Perf_{disc}^L(E)$ is quasi-isomorphic to a bounded complex of injectives $J_\bu$ in $\LMod_{disc}(E)$ such that the Pontryagin dual $J_n^\vee$ is a finitely generated projective right $E$-module for all $n$. Conversely, every bounded complex $J_\bu$ in $\LMod_{disc}(E)$ of injectives with $J_n^\vee$ a finitely generated projective right $E$-module for all $n$ and with finite cohomology groups is perfect.
\end{lemma}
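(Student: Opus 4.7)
The plan is to use Pontryagin duality $M^\vee = \Hom_\oo(M, L/\oo)$ (with continuity implicit), which gives an exact contravariant equivalence between $\LMod_{disc}(E)$ and $\RMod_{cpt}(E)$ and between $\LMod_{cpt}(E)$ and $\RMod_{disc}(E)$. Under this duality, injective objects correspond to projective objects, and the injectives $J \in \LMod_{disc}(E)$ with $J^\vee$ finitely generated projective right $E$-module correspond precisely to finitely generated projective right $E$-modules (compact in their $\m$-adic topology). The other ingredient I would use, repeatedly, is the standard fact that since $E$ is Noetherian and of finite global dimension (Proposition \ref{finite global dimension}), every bounded complex in $\D^L(E)$ or $\D^R(E)$ with finitely generated cohomology is perfect; this follows by induction on the amplitude, resolving the top cohomology module by a finite projective resolution.

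For the converse direction, I first note that a finite $\m^\infty$-torsion module is a fortiori finitely generated over $R$, so $J_\bu$ is a bounded complex in $\D^L(E)$ with finitely generated cohomology, and the fact just recalled gives $J_\bu \in \Perf^L(E)$. Since its cohomology is $\m^\infty$-torsion, $J_\bu \in \D^L_{disc}(E)$, hence $J_\bu \in \Perf^L_{disc}(E)$. The hypothesis that $J_n^\vee$ be finitely generated projective is actually not used here; it is needed only to match up the converse with the first part of the statement.

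For the first direction, I would take $P_\bu \in \Perf^L_{disc}(E)$ and apply Pontryagin duality termwise (each $P_n$ being compact in its $\m$-adic topology) to obtain a bounded complex $P_\bu^\vee$ of discrete right $E$-modules. Each $P_n$ is a direct summand of some $E^a$, so each $P_n^\vee$ is a direct summand of $(E^\vee)^a$, and hence injective in $\RMod_{disc}(E)$. The cohomology $H_i(P_\bu)$ is finitely generated and $\m^\infty$-torsion, hence finite (using that $R/\m$ is finite), so by exactness of Pontryagin duality the cohomology of $P_\bu^\vee$ is also finite. Applying the standard fact of the first paragraph to $P_\bu^\vee$ yields a quasi-isomorphism $Q_\bu \to P_\bu^\vee$ with $Q_\bu$ a bounded complex of finitely generated projective right $E$-modules. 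Setting $J_\bu := Q_\bu^\vee$, each $J_n$ is injective in $\LMod_{disc}(E)$ with $J_n^\vee = Q_n$ finitely generated projective, and Pontryagin biduality (which applies since each $P_n$ is compact Hausdorff) gives a quasi-isomorphism $J_\bu = Q_\bu^\vee \simeq (P_\bu^\vee)^\vee \simeq P_\bu$ in $\D^L(E)$.

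The main nontrivial input is the identification $\D^b_{coh} = \Perf$ in finite global dimension, used for both directions; everything else is a formal consequence of Pontryagin duality together with the structure of $\LMod_{disc}(E)$ and $\RMod_{cpt}(E)$ recalled in Proposition \ref{description of top E-modules}.
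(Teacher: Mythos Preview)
Your argument is correct and uses the same two ingredients as the paper—finite global dimension of $E$ and Pontryagin duality—but organizes them slightly differently. The paper argues by induction on the cohomological amplitude of $P_\bu$: in amplitude zero one has a single finite module $M$, one resolves the finite dual $M^\vee$ by finitely generated projectives and dualizes back; the induction step is done by truncation triangles. You instead dualize the whole complex $P_\bu$ at once, resolve $P_\bu^\vee$ by a perfect $Q_\bu$, and set $J_\bu = Q_\bu^\vee$.

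One step in your version deserves a sentence of justification: the passage from the quasi-isomorphism $Q_\bu \to P_\bu^\vee$ to a quasi-isomorphism $P_\bu \cong (P_\bu^\vee)^\vee \to Q_\bu^\vee = J_\bu$. The terms of $Q_\bu$ are compact while those of $P_\bu^\vee$ are discrete, so one should say why termwise Pontryagin duality applies and preserves the quasi-isomorphism. The cleanest way is to note that $Q_\bu$ is a bounded complex of projectives and $P_\bu^\vee$ a bounded complex of injectives in $\RMod(E)$ (injectives in $\RMod_{disc}(E)$ are injective in $\RMod(E)$, as recalled before Lemma~\ref{embdding of unbounded deriv cat}), so the quasi-isomorphism is in fact a chain homotopy equivalence; any additive functor, in particular the termwise continuous dual, preserves homotopy equivalences. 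The paper's induction on amplitude sidesteps this issue entirely because at each step the duality is applied to a single finite module.

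Your observation that the hypothesis on $J_n^\vee$ is not needed for the converse is correct; only boundedness and finiteness of cohomology are used.
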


\begin{proof}
We prove this by induction on the amplitude of $P_\bu$. When the homology of $P_\bu$ is concentrated in a single degree $n$, then $P_\bu$ is quasi-isomorphic to $M := H_n(P_\bu)[-n]$ and the latter is finite. Consider $M^\vee$, which is a finite discrete right $E$-module. By Proposition \ref{finite global dimension}, $E$ has finite global dimension, so $M^\vee$ has a finite resolution by finitely generated projective right $E$-modules. Taking Pontryagin duals we obtain the desired resolution of $M$. For the induction step, we may choose $P_\bu \in \Proj^L(E)$ with discrete cohomology and $P_n \neq 0$ only if $n \in [r,s]$, and with $H_r(P^\bu)\neq 0$. Consider the truncations $\tau_{> r}P_\bu$ and $\tau_{\leq r}P_\bu = H_r(P_\bu)[-r]$. We know that $H_r(P_\bu)[-r]$ is perfect since $E$ has finite global dimension, so it follows that $\tau_{> r}P_\bu$ is perfect as well, since it is the cone of $P_\bu[-1] \to \tau_{\leq r}P_\bu[-1]$. We can therefore apply the induction hypothesis to $\tau_{> r}P_\bu$ and $\tau_{\leq r}P_\bu$, and get the result for $P_\bu$ by writing it as the cone of $\tau_{\leq r}P_\bu[-1] \to \tau_{> r}P_\bu$. This gives the first statement, and the proof of the converse is entirely dual.
\end{proof}

The following corollary is then immediate.

\begin{corollary}\label{cor to description of discrete perfect objects}
Pontryagin duality induces an equivalence $\Perf^R_{disc}(E) \cong \Perf^L_{disc}(E)^{op}$. 
\end{corollary}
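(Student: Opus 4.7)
The plan is to build the equivalence by transporting the ``complex of injectives'' model provided by the preceding lemma across Pontryagin duality. First I would observe that Pontryagin duality $(-)^\vee := \Hom^{cts}_\OO(-, L/\OO)$ is a contravariant exact equivalence of abelian categories $\LMod_{disc}(E) \cong \RMod_{cpt}(E)^{op}$, sending injectives to projectives and vice versa, and intertwining the naive internal Homs in the sense that it sends a finite module to its Pontryagin dual on the opposite side. Applying $(-)^\vee$ termwise to a bounded chain complex is therefore a dg anti-equivalence between the dg category of bounded complexes of injectives in $\LMod_{disc}(E)$ whose terms have finitely generated projective right $E$-dual and the dg category $\Proj^R(E)$ of bounded complexes of finitely generated projective right $E$-modules, and dually. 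Taking dg nerves yields an equivalence of stable $\infty$-categories in the opposite direction.

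Next I would use the preceding lemma to identify the source and target of this equivalence with $\Perf^L_{disc}(E)$ and $\Perf^R_{disc}(E)$ respectively. Concretely, the lemma shows that every object of $\Perf^L_{disc}(E)$ is quasi-isomorphic to a bounded complex $J_\bullet$ of injectives in $\LMod_{disc}(E)$ with $J_n^\vee$ finitely generated projective for all $n$, and such $J_\bullet$ has finite (hence discrete) cohomology. Pontryagin-dualising $J_\bullet$ produces a bounded complex of finitely generated projective right $E$-modules whose cohomology is the Pontryagin dual of the cohomology of $J_\bullet$, which is finite, hence discrete. Therefore the dual lies in $\Perf^R_{disc}(E)$. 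Conversely, starting from a perfect complex of right $E$-modules with finite cohomology, Pontryagin duality produces a bounded complex of injectives in $\LMod_{disc}(E)$ with finitely generated projective duals and finite cohomology, and the converse direction of the lemma places this in $\Perf^L_{disc}(E)$.

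Finally I would note that $(-)^\vee \circ (-)^\vee$ is naturally isomorphic to the identity (this is the usual biduality for finite/compact modules over $\OO$, and holds on the nose at the dg level on the two resolving subcategories in question), so the two functors obtained this way are mutually inverse equivalences $\Perf^R_{disc}(E) \simeq \Perf^L_{disc}(E)^{op}$. I expect no real obstacle here beyond bookkeeping; the only step requiring any care is the verification that the resolving subcategories on the two sides actually compute $\Perf^L_{disc}(E)$ and $\Perf^R_{disc}(E)$ inside the ambient derived $\infty$-categories, which is exactly what the preceding lemma (together with its evident right-module analogue, proved by the same argument) supplies.
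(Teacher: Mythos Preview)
Your proposal is correct and takes the same approach as the paper: the paper simply declares the corollary ``immediate'' from the preceding lemma, and your argument spells out precisely the bookkeeping that this entails (termwise Pontryagin duality between bounded complexes of injectives with finitely generated projective duals and $\Proj^R(E)$, together with the two directions of the lemma to identify the essential images).
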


If $\mc{A}$ is a Grothendieck abelian category, then we recalled in \S \ref{sec: coh sheaves on stacks} that the unbounded derived $\infty$-category $\D(\mc{A})$ is defined in \cite[\S 1.3.5]{lurie-ha}. If $\mc{A}$ is an abelian category such that $\mc{A}^{op}$ is a Grothendieck abelian category, we may define $\D(\mc{A}) := \D(\mc{A}^{op})^{op}$. Note that this is a stable $\infty$-category by \cite[Rem.~1.1.1.3]{lurie-ha}, and that one already has a canonical equivalence $\D^-(\mc{A}) \cong \D^+(\mc{A}^{op})^{op}$ \cite[Variant 1.3.2.8]{lurie-ha}, so this definition of $\D(\mc{A})$ is reasonable.

\begin{corollary}\label{unbounded cpt category}
We have a natural equivalence $\D(\RMod_{cpt}(E)) \cong \Pro(\Perf^R_{disc}(E))$.
\end{corollary}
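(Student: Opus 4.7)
The plan is to chain together the equivalences already established in the preceding paragraphs with the two basic principles $\Pro(\mc{C}) \simeq \Ind(\mc{C}^{op})^{op}$ and $\D(\mc{A}^{op}) \simeq \D(\mc{A})^{op}$ (the latter being our definition of the unbounded derived $\infty$-category for abelian categories whose opposite is Grothendieck).

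First, I would note that Pontryagin duality gives an equivalence of abelian categories $\RMod^{cpt}(E) \simeq \LMod^{disc}(E)^{op}$; this is essentially tautological from Proposition \ref{description of top E-modules} together with the standard Pontryagin duality between compact and discrete abelian groups. In particular, since $\LMod^{disc}(E)$ is a Grothendieck abelian category (as noted right after Proposition \ref{description of top E-modules}), the opposite of $\RMod^{cpt}(E)$ is Grothendieck, so $\D(\RMod^{cpt}(E))$ is defined as $\D(\LMod^{disc}(E))^{op}$.

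Next, I would invoke Proposition \ref{unbounded disc category} to rewrite
\[
\D(\RMod^{cpt}(E)) \;\simeq\; \D(\LMod^{disc}(E))^{op} \;\simeq\; \D^L_{disc}(E)^{op} \;\simeq\; \Ind(\Perf^L_{disc}(E))^{op}.
\]
Then, applying Corollary \ref{cor to description of discrete perfect objects}, which identifies $\Perf^L_{disc}(E) \simeq \Perf^R_{disc}(E)^{op}$ via Pontryagin duality, we obtain
\[
\Ind(\Perf^L_{disc}(E))^{op} \;\simeq\; \Ind(\Perf^R_{disc}(E)^{op})^{op} \;=\; \Pro(\Perf^R_{disc}(E)),
\]
where the last equality is the definition/characterization of the $\Pro$-completion of a small $\infty$-category as the opposite of the $\Ind$-completion of its opposite (see e.g.\ \cite[\S 5.3]{lurie-htt}).

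There is no real obstacle here — the statement is essentially a formal consequence of the previous results, and the only thing to check is that the chain of equivalences is natural, which is clear because each step is either Pontryagin duality, the construction of Proposition \ref{unbounded disc category}, or a general categorical identity. The only minor care needed is to make sure that the unbounded derived $\infty$-category of $\RMod^{cpt}(E)$ is well-defined as $\D(\LMod^{disc}(E))^{op}$, which follows from our convention established just before Corollary \ref{cor to description of discrete perfect objects}.
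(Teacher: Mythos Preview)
Your proof is correct and follows essentially the same approach as the paper: both chain together Pontryagin duality $\RMod^{cpt}(E)\simeq\LMod^{disc}(E)^{op}$, the definition $\D(\mc{A}):=\D(\mc{A}^{op})^{op}$, Proposition \ref{unbounded disc category}, Corollary \ref{cor to description of discrete perfect objects}, and the identity $\Pro(\mc{C})\simeq\Ind(\mc{C}^{op})^{op}$, the only difference being that the paper traverses the chain from the $\Pro$ side to the derived-category side while you go in the opposite direction. One trivial slip: the convention defining $\D(\mc{A})$ for $\mc{A}$ with Grothendieck opposite is stated just \emph{after} Corollary \ref{cor to description of discrete perfect objects} (immediately before the corollary being proved), not before it.
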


\begin{proof}
Corollary \ref{cor to description of discrete perfect objects} gives an equivalence $\Pro(\Perf_{disc}^R(E)) \cong \Pro((\Perf_{disc}^L(E))^{op})$, and the right hand side here is equivalent to $(\Ind(\Perf^L_{disc}(E)))^{op}$, which is equivalent to $\D(\LMod_{disc}(E))^{op}$ by Proposition \ref{unbounded disc category}. We then have $\D(\LMod_{disc}(E))^{op} = \D(\LMod_{disc}(E)^{op})$ by definition, and the latter is equivalent to $\D(\RMod_{cpt}(E))$ by Pontryagin duality.
\end{proof}

To simplify our notation, we write $\D_{cpt}^R(E)$ for $\D(\RMod_{cpt}(E))$. We can now define our third functor by taking $\Pro$ of $F^\prime : \Perf^R_{disc}(E) \to \D^b_{\Coh,\m}(\mf{X})$ to get
\[
\Pro(\Perf^R_{disc}(E)) \to \ProCoh_{\m}(\mf{X}) := \Pro(\D^b_{\Coh,\m}(\mf{X})).
\]
Applying Corollary \ref{unbounded cpt category}, we get a fully faithful embedding $F_{cpt} : \D_{cpt}^R(E) \to \ProCoh_{\m}(\mf{X})$, as desired. We summarize these results in a theorem.

\begin{theorem}\label{derived embedding}
There are fully faithful exact functors $F : \D^L(E) \to \IndCoh(\mf{X})$, $F_{disc} : \D_{disc}^L(E) \to \IndCoh_{\m}(\mf{X})$ and $F_{cpt} : \D_{cpt}^R(E) \to \ProCoh_{\m}(\mf{X})$ induced by 
$F: \Perf^L(E) \to \D^b_{coh}(\fX)$ in the first two cases and $F': \Perf^R(E) \to \D^b_{coh}(\fX)$ in the third case (these functors are defined in (\ref{eqn: derived F Fprime})).
\end{theorem}

As in the abelian case, we also have adjoint functors.

\begin{proposition}\label{derived adjoints}
The functors $F$, $F_{disc}$ and $F_{cpt}$ from Theorem \ref{derived embedding} have the following adjoints:
\begin{enumerate}
\item $F$ has a right adjoint $G : \IndCoh(\mf{X}) \to \D^L(E)$ given by $G(W) = \RHom(V,W)$. Moreover, $G$ commutes with limits and colimits.

\smallskip

\item $F_{disc}$ has a right adjoint $G_{disc} : \IndCoh_{\m}(\mf{X}) \to \D_{disc}^L(E)$ given by $G_{disc}(W) = \RHom(V,W)$. Moreover, $G_{disc}$ commutes with limits and colimits.

\smallskip

\item $F_{cpt}$ has a left adjoint $G_{cpt} : \ProCoh_{\m}(\mf{X}) \to \D_{cpt}^R(E)$. Moreover, $G_{cpt}$ commutes with limits and colimits.

\end{enumerate}
\end{proposition}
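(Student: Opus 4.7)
My strategy is uniform: each of $F$, $F_{disc}$, $F_{cpt}$ is realized as an Ind- or Pro-extension of an exact functor between essentially small stable $\infty$-categories, and the adjoints are produced by Lurie's presentable adjoint functor theorem combined with explicit tensor--hom adjunction. By construction $F$ and $F_{disc}$ are the ind-extensions of $\Perf^L(E) \hookrightarrow \D^b_{coh}(\mf{X})$ and $\Perf^L_{disc}(E) \hookrightarrow \D^b_{coh,\m}(\mf{X})$; these are functors between compactly generated presentable stable $\infty$-categories which carry compact generators to compact objects, so they preserve small colimits and admit right adjoints $G$, $G_{disc}$. For $F_{cpt}$, I would dualize using Pontryagin duality: the identification $\D_{cpt}^R(E) \simeq \D_{disc}^L(E)^{op}$ from Corollary \ref{unbounded cpt category} together with $\ProCoh_\m(\mf{X}) = \Pro(\D^b_{coh,\m}(\mf{X})) \simeq \Ind(\D^b_{coh,\m}(\mf{X})^{op})^{op}$ realizes $F_{cpt}^{op}$ as an ind-extension of an exact functor preserving compact objects, so it admits a right adjoint; this corresponds to a left adjoint $G_{cpt}$ to $F_{cpt}$.

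Next I would identify the formulas. The right adjoint of an ind-extension is determined by its values on compact generators, so for $P \in \Perf^L(E)$ and $W \in \D^b_{coh}(\mf{X})$ I would apply dg-level tensor--hom adjunction between the $(\oo_{\mf{X}}, E)$-bimodule $V^\ast$, the left $E$-module $P$, and the coherent complex $W$ to obtain a natural equivalence
\[
\Hom_{\IndCoh(\mf{X})}(V^\ast \otimes_E^L P, W) \simeq \Hom_{\D^L(E)}(P, \RHom_{\mf{X}}(V, W)),
\]
where the left $E$-action on $\RHom_{\mf{X}}(V, W)$ arises from $E = \End(V)$, making use of the coherent duality interchange $V \leftrightarrow V^\ast$ from Remark \ref{remark:duality}. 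This yields $G(W) = \RHom_{\mf{X}}(V, W)$, and since $V$ is coherent, $\RHom_{\mf{X}}(V,-)$ commutes with filtered colimits and preserves $\m^\infty$-torsion, giving $G_{disc}$ as the restriction to $\IndCoh_\m(\mf{X})$.

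Finally, for the continuity properties: $G, G_{disc}$ preserve limits automatically as right adjoints, and they preserve colimits because $F, F_{disc}$ preserve compact objects (hence their right adjoints preserve filtered colimits; combined with exactness, they preserve all small colimits). For $G_{cpt}$, colimit-preservation is automatic for a left adjoint. Limit-preservation follows by dualizing: under the opposite-category identifications used in the first paragraph, $G_{cpt}^{op}$ becomes a right adjoint whose left adjoint $F_{cpt}^{op}$ preserves compact objects, so $G_{cpt}^{op}$ preserves colimits, equivalently $G_{cpt}$ preserves limits. The main subtlety lies in the second paragraph: careful bookkeeping of $(E, E^{op})$-bimodule structures on $V$, $V^\ast$ and $\RHom(V, W)$ at the dg/$\infty$-categorical level, especially in the non-generic cases where $V$ is only MCM (not locally free) so that $V$ and $V^\ast$ are genuinely distinct, together with the precise Ind/Pro-duality identifications needed to apply the adjoint functor theorem in each of the three cases.
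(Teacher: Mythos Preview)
Your approach is essentially the same as the paper's: both produce the adjoints via tensor--Hom adjunction (you route through the adjoint functor theorem first and then identify the formula, whereas the paper simply says the adjunction ``can be checked directly; it is a Hom--Tensor adjunction''), both deduce colimit-preservation of $G$ and $G_{disc}$ from compactness of the kernel in $\IndCoh(\mf{X})$ (equivalently, from $F$ preserving compact objects), and for part (3) both pass to opposite categories and invoke Lurie's adjoint functor theorem. Your dualizing argument for limit-preservation of $G_{cpt}$ is a clean repackaging of what the paper does: the paper instead recycles the explicit cocompactness argument from Proposition~\ref{abelian adjoints}(3) to handle cofiltered limits, and then invokes exactness to get finite limits; the underlying mechanism (that $F_{cpt}$ preserves cocompact objects) is the same.

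One point to tighten: the tensor--Hom right adjoint of $P \mapsto V^\ast \otimes_E^L P$ is $W \mapsto \RHom_{\mf{X}}(V^\ast, W)$, with left $E$-action induced from the right $E$-action on $V^\ast$. Your appeal to the ``coherent duality interchange $V \leftrightarrow V^\ast$'' from Remark~\ref{remark:duality} does not by itself convert this into $\RHom_{\mf{X}}(V, W)$: duality gives $V^{\ast\ast} \cong V$, not a natural isomorphism $\RHom(V^\ast,-) \cong \RHom(V,-)$, and in the non-generic~II case $V$ is not locally free, so the usual identification $\RHom(V^\ast,W) \cong V \otimes W$ is unavailable. (Compare Proposition~\ref{abelian adjoints}(1), where the formula is written as $G(W)=\Hom(V^\ast,W)$.) This does not affect existence of the adjoint, the $\m^\infty$-torsion verification in (2), or any of the continuity claims, all of which go through verbatim with $V^\ast$ in place of $V$; but the step you flagged as the ``main subtlety'' should be handled by writing the adjoint as $\RHom(V^\ast,-)$ rather than by invoking duality.
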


\begin{proof}
We start with (1). The adjunction between $F$ and $G$ can be checked directly; it is a Hom-Tensor adjunction. It is clear that $G$ commutes with limits, and it commutes with colimits since $V$ is compact in $\IndCoh(\mf{X})$ (by definition, since it lies in $\D_{coh}^b(\mf{X})$).

For part (2), it suffices to prove that $G(W)$ has $\m^\infty$-torsion cohomology when $W \in \IndCoh_{\m}(\mf{X})$. Since $G$ commutes with colimits, it suffices to check this for $W \in \D_{coh,\m}^b(\mf{X})$. By induction on the amplitude and shifting, we may assume that $W \in \Coh_{\m}(\mf{X})$. Then it is clear that $\Ext^i(V,W)$ is killed by any power of $\m$ that kills $W$, independent of $i$. This finishes the proof.

Finally, existence of $G_{cpt}$ in part (3) follows from Lurie's adjoint functor theorem \cite[Cor.~5.5.2.9]{lurie-htt} (note that this applies to $\infty$-categories whose opposites are presentable as well), and the fact that it commutes with cofiltered limits is proved in exactly the same way as in Proposition \ref{abelian adjoints}(3). Since it is exact (by \cite[Prop.~1.1.4.1]{lurie-ha}) it then commutes with all limits.
\end{proof}

\begin{remark}\label{rmk on adjoints derived}
As in the abelian case, Lurie's adjoint functor theorem implies that the functors $G$ and $G_{disc}$ have right adjoints and that $G_{cpt}$ has a left adjoint. Perhaps more interestingly, the adjoint pairs $(F,G)$, $(F_{disc},G_{disc})$ and $(G_{cpt},F_{cpt})$ also induce semiorthogonal decompositions on $\IndCoh(X)$, $\IndCoh_{\m}(X)$ and $\ProCoh_{\m}(X)$. Let us spell this out for $(F,G)$, the details for $(F_{disc},G_{disc})$ are identical and the details for $(G_{cpt},F_{cpt})$ are dual. We refer to \cite[\S A.8]{egh} for generalities on semiorthogonal decompositions. Write $\mc{A}$ for the kernel of $G$ (i.e.\ the full subcategory of $\IndCoh(X)$ of objects $W$ satisfying $\RHom(V,W)=0$) and let $\mc{B}$ denote the essential image of $F$. Then $(\mc{B},\mc{A})$ is easily seen to be a semiorthogonal decomposition for $\IndCoh(\mf{X})$ (cf. \cite[Lem.~A.8.4]{egh}).
\end{remark}

In the case when $V$ satisfies the hypotheses in the abelian situation, we now have two a priori different definitions of functors at the level of derived categories: Those given by Theorem \ref{derived embedding} and those given by deriving the functors in Theorem \ref{abelian embedding}. As expected, they agree, in a suitable sense. In our discussion of this (only), we will use $F$, $F_{disc}$ and $F_{cpt}$ to denote the functors from Theorem \ref{abelian embedding}, and $\mc{F}$, $\mc{F}_{disc}$ and $\mc{F}_{cpt}$ to denote the functors from Theorem \ref{derived embedding}. From these, we can form new functors in the following way. First, by composing $\mc{F}$ with the natural functor
\[
\IndCoh(\mf{X}) \to \D_{qcoh}(\mf{X}),
\]
we obtain a functor $\ol{\mc{F}} : \D^L(E) \to \D_{qcoh}(\mf{X}))$. Similarly, we obtain functors $\ol{\mc{F}}_{disc} : \D_{disc}^L(E) \to \D_{qcoh,\m}(\mf{X})$ and $\ol{\mc{F}}_{cpt} : \D_{cpt}^R(E) \to \D(\Pro(\Coh_{\m}(\mf{X})))$. On the other hand, we may derive the functors $F$, $F_{disc}$ and $F_{cpt}$. For this we use the model-theoretic framework, for which we refer to \cite[\S 7.5]{cisinski}: Our functors, as well as their adjoints $G$, $G_{disc}$ and $G_{cpt}$, extend to functors on the abelian categories of chain complexes in the corresponding abelian categories, and these have model structures described by \cite[Prop.~1.3.5.3]{lurie-ha} (or its dual).

\begin{lemma}\label{Quillen adjunction}
The pairs $(F,G)$, $(F_{disc},G_{disc})$ and $(G_{cpt},F_{cpt})$ are Quillen adjunctions. In particular, they induce adjunctions $(LF,RG)$, $(LF_{disc},RG_{disc})$ and $(LG_{cpt},RF_{cpt})$ at the level of derived $\infty$-categories.
\end{lemma}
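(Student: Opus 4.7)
The plan is to show that in each case the left adjoint is exact on the underlying abelian category, and then to invoke a general feature of Lurie's model structure \cite[Prop.~1.3.5.3]{lurie-ha} on chain complexes in a Grothendieck abelian category: in that model structure the cofibrations are monomorphisms and weak equivalences are quasi-isomorphisms, so any exact functor preserves both (the latter via $H_n \circ F = F \circ H_n$ applied degreewise), and hence an exact left adjoint between such chain complex categories is automatically left Quillen.

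For the pair $(F, G)$ the left adjoint $F = V^\ast \otimes_E -$ is exact by the standing hypothesis (3) on $V$ that $V^\ast$ is flat as a right $E$-module (stated just before Theorem~\ref{abelian embedding}); since $\LMod(E)$ and $\QCoh(\mf{X})$ are both Grothendieck abelian, Lurie's framework applies directly. The pair $(F_{disc}, G_{disc})$ is treated identically upon restriction to the localizing subcategories $\LMod_{disc}(E)$ and $\QCoh_{\m}(\mf{X})$, which remain Grothendieck abelian. In both cases the resulting Quillen adjunctions descend to the derived adjunctions $(LF, RG)$ and $(LF_{disc}, RG_{disc})$.

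The pair $(G_{cpt}, F_{cpt})$ requires one additional manipulation, since $\RMod_{cpt}(E)$ is not itself Grothendieck abelian; rather, its opposite is, via the Pontryagin duality $\RMod_{cpt}(E)^{op} \cong \LMod_{disc}(E)$. We therefore equip $\mathrm{Ch}(\RMod_{cpt}(E))$ with the opposite of Lurie's model structure on $\mathrm{Ch}(\LMod_{disc}(E))$, and handle the sheaf side analogously. In these dualized model structures, the Quillen condition for $(G_{cpt}, F_{cpt})$ translates to the Quillen condition for the opposite pair, which reduces to verifying that $F_{cpt}$ is exact. This holds because $F_{cpt}$ is induced from the exact functor $F' \colon M \mapsto M \otimes_E V$ (exactness by flatness of $V$ as a left $E$-module), and the resulting functor on compact modules remains exact because short exact sequences of pro-objects can be tested on level representatives in $\Coh_{\m}(\mf{X})$.

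The second assertion of the lemma, that these Quillen adjunctions induce adjunctions of the underlying derived $\infty$-categories, is then a standard consequence of the fact that the $\infty$-category underlying a combinatorial model category agrees with its $\infty$-categorical localization. The principal technical obstacle lies in the third pair, where the dualization via Pontryagin duality must be formulated carefully in order to place the non-Grothendieck category $\RMod_{cpt}(E)$ inside Lurie's framework; once that setup is in place, the verification collapses to the same exactness check used for $(F, G)$.
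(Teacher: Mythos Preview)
Your proposal is correct and takes essentially the same approach as the paper: exactness of $F$, $F_{disc}$, and $F_{cpt}$ (from the flatness hypotheses on $V$ and $V^\ast$) forces preservation of cofibrations and weak equivalences in Lurie's model structure (or, dually, of fibrations and weak equivalences for $F_{cpt}$), and the derived adjunction then follows formally. The paper's treatment of the third pair is slightly more direct than yours---rather than passing to opposite pairs, it simply observes that in the dual model structure the fibrations are epimorphisms, so exactness of $F_{cpt}$ immediately makes it right Quillen---and it cites \cite[Thm.~7.5.30]{cisinski} for the passage to derived $\infty$-categories, whereas you invoke the general principle about underlying $\infty$-categories of combinatorial model categories; these amount to the same thing.
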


\begin{proof}
Since $F$ and $F_{disc}$ are exact, they preserve cofibrations and weak equivalences (directly from the definitions of the model relevant structures), and hence $(F,G)$ and $(F_{disc},G_{disc})$ are Quillen adjunctions. Similarly, exactness of $F_{cpt}$ means that it preserves fibrations and weak equivalences, making $(G_{cpt},F_{cpt})$ a Quillen adjunction. The second statement is then \cite[Thm.~7.5.30]{cisinski}.
\end{proof}

We can now formulate and prove the compatibility between our abelian and derived embeddings, in the abstract setting of this subsection.

\begin{proposition}\label{abelian = derived 1}
We have natural equivalences of functors $\ol{\mc{F}} \cong LF$, $\ol{\mc{F}}_{disc} \cong LF_{disc}$ and $\ol{\mc{F}}_{cpt} \cong RF_{cpt}$.
\end{proposition}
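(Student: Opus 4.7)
The plan is to prove the three equivalences by a common argument; I will describe it for $\ol{\mc{F}} \cong LF$ and then indicate the straightforward modifications for the other two. The central observation is that each of the abelian functors $F$, $F_{disc}$, $F_{cpt}$ is exact (by flatness of $V^\ast$, respectively $V$), hence preserves quasi-isomorphisms of chain complexes. Consequently, the total derived functors $LF$, $LF_{disc}$, $RF_{cpt}$ produced by the Quillen adjunctions of Lemma \ref{Quillen adjunction} are each canonically equivalent to the functor induced by termwise application of the underlying abelian functor, with no non-trivial (co)fibrant resolution required.

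The first step is to compare $LF$ and $\ol{\mc{F}}$ on the subcategory $\Perf^L(E) \sub \D^L(E)$. By the very construction of $\mc{F}$, its restriction to $\Perf^L(E)$ is the exact dg functor $P_\bu \mapsto V^\ast \otimes_E P_\bu$ landing in $\D^b_{coh}(\mf{X}) \sub \IndCoh(\mf{X})$. Since the natural functor $\IndCoh(\mf{X}) \to \D_{qcoh}(\mf{X})$ restricts to the tautological inclusion on $\D^b_{coh}(\mf{X})$, we obtain $\ol{\mc{F}}|_{\Perf^L(E)} \simeq LF|_{\Perf^L(E)}$ as functors of $\infty$-categories, and this equivalence is visibly natural since both sides are defined by the same dg/chain-level formula.

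The second step is to propagate this equivalence to $\D^L(E) = \Ind(\Perf^L(E))$. As a left adjoint to $RG$, $LF$ preserves arbitrary small colimits. The functor $\mc{F}$ preserves filtered colimits by the universal property of Ind-extension and preserves finite colimits by exactness; the natural map $\IndCoh(\mf{X}) \to \D_{qcoh}(\mf{X})$ is the filtered-colimit-preserving extension of $\D^b_{coh}(\mf{X}) \hookrightarrow \D_{qcoh}(\mf{X})$ and is also exact, so $\ol{\mc{F}}$ preserves all small colimits. By the universal property of $\Ind$, any two colimit-preserving functors out of $\D^L(E)$ that agree naturally on $\Perf^L(E)$ are naturally equivalent, and this yields $\ol{\mc{F}} \cong LF$. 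The equivalence $\ol{\mc{F}}_{disc} \cong LF_{disc}$ is proved identically, using $\D^L_{disc}(E) = \Ind(\Perf^L_{disc}(E))$ from Proposition \ref{unbounded disc category} and the natural $\IndCoh_{\m}(\mf{X}) \to \D_{qcoh,\m}(\mf{X})$. The case $\ol{\mc{F}}_{cpt} \cong RF_{cpt}$ is the formal dual: $RF_{cpt}$ is a right adjoint and preserves all small limits, $\mc{F}_{cpt}$ preserves cofiltered limits by the universal property of Pro-extension, and one invokes $\D^R_{cpt}(E) \cong \Pro(\Perf^R_{disc}(E))$ (Corollary \ref{unbounded cpt category}) together with the dual universal property of $\Pro$.

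The main technical obstacle will be verifying carefully that each of the three bridging functors --- $\IndCoh(\mf{X}) \to \D_{qcoh}(\mf{X})$, its $\m^\infty$-torsion analogue, and the dual Pro-version used in the compact case --- is exact and preserves filtered colimits (respectively, cofiltered limits), and, relatedly, that the equivalence ``$LF \simeq F$ applied termwise'' is genuinely an equivalence of $\infty$-functors rather than merely a pointwise equivalence. These are standard but somewhat laborious checks in the dg-nerve/model-theoretic formalism used throughout \S\ref{sec: categorical constructions}.
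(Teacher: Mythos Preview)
Your proposal is correct and follows essentially the same approach as the paper's proof: both argue that exactness of the abelian functor makes cofibrant replacement unnecessary, verify agreement on $\Perf^L(E)$ from the defining formulas, and then extend to $\D^L(E)$ using that both functors preserve colimits (with the dual argument for the compact case). Your version is somewhat more explicit about the technical checks involved, but the structure is the same.
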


\begin{proof}
We give the proof that $\ol{\mc{F}} \cong LF$, the proof that $\ol{\mc{F}}_{disc} \cong LF_{disc}$ is identical and the proof that $\ol{\mc{F}}_{cpt} \cong RF_{cpt}$ is dual. First, we observe that cofibrant replacement is not needed to define $RF$, since $F$ is exact and hence preserves all weak equivalences. In particular, it follows from the defining formulas that $RF$ and $\ol{\mc{F}}$ agree on $\Perf^L(E)$. Since they also commute with colimits, they have to agree on all of $\D^L(E)$ (to see that they commute with colimits, one can e.g.\ use that $LF$ is a left adjoint by Lemma \ref{Quillen adjunction}, and for $\ol{\mc{F}}$ that $\mc{F}$ and the natural map $\IndCoh(\mf{X}) \to \D_{qcoh}(\mf{X})$ commute with colimits).
\end{proof}

In the situations when we wish to apply the abelian construction, we can give a slightly more precise result. For this, we need the following lemma.

\begin{lemma}\label{QCoh = IndCoh}
Assume that $G$ is linearly reductive over $\oo$. Assume moreover that $A$ has finite global dimension. Then every complex in $D_{coh}^b(\mf{X})$ is perfect, and the natural functors $\IndCoh(\mf{X}) \to \D_{qcoh}(\mf{X})$ and $\IndCoh_{\m}(\mf{X}) \to \D_{qcoh,\m}(\mf{X})$ are equivalences.
\end{lemma}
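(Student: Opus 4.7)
The plan is to prove the perfectness assertion first, then deduce both equivalences by a compact-generation argument applied uniformly in the non-torsion and $\m$-torsion settings.

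\emph{Perfectness.} Identifying $\QCoh(\mf{X})$ with $G$-equivariant $A$-modules, linear reductivity of $G$ furnishes a Reynolds operator, which implies that an object of $\QCoh(\mf{X})$ is projective if and only if it is projective as an $A$-module. Given $M \in \Coh(\mf{X})$, local finiteness of the $G$-action together with finite generation over $A$ yields a finite-dimensional $G$-subrepresentation $V \subseteq M$ generating $M$ over $A$, and hence a $G$-equivariant surjection $A \otimes_\oo V \twoheadrightarrow M$. The source is finitely generated and $A$-projective, hence projective in $\QCoh(\mf{X})$. Iterating $d$ times, where $d$ is the global dimension of $A$, the $d$-th kernel is $A$-projective and finitely generated, so projective in $\QCoh(\mf{X})$ by the same reductivity argument. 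This produces a bounded resolution by finitely generated projectives, exhibiting $M$ as perfect. A standard dévissage on cohomological amplitude extends this to all of $D^b_{coh}(\mf{X})$; the reverse inclusion $\Perf(\mf{X}) \subseteq D^b_{coh}(\mf{X})$ is automatic.

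\emph{Equivalences.} A perfect complex $P$ is compact in $\D_{qcoh}(\mf{X})$ because $\RHom(P, -)$ reduces to termwise $\Hom$ out of a bounded complex of finitely generated projectives, which commutes with filtered colimits in $\QCoh(\mf{X})$. Hence the natural functor
\[
\IndCoh(\mf{X}) = \Ind(D^b_{coh}(\mf{X})) \longrightarrow \D_{qcoh}(\mf{X})
\]
is fully faithful, and it remains to show essential surjectivity, i.e.\ that $D^b_{coh}(\mf{X})$ compactly generates $\D_{qcoh}(\mf{X})$. Two inputs suffice: (i) $\QCoh(\mf{X}) = \Ind(\Coh(\mf{X}))$, which follows from Noetherianity of $A$ together with local finiteness of the $G$-action; and (ii) $\QCoh(\mf{X})$ has cohomological dimension at most $d$, because linear reductivity of $G$ combined with finite global dimension of $A$ gives $\Ext^i_{\QCoh(\mf{X})}(M, N) = \Ext^i_A(M, N)^G = 0$ for $i > d$. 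Given $X \in \D_{qcoh}(\mf{X})$ with $\Hom(C, X[n]) = 0$ for all $C \in D^b_{coh}(\mf{X})$ and $n \in \Z$, input (ii) makes the spectral sequence $\Ext^p(C, H^q(X)) \Rightarrow \Hom(C, X[p+q])$ concentrated in a finite horizontal strip and thus strongly convergent, and combined with (i) this forces $H^n(X) = 0$ for all $n$, hence $X = 0$. The argument for $\IndCoh_\m(\mf{X}) \to \D_{qcoh,\m}(\mf{X})$ is identical, replacing $\Coh(\mf{X})$ and $\QCoh(\mf{X})$ throughout by their $\m^\infty$-torsion counterparts, and using that $\QCoh_\m(\mf{X}) = \Ind(\Coh_\m(\mf{X}))$ and that filtered colimits of $\m^\infty$-torsion objects remain $\m^\infty$-torsion.

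\emph{Main obstacle.} The chief technical point is ensuring that the spectral sequence argument really controls unbounded complexes. This rests on the finite cohomological dimension of $\QCoh(\mf{X})$, which both makes the spectral sequence strongly convergent and ensures that the t-structure on $\D_{qcoh}(\mf{X})$ is well-behaved enough that vanishing of all cohomology sheaves against the compact family $D^b_{coh}(\mf{X})$ forces vanishing of $X$. Without these hypotheses, one would have to contend with subtleties of the type highlighted in Remark \ref{issues with qc sheaves}.
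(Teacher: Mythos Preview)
Your overall approach matches the paper's, but two steps do not go through as written. First, in the perfectness argument you take a finite $G$-subrepresentation $V \subseteq M$ and assert that $A \otimes_\oo V$ is $A$-projective. This requires $V$ to be $\oo$-flat, which fails whenever $M$ has $\oo$-torsion. The paper's fix is to first cover the generating subrepresentation by a projective object of $\Coh([\Spec\oo/G])$ (these exist since $G$ is linearly reductive) and induce that up to $A$; the rest of your resolution argument then works verbatim.

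Second, and more seriously, the spectral-sequence step does not conclude: knowing that $E_\infty^{p,q}=0$ and that the $E_2$-page lives in a finite strip in $p$ does not force $E_2^{0,q}=\Hom(C,H^q(X))$ to vanish, since the differentials $d_r$ out of the $p=0$ column can kill it against higher columns. The repair is to bypass the spectral sequence entirely by specializing $C$ to the \emph{projective} coherent objects you already produced: for projective $P$ one has $\Hom_{\D}(P,X[n])=\Hom(P,H^n(X))$ on the nose, and since every coherent sheaf is covered by such a $P$ this forces $H^n(X)=0$. This is exactly the paper's argument. Note, however, that this fix does not transfer to the $\m$-torsion case by an ``identical argument'' as you claim, because $\Coh_\m(\mf{X})$ has no nonzero projectives. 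There one instead shows directly that the objects $V_A/\m^r V_A$ detect nonvanishing cohomology via a chain-level construction, as in the proof of Proposition~\ref{unbounded disc category}; the paper makes this reference explicit.
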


\begin{proof}
First, we show that $\Coh(\mf{X})$ has enough projectives. The first step is to show that $\QCoh([\Spec \oo/G])$ has enough projectives. Since $G$ is linearly reductive, $V \in \QCoh([\Spec \oo/G])$ is projective if and only if its underlying $\oo$-module is projective. If $G$ is diagonalizable with Cartier dual $M$, then $\QCoh([\Spec \oo/G])$ is the category of $M$-graded $\oo$-modules (cf. \cite[\S  I.2.11]{jantzen}), which visibly has enough projectives. This is the only case we will needed in applications, so we only sketch how the general case follows. First, one checks that the claim may be checked after a finite \'etale extension of $\oo$. Thus, by \cite[Lemma 2.20]{aov}, we may assume that $G$ is an extension of a finite constant group scheme of order prime to $p$ by a diagonalizable group scheme. Using induction from the diagonalizable subgroup (which is now a left adjoint as well to the restriction functor) one reduces to the diagonal case. We then deduce that $\Coh([\Spec \oo/G])$ also has enough projectives using \cite[\S 1.2.13]{jantzen}. To show that $\Coh(\mf{X})$ has enough projectives, pick $W \in \Coh(\mf{X})$ and choose (by \cite[\S 1.2.13]{jantzen} again) a $G$-equivariant finitely generated $\oo$-submodule $W^\prime \sub W$ which generates $W$ as an $A$-module. projective $V \in \Coh([\Spec \oo / G])$ with a map. We may then choose a surjection $V \to W^\prime$ from a projective $V \in \Coh([\Spec \oo/G])$, and from this we obtain a surjection $V_A := V\otimes_\oo A \to W$. Note $V_A$ is projective in $\Coh(\mf{X})$ since its underlying $A$-module is projective.

Next, we show that every complex in $D_{coh}^b(\mf{X})$ is perfect.  It suffices to show that every $M \in \Coh(\mf{X})$ is perfect. Since $\Coh(\mf{X})$ has enough projectives, we may find resolutions
\[
0 \to W_{s+1} \to V^s \to \dots \to V^0 \to W \to 0
\]
for all $s\geq 0$, with $V^j \in \Coh(\mf{X})$ projective for all $j$. Since the global dimension of $A$ is finite, $W_{s+1}$ is automatically projective as an $A$-module for large enough $s$, and hence as an object of $\Coh(\mf{X})$. This shows that $W$ is perfect, as desired. 

To show that $\IndCoh(\mf{X}) \cong \D_{qcoh}(\mf{X})$, it suffices to show that objects of $\D_{coh}^b(\mf{X})$ are compact and generate $\D_{qcoh}(\mf{X})$. Since $\Coh(\mf{X})$ has enough projectives, we see (\cite[\S 1.2.13]{jantzen} again) that these projectives generate $\D_{qcoh}(\mf{X})$. Since (in our situation) every perfect complex is quasi-isomorphic to a bounded complex of projective objects, the usual proof for rings (see e.g. \cite[\href{https://stacks.math.columbia.edu/tag/07LQ}{Tag 07LQ}]{stacks-project}) shows that perfect complexes are compact in $\D_{qcoh}(\mf{X})$. Putting this together, we have shown that $\IndCoh(\mf{X}) \to D_{qcoh}(\mf{X})$ is an equivalence. 

Finally, to show that $\IndCoh_{\m}(\mf{X}) \to \D_{qcoh,\m}(\mf{X})$ is an equivalence, it suffices to show that $\D_{coh,\m}^b(\mf{X})$ generates $\D_{qcoh,\m}(\mf{X})$. In fact, the objects $V_A/\m^r V_A \in \D_{coh,\m}^b(\mf{X})$ for $r\in \Z_{\geq 1}$ and $V\in \Coh([\Spec \oo /G])$ generate $\D_{qcoh,\m}(\mf{X})$; this follows by essentially the same argument as in the proof of Proposition \ref{unbounded disc category}.
\end{proof}

\begin{remark}\label{rmk: IndCoh not eq to QCoh}
Assume that $A$ has a maximal ideal fixed by $G$, so that $G$ occurs as the stabilizer of a point of $X$. Then the assumption that $G$ is linearly reductive is essential for compact generation of $\D_{qcoh}(\mf{X})$ (see Remark \ref{issues with qc sheaves}) and hence for $\IndCoh(\mf{X}) \to \D_{qcoh}(\mf{X})$ (or even for $\Ind \Perf(\mf{X}) \to \D_{qcoh}(\mf{X})$) to have a chance of being an equivalence). In particular, $\IndCoh(\mf{X}) \to \D_{qcoh}(\mf{X})$ is not an equivalence in the ``non-generic case I'' situation considered in \S \ref{subsec: non-generic I}, even though the $A$ there has finite global dimension. 
\end{remark}

\begin{corollary}\label{abelian = derived 2}
Assume that $G$ is linearly reductive and that $A$ has finite global dimension. Then we have natural equivalences $\mc{F} \cong LF$ and $\mc{F}_{disc} \cong LF_{disc}$. Moreover, the $RG$ and $RG_{disc}$ are naturally equivalent to the adjoints $G$ and $G_{disc}$ from Proposition \ref{derived adjoints}.
\end{corollary}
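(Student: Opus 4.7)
The plan is to reduce everything to the compatibility already established in Proposition \ref{abelian = derived 1}, using the identification of categories provided by Lemma \ref{QCoh = IndCoh}. First, under the hypotheses that $G$ is linearly reductive and $A$ has finite global dimension, Lemma \ref{QCoh = IndCoh} tells us that the natural functors $\IndCoh(\mf{X}) \to \D_{qcoh}(\mf{X})$ and $\IndCoh_{\m}(\mf{X}) \to \D_{qcoh,\m}(\mf{X})$ are equivalences of stable $\infty$-categories. In particular, the definitions of $\ol{\mc{F}}$ and $\ol{\mc{F}}_{disc}$ show that, under these equivalences, $\ol{\mc{F}}$ is identified with $\mc{F}$ and $\ol{\mc{F}}_{disc}$ is identified with $\mc{F}_{disc}$.

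Combining this with Proposition \ref{abelian = derived 1}, which gives natural equivalences $\ol{\mc{F}} \cong LF$ and $\ol{\mc{F}}_{disc} \cong LF_{disc}$, yields the desired equivalences $\mc{F} \cong LF$ and $\mc{F}_{disc} \cong LF_{disc}$.

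For the statement about the right adjoints, I would invoke uniqueness of adjoints in the $\infty$-categorical setting. By Lemma \ref{Quillen adjunction}, the Quillen adjunctions $(F, G)$ and $(F_{disc}, G_{disc})$ descend to adjoint pairs $(LF, RG)$ and $(LF_{disc}, RG_{disc})$ at the level of derived $\infty$-categories. On the other hand, Proposition \ref{derived adjoints} provides adjoint pairs $(\mc{F}, G^{der})$ and $(\mc{F}_{disc}, G^{der}_{disc})$, where I use $G^{der}$ and $G^{der}_{disc}$ to denote the right adjoints produced there. Since we have just shown that the left adjoints $LF$ and $\mc{F}$ (respectively $LF_{disc}$ and $\mc{F}_{disc}$) agree under the identification of Lemma \ref{QCoh = IndCoh}, uniqueness of right adjoints gives canonical equivalences $RG \cong G^{der}$ and $RG_{disc} \cong G^{der}_{disc}$. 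The main (minor) subtlety to verify is that the identifications are compatible with the unit/counit data of the respective adjunctions, but this follows from the fact that the Quillen adjunction of Lemma \ref{Quillen adjunction} and the $\infty$-categorical adjunction of Proposition \ref{derived adjoints} are both induced by the same underlying tensor-Hom pairing with $V$ and $V^\ast$.
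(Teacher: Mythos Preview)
Your proof is correct and follows essentially the same approach as the paper's own proof: use Lemma \ref{QCoh = IndCoh} to identify $\ol{\mc{F}}$ with $\mc{F}$ (and similarly for the discrete versions), apply Proposition \ref{abelian = derived 1}, and then invoke Lemma \ref{Quillen adjunction} together with uniqueness of adjoints. The paper's version is simply more terse, and your remark about unit/counit compatibility is unnecessary once one appeals to uniqueness of adjoints.
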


\begin{proof}
The first statement follows directly from Lemma \ref{QCoh = IndCoh} and Proposition \ref{abelian = derived 1}. The final statements then follow from Lemma \ref{Quillen adjunction} by uniqueness of adjoints.
\end{proof}

\section{Geometric interpretation of $p$-adic Local Langlands for $\GL_2(\Qp)$}\label{chap: geometric interpretation}
\subsection{The $p$-adic Local Langlands correspondence for $\GL_2(\Qp)$ as an embedding of categories}\label{subsec: main thm}

In this section we will apply the material of \S \ref{sec: categorical constructions} to give our interpretation of $p$-adic local Langlands as an embedding of ($\infty$-)categories. We will freely use the notation used there for categories of modules and sheaves (but our notation for rings, groups and stacks might be slightly different), as well as the notation for blocks etc. from \S \ref{sec: blocks for GL2}. Recall that we have fixed a determinant $\psi$ and the corresponding central character $\zeta$. For each block $\mf{B}$ with corresponding semisimple Galois representation $\rho_{\mf{B}}$ with pseudorepresentation $D_{\mf{B}}$, we set $\fX_{\mf{B}} := \Rep^{\psi}_{D_{\mf{B}}}$. Set $G=\GL_2(\Qp)$ and recall from \cite[Prop.~5.34]{paskunas-image} that the category $\Mod_{G,\zeta}^{lfin}(\oo)$ has a decomposition
\[
\Mod_{G,\zeta}^{lfin}(\oo) = \prod_{\mf{B}} \Mod_{G,\zeta}^{lfin}(\oo)_{\mf{B}}
\]
according to blocks. On the dual side, we get a decomposition 
\[
\mf{C}(\oo) = \prod_{\mf{B}} \mf{C}(\oo)_{\mf{B}}.
\]
Recall that if $\wt{P}_{\mf{B}}$ is a projective envelope of $\pi_{\mf{B}}^\vee$ in $\mf{C}(\oo)_{\mf{B}}$, then we have an equivalence $\mf{C}(\oo)_{\mf{B}} \cong \RMod_{cpt}(\wt{E}_{\mf{B}})$, where $\wt{E}_{\mf{B}} := \End(\wt{P}_{\mf{B}})$. Since $\Mod_{G,\zeta}^{lfin}(\oo)_{\mf{B}}$ is equivalent to $\mf{C}(\oo)_{\mf{B}}^{op}$ (via Pontryagin duality), $\Mod_{G,\zeta}^{lfin}(\oo)_{\mf{B}}$ is equivalent to $\LMod_{disc}(\wt{E}_{\mf{B}})$. We let $R_{\mf{B}}$ denote the center of $\wt{E}_{\mf{B}}$ and recall that, by \cite[Thm.~1.5]{paskunas-image}, $R_{\mf{B}}$ is naturally isomorphic to the universal deformation ring of the pseudorepresentation $D_{\mf{B}}$. Let $\m\sub R_{\mf{B}}$ denote the maximal ideal of $R_{\mf{B}}$. We now formulate the main results of this section, which are our main results on $p$-adic local Langlands for $\GL_2(\Qp)$. We start with a general result, applying to all blocks (although we recall our running assumption that $p \ge 5$). 

\begin{theorem}\label{main derived geometrization GL2}
For each block $\mf{B}$, there are exact fully faithful embeddings $
F_{disc} : \D(\Mod_{G,\zeta}^{lfin}(\oo)_{\mf{B}}) \to \IndCoh_{\m}(\fX_{\mf{B}}) $ and $F_{cpt} : \D(\mf{C}(\oo)_{\mf{B}}) \to \ProCoh_{\m}(\fX_{\mf{B}}) $ of stable $\infty$-categories. They satisfy the following properties:
\begin{enumerate}
\item $F_{disc}$ commutes with colimits and preserves compact objects. It has a right adjoint $G_{disc}$ which commutes with colimits. It is induced by the functor \begin{align*}F_{disc}: \Perf^L(\widetilde{E}_{\mathfrak{B}}) &\to \mathcal{D}^b_{coh}(\mathfrak{X}_{\mathfrak{B}})\\
P_\bullet &\mapsto X_{\mf{B}}^*\otimes_{\widetilde{E}_{\mathfrak{B}}} P_\bullet \end{align*} for a coherent sheaf $X_{\mf{B}} \in MCM(\frak{X}_{\frak{B}})$ equipped with an isomorphism $\widetilde{E}_{\mathfrak{B}} \cong \End(X_{\mf{B}})$.

\smallskip

\item $F_{cpt}$ commutes with limits and preserves cocompact objects. It has a left adjoint $G_{cpt}$ which commutes with limits. It is induced by the functor \begin{align*}F_{cpt}: \Perf^R(\widetilde{E}_{\mathfrak{B}}) &\to \mathcal{D}^b_{coh}(\mathfrak{X}_{\mathfrak{B}})\\
	P_\bullet &\mapsto P_\bullet\otimes_{\widetilde{E}_{\mathfrak{B}}} X_{\mf{B}}. \end{align*}
\end{enumerate} 
\end{theorem}

When the block $\mf{B}$ is supersingular or reducible generic, we get embeddings at the level of abelian categories:

\begin{theorem}\label{main abelian geometrization GL2}
Assume that $\mf{B}$ is supersingular or reducible generic. Then there are exact fully faithful embeddings $
F_{disc} : \Mod_{G,\zeta}^{lfin}(\oo)_{\mf{B}} \to \QCoh_{\m}(\fX_{\mf{B}}) $ and $F_{cpt} : \mf{C}(\oo)_{\mf{B}} \to \Pro(\Coh_{\m}(\fX_{\mf{B}})) $ of abelian categories. They satisfy the following properties:
\begin{enumerate}
\item $F_{disc}$ commutes with colimits and preserves compact objects. It has a right adjoint $G_{disc}$ which commutes with colimits.

\smallskip

\item $F_{cpt}$ commutes with limits and preserves cocompact objects. It has a left adjoint $G_{cpt}$ which commutes with cofiltered limits.
\end{enumerate}
Moreover, the derived functor of $F_{disc}$ agrees with the functor $F_{disc}$ from Theorem \ref{main derived geometrization GL2}, and the derived functor of $F_{cpt}$ agrees with the functor $F_{cpt}$ from Theorem \ref{main derived geometrization GL2}, after composing the latter with the canonical functor $\ProCoh_{\m}(X) \to \D(\Pro(\Coh_{\m}(X)))$.
\end{theorem}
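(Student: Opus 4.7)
The plan is to apply the abstract categorical machinery of Section \ref{sec: categorical constructions} block-by-block, taking $E := \wt E_\mf{B}$ (so that Pa{\v s}k{\=u}nas's equivalences translate the source categories into $\LMod_{disc}(\wt E_\mf{B})$ and $\RMod_{cpt}(\wt E_\mf{B})$) and taking the distinguished sheaf $V := X_\mf{B}$ to be the twisted structure sheaf on $[\Spec R_\mf{B}/\mu_2]$ in the supersingular case (\S \ref{subsec: supersingular case}) and the universal vector bundle $\mc V$ in the generic principal series case (\S \ref{subsec: stacks generic ps GL2}). First I would verify the three standing hypotheses on $\wt E_\mf{B}$ imposed at the start of \S \ref{sec: categorical constructions}: its centre $R_\mf{B}$ is complete Noetherian local with finite residue field; it is module-finite over $R_\mf{B}$ (from the explicit descriptions in \cite{paskunas-image}); and its simple modules have finite projective dimension, trivially in the supersingular case where $\wt E_\mf{B} \cong R_\mf{B}$ is regular, and in the generic principal series case by combining module-finiteness with the regularity of $R_\mf{B}$ recorded in Theorem \ref{computation of rings generic ps}.

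Next I would verify the hypotheses on $X_\mf{B}$ required by Theorem \ref{abelian embedding}: that $\End(X_\mf{B}) \cong \wt E_\mf{B}$, that $X_\mf{B}$ and its coherent dual $X_\mf{B}^\ast$ are projective in $\QCoh(\mf{X}_\mf{B})$, and that both are flat as $\wt E_\mf{B}$-modules. The endomorphism identification follows from Theorem \ref{Ext groups of universal rep generic ps} in the generic principal series case and from the description of $\mf{X}_\mf{B}$ in the supersingular case (where the use of the twisted structure sheaf, rather than the universal bundle, is what reduces $\End(X_\mf{B})$ from the Morita-equivalent $M_2(R_\mf{B})$ down to $R_\mf{B} \cong \wt E_\mf{B}$), combined with Pa{\v s}k{\=u}nas's identifications of $\wt E_\mf{B}$ with the universal Cayley--Hamilton algebra in these cases \cite{paskunas-image}. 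Projectivity in $\QCoh(\mf{X}_\mf{B})$ is provided by Theorem \ref{Ext groups of universal rep generic ps} (which states that every vector bundle on $\mf{X}_\mf{B}$ is projective; the supersingular analogue is immediate), and self-duality of $\mc V$ is Proposition \ref{selfduality of universal vb}. Flatness of $X_\mf{B}$ and $X_\mf{B}^\ast$ as $\wt E_\mf{B}$-modules is trivial in the supersingular case and follows in the generic principal series case from the $R$-GMA structure of Theorem \ref{computation of rings generic ps}, which realises $\mc V$ as a direct summand of the free module $\wt E_\mf{B}$ up to a graded shift. Theorem \ref{abelian embedding} and Proposition \ref{abelian adjoints} then produce the abelian embeddings together with their adjoints and the stated exactness and continuity properties.

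Finally, to match the derived functors of these abelian embeddings with the $\infty$-categorical $F_{disc}, F_{cpt}$ of Theorem \ref{main derived geometrization GL2}, I would invoke Corollary \ref{abelian = derived 2}. Its hypotheses---linear reductivity of the quotienting group ($\mu_2$ or $\bG_m$) and finite global dimension of the coordinate ring of the affine presentation ($R_\mf{B}$ or $S_\mf{B}$, each seen to be regular in \S \ref{subsec: supersingular case} and Theorem \ref{computation of rings generic ps})---both hold, so Lemma \ref{QCoh = IndCoh} ensures $\IndCoh(\mf{X}_\mf{B}) \cong \D_{qcoh}(\mf{X}_\mf{B})$ and the derived abelian and $\infty$-categorical constructions of the functor coincide. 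The main subtle point in the argument is the flatness verification above, which relies on the explicit $R$-GMA presentation rather than on a general principle; once it is extracted from the material of Section \ref{chap: stacks for GL2Qp}, the rest of the proof is a formal assembly of pieces already constructed in Sections \ref{chap: stacks for GL2Qp} and \ref{sec: categorical constructions}.
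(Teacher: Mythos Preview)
Your overall strategy matches the paper's: reduce to the abstract machinery of \S\ref{sec: categorical constructions} by exhibiting $X_\mf{B}$ with the required properties, then invoke Theorem~\ref{abelian embedding}, Proposition~\ref{abelian adjoints}, and the comparison results. However, two of your verifications have gaps.

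First, your justification of condition~(3) on $\wt E_\mf{B}$ in the generic principal series case (``combining module-finiteness with the regularity of $R_\mf{B}$'') is not a valid argument: a non-commutative algebra that is finite free over a regular local ring need not have finite global dimension, and in particular its simples need not have finite projective dimension (consider $k[x]/(x^2)$ over $k$). The paper instead proves this in Proposition~\ref{conditions on E generic} by translating, via the Morita equivalence $\RMod_{cpt}(\wt E_\mf{B})^{op}\simeq \Mod^{lfin}_{G,\zeta}(\oo)_\mf{B}$, to the vanishing of $\Ext^i_{G,\zeta}(\pi,\pi')$ for $i\gg 0$, which holds because every irreducible in $\mf{B}$ is parabolically induced.

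Second, your flatness argument is misstated: $\mc V$ is \emph{not} a direct summand of $\wt E_\mf{B}$ up to a shift (it is infinite rank over $R$). What is true, and what the paper proves in Proposition~\ref{aux results for generic}, is that each graded piece of $\mc V$ (respectively $\mc V^\ast$) is isomorphic to a column (respectively row) of $E$, so $\mc V$ and $\mc V^\ast$ are countable direct sums of projective $E$-modules, hence themselves projective.

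Finally, Corollary~\ref{abelian = derived 2} only identifies the derived abelian $F_{disc}$ with its $\infty$-categorical counterpart; for $F_{cpt}$ the paper appeals to Proposition~\ref{abelian = derived 1} (the target category $\ProCoh_\m(\mf X_\mf{B})$ is not covered by Lemma~\ref{QCoh = IndCoh}).
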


\begin{remark}
These functors will be constructed by applying the material from \S \ref{sec: categorical constructions}, i.e.\ by constructing suitable objects $X_{\mf{B}} \in \MCM(\mf{X}_{\mf{B}})$ satisfying the conditions given there and then applying Theorems \ref{abelian embedding} and \ref{derived embedding}. In particular, we make no claims about our functors being `canonical' (whatever the reader might read into this word), or unique. Indeed, given an $X_{\mf{B}}$, any twist of this $X_{\mf{B}}$ by a line bundle will have the same properties. We do remark, however, that the objects $X_{\mf{B}}$ that we will present seem rather natural; they are closely related to the vector bundle underlying the universal representation on $\mf{X}_{\mf{B}}$. To us, this seems unlikely to be a coincidence. On the other hand, it is not the case that the $X_{\mf{B}}$ are uniform in $\mf{B}$ either (but see Remark \ref{kernel of functor}). We note that our $X_\mf{B}$, at least for supersingular and generic principal series blocks, occur in the description of the functor of \cite{dotto-emerton-gee}; see \cite[Thm.~7.3.5]{egh}. 
\end{remark}

We now start the proof of Theorems \ref{main derived geometrization GL2} and \ref{main abelian geometrization GL2} by pointing out the general steps; we will then finish the proof on a block by block basis. The strategy is to construct an object $X_{\mf{B}} \in \MCM(\mf{X}_{\mf{B}})$ which satisfies Assumption \ref{assumptions derived} (and the stronger Assumption \ref{assumptions abelian} when $\mf{B}$ is supersingular or generic principal series). Using the coherent dual $X_{\mf{B}}^\ast$, we then obtain a fully faithful embedding
\[
F_{disc} : \D(\Mod^{lfin}_{G,\zeta}(\oo)_{\mf{B}}) \cong \D_{disc}^L(\wt{E}_{\mf{B}}) \to \IndCoh_{\m}(\mf{X}_{\mf{B}})
\]
from Theorem \ref{derived embedding} (and the abelian version from Theorem \ref{abelian embedding} when $\mf{B}$ is supersingular or generic principal series). On the other hand, using the object $X_{\mf{B}}$, we obtain a fully faithful embedding
\[
F_{cpt} : \D(\mf{C}(\oo)_{\mf{B}}) \cong \D_{cpt}(\wt{E}_{\mf{B}}) \to \ProCoh_{\m}(X)
\]
from Theorem \ref{derived embedding} (and the abelian version from Theorem \ref{abelian embedding} when $\mf{B}$ is supersingular or generic principal series) again. The statements about adjoints are then provided by Propositions \ref{abelian adjoints} and \ref{derived adjoints}, and the statement in Theorem \ref{main abelian geometrization GL2} about compatibility between the abelian and derived functors follows from Proposition \ref{abelian = derived 1} for $F_{cpt}$, and the stronger compatibility for $F_{disc}$ follows from Corollary \ref{abelian = derived 2} (we will verify the assumptions needed for this statement in our discussion of the blocks).

It therefore remains to describe $X_{\mf{B}}$. In essence, this has already been done in \S \ref{chap: stacks for GL2Qp}, so all we have to do is to collect the results. As indicated above, we will do this block by block. We will put added emphasis on the functor $F_{disc}$, since its formulation is closest to the formulation of categorical $p$-adic local Langlands conjecture from \cite{egh}. In particular, we will also use the calculations from \S \ref{chap: stacks for GL2Qp} to describe where the irreducible objects in $\Mod^{lfin}_{G,\zeta}(\oo)_{\mf{B}}$ go under $F_{disc}$. For simplicity, we will mostly drop the subscript $-_{\mf{B}}$ from the notation since it is fixed at the start of the discussion of each block.

\begin{remark}
	In what follows, we compute $F_{disc}(\pi)$ for irreducible representations $\pi$. We can also consider $F_{cpt}(\pi^\vee)$. Our computations suggest that we have $F_{disc}(\pi) = F_{cpt}((\mathcal{S}\pi)^\vee)$, where $\mathcal{S}$ is a shift of the (derived) smooth dual introduced by Kohlhaase \cite{kohlhaase} (this is an easy check in the supersingular and generic principal series cases). 
	
	We note that this is different to the compatibility with duality functors in \cite[Conj. 6.1.14]{egh} --- the difference between $F_{cpt}$ and $F_{disc}$ comes from Pontryagin duality and coherent duality, whilst the duality in \emph{loc.~cit.}\ involves coherent duality and the `dual Galois representation' involution on the Galois stack.
\end{remark}

\subsection{Supersingular blocks}\label{subsubsec: geom ss} In the supersingular case, we recall from \S \ref{subsec: supersingular case} that $\mf{X} \cong [ \Spec R / \mu_2 ]$ , with $R \cong \oo \lb X_1, X_2, X_3 \rb$ and $\mu_2$ acting trivially on $R$. In particular, $R$ is a regular local ring (and so has finite global dimension) and $\QCoh(\mf{X})$ is the category of $\Z/2$-graded $R$-modules.  On the $\GL_2(\Qp)$-side, we have $\wt{E} \cong R$ by \cite[Prop.~6.2]{paskunas-image}. It is the clear that $\wt{E}=R$ satisfies Assumption \ref{assumptions on E}. We then see that there are two obvious candidates for $X$: $L_0$ or $L_1$, where $L_n$ denotes the $R$-module $R$, with grading concentrated in degree $n$. We note that these are both self-dual and projective in $\QCoh(\mf{X})$, and flat as $R$-modules (and so satisfy Assumption \ref{assumptions abelian}). For concreteness, we pick $X=L_1$ (one motivation for this choice is Theorem \ref{local-global formula}). Then we get functors
\[
F_{disc} : \Mod_{G,\zeta}^{lfin}(\oo)_{\mf{B}} \to \QCoh_{\m}(\mf{X}), \,\,\,\,\,\, F_{cpt} : \mf{C}(\oo)_{\mf{B}} \to \Pro(\Coh_{\m}(\mf{X})).
\]
The functor $F_{disc}$ identifies the source with the summand of $\Z/2$-graded modules concentrated in degree $1$ of the target. In particular, $F_{disc}$ sends the (unique) supersingular representation $\pi$ in $\mf{B}$ to the skyscraper sheaf $L_1 \otimes_R R/\mf{m}$ on $\mf{X}$ (i.e.\ $R/\mf{m}$ but concentrated in degree $1$).

\subsection{Generic principal series blocks}\label{subsubsec: geom gps} In this case, we recall from \S \ref{subsec: stacks generic ps GL2} that $\mf{X}$ has a presentation $[\Spec S /\G_m]$, where $S \cong \oo \lb a_0, a_1,bc \rb [b,c]$ with $a_0$ and $a_1$ in degree $0$, $b$ in degree $2$ and $c$ in degree $-2$. The pseudodeformation ring $R$ is the subring $\oo \lb a_0, a_1,bc \rb$ of degree zero elements. We also know that the Cayley--Hamilton algebra $E$ is
\[
E = \begin{pmatrix} R & Rb \\ Rc & R  \end{pmatrix},
\]
and it is equal to $\End(\mc{V})$, where $\mc{V}$ is the vector bundle underlying the universal Galois representation. Moreover, $\mc{V}$ is a projective object in $\QCoh(\mf{X})$ (all this is Theorem \ref{Ext groups of universal rep generic ps}). Note also that $\mc{V}$ is self-dual by Proposition \ref{selfduality of universal vb}. We prove the last few things we need about these objects.

\begin{proposition}\label{aux results for generic}
$S$ has finite global dimension, $\mc{V}$ is a projective left $E$-module and $\mc{V}^\ast$ is a projective right $E$-module.
\end{proposition}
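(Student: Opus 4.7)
The proposition makes three claims, which I will address in turn.

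For the first, I would use the explicit presentation of $S$ as a hypersurface. Writing $T$ for the formal power series generator corresponding to the element $bc \in R$, Theorem~\ref{computation of rings generic ps} gives $S \cong \OO\lb a_0, a_1, T\rb[b,c]/(bc - T)$, exhibiting $S$ as a codimension-one closed subscheme of the regular Noetherian local ring $\OO\lb a_0, a_1, T\rb[b,c]$ of Krull dimension $6$. The defining polynomial $bc - T$ has nonzero linear term $-T$, and $T$ is a regular parameter of the ambient ring at any maximal ideal of $\OO\lb a_0, a_1, T\rb[b,c]$ containing $bc - T$. By the Jacobian criterion (applied at each closed point of $\Spec S$), $S$ is therefore regular. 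Being a Noetherian regular ring of finite Krull dimension ($5$), $S$ has finite global dimension.

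For the projectivity claims, my plan is to exploit that $\mc{V}$ is a rank-two projective graded $S$-module with $\End(\mc{V}) = E$. By Theorem~\ref{presentation of moduli stack}(5), $E$ can be identified with $M_2(S)^{\bG_m}$, so $\mc{V}$ is the standard representation of $E$ via the inclusion $E \hookrightarrow M_2(S)$. Using the idempotents $e_1, e_2 \in E$ arising from the GMA structure, together with generators of $\mc{V}$ over $E$ indexed by the explicit $R$-basis $\{1, b^n, c^n : n \geq 1\}$ of $S$, one writes down a natural surjection $E^{\oplus I} \twoheadrightarrow \mc{V}$ of left $E$-modules and constructs an $E$-linear splitting, exhibiting $\mc{V}$ as a direct summand of a free $E$-module. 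Since $\mc{V}^* \cong \mc{V}$ by Proposition~\ref{selfduality of universal vb}, the corresponding projectivity of $\mc{V}^*$ as a right $E$-module will then follow by dualizing.

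The main obstacle I anticipate is that $\mc{V}$ is not finitely generated as a left $E$-module: as an $R$-module it is $S \oplus S$, which has countably infinite rank because $S$ itself is not finitely generated over $R$. Consequently the usual ``finitely generated projective equals summand of finitely generated free'' arguments do not apply, and the splitting must be built from countably many pieces. However, the explicit $R$-basis of $S$ together with the transparent matrix structure of $E$ should make the $E$-action on each $\bG_m$-weight space of $\mc{V}$ completely explicit, allowing the splitting to be written down by hand and verified to be $E$-linear.
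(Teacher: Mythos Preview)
Your approach to the finite global dimension is essentially the paper's argument: both exhibit $S$ as a regular ring of dimension $5$. One small correction --- $\OO\lb a_0,a_1,T\rb[b,c]$ is not a local ring. The substance is fine, since the partial derivative of $bc-T$ with respect to $T$ is a unit, so regularity holds at every maximal ideal.

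For the projectivity of $\mc{V}$, your plan is exactly the paper's. The paper makes the weight-space decomposition completely explicit: writing $\mc{V}=L_1\oplus L_{-1}=S^2$ as column vectors, the graded pieces are
\[
\mc{V}=\Bigl(\bigoplus_{n\ge 0}\begin{pmatrix}c^nR\\ c^{n+1}R\end{pmatrix}\Bigr)\oplus\Bigl(\bigoplus_{n\ge 0}\begin{pmatrix}b^{n+1}R\\ b^nR\end{pmatrix}\Bigr),
\]
and each summand is isomorphic (as a left $E$-module) to one of the two columns of $E$ itself. So the splitting you propose to build by hand is just the identification of each weight space with a column of $E$; no further bookkeeping is needed.

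The gap is in your treatment of $\mc{V}^\ast$. Proposition~\ref{selfduality of universal vb} gives $\mc{V}\cong\mc{V}^\ast$ only as graded $S$-modules; it says nothing about the $E$-module structures, and indeed one side is a left module and the other a right module. ``Dualizing'' an $E$-linear splitting of $\mc{V}$ does not automatically produce anything about $\mc{V}^\ast=\Hom_S(\mc{V},S)$, since $E$ is not an $S$-module. One can rescue your idea by observing that under the isomorphism $\mc{V}\cong\mc{V}^\ast$ the induced right $E$-action on $\mc{V}$ is $v\cdot e=\mathrm{adj}(e)\,v$, and that the adjugate is an anti-involution of $E$; this transports left-projectivity to right-projectivity. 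The paper takes the more direct route: it simply repeats the weight-space computation with $\mc{V}^\ast$ viewed as row vectors, identifying each graded piece with a \emph{row} of $E$. Either way, the self-duality statement alone does not carry the argument --- some additional input is required.
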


\begin{proof}
We have an isomorphism $S \cong R[x,y]/(xy-bc)$, where we note that $bc$ is a prime element in the regular local ring $R$ (which has Krull dimension $4$). It then follows easily that $S$ is regular of dimension $5$, and hence has global dimension $5$. 
We now prove projectivity of $\mc{V}$; the proof of projectivity of $\mc{V}^\ast$ is entirely analogous (using row vectors and right actions). Note that the underlying $S$-module of $\mc{V}$ is simply $S^2$, and that $E$ acts via the embedding $E\sub M_2(S)$, with the usual left action of $M_2(S)$ on $S^2$. The decomposition of $\mc{V}$ into graded pieces is then
\[
\mc{V} = \left( \bigoplus_{n=0}^\infty \begin{pmatrix} c^n R \\ c^{n+1}R \end{pmatrix} \right) \oplus \left( \bigoplus_{n=0}^\infty \begin{pmatrix} b^{n+1} R \\ b^n R \end{pmatrix} \right),
\]
which is a left $E$-module decomposition, so projectivity of $\mc{V}$ is equivalent to projectivity of all of these summands. For this, note that 
\[
\begin{pmatrix} c^n R \\ c^{n+1}R \end{pmatrix} \cong \begin{pmatrix} R \\ cR \end{pmatrix} \,\,\,\,\, \text{and} \,\,\,\,\, \begin{pmatrix} b^{n+1} R \\ b^n R \end{pmatrix} \cong \begin{pmatrix} bR \\ R \end{pmatrix}
\]
as left $E$-modules and that the right hand sides of these isomorphisms are direct summands of $E$ itself. This finishes the proof.
\end{proof}

Let us now compare this with the $\GL_2(\Qp)$-side. The block $\mf{B}$ consists of two irreducible representations $\pi_1 = \Ind_B^G(\delta_1 \otimes \delta_2 \omega^{-1})$ and $\pi_2 = \Ind_B^G(\delta_2 \otimes \delta_1 \omega^{-1})$. Let $P_i$ be the projective envelope of $\pi_i^\vee$, for $i=1,2$. We we will use the decomposition $P_{\mf{B}} = P_2 \oplus P_1$, to match with the local-global considerations in \S \ref{chap: local-global}. By \cite[Cor.~8.7, Lem.~8.10 and Prop.~B.26]{paskunas-image}, we have
\[
\wt{E} \cong \begin{pmatrix} R & R \Phi_{21} \\ R\Phi_{12} & R  \end{pmatrix}
\]
with $\Phi_{12} \circ \Phi_{21} = \wt{c}$ and $\Phi_{21} \circ \Phi_{12} = \wt{c}$, where $\wt{c}$ is a generator of the reducibility ideal in $R$ ($\wt{c}$ is called $c$ in \cite{paskunas-image}) and $\Phi_{ij} \in \Hom(P_j,P_i)$ is a generator. Looking at this, we set $X=\mc{V}$. Since $bc$ also generates the reducibility ideal (by Theorem \ref{computation of rings generic ps}), we see that $\wt{E} \cong E$ as desired, matching up the matrix entries (in particular, $P_2$ will correspond to $L_1$ and $P_1$ corresponds to $L_{-1}$). Let us now verify that $E$ satisfies Assumption \ref{assumptions on E}:

\begin{proposition}\label{conditions on E generic}
$E$ (or equivalently $\wt{E}$) satisfies satisfies Assumption \ref{assumptions on E}.
\end{proposition}

\begin{proof}
From the description above, one sees that $R$ is the center of $E$ (see also \cite[Cor.~8.11]{paskunas-image}), and that $E$ is finitely generated over $R$. Finally, we need to verify that every simple right $E$-module has finite projective dimension. By the equivalence $\RMod_{cpt}(E)^{op} \cong \Mod_{G,\zeta}^{lfin}(\oo)_{\mf{B}}$, this translates into showing that every irreducible $\pi \in \mf{B}$ has finite injective dimension. By \cite[Rem.~10.11]{paskunas-image}, this is equivalent to $\Ext_{G,\zeta}^i(\pi,\pi^\prime)$ vanishing for all $\pi^\prime \in \mf{B}$ and all sufficiently large $i$. Since all members of $\mf{B}$ are induced, this follows from the general fact that $\Ext_{G,\zeta}^i(\Ind_B^G U,V) = 0$ for all $i\geq 4$, all representations $U$ of the diagonal torus $T$ and $V$ of $G$ (both with central character $\zeta$); this is contained in the discussion in \cite[\S 7.1]{paskunas-image}, preceding Proposition 7.1 of \emph{loc.\ cit.}
\end{proof}

This then gives us our functors
\[
F_{disc} : \Mod_{G,\zeta}^{lfin}(\oo)_{\mf{B}} \to \QCoh_{\m}(\mf{X}), \,\,\,\,\,\, F_{cpt} : \mf{C}(\oo)_{\mf{B}} \to \Pro(\Coh_{\m}(\mf{X})).
\]
Here, we note that the target category is much larger than the source: The essential image is anything that can be built from $\mc{V} = L_1 \oplus L_{-1}$, whereas one needs all the $L_n$, $n\in \Z$, to generate the whole of $\QCoh(\mf{X})$. We finish our discussion of this case by computing $F_{disc}(\pi_i)$ for $i=1,2$. Note that the definition of $F_{disc}$ uses $\mc{V}^\ast = L_{-1} \oplus L_1$, viewed as row vectors acted on from the right by $E$. 

\begin{proposition}
We have (canonical) isomorphisms $F_{disc}(\pi_1) = L_1/(\vp,a_0,a_1,b)$ and $F_{disc}(\pi_2) = L_{-1}/(\vp,a_0,a_1,c)$.
\end{proposition}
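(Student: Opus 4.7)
The plan is to compute $F_{disc}(\pi_i)$ directly from the definition $F_{disc}(\sigma) = \mc{V}^\ast \otimes_E^L \Hom_G(\sigma,I_{\mf{B}})^\vee$. Because $\pi_i$ is irreducible, $M_i := \Hom_G(\pi_i,I_{\mf{B}})^\vee$ is a simple left $E$-module, and since $E$ is semilocal with $E/\text{rad}(E) \cong \F \times \F$ there are (up to isomorphism) exactly two such simples, one for each primitive idempotent $e_1,e_2 \in E$. So the first step is simply bookkeeping: match up the two simple modules with $\pi_1$ and $\pi_2$. Under the dictionary recalled above, $P_j$ corresponds to $L_{(-1)^{j+1}}$, hence to the idempotent $e_j$ (with our convention $e_1 \leftrightarrow L_1$, $e_2 \leftrightarrow L_{-1}$). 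Composing the Morita equivalence $\mf{C}(\oo)_{\mf{B}} \cong \RMod_{cpt}(\wt{E})$ with Pontryagin duality, the simple left $E$-module attached to $\pi_1$ is $Ee_2/\text{rad}(E)Ee_2$ and the one attached to $\pi_2$ is $Ee_1/\text{rad}(E)Ee_1$.

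Next I would carry out the tensor product. Using $\mc{V}^\ast \otimes_E Ee_j = \mc{V}^\ast e_j$ and the description of $\mc{V}^\ast = (L_{-1},L_1)$ as a right $E$-module (row vectors acted on by matrix multiplication), one gets $\mc{V}^\ast e_2 = L_1$ and $\mc{V}^\ast e_1 = L_{-1}$ as graded $S$-modules. By the right exactness of $\mc{V}^\ast \otimes_E (-)$ applied to the short exact sequence $0 \to \text{rad}(E)Ee_j \to Ee_j \to Ee_j/\text{rad}(E)Ee_j \to 0$, it remains to identify the image of $\mc{V}^\ast \otimes_E \text{rad}(E)Ee_j$ inside $\mc{V}^\ast e_j$. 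Using the explicit description $\text{rad}(E) = \left(\begin{smallmatrix} \m & Rb \\ Rc & \m\end{smallmatrix}\right)$ and expanding the matrix product, one finds $\mc{V}^\ast \cdot \text{rad}(E) \cdot e_2 = (0,\, \m L_1 + b L_{-1})$ and $\mc{V}^\ast \cdot \text{rad}(E) \cdot e_1 = (\m L_{-1} + c L_1,\, 0)$.

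Projecting to the nonzero component and noting that $bL_{-1} \subset L_1$ is the $S$-submodule $bS$ (which contains $bc \in \m$), we obtain
\[
F_{disc}(\pi_1) = L_1/(\varpi,a_0,a_1,b)L_1, \qquad F_{disc}(\pi_2) = L_{-1}/(\varpi,a_0,a_1,c)L_{-1},
\]
as required. Because $\mc{V}^\ast$ is flat over $E$ (Proposition \ref{aux results for generic} and Remark \ref{remark:duality}), the derived and underived tensor products agree, so the computation is genuinely at the level of abelian categories; in particular the higher derived functors vanish and no additional work is needed to identify $F_{disc}(\pi_i)$ as a sheaf in degree $0$. The only subtle point is the bookkeeping of which idempotent is attached to which $\pi_i$, which is forced by the choice $P_2 \leftrightarrow L_1$ made earlier; everything else is a direct matrix calculation.
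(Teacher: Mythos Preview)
Your approach is essentially the same as the paper's: identify the simple left $E$-module $M_i$ attached to $\pi_i$, then compute $\mc{V}^\ast \otimes_E M_i$ as a graded $S$-module. The paper presents the simple as $E/I_i$ for an explicit two-sided ideal and then computes $\mc{V}^\ast/\mc{V}^\ast I_i$ graded piece by graded piece, while you use the column form $Ee_j/\mathrm{rad}(E)e_j$ and compute $\mc{V}^\ast e_j / \mc{V}^\ast\,\mathrm{rad}(E)\,e_j$ directly; these are the same calculation organised slightly differently. Your computation of the quotient and the flatness remark are correct.

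However, your bookkeeping paragraph contains two sign/index errors that happen to cancel. The paper records (just before Proposition~\ref{conditions on E generic}) that $P_2 \leftrightarrow L_1$ and $P_1 \leftrightarrow L_{-1}$, i.e.\ $P_j \leftrightarrow L_{(-1)^{j}}$, not $L_{(-1)^{j+1}}$. Equivalently, since $P_{\mf B} = P_2 \oplus P_1$ with $P_2$ listed first, the Morita equivalence sends $P_2 \mapsto e_1 E$ and $P_1 \mapsto e_2 E$; your claim $P_j \leftrightarrow e_j$ is therefore off by a swap. There is then no further swap when passing to simples via Pontryagin duality: the simple right module $e_j E/e_j\,\mathrm{rad}(E)$ dualises to the simple left module $Ee_j/\mathrm{rad}(E)e_j$ with the \emph{same} $j$. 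So your conclusion ``$\pi_1 \leftrightarrow Ee_2/\mathrm{rad}(E)e_2$'' is correct, but it follows from $P_1 \leftrightarrow e_2 E$, not from a Pontryagin swap applied to a (wrong) $P_1 \leftrightarrow e_1$. Fixing either error without the other would give the wrong answer, so this paragraph should be rewritten.
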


\begin{proof}
The two simple right $E$-modules $\sigma_i^\prime$, $i=1,2$, are both isomorphic to $k = R/\m$ as $R$-modules, with action given by
\[
u \cdot \begin{pmatrix} x_2 & y_2 b \\ y_1 b & x_1  \end{pmatrix} = x_i u,
\]
for $u\in k$ and $x_1,x_2,y_1,y_2\in R$. Letting $\sigma_i$ be the Pontryagin dual of $\sigma_i^\prime$, we see that $\sigma_i$ is the left $E$-module with action given by
\[
\begin{pmatrix} x_1 & y_1 b \\ y_2 c & x_2  \end{pmatrix} \cdot u = x_i u.
\]
This action defines a surjection $E \to \sigma_i$ and its kernel is the (in fact two-sided) ideal $I_i = \{A \in E \mid x_i \in \m \}$, where $A = \left( \begin{smallmatrix} x_2 & y_2 b \\ y_1 c & x_1 \end{smallmatrix} \right)$. By the definition of $F_{disc}$ we then have 
\[
F_{disc}(\pi_i) =  \mc{V}^\ast \otimes_{E}  (E/I_i)= \mc{V}^\ast/I_i \mc{V}^\ast,
\]
so it remains to make the right hand side explicit. Consider the decomposition
\[
\mc{V}^\ast = \left( \bigoplus_{n=0}^\infty \begin{pmatrix} c^{n+1} R & c^n R \end{pmatrix} \right) \oplus \left( \bigoplus_{n=0}^\infty \begin{pmatrix} b^n R & b^{n+1} R \end{pmatrix} \right)
\]
into its graded pieces, which are right $E$-modules. Note that 
$\mc{V}^\ast_{2n+1} = (\,b^n R \,\,\, b^{n+1}R\,)$ and $\mc{V}^\ast_{1-2n} = (\,c^{n+1}R \,\,\, c^n R \, )$ for $\n\in \Z_{\geq 0}$. By direct computation we observe that 
\[
\begin{pmatrix} c^{n+1} R & c^n R \end{pmatrix} I_1  = \begin{pmatrix} c^{n+1} R & c^n \m \end{pmatrix},\,\,\,\,  \begin{pmatrix} c^{n+1} R & c^n R \end{pmatrix} I_2 = \begin{pmatrix} c^{n+1} R & c^n R \end{pmatrix},
\]
\[
\begin{pmatrix} b^n R & b^{n+1} R \end{pmatrix} I_1 = \begin{pmatrix} b^n R & b^{n+1} R \end{pmatrix} \,\,\, \text{and} \,\,\, \begin{pmatrix} b^n R & b^{n+1} R \end{pmatrix} I_2 = \begin{pmatrix} b^n \m & b^{n+1} R \end{pmatrix}.
\]
From this, we deduce that 
\[
\mc{V}^\ast/I_1\mc{V}^\ast \cong \bigoplus_{n=0}^\infty \begin{pmatrix} 0 & c^n k \end{pmatrix} \,\,\,\,\, \text{and} \,\,\,\,\, \mc{V}^\ast/I_2\mc{V}^\ast \cong \bigoplus_{n=0}^\infty \begin{pmatrix} b^n k & 0 \end{pmatrix}
\]
and hence that we have isomorphisms $F_{disc}(\pi_1) = L_1/(\vp,a_0,a_1,b)$ and $F_{disc}(\pi_2) = L_{-1}/(\vp,a_0,a_1,c)$, as desired.
\end{proof}

Note in particular that these are not skyscraper sheaves. As $S$-modules their supports are $1$-dimensional, their union being the two lines that make up $S/\m S = k[b,c]/(bc)$. 

\subsection{Non-generic case I}\label{subsubsec: geom ng1} In this case, the block consists of a single irreducible representation of the form $\pi = \Ind_B^G(\delta \otimes \delta \omega^{-1})$. The ring $\wt{E}$, while not as explicit as for previous blocks, is studied in detail by Pa{\v s}k{\=u}nas \cite[\S 9]{paskunas-image}. By (the proof of) \cite[Cor.~9.33]{paskunas-image}, $\wt{E}$ is isomorphic to the Cayley--Hamilton algebra $E$. On the other hand, Proposition \ref{vanishing of exts non-generic I} says that $\Ext^i(\mc{V},\mc{V})=0$ for $i\geq 1$, and Theorem \ref{endomorphisms in non-generic 1} says that $E= \End(\mc{V})$, so Assumption \ref{assumptions derived} is satisfied. Therefore, we may set $X=\mc{V}$ for this block as well. Recall that $\mc{V}^\ast \cong \mc{V}$, so this gives us our functors
\[
F_{disc} : \D(\Mod_{G,\zeta}^{lfin}(\oo)_{\mf{B}}) \to \IndCoh_{\m}(\mf{X}), \,\,\,\,\,\, F_{cpt} : \D(\mf{C}(\oo)_{\mf{B}}) \to \ProCoh_{\m}(\mf{X}),
\]
now directly at the derived level\footnote{We cannot apply the abelian construction since we do not know if $\mc{V}^\ast$ is projective, or if it is flat as a right $E$-module. However, we will still get a result at the level of abelian categories; see Proposition \ref{nongen 1 is abelian!}.},
 if we can verify that $E$ satisfies Assumption \ref{assumptions on E}. We now verify this.

\begin{proposition}
$E$ (or equivalently $\wt{E}$) satisfies Assumption \ref{assumptions on E}.
\end{proposition}

\begin{proof}
First, $R$ is the center of $E$ by \cite[Cors.~9.13, 9.24  and 9.27]{paskunas-image}, and $E$ is finitely generated over $R$ by \cite[Cor.~9.25]{paskunas-image}. Finally, that every simple right $E$-module has finite projective dimension follows exactly as in the proof of Proposition \ref{conditions on E generic} since $\pi$ is induced.
\end{proof}

We now compute $F_{disc}(\pi)$. We recall from \S\ref{subsec: non-generic I} that $\gamma,\delta$ are pro-generators for the maximal pro-$p$ quotient $\mathcal{G}$ of $\Gamma$, and $2(1+t_1), 2(1+t_2), 2(1+t_3)$ are the traces of $\gamma$, $\delta$ and $\gamma\delta$ respectively under the universal pseudorepresentation $\mathcal{G} \to R$.  

\begin{proposition}\label{computation of irreducible nongen 1}
We have $F_{disc}(\pi) = \mathcal{V}^\ast/(\varpi,\im(u^\ast),\im(v^\ast))[0]$, where $u, v \in \End(\mathcal{V})$ are as in \S\ref{subsec: non-generic I}, and $u^\ast,v^\ast$ are the dual endomorphisms of $\mathcal{V}^\ast$. 

The (scheme-theoretic) support of $F_{disc}(\pi)$ on $\mathfrak{X}\otimes_{\OO}\OO/\varpi$ is cut out by the equations  $(\gamma-1)(\delta-1) = (\delta-1)(\gamma-1) = 0$ and $t_i = 0$ for $i = 1,2,3$. 
\end{proposition}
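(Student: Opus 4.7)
The plan is to compute $F_{disc}(\pi) = \mathcal{V}^\ast \otimes_E^L \Hom_G(\pi,I_\mathfrak{B})^\vee$ using the free resolution (\ref{eq:ng1-resolution}) of the simple $E$-module, and then to read off the scheme-theoretic support. First, I would identify $\Hom_G(\pi,I_\mathfrak{B})^\vee$ with the simple left $E$-module $\F_\mathbf{1}$ on which $u$, $v$ and all $t_i$ act as zero. Since $\pi$ is the unique irreducible in $\mathfrak{B}$ and $I_\mathfrak{B}$ is its injective envelope, the equivalence $\Mod_{G,\zeta}^{lfin}(\OO)_\mathfrak{B} \cong \LMod_{disc}(E)$ sends $\pi$ to the unique simple left $E$-module, which is $\F_\mathbf{1} = \OO_\mathbf{1}/\varpi$ for the augmentation $f\colon E \to \OO$ from \S\ref{subsec: non-generic I}. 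Using the $\OO$-flatness of $\OO_\mathbf{1}$, this reduces the computation to
\[
F_{disc}(\pi) \;\simeq\; (\mathcal{V}^\ast \otimes_E^L \OO_\mathbf{1}) \otimes_\OO^L \F.
\]
Tensoring the resolution (\ref{eq:ng1-resolution}) with $\mathcal{V}^\ast$ produces the four-term complex
\[
C_\bullet \colon 0 \to \mathcal{V}^\ast \xrightarrow{(v^\ast,\,u^\ast)} (\mathcal{V}^\ast)^{\oplus 2} \xrightarrow{M^\ast} (\mathcal{V}^\ast)^{\oplus 2} \xrightarrow{\binom{u^\ast}{v^\ast}} \mathcal{V}^\ast,
\]
where $u^\ast, v^\ast$ denote the coherent duals of the endomorphisms $u, v$ of $\mathcal{V}$ (equivalently the right actions of $u,v \in E$ on $\mathcal{V}^\ast$), and $M^\ast$ is the matrix of duals of the middle differential. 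The cokernel at the rightmost term is visibly $\mathcal{V}^\ast/(\im u^\ast + \im v^\ast)$.

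The main technical step is to verify that $C_\bullet$ has trivial homology in positive degrees, i.e.\ that $\Tor_i^E(\mathcal{V}^\ast, \OO_\mathbf{1}) = 0$ for $i \geq 1$. Via the self-duality $\mathcal{V}^\ast \cong \mathcal{V}$ (Proposition \ref{selfduality of universal vb}) and the isomorphism $E \isoto M_2(A)^{\SL_2}$ of Theorem \ref{endomorphisms in non-generic 1}, the action of $E$ on $\mathcal{V}^\ast \cong A^2$ becomes explicit, with $u, v$ realized as trace-zero matrices $U, V \in M_2(A)$ satisfying $U^2 = u^2 \cdot I$, $V^2 = v^2 \cdot I$, and $UV + VU = (uv + vu) \cdot I$, all central scalars in $R$. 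These quadratic central relations are precisely what makes (\ref{eq:ng1-resolution}) a periodic Koszul-type resolution; correspondingly $C_\bullet$ is essentially a Koszul-type complex for $(u,v)$ acting on $A^2$, and I would verify its exactness by a direct computation term by term, using the three central relations above together with $R$-flatness of $A$ (which follows from Corollary \ref{base change for A}, Lemma \ref{automatic continuity}, and the description of $R_F$ as a polynomial ring in Proposition \ref{pseudorep ring is GIT quotient}). Granted the vanishing, $H_0(C_\bullet) = \mathcal{V}^\ast/(\im u^\ast + \im v^\ast)$ is $\OO$-flat (killing the actions of elements whose squares are nonzerodivisors in characteristic zero), so the final $\otimes_\OO^L \F$ collapses to $\mathcal{V}^\ast/(\varpi, \im u^\ast, \im v^\ast)$ as claimed.

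For the scheme-theoretic support on $\mathfrak{X} \otimes \F$, one analyzes the annihilator ideal of the coherent sheaf $\mathcal{V}^\ast/(\varpi, \im u^\ast, \im v^\ast)$. The centrality and explicit form of $u^2 = 2t_1 - t_1^2$, $v^2 = 2t_2 - t_2^2$, and $uv + vu = 2(t_3 - t_1 - t_2 - t_1 t_2)$ place each into the annihilator; since $p \geq 5$ and each $t_i$ lies in the maximal ideal of the local ring $R/\varpi$, the factors $2 - t_i$ are units in $R/\varpi$, so this ideal reduces to $(t_1, t_2, t_3)$. After killing $(t_1, t_2, t_3)$ one has $u = \gamma - 1$ and $v = \delta - 1$; since right multiplication by $uv$ on $\mathcal{V}^\ast$ factors as right multiplication by $u$ followed by $v$, the action of $uv$ lands in $\im v^\ast$ (and symmetrically for $vu$), placing the matrix coefficients of $(\gamma - 1)(\delta - 1)$ and $(\delta - 1)(\gamma - 1)$ into the annihilator. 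A fibre-by-fibre check then shows these equations suffice: the fibre of $F_{disc}(\pi)$ at a closed point $\rho$ is nonzero precisely when the rows of $U|_\rho$ and $V|_\rho$ together span a proper subspace of the two-dimensional representation space, and a short linear-algebra argument forces both $U|_\rho$ and $V|_\rho$ to be nilpotent with a common kernel, which is equivalent to $t_i(\rho) = 0$ and $(\rho(\gamma) - 1)(\rho(\delta) - 1) = (\rho(\delta) - 1)(\rho(\gamma) - 1) = 0$. The principal obstacle throughout is the $\Tor$-vanishing; the support calculation is largely finite-type bookkeeping given the stacky presentation of $\mathfrak{X}$.
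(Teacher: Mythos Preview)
Your overall architecture matches the paper's: reduce to $\mathcal{V}^\ast \otimes_E^L \OO_{\mathbf{1}}$ via the resolution (\ref{eq:ng1-resolution}), show the resulting complex is acyclic in positive degrees, verify $\varpi$-torsion freeness of $H_0$, and compute the annihilator. But three of your steps have genuine gaps.

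\textbf{Acyclicity.} You assert that exactness of $C_\bullet$ follows from ``a direct computation term by term'' using the central relations and $R$-flatness of $A$. This is not a proof, and the paper does not attempt one either: it descends the complex to the finite-type $\Z$-algebra $A_{F,\Z}$ and verifies acyclicity with Macaulay2. The Koszul-type shape of (\ref{eq:ng1-resolution}) over $E$ relies on the $R$-basis $1,u,v,w$ and the regular sequence $u^2,v^2,uv+vu$ in $R$; after tensoring with $\mathcal{V}^\ast$ you lose that module structure and there is no obvious replacement argument.

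\textbf{$\varpi$-torsion freeness.} Your justification---that $H_0(C_\bullet)$ is $\OO$-flat because one is ``killing the actions of elements whose squares are nonzerodivisors''---does not make sense: quotienting by images of endomorphisms can certainly introduce torsion. The paper's argument is entirely different: it uses Auslander--Buchsbaum on the locally complete intersection ring $A$ to show $\mathrm{depth}(M_\m) = 4$ at any associated prime of $M = H_0$, and then bounds $\dim(A/\mathrm{Ann}(M)) \le 3$ using the explicit annihilator computation, so no associated prime can contain $\varpi$.

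\textbf{Scheme-theoretic support.} Two problems. First, your containment argument is flawed: from ``right multiplication by $uv$ has image in $\im v^\ast$'' you conclude that the \emph{matrix entries} of $uv \in M_2(A)$ lie in the annihilator of the quotient. These are different statements---an endomorphism can kill a quotient without its individual matrix entries annihilating it (take $\phi = \sm{0}{1}{0}{0}$ and $N = \langle e_1\rangle$). Second, and more seriously, your fibre-by-fibre check only pins down the \emph{set-theoretic} support; it shows the radicals agree, not that the annihilator ideal is exactly $(t_1,t_2,t_3) + (\text{entries of }(\gamma-1)(\delta-1),(\delta-1)(\gamma-1))$. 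The paper computes the annihilator exactly (again via Macaulay2 over $A_{F,\Z}$, after inverting $2$ and $t_i+2$), which is what ``scheme-theoretic support'' demands.
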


\begin{proof}
We use the resolution (\ref{eq:ng1-resolution}) to compute $F_{disc}(\pi)$. Indeed, we have a perfect complex of left $E$-modules
\[P_{\OO}
 = \left[ E \xrightarrow{\left(\begin{smallmatrix}
			v & u
		\end{smallmatrix}\right)} E^{\oplus 2} \xrightarrow{\left(\begin{smallmatrix}
			vu & -u^2\\ -v^2 & uv
		\end{smallmatrix}\right)} E^{\oplus 2} \xrightarrow{\left(\begin{smallmatrix}
			u \\ v
		\end{smallmatrix}\right)}  E \right] \]
	such that $\pi$ corresponds to the mapping cone of $P_{\OO} \xrightarrow{\times\varpi} P_{\OO}$ in $\Perf^L({E})$.

Now we must understand the complex $\mathcal{V}^\ast \otimes_E P_{\OO}$ in $\Coh(\mf{X})$. We do this after pulling back by the map $\pi: \Spec A \to \mathfrak{X}$, where $\Spec A$ represents $\Rep(E)^\square$. We have $\pi^*\mathcal(\mc{V}^\ast) = A^2$ and we can write explicit matrices for each map in the complex $\mathcal{C}_{\OO}:=\pi^*(\mathcal{V}^\ast \otimes_E P_{\OO})$. To prove the Proposition, it suffices to show that $\mathcal{C}_{\OO}$ is acyclic and that $H_0(\mathcal{C}_{\OO})$ is $\varpi$-torsion free with support in $\Spec A$ cut out by the equations
\[
(\gamma-1)(\delta-1) = (\delta-1)(\gamma-1) = 0, \qquad t_i = 0 \quad \text{ for }i = 1,2,3.
\]

In \S\ref{subsec: non-generic I}, we described an explicit presentation of $A$ as an $R$-algebra. In fact, the complex $\mathcal{C}_{\OO}$ descends to a perfect complex of $A_F$-modules, and we can even replace the coefficient ring $\OO$ with $\Z$. At this point we have a finite type $\Z$-algebra $A_{F,\Z}$ and a perfect complex $\mathcal{C}_{F,\Z}$ of $A_{F,\Z}$-modules with $\mathcal{C}_{F,\Z}\otimes_{\Z[t_1,t_2,t_3]}R = \mathcal{C}_{\OO}$. We used Macaulay2 \cite{M2} to check that $\mathcal{C}_{F,\Z}$ has $H_i(\mathcal{C}_{F,\Z}) = 0$ for $i \neq 0$. The annihilator of $H_0(\mathcal{C}_{F,\Z})$ is given by the equations  $(\gamma-1)(\delta-1) = (\delta-1)(\gamma-1) = 0$ and $t_i = 0$ for $i = 1,2,3$, once we invert $2$ and $t_i + 2$ for $i = 1,2,3$ (since $p \ne 2$, these elements are invertible in $R$). The Macaulay2 commands for these verifications can be found at \url{https://github.com/jjmnewton/p-adic-LLC}.

It remains to show that $M = H_0(\mathcal{C}_{\OO})$ is $\varpi$-torsion free. It suffices to show that $\varpi$ is not contained in an associated prime of $M$. For a contradiction, suppose $\varpi \in \pf \in \mathrm{Ass}(M)$. Let $\m$ be a maximal ideal of $A$ containing $\pf$. We necessarily have $\m_R \subset \m$. We note that $A$ is locally complete intersection of relative dimension 6 over $\OO$ (hence Cohen--Macaulay). This follows from \cite[\S3]{BIP2023}, and it can also be deduced directly from the presentation for $A$ in \S\ref{subsec: non-generic I}. Using Auslander--Buchsbaum and the projective resolution $\mathcal{C}_{\OO}$ for $M$, we have $\mathrm{depth}(M_\m) = 4 \le \dim(A/\pf)$. On the other hand, $A/\pf$ is a quotient of the ring $A/\mathrm{Ann}(M)$ which is finite over $\F[c_1,c_2,d_1,d_2]/(c_1d_2-c_2d_1)$. Hence $\dim(A/\pf) \le 3$, a contradiction. \end{proof}

From this, we can actually deduce that $F_{disc}(\pi)$ induces a fully faithful embedding of \emph{abelian categories}. For this, we need some quick recollections on the natural t-structure on $\IndCoh_{\m}(\mf{X})$. We refer to \cite[\S A.6]{egh} and the references given there for more details (see also \cite[\S 1.2]{gaitsgory-indcoh}). The inclusion of $\D_{\Coh,\m}^b(\mf{X})$ into $\D_{qcoh}(\mf{X})$ endows $\D_{coh,\m}^b(\mf{X})$ with its natural t-structure, and this extends to a t-structure on  $\IndCoh_{\m}(\mf{X})$ characterized by the properties that the truncation functors on $\IndCoh_{\m}(\mf{X})$ extend those of $\D_{coh,\m}^b(\mf{X})$ and commute with filtered colimits. The natural map $\IndCoh_{\m}(\mf{X}) \to \D_{qcoh,\m}(\mf{X})$ is then t-exact and induces an equivalence of hearts, so the heart of the natural t-structure on $\IndCoh_{\m}(\mf{X})$ is $\QCoh_{\m}(\mf{X})$.

\begin{proposition}\label{nongen 1 is abelian!}
If $V \in \Mod_{G,\zeta}^{lfin}(\oo)_{\mf{B}}$ then $F_{disc}(V)\in \IndCoh_{\m}(\mf{X})$ is concentrated in degree $0$, hence lies in $\QCoh_{\m}(\mf{X})$. In particular, $F_{disc}$ is t-exact and the resulting functor $\Mod_{G,\zeta}^{lfin}(\oo)_{\mf{B}} \to \QCoh_{\m}(\mf{X})$ is an exact fully faithful embedding of abelian categories.
\end{proposition}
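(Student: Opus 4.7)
The plan proceeds in three steps. First, I would reduce to the case when $V$ has finite length. By definition, any $V \in \Mod_{G,\zeta}^{lfin}(\oo)_{\mf{B}}$ is a filtered colimit of its finite length subobjects; by Theorem \ref{main derived geometrization GL2}(1), the functor $F_{disc}$ preserves colimits, and by the characterisation of the natural t-structure on $\IndCoh_{\m}(\mf{X})$ recalled just before the proposition, the truncation functors commute with filtered colimits, so the heart $\QCoh_{\m}(\mf{X})$ is closed under filtered colimits in $\IndCoh_{\m}(\mf{X})$. Thus it suffices to treat finite length $V$.

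Second, I would induct on length. Since $\mf{B}$ is of non-generic case I type, the block contains a unique irreducible (up to isomorphism), namely $\pi = \Ind_B^G(\delta \otimes \delta\omega^{-1})$, so the length-one case is exactly Proposition \ref{computation of irreducible nongen 1}. For the induction step, choose a short exact sequence
\[
0 \to V' \to V \to \pi \to 0
\]
in $\Mod_{G,\zeta}^{lfin}(\oo)_{\mf{B}}$ with $V'$ of smaller length. Applying the exact functor $F_{disc}$ produces a fibre sequence $F_{disc}(V') \to F_{disc}(V) \to F_{disc}(\pi)$ in $\IndCoh_{\m}(\mf{X})$. The associated long exact sequence of homology forces $F_{disc}(V)$ into the heart, since both $F_{disc}(V')$ and $F_{disc}(\pi)$ lie there.

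Third, I would deduce the remaining assertions. t-exactness of $F_{disc}$ is then immediate: it is exact at the level of stable $\infty$-categories and has just been shown to preserve the heart. Full faithfulness of the induced functor $\Mod_{G,\zeta}^{lfin}(\oo)_{\mf{B}} \to \QCoh_{\m}(\mf{X})$ follows from the full faithfulness of the derived $F_{disc}$ (Theorem \ref{main derived geometrization GL2}) combined with the general fact that in any stable $\infty$-category with t-structure, the mapping space between two objects in the heart recovers the Hom-set in the heart (and likewise on the source side for the abelian category $\Mod_{G,\zeta}^{lfin}(\oo)_{\mf{B}}$ embedded into its derived $\infty$-category).

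The essential computational content has already been carried out in Proposition \ref{computation of irreducible nongen 1}, where the key obstacle---namely, the vanishing of the higher homology of the explicit perfect complex computing $F_{disc}(\pi)$---was verified (partly via a Macaulay2 computation). Granted that input, the remainder of the present proposition is a formal consequence of the framework of \S \ref{sec: categorical constructions} and the properties of the natural t-structure on $\IndCoh_{\m}(\mf{X})$.
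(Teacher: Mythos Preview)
Your proposal is correct and follows essentially the same approach as the paper: reduce to finite length via filtered colimits and the compatibility of truncation functors with filtered colimits, then handle finite length objects by d\'evissage (induction on length) using that $\pi$ is the unique irreducible in the block and that $F_{disc}(\pi)$ is concentrated in degree $0$ by Proposition \ref{computation of irreducible nongen 1}. The paper presents the two steps in the opposite order and compresses the full faithfulness deduction into ``straightforward consequence,'' but the content is identical.
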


\begin{proof}
The second part is a straightforward consequence of the first. For the first statement, recall that $\pi$ is the unique irreducible object in the block $\Mod_{G,\zeta}^{lfin}(\oo)_{\mf{B}}$ and that $F_{disc}(\pi)$ is concentrated in degree $0$ by Proposition \ref{computation of irreducible nongen 1}. By devissage and exactness of $F_{disc}$ (in the triangulated sense), $F_{disc}(V)$ is concentrated in degree $0$ for any finite length representation $V\in \Mod_{G,\zeta}^{lfin}(\oo)_{\mf{B}}$. Finally, any $V\in \Mod_{G,\zeta}^{lfin}(\oo)_{\mf{B}}$ is a filtered colimit of finite length objects, so $F_{disc}(V)$ is concentrated in degree $0$ since $F_{disc}$ and the truncation functors on both sides commutes with filtered colimits (for $\D(\Mod_{G,\zeta}^{lfin}(\oo)_{\mf{B}})$, see \cite[Prop.~1.3.5.21, Rem.~1.3.5.23]{lurie-ha}).
\end{proof}

\begin{remark}\label{rmk: abelian vs derived for ng1}
In this remark, let us write $F_{disc}^{ab}$ for the exact embedding $\Mod_{G,\zeta}^{lfin}(\oo)_{\mf{B}} \to \QCoh_{\m}(\mf{X})$ given by Proposition \ref{nongen 1 is abelian!}. Deriving $F_{disc}^{ab}$ produces a functor 
\[
LF_{disc}^{ab} : \D(\Mod_{G,\zeta}^{lfin}(\oo)_{\mf{B}}) \to \D_{qcoh,\m}(\mf{X}),
\]
which is not equal to $F_{disc}$ for the simple reason that their codomains differ (by Remark \ref{rmk: IndCoh not eq to QCoh}). While the difference does matter (for example, $LF_{disc}^{ab}$ is probably not an embedding), it is also not that big. Indeed, $LF_{disc}^{ab}$ is the composition of $F_{disc}$ with the natural map $\IndCoh_{\m}(\mf{X}) \to \D_{qcoh,\m}(\mf{X})$. Moreover, $F_{disc}$ can be reconstructed from $LF_{disc}^{ab}$ by first restricting the domain to $\D^b(\Mod_{G,\zeta}^{fin}(\oo)_{\mf{B}})$ and the codomain to $\D_{coh,\m}^b(\mf{X})$, and then taking the ind-completion. Let us emphasize, however, that we do not know how to construct $F_{disc}^{ab}$ directly (i.e.\ without constructing $F_{disc}$ and showing that it is t-exact).  
\end{remark}

\subsection{Non-generic case II}\label{subsubsec: geom ng2} Up to twist, the block is given by $\mf{B}= \{ \mbf{1}, \mathrm{St},\Ind_B^G(\omega \otimes \omega^{-1})\}$. The ring $\wt{E}$ is described in \cite[\S 10]{paskunas-image}; we now recall this in detail. To make the comparison easier, we will try to follow Pa{\v s}k{\=u}nas' notation for the representation theoretic objects (we will continue to write $R$ for the pseudodeformation ring; Pa{\v s}k{\=u}nas writes $R^\psi$). Pa{\v s}k{\=u}nas denotes $\mbf{1}$, $\mathrm{St}$ and $\Ind_B^G(\omega \otimes \omega^{-1})$ by $\mbf{1}_G$, $\mathrm{Sp}$ and $\pi_\alpha$, respectively; their Pontryagin duals are denoted by $\mbf{1}_G^\vee$, $\mathrm{Sp}^\vee$ and $\pi_\alpha^\vee$. Let $\wt{P}_{\mbf{1}_G^\vee}$, $\wt{P}_{\mathrm{Sp}^\vee}$ and $\wt{P}_{\pi_\alpha^\vee}$ be projective envelopes in $\mf{C}(\oo)_{\mf{B}}$ of $\mbf{1}_G^\vee$, $\mathrm{Sp}^\vee$ and $\pi_\alpha^\vee$, respectively.  The ring $\wt{E} = \wt{E}_{\mf{B}}=\End(\wt{P}_{\pi_\alpha^\vee} \oplus \wt{P}_{\mathrm{Sp}^\vee} \oplus \wt{P}_{\mbf{1}_G^\vee})$ is the $3 \times 3$ generalized matrix algebra 
\begin{equation}\label{eq: 3x3 pres 2}
\wt{E}_{\mf{B}} = 
\begin{pmatrix}
\End(\wt{P}_{\pi_\alpha^\vee}) & \Hom(\wt{P}_{\mathrm{Sp}^\vee},\wt{P}_{\pi_\alpha^\vee}) & \Hom(\wt{P}_{\mbf{1}_G^\vee},\wt{P}_{\pi_\alpha^\vee}) \\ \Hom(\wt{P}_{\pi_\alpha^\vee},\wt{P}_{\mathrm{Sp}^\vee}) & \End(\wt{P}_{\mathrm{Sp}^\vee}) & \Hom(\wt{P}_{\mbf{1}_G^\vee},\wt{P}_{\mathrm{Sp}^\vee}) \\ \Hom(\wt{P}_{\pi_\alpha^\vee},\wt{P}_{\mbf{1}_G^\vee}) & \Hom(\wt{P}_{\mathrm{Sp}^\vee},\wt{P}_{\mbf{1}_G^\vee}) & \End(\wt{P}_{\mbf{1}_G^\vee})
\end{pmatrix},
\end{equation}\label{eq: 3x3 pres 3}
which has the following description (see just after \cite[Cor.~10.94]{paskunas-image}):
\begin{equation}
\wt{E}_{\mf{B}} = 
\begin{pmatrix}
Re_1 & R\varphi_{12} & R\varphi_{13}^0 + R\varphi_{13}^1 \\ R\varphi_{21}^0 + R\varphi_{21}^1 & Re_2 & R\varphi_{23}^0 + R\varphi_{23}^1 \\ R\varphi_{31} & R\varphi_{32} + R\beta & Re_3
\end{pmatrix}.
\end{equation}

\begin{proposition}
$\wt{E}$ satisfies Assumption \ref{assumptions on E}.
\end{proposition}

\begin{proof}
The center of $\wt{E}$ is $R$ and $\wt{E}$ is finitely generated over $R$ by \cite[Thm.~10.87, Lem.~10.90]{paskunas-image}, respectively. It remains to show that every simple right $\wt{E}$-module has a finite injective resolution. Again (as in the proof of Proposition \ref{conditions on E generic}), this follows from vanishing of $\Ext^i(\pi_1,\pi_2)$ for all sufficiently large $i$ and all $\pi_1,\pi_2 \in \mf{B}$. This vanishing (for $i\geq 5$) is proved in \cite[\S 10.1]{paskunas-image}; see the table on p.~128 of \emph{loc.\ cit}.
\end{proof}

We now compare $\wt{E}$ to the Galois side. We use the notation of \S \ref{subsec: non-gen II coh sheaves} freely, and we set $X = L_{-1} \oplus L_1 \oplus Q$. We have $\Ext^i(X,X)=0$ for $i\geq 1$ by Propositions \ref{ext calcs 1 non-gen 2} and \ref{vanishing of Exts non-gen 2}, so to verify Assumption \ref{assumptions derived} it remains to show that $\wt{E} \cong \End(X)$. In our embedding of categories, the individual coherent sheaves $L_{-1}$, $L_1$ and $Q$ will correspond to $\wt{P}_{\pi_\alpha^\vee}$, $\wt{P}_{\mathrm{Sp}^\vee}$ and $\wt{P}_{\mbf{1}_G^\vee}$, respectively. Comparing equation (\ref{eq: 3x3 pres 3}) with equation (\ref{eq: 3x3 presentation 1}) and Theorem \ref{module structure nongen 2} we see that we have matched up the $R$-module generators of $\End(X)$ and $\wt{E}_{\mf{B}}$ by giving them the same name (the identity morphisms $e_i$ match up with $1 \in R$ in each case). It remains to check the relations, first for the $R$-module structure and then for the ring structure.

To make these comparisons we need to compare the notation used for the elements in $R$ in \S \ref{subsec: non-gen II} with that used by Pa{\v s}k{\=u}nas. In our presentation, we have 
\[
R=\oo \lb a_0,a_1,b_0 c,b_1 c \rb / (pb_0 c + a_1 b_0 c +a_0 b_1 c),
\]
and recall that we had set $\ap = a_0 + p$. In \cite[Lem.~10.93]{paskunas-image}, Pa{\v s}k{\=u}nas has a presentation\footnote{Note that \cite[Cor.~B.5]{paskunas-image} gives a slightly different presentation using the same variables, but the one we use is the one that is used in \cite[\S 10]{paskunas-image}.}
\[
R = \oo \lb c_0,c_1,d_0,d_1 \rb / (c_0 d_1 - c_1 d_0).
\]
The comparison between the two presentations is that $a_0$ corresponds to $d_0$, $a_1 + p =\ap$ corresponds to $-d_1$, and $b_i c$ corresponds to $c_i$ for $i=0,1$.

Let us now compare the $R$-module structures. There are five entries in the presentation (\ref{eq: 3x3 pres 3}) that are free of rank $1$, and the corresponding entries in (\ref{eq: 3x3 pres 2}) are also free of rank $1$ by \cite[Cor.~10.78 and Lem.~10.74, eqs (237) and (238)]{paskunas-image}. That leaves four entries, and we start with $\Hom(Q,L_{-1})$, which corresponds to $\Hom(\wt{P}_{\mbf{1}_G^\vee},\wt{P}_{\pi_\alpha^\vee})$. By \cite[Lem.~10.74, eq (241)]{paskunas-image}, we have an injection
\[
\Hom(\wt{P}_{\mbf{1}_G^\vee},\wt{P}_{\pi_\alpha^\vee}) \hookrightarrow \End(\wt{P}_{\mbf{1}_G^\vee})
\]
given by postcomposition with $\varphi_{31}$, and by \cite[Eq (246)]{paskunas-image} we have $\varphi_{31} \circ \varphi_{13}^i = c_i e_3$ for $i=0,1$. This shows that $\Hom(\wt{P}_{\mbf{1}_G^\vee},\wt{P}_{\pi_\alpha^\vee})$ is isomorphic to $c_0 R + c_1 R \sub R$ with $\varphi_{13}^i$ mapping to $c_i$, which matches with the structure of $\Hom(Q,L_{-1})$ from Theorem \ref{module structure nongen 2}(3). Next, we look at $\Hom(\wt{P}_{\pi_\alpha^\vee},\wt{P}_{\mathrm{Sp}^\vee})$ where we have an injection
\[
\Hom(\wt{P}_{\pi_\alpha^\vee},\wt{P}_{\mathrm{Sp}^\vee}) \hookrightarrow \End(\wt{P}_{\pi_\alpha^\vee})
\]
by \cite[Lem.~10.74, eq (240)]{paskunas-image}, given by postcomposing with $\varphi_{12}$, and by \cite[Eq (246)]{paskunas-image} we have $\varphi_{12} \circ \varphi_{21}^i = c_i e_1$ for $i=0,1$. This again shows that $\Hom(\wt{P}_{\pi_\alpha^\vee},\wt{P}_{\mathrm{Sp}^\vee})$ is isomorphic to $c_0 R + c_1 R $ with $\varphi_{21}^i$ mapping to $c_i$, which matches with the structure of $\Hom(L_{-1},L_1)$ from Theorem \ref{module structure nongen 2}(4) (note that $b_0 R + b_1 R$ is isomorphic to $c_0 R + c_1 R$ via multiplication by $c$ inside $S$). Next up is $\Hom(\wt{P}_{\mbf{1}_G^\vee},\wt{P}_{\mathrm{Sp}^\vee})$. By \cite[Lem.~10.74, eq (239)]{paskunas-image}, we have an isomorphism 
\[
\Hom(\wt{P}_{\mbf{1}_G^\vee},\wt{P}_{\mathrm{Sp}^\vee}) \cong \Hom(\wt{P}_{\mbf{1}_G^\vee},\wt{P}_{\pi_\alpha^\vee})
\]
given by postcomposing with $\varphi_{12}$, which satisfies $\varphi_{12} \circ \varphi_{23}^i = \varphi_{13}^i$. Hence $\Hom(\wt{P}_{\mbf{1}_G^\vee},\wt{P}_{\mathrm{Sp}^\vee})$ is isomorphic to $c_0 R + c_1 R$ with $\varphi_{23}^i$ mapping to $c_i$, matching the structure of $\Hom(Q,L_1)$ from Theorem \ref{module structure nongen 2}(5) (with the same remark as in the previous case). The final case is to compare $\Hom(\wt{P}_{\mathrm{Sp}^\vee},\wt{P}_{\mbf{1}_G^\vee})$ and $\Hom(L_1,Q)$. They both have generators $\varphi_{32}$ and $\beta$, so we need to check that the relations match. In the case of $\Hom(\wt{P}_{\mathrm{Sp}^\vee},\wt{P}_{\mbf{1}_G^\vee})$ the relations are
$c_i \beta = d_i \varphi_{32}$ for $i=0,1$ by \cite[Lem.~10.92]{paskunas-image}\footnote{Note that this reference has a typo.}, and this matches the result for $\Hom(L_1,Q)$ given in Theorem \ref{module structure nongen 2}(7).

This finishes the discussion of the $R$-module structure, so it remains to verify that the ring structures match; i.e.\ that the composing the generators gives the same results in both cases. For $\End(L_{-1} \oplus L_1 \oplus Q)$ this was computed in (the twelve parts of) Theorem \ref{ring structure non-gen II}. Parts (1), (8) and the first identity in (12) correspond to \cite[Eq (246)]{paskunas-image}. Parts (3), (4) and the first identity in (10) correspond to \cite[Eq (247)]{paskunas-image}. Parts (2) and (7) correspond to \cite[Eq (248)]{paskunas-image}. Part (5) and the first identity in (11) correspond to \cite[Eq (249)]{paskunas-image}. Part (6) and the first identity in (9) correspond to \cite[Eq (250)]{paskunas-image}. Finally, the last two identities in parts (9), (10), (11) and (12) correspond to \cite[Eq (251)]{paskunas-image}.

This finishes the verification that $\wt{E} \cong \End(X)$ as $R$-algebras, and gives us our functors 
\[
F_{disc} : \D(\Mod_{G,\zeta}^{lfin}(\oo)_{\mf{B}}) \to \IndCoh_{\m}(\mf{X}), \,\,\,\,\,\, F_{cpt} : \D(\mf{C}(\oo)_{\mf{B}}) \to \ProCoh_{\m}(\mf{X})
\]
at the derived level.

\begin{remark}\label{rem:what are we doing}
	At this point we can explain our motivation for the definition of $X$. We started out with the hypothesis that $F_{cpt}(\wt{P}_{\pi_\alpha^\vee})$ and $F_{cpt}(\wt{P}_{\mathrm{Sp}^\vee})$ should be $L_{-1}$ and $L_{1}$ respectively --- the correct assignment is determined by the fact that $\Hom(L_1,L_{-1})$ is a cyclic $R$-module. See also Proposition \ref{rem:vytas question}.
	
	Then we considered the short exact sequences (234) and (235) in \cite{paskunas-image}. The cokernel $L_{-1}/cL_{1}$ is supported on a substack of reducible Galois representations (cut out by the condition $c=0$). This is, at least heuristically, compatible with the fact that the cokernel of the corresponding map $\varphi_{12} \in \Hom(\wt{P}_{\mathrm{Sp}^\vee},\wt{P}_{\pi_\alpha^\vee})$ is (dual to) a parabolic induction (sequence (235)).  
	
	Looking at sequence (234), we were then naturally led to guess that the cokernel of the map \[F_{cpt}(\wt{P}_{\pi_\alpha^\vee}) \xrightarrow{\varphi_{31}}F_{cpt}(\wt{P}_{\mathbf{1}_G^\vee})\] 
	would be supported on the reducible substack cut out by $b_0=b_1=0$. This led us to consider the module $Q$ as a candidate for $F_{cpt}(\wt{P}_{\mathbf{1}_G^\vee})$. It is an extension of $\overline{Q}= L_{1}/(b_0,b_1)L_{-1}$ by $L_{-1}$. It is not hard to check that $\Ext^1(\overline{Q},L_{-1})$ is a cyclic $R$-module (isomorphic to $R/(b_0c,b_1c)$) and the extension class of $Q$ is a generator for this module. 
\end{remark}

\begin{proposition}\label{computation of irreducibles non-gen II}
We have \begin{align}F_{disc}(\pi_{\alpha}) &=  k[c][0] = L_1/(a_0,a_1',b_0,b_1, \varpi)[0]  \text{\quad($c$ in graded degree $-3$)}\\ F_{disc}(\Sp) &= k[b_0,b_1][0] = L_{-1}/(a_0,a_1',c, \varpi)[0] \text{\quad($b_0,b_1$ in graded degree $3$)} \\ F_{disc}(\mathbf{1}_G) &= k[b_0,b_1][-1] = L_{-3}/(a_0,a_1',c,\varpi)[-1] \text{\quad($b_0,b_1$ in graded degree $5$)} \end{align}
\end{proposition}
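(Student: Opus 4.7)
The equivalence $\Mod_{G,\zeta}^{lfin}(\OO)_{\mf{B}} \cong \LMod_{disc}(\wt{E})$ of \S\ref{subsubsec: geom ng2}, together with the identifications $\wt{P}_{\pi_\alpha^\vee} \leftrightarrow L_{-1}$, $\wt{P}_{\mathrm{Sp}^\vee} \leftrightarrow L_1$, $\wt{P}_{\mathbf{1}_G^\vee} \leftrightarrow Q$, matches $\pi_\alpha$, $\mathrm{Sp}$, $\mathbf{1}_G$ with the simple left $\wt{E}$-modules $\wt{E}/J_1$, $\wt{E}/J_2$, $\wt{E}/J_3$ respectively. The plan is to compute $F_{disc}(\wt{E}/J_i) = X^\ast \otimes_{\wt{E}}^L \wt{E}/J_i$ by applying $X^\ast \otimes_{\wt{E}} -$ term-by-term to the finite projective resolutions of Proposition \ref{prop:projresofsimple} and computing the homology of the resulting chain complex of coherent sheaves on $\mf{X}$.

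The essential observation is the canonical identification $X^\ast \otimes_{\wt{E}} C_i = X^\ast e_i$, which equals $L_1$, $L_{-1}$, or $Q^\ast$ for $i = 1, 2, 3$ respectively. Moreover, for $\varphi \in e_i\wt{E}e_j$ (i.e.\ a morphism from the $j$-th to the $i$-th component of $X$), right multiplication $C_j \to C_i$ becomes, after tensoring, the coherent dual $\varphi^\ast: X^\ast e_i \to X^\ast e_j$; central elements continue to act by multiplication.

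For $\pi_\alpha$ this yields, from resolution \eqref{eq:res1}, a length-four chain complex whose rightmost differential is
\[
L_1^{\oplus 2} \oplus Q^\ast \xrightarrow{(p,q,r) \mapsto a_0p + a_1'q + \varphi_{31}^\ast(r)} L_1.
\]
The key identification is $\im \varphi_{31}^\ast = (b_0, b_1)L_{-1} \subset L_1$, which I would deduce from the presentation of $Q^\ast$ in Proposition \ref{prop:Qvee description} together with the relation $a_1'b_0 + a_0b_1 = 0$ in $S$. This immediately gives $H_0 = L_1/(a_0, a_1', b_0, b_1, \varpi)$, and vanishing of the higher homology groups would be checked by direct analysis of the remaining chain maps, mirroring the acyclicity arguments underlying Proposition \ref{prop:projresofsimple}. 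For $\mathrm{Sp}$, an entirely parallel computation using \eqref{eq:res2} produces a complex whose rightmost differential involves $\varphi_{12}^\ast$ (multiplication by $c$) and $\beta^\ast$, and the analogous analysis yields $L_{-1}/(a_0, a_1', c, \varpi)$ concentrated in degree zero.

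The main obstacle is the case of $\mathbf{1}_G$, where applying $X^\ast \otimes_{\wt{E}} -$ to \eqref{eq:res3} gives the complex
\[
L_{-1}^{\oplus 2} \to L_{-1}^{\oplus 2} \oplus Q^\ast \to L_{-1}^{\oplus 2} \to L_{-1}^{\oplus 2} \xrightarrow{(p,q) \mapsto -(\varphi_{23}^1)^\ast(p) + (\varphi_{23}^0)^\ast(q)} Q^\ast,
\]
and the answer must live in homological degree $1$. This shift reflects the fact that $Q$ is not locally free, so $X^\ast$ fails to be flat over $\wt{E}$ near the summand $Q$. I would first prove surjectivity of the rightmost differential (giving $H_0 = 0$) by computing the images of $(\varphi_{23}^0)^\ast$ and $(\varphi_{23}^1)^\ast$ modulo $\m$ via Proposition \ref{prop:Qvee description} and then invoking Nakayama. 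The identification $H_1 \cong L_{-3}/(a_0, a_1', c, \varpi)$ is the central technical computation: one computes the kernel of the rightmost map, quotients by the image of the preceding differential (which involves $M' = \left(\begin{smallmatrix} b_0c & b_1c \\ -a_0 & a_1' \end{smallmatrix}\right)$), and compares the resulting graded $S$-module to the claimed answer; the shift from $L_{-1}$ to $L_{-3}$ arises naturally from the graded degree $2$ of each $\varphi_{23}^i$. Vanishing of $H_i$ for $i \geq 2$ would follow from the same Koszul-style exactness underpinning Proposition \ref{prop:projresofsimple}. To streamline the verification I would pull the entire computation back along the presentation $\Spec S \to \mf{X}$, reducing everything to graded linear algebra over $S$, and, if needed, confirm the final identifications using a computer algebra system, as was done in the proof of Proposition \ref{computation of irreducible nongen 1}.
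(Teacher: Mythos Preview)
Your approach is essentially the paper's: tensor the resolutions of Proposition \ref{prop:projresofsimple} with $X^\ast$ and compute homology, falling back on computer algebra for the higher acyclicity. Two minor refinements worth noting: the simple modules are $\wt{E}/(\varpi,J_i)$ rather than $\wt{E}/J_i$, so the paper first computes $\mathcal{C}_{\OO,i} = X^\ast \otimes_{\wt{E}} P_{\OO,i}$ over $\OO$ and then takes the cone of multiplication by $\varpi$; and in the $\mathbf{1}_G$ case it bypasses your Nakayama step by observing that the rightmost differential $L_{-1}^{\oplus 2} \to Q^\ast$ is, after switching to column vectors, exactly the quotient map in the presentation $Q^\ast = \Coker(M^t)$ from Proposition \ref{prop:Qvee description}, which gives $H_0 = 0$ and simultaneously identifies the kernel as $M^t(L_{-3}\oplus L_{-1})$, whence $H_1 = M^t(L_{-3}\oplus L_{-1})/(M')^t(L_{-1}^{\oplus 2})$.
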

\begin{proof}
	We explain the details of the third case, which is most interesting. The first two are established in a very similar way. From Proposition \ref{prop:projresofsimple}, we have a perfect complex of left $\widetilde{E}$-modules \[P_{\OO,3}= \left[C_2^{\oplus 2} \xrightarrow{\left(\begin{smallmatrix}1 &0&\varphi_{23}^1\\0&1&-\varphi_{23}^0\end{smallmatrix}\right)}  C_2^{\oplus 2}\oplus C_3\xrightarrow{\left(\begin{smallmatrix}a_1' & -b_1c\\a_0&b_0c\\\beta &\varphi_{32}\end{smallmatrix}\right)} C_2^{\oplus 2} \xrightarrow{M':=\left(\begin{smallmatrix}
			b_0c & b_1c\\-a_0&a_1'
		\end{smallmatrix}\right)} C_2^{\oplus 2} \xrightarrow{\pi := \left(\begin{smallmatrix}-\varphi_{23}^1\\\varphi_{23}^0\end{smallmatrix}\right)} C_3\right]\] such that $\mathbf{1}_G$ corresponds to the mapping cone of $P_{\OO,3} \xrightarrow{\times\varpi} P_{\OO,3}$ in $\Perf^{L}(\wt{E})$. We deduce that $F_{disc}(\mathbf{1}_G)$ is the mapping cone of  $X^\ast \otimes_{\widetilde{E}} P_{\OO,3} \xrightarrow{\times\varpi} X^\ast \otimes_{\widetilde{E}} P_{\OO,3}$ in $\D_{coh}^b(\mf{X})$. This leaves us needing to understand the complex $ X^\ast \otimes_{\widetilde{E}} P_{\OO,3}$ which is 
	
	\begin{align*}\mathcal{C}_{\OO,3} = L_{-1} \oplus L_{-1} \xrightarrow{\left(\begin{smallmatrix}1 &0&(\varphi_{23}^1)^\ast\\0&1&-(\varphi_{23}^0)^\ast\end{smallmatrix}\right)}   L_{-1} \oplus L_{-1}\oplus Q^\ast\xrightarrow{\left(\begin{smallmatrix}a_1' & -b_1c\\a_0&b_0c\\\beta^\ast &\varphi_{32}^\ast\end{smallmatrix}\right)} &L_{-1} \oplus L_{-1} \xrightarrow{M'} L_{-1} \oplus L_{-1} \\& \xrightarrow{ \left(\begin{smallmatrix}-(\varphi_{23}^1)^\ast\\(\varphi_{23}^0)^\ast\end{smallmatrix}\right)} Q^\ast .\end{align*}
	
Note that here our maps are again given by matrices acting on row vectors from the right.

Comparing with the description of $Q^\ast$ in Proposition \ref{prop:Qvee description} and switching to column vectors, we see that \[H_0(\mathcal{C}_{\OO,3})=0 \text{ and }H_1(\mathcal{C}_{\OO,3}) = M^t(L_{-3}\oplus L_{-1})/(M')^t(L_{-1}\oplus L_{-1}).\] We claim that the map \begin{align*}L_{-3} &\to H_1(\mathcal{C}_{\OO,3})\\ x &\mapsto \begin{pmatrix}
		b_0x\\b_1x
	\end{pmatrix}
\end{align*} induces an isomorphism $\OO[b_0,b_1] = L_{-3}/(a_0,a_1',c) \cong H_1(\mathcal{C}_{\OO,3})$.

The map is clearly surjective, and factors through the specified quotient of $L_{-3}$, since \[\begin{pmatrix}
	b_0a_0\\b_1a_0
\end{pmatrix} = b_0\begin{pmatrix}
	a_0\\-a_1'
\end{pmatrix},~\begin{pmatrix}
	b_0a_1'\\b_1a_1'
\end{pmatrix} = b_1\begin{pmatrix}
	-a_0\\a_1'
\end{pmatrix}.\] The surviving graded pieces in $L_{-3}/(a_0,a_1',c)$ map to $\OO[b_0,b_1]\begin{pmatrix}
	b_0\\b_1
\end{pmatrix}$ which has trivial intersection with $(M')^t(L_{-1}\oplus L_{-1})$. To complete the computation of $F_{disc}(\mathbf{1}_G)$, it remains to check acyclicity of $\mathcal{C}_{\OO,3}$ in degree 2 and 3. This can be checked with Macaulay2 \cite{M2} which computes over the $\Z$-algebra \[\Z[a_0,a_1',b_0,b_1,c]/(a_0b_1+a_1'b_0).\] See \url{https://github.com/jjmnewton/p-adic-LLC} for the relevant Macaulay2 commands. Because $S$ is flat over this $\Z$-algebra, we deduce acyclicity for our complex of $S$-modules by base change. It is also not too difficult to check acyclicity of $\mathcal{C}_{\OO,3}$ in degree $2$ and $3$ by hand. 
\end{proof}

\begin{remark}
Since $F_{disc}(\mbf{1})$ is concentrated in homological degree $1$, we see that $F_{disc}$ does not come from deriving an embedding $\Mod_{G,\zeta}^{lfin}(\oo)_{\mf{B}} \to \QCoh_{\m}(\mf{X})$.
\end{remark}

\begin{remark}
Categorical formulations of the local Langlands correspondence have introduced the condition of nilpotent singular support (\cite{AG}, cf.~also \cite[\S VIII.2.2]{fargues-scholze}). In non-generic case II, the singularity stack $\mathrm{Sing}(\fX/\OO)$ is given by $\left[\Spec(\Sym_S \OO[c])/T\right]$, where $\OO[c]$ is the cyclic $S$-module $S/(a_0,a'_1,b_0,b_1)$, with $T$-action corresponding to $c$ being in graded degree $-2$. We have a zero section $\fX \to \mathrm{Sing}(\fX/\OO)$ with complement $\left[\G_{m,\OO[c]}/T\right]$. Its image in $\fX$ is the closed substack $\left[\Spec\OO[c]/T\right]$ cut out by $a_i = b_i = 0$, which is, as expected, the singular locus. Without making a general definition of nilpotent singular support, it seems clear in this situation that any member of $\IndCoh_{\m}(\mf{X})$ will have nilpotent singular support, since $c$ corresponds to a unipotent deformation.
\end{remark}

\section{The Montr\'{e}al functor and local-global compatibility}\label{chap: local-global}

In this section we show how to recover the Montr\'eal functor from our functors, and prove a local-global compatibility formula relating the singular homology of modular curves to the output of our functor, in the spirit of \cite[Exp.~Thm.~9.4.2]{egh}. We remark that the construction which recovers the Montr\'eal functor is a familiar and important construction in geometric Langlands; a Whittaker coefficent (cf.\ e.g.\ \cite{faergeman2022nonvanishing}). As for local-global compatibility, the most general statements of such formulas will involve the analogous functors for $\ell \neq p$, as considered in \cite{bzchn,hellmann-derived,zhu-coherent}. Our goal here will only be to illustrate how our functors fit in with such statements, rather than proving the strongest possible results. For this reason, we prove our results in the simplified setting of \cite[\S 7]{ceggps2} and \cite[\S 5]{gee-newton} (with $F=\Q$), where one ultimately does not need to worry about contributions from ramified primes $\ell \neq p$. We make one conceptual addition in that we work with $p$-arithmetic (co)homology\footnote{In general, the versions involving local Langlands functors for $\ell \neq p$ should be formulated using $S$-arithmetic (co)homology, where $S$ is set of ramified primes and $p$.}, as defined for example in \cite{tarrach2022sarithmetic}, instead of the (co)homology of modular curves. This matches very well with our functors (and those of \cite{egh}) and allows us to extend our formula to spaces of interest in the theory of eigenvarieties as well. 

\subsection{Local considerations}\label{subsec: local considerations}

In this subsection we will prove all the local preparations needed for the local-global compatibility statement. To be able to prove a statement valid for homology of modular curves with essentially arbitrary ($p$-adic) locally constant coefficients, we need to expand the domain of our functors. Fix a block $\mf{B}$. Recall that our functor
\[
F_{disc} : \D(\Mod_{G,\zeta}^{lfin}(\oo)_{\mf{B}}) \to \IndCoh_{\m}(\mf{X}_{\mf{B}}) \sub \IndCoh(\mf{X}_{\mf{B}})
\] 
is the composition of the fully faithful embeddings
\[
F : \D(\LMod(\wt{E}_{\mf{B}})) \to \IndCoh(\mf{X}_{\mf{B}})
\]
and
\[
J : \D(\Mod_{G,\zeta}^{lfin}(\oo)_{\mf{B}}) \to \D(\LMod(\wt{E}_{\mf{B}})),
\]
where the latter is $t$-exact and given by 
\[
\sigma \mapsto \Hom_G(P_{\mf{B}},\sigma^\vee)^\vee = \Hom_{G}(\sigma, P_{\mf{B}}^\vee)^\vee
\]
already at the level of abelian categories. We will expand the domain of $F_{disc}$ by expanding the domain of $J$. To this end, we wish to show that
\[
\Hom_{G}(\sigma, P_{\mf{B}}^\vee)^\vee = P_{\mf{B}} \otimes_{\oo \lb G \rb_\zeta }\sigma
\]
for all $\sigma \in \Mod_{G^{ad}}^{lfin}(\oo)_{\mf{B}}$. Here we recall that $\oo \lb G \rb $ is the ring of compactly supported measures on $G$, originally considered by Kohlhaase \cite{kohlhaase} (we refer to \cite[\S 3]{shotton} and \cite[Definition E.1.1]{egh} for the definition in our context). The ring $\oo \lb G \rb_{\zeta}$ is the quotient of $\oo \lb G \rb$ by the two-sided ideal generated by $\{z-\zeta(z)\mid z\in Z\}$. Every smooth $G$-representation over $\oo$ with central character $\zeta$ is a $\oo \lb G \rb_\zeta$-module in a unique way by \cite[Lem.~3.5]{shotton}, so the tensor product above makes sense and we may rewrite $\Hom_{G}(\sigma, P_{\mf{B}}^\vee)^\vee$ as $\Hom_{\oo \lb G \rb_\zeta }(\sigma, P_{\mf{B}}^\vee)^\vee$. Note that we have a natural transformation $i_\sigma : P_{\mf{B}} \otimes_{\oo \lb G \rb_\zeta }\sigma \to \Hom_{\oo \lb G \rb_\zeta }(\sigma, P_{\mf{B}}^\vee)^\vee$ defined by linearly extending the formula
\[
x \otimes v \mapsto \big(( f : \sigma \to P_{\mf{B}}^\vee) \mapsto f(v)(x) \big).
\]
This makes sense not only for $\sigma \in \Mod_{G,\zeta}^{lfin}(\oo)_{\mf{B}}$, but also for finitely presented $\oo \lb G \rb_\zeta$-modules.

\begin{lemma}\label{hom = tensor for fp}
We have $\Hom_{\oo \lb G \rb_\zeta}(\sigma, P_{\mf{B}}^\vee)^\vee = P_{\mf{B}} \otimes_{\oo \lb G \rb_\zeta }\sigma$ as functors from finitely presented $\oo \lb G \rb_\zeta$-modules to left $\wt{E}_{\mf{B}}$-modules.
\end{lemma}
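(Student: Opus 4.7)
The plan is to verify that the natural transformation $i_\sigma$ is an isomorphism by the standard ``reduce to the free case and use the five lemma'' argument. First I would observe that both functors are right exact and commute with finite direct sums: for the tensor product $P_{\mf{B}} \otimes_{\oo \lb G \rb_\zeta}(-)$ this is immediate from the definition, while for $\Hom_{\oo \lb G \rb_\zeta}(-, P_{\mf{B}}^\vee)^\vee$ one uses that $\Hom$ is left exact in the first variable and that Pontryagin duality is an exact contravariant functor, so that the composition is right exact. Functoriality of $i_\sigma$ in $\sigma$, together with these observations, shows that the collection of $\sigma$ for which $i_\sigma$ is an isomorphism is closed under finite direct sums and stable under cokernels.

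Next I would check that $i_\sigma$ is an isomorphism for $\sigma = \oo \lb G \rb_\zeta$. On the tensor side we have $P_{\mf{B}} \otimes_{\oo \lb G \rb_\zeta} \oo \lb G \rb_\zeta = P_{\mf{B}}$, and on the Hom side $\Hom_{\oo \lb G \rb_\zeta}(\oo \lb G \rb_\zeta, P_{\mf{B}}^\vee) = P_{\mf{B}}^\vee$, whose Pontryagin dual $(P_{\mf{B}}^\vee)^\vee$ is canonically $P_{\mf{B}}$: here we use that $P_{\mf{B}} \in \mf{C}(\oo)_{\mf{B}}$ is pseudocompact as an $\oo$-module, so Pontryagin biduality applies. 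A direct inspection of the defining formula $x \otimes v \mapsto (f \mapsto f(v)(x))$ shows that under these identifications $i_{\oo \lb G \rb_\zeta}$ is precisely the biduality isomorphism, and it is clearly a map of left $\wt{E}_{\mf{B}}$-modules. Taking finite direct sums, $i_\sigma$ is an isomorphism for every finite free $\oo \lb G \rb_\zeta$-module.

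For a general finitely presented $\sigma$, choose a presentation $\oo \lb G \rb_\zeta^m \to \oo \lb G \rb_\zeta^n \to \sigma \to 0$. Applying both functors produces a commutative diagram with right-exact rows
\[
\xymatrix{
P_{\mf{B}} \otimes \oo \lb G \rb_\zeta^m \ar[r] \ar[d]^{i} & P_{\mf{B}} \otimes \oo \lb G \rb_\zeta^n \ar[r] \ar[d]^{i} & P_{\mf{B}} \otimes \sigma \ar[r] \ar[d]^{i_\sigma} & 0 \\
\Hom(\oo \lb G \rb_\zeta^m, P_{\mf{B}}^\vee)^\vee \ar[r] & \Hom(\oo \lb G \rb_\zeta^n, P_{\mf{B}}^\vee)^\vee \ar[r] & \Hom(\sigma, P_{\mf{B}}^\vee)^\vee \ar[r] & 0
}
\]
and the first two vertical maps are isomorphisms by the previous step. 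The five lemma then yields that $i_\sigma$ is an isomorphism, and all constructions are natural in $\sigma$, so this is an isomorphism of functors.

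The main potential obstacle is purely bookkeeping: one must make sure that all identifications in the $\sigma = \oo \lb G \rb_\zeta$ case are compatible with the left $\wt{E}_{\mf{B}}$-action, so that the resulting isomorphism is an isomorphism of left $\wt{E}_{\mf{B}}$-modules and not merely of $\oo$-modules. This amounts to tracing through the conventions fixing the right $\oo \lb G \rb_\zeta$-module structure on $P_{\mf{B}}$ (obtained by dualizing the smooth left $G$-action on $P_{\mf{B}}^\vee$) together with the left $\wt{E}_{\mf{B}} = \End_G(P_{\mf{B}}^\vee)^{op}$-action, and checking that these commute both in $i_{\oo \lb G \rb_\zeta}$ and in the biduality map. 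Everything else is formal.
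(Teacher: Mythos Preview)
Your proposal is correct and follows exactly the same approach as the paper's proof, which simply says that $i_\sigma$ is an isomorphism for $\sigma = \oo\lb G\rb_\zeta$, both functors are right exact, and one concludes by taking a presentation and applying the five lemma. You have merely written out in full the details the paper leaves implicit.
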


\begin{proof}
The proof follows a standard pattern: first, $i_\sigma$ is an isomorphism for $\sigma = \oo \lb G \rb_\zeta $, and from this one gets that it is an isomorphism in general by taking a presentation and using the five lemma (note that both functors are right exact). 
\end{proof}

We then get the formula we want on $\Mod_{G_\zeta}^{lfin}(\oo)_{\mf{B}}$. 

\begin{proposition}\label{hom = tensor for ladm}
We have $\Hom_{\oo \lb G \rb_\zeta }(\sigma, P_{\mf{B}}^\vee)^\vee = P_{\mf{B}} \otimes_{\oo \lb G \rb_\zeta }\sigma$ as functors from  $\Mod_{G,\zeta}^{lfin}(\oo)_{\mf{B}}$ to left $\wt{E}_{\mf{B}}$-modules.
\end{proposition}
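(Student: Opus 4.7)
The plan is to reduce the statement to a standard Pontryagin biduality result. Writing $P_\mf{B}^\vee = \Hom_\oo(P_\mf{B}, L/\oo)$, the ordinary tensor-hom adjunction produces a natural isomorphism
\[
\Hom_{\oo\lb G\rb_\zeta}(\sigma, P_\mf{B}^\vee) \cong \Hom_\oo\bigl(P_\mf{B} \otimes_{\oo\lb G\rb_\zeta} \sigma,\, L/\oo\bigr) = (P_\mf{B} \otimes_{\oo\lb G\rb_\zeta} \sigma)^\vee,
\]
and applying $(-)^\vee$ once more gives $\Hom_{\oo\lb G\rb_\zeta}(\sigma, P_\mf{B}^\vee)^\vee \cong (P_\mf{B} \otimes_{\oo\lb G\rb_\zeta} \sigma)^{\vee\vee}$. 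I would then unwind the explicit formula $i_\sigma(x \otimes v)(f) = f(v)(x)$ on elementary tensors to show that $i_\sigma$ is identified under this composite isomorphism with the Pontryagin biduality map $P_\mf{B} \otimes_{\oo\lb G\rb_\zeta} \sigma \to (P_\mf{B} \otimes_{\oo\lb G\rb_\zeta} \sigma)^{\vee\vee}$, and that this identification is equivariant for the left $\wt{E}_\mf{B}$-module structures (inherited from the right $\wt{E}_\mf{B}$-action on $P_\mf{B}$ and on $P_\mf{B}^\vee$).

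Given this identification, it suffices to prove that $P_\mf{B} \otimes_{\oo\lb G\rb_\zeta} \sigma$ is a discrete $\oo$-module, since Pontryagin duality restricts to an anti-equivalence between discrete and pseudocompact $\oo$-modules on which biduality is an isomorphism. For this I would argue that $\sigma$ is pointwise $\varpi$-power torsion: by local finiteness every $v \in \sigma$ lies in a subrepresentation of finite length, and every irreducible smooth $G$-representation over $\oo$ is automatically an $\F$-representation (if $\pi$ were irreducible with $\varpi\pi \neq 0$ then $\varpi\pi$ would be a proper nonzero subrepresentation of $\pi$). Hence every finite length object of $\Mod^{lfin}_{G,\zeta}(\oo)_\mf{B}$ is killed by a power of $\varpi$, and discreteness of $\sigma$ transfers to $P_\mf{B} \otimes_{\oo\lb G\rb_\zeta} \sigma$ via $\varpi^N(x \otimes v) = x \otimes \varpi^N v$.

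The only mildly delicate step is matching $i_\sigma$ with the biduality map and checking compatibility with the $\wt{E}_\mf{B}$-actions, but this is a direct computation on generators; the special case $\sigma = \oo\lb G\rb_\zeta$ from Lemma \ref{hom = tensor for fp} (where biduality is simply the identification $P_\mf{B} = (P_\mf{B}^\vee)^\vee$) provides a consistency check, and the general case follows by naturality.
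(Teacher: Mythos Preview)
Your approach is correct and genuinely different from the paper's. The paper argues that both functors commute with filtered colimits, reduces to finite length $\sigma$, and then invokes the nontrivial fact (due to Vign\'eras and Shotton) that finite length smooth $G$-representations are finitely presented over $\oo\lb G\rb_\zeta$, so that Lemma~\ref{hom = tensor for fp} applies. Your argument bypasses this finite presentation input entirely: once you recognise $i_\sigma$ as the biduality map for $P_{\mf B}\otimes_{\oo\lb G\rb_\zeta}\sigma$ via tensor--hom adjunction, the only thing you need is that this tensor product is $\varpi$-power torsion, which follows immediately from local finiteness of $\sigma$.

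One point you should make explicit: the outer $(-)^\vee$ in the statement is the \emph{continuous} Pontryagin dual, so you need the tensor--hom identification $\Hom_{\oo\lb G\rb_\zeta}(\sigma,P_{\mf B}^\vee)\cong (P_{\mf B}\otimes_{\oo\lb G\rb_\zeta}\sigma)^\vee$ to respect the profinite topology on the right-hand side. This is routine (both sides are the inverse limit over finite length $\sigma'\subset\sigma$ of the corresponding finite modules, and the adjunction is natural in $\sigma'$), and your functional $f\mapsto f(v)(x)$ visibly factors through such a finite stage, hence is continuous. With that checked, your proof is both shorter and more self-contained than the paper's. The paper's route does have the side benefit that Lemma~\ref{hom = tensor for fp} is established for all finitely presented modules, which is reused later in the proof of Proposition~\ref{LJ is an extension}; your argument proves only the locally finite case.
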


\begin{proof}
Since both functors commute with direct limits, it suffices to show that we have an isomorphism for finite length representation. In view of Lemma \ref{hom = tensor for fp}, it therefore suffices to show that any finite length representation is finitely presented as an $\oo \lb G \rb_\zeta$-module. But this follows from \cite[Thm.~1.1(2)(i)]{vigneras-foncteur-de-colmez} and \cite[Prop.~3.8]{shotton}.
\end{proof}

Thus, we see that $J(\sigma) = P_{\mf{B}} \otimes_{\oo \lb G \rb_\zeta }\sigma$ for $\sigma \in \Mod_{G,\zeta}^{lfin}(\oo)_{\mf{B}}$. We can then attempt to expand the domain of $J$ to all of $\LMod(\oo \lb G \rb_\zeta)$ by defining
\[
J_{ext} : \LMod(\oo \lb G \rb_\zeta) \to \LMod(\wt{E}_{\mf{B}})
\]
by $J_{ext}(\sigma) = P_{\mf{B}} \otimes_{\oo \lb G \rb_\zeta }\sigma$ and taking the unbounded left derived functor $LJ_{ext}$ of $J_{ext}$\footnote{Recall that this may be defined since $\D(\LMod(\oo \lb G \rb_\zeta ))$ has enough K-projective complexes, by \cite{spaltenstein}.}. Although we will, strictly speaking, not need it, we will prove that $LJ_{ext}$ really is an extension of $J$. We start by noting that $P_{\mf{B}}$ is a flat right $\oo \lb K \rb_\zeta$-module, where $K=\GL_2(\Z_p)$ and $\oo \lb K \rb_\zeta$ is the quotient of $\oo \lb K \rb$ by the two-sided ideal generated by $z-\zeta(z)$, for $z\in Z\cap K$. If $\tau$ is a left $\oo \lb K \rb_\zeta$-module, we write $\mathrm{ind}_{KZ}^G\tau$ for $\oo \lb G \rb_\zeta \otimes_{\oo \lb K \rb_\zeta} \tau$ (if $\tau$ is smooth, this is the usual compact induction with fixed central character).

\begin{lemma}\label{compact inductions are acyclic}
$P_{\mf{B}}$ is a flat right $\oo \lb K \rb_\zeta$-module. As a consequence, $\Tor_i^{\oo \lb G \rb_\zeta}(P_{\mf{B}},\mathrm{ind}^G_{KZ}\tau) = 0$ for all $i \geq 1$ and all left $\oo \lb K \rb_\zeta$-modules $\tau$.
\end{lemma}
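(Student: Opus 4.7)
The plan is first to establish the flatness of $P_{\mf{B}}$ as a right $\oo \lb K \rb_\zeta$-module, and then to deduce the Tor-vanishing by a standard change-of-rings computation. For the latter, write $\mathrm{ind}_{KZ}^G \tau = \oo\lb G\rb_\zeta \otimes_{\oo\lb K\rb_\zeta}\tau$ and take a flat resolution $\tau_\bullet \to \tau$ over $\oo\lb K\rb_\zeta$. Provided $\oo\lb G\rb_\zeta$ is itself flat over $\oo\lb K\rb_\zeta$ as a right module (which will be a byproduct of the argument below), $\oo\lb G\rb_\zeta \otimes_{\oo\lb K\rb_\zeta}\tau_\bullet \to \mathrm{ind}_{KZ}^G\tau$ is a resolution over $\oo\lb G\rb_\zeta$ by modules of the form $\oo\lb G\rb_\zeta \otimes_{\oo\lb K\rb_\zeta} \tau_n$, which are flat over $\oo\lb G\rb_\zeta$ by flatness of the $\tau_n$ over $\oo\lb K\rb_\zeta$. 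The change-of-rings identity $P_{\mf{B}}\otimes_{\oo\lb G\rb_\zeta}(\oo\lb G\rb_\zeta \otimes_{\oo\lb K\rb_\zeta}-) \cong P_{\mf{B}}\otimes_{\oo\lb K\rb_\zeta}-$, together with flatness of $P_{\mf{B}}$ over $\oo\lb K\rb_\zeta$, then yields $\Tor^{\oo\lb G\rb_\zeta}_i(P_{\mf{B}},\mathrm{ind}^G_{KZ}\tau) \cong \Tor^{\oo\lb K\rb_\zeta}_i(P_{\mf{B}},\tau) = 0$ for $i\ge 1$.

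For the flatness of $P_{\mf{B}}$, the key input is that $P_{\mf{B}}$, being a finite direct sum of projective envelopes in $\mf{C}(\oo)_{\mf{B}}$, is projective in the pseudocompact category $\mf{C}(\oo)$ of right $\oo\lb G\rb_\zeta$-modules. Hence $P_{\mf{B}}$ is a direct summand of a pseudocompactly free object of the form $\prod_{i\in I}\oo\lb G\rb_\zeta$. Using the coset decomposition $G = \bigsqcup_{gKZ\in G/KZ} gKZ$ one obtains a pseudocompact decomposition
\[
\oo\lb G\rb_\zeta \;\cong\; \prod_{gKZ\in G/KZ} g\cdot\oo\lb K\rb_\zeta
\]
of right $\oo\lb K\rb_\zeta$-modules (noting that $\oo\lb KZ\rb_\zeta \cong \oo\lb K\rb_\zeta$). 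Combining these gives an isomorphism of right $\oo\lb K\rb_\zeta$-modules $\prod_I \oo\lb G\rb_\zeta \cong \prod_{I\times G/KZ}\oo\lb K\rb_\zeta$. Since $K$ is a compact $p$-adic Lie group, $\oo\lb K\rb_\zeta$ is Noetherian by Lazard, and by a theorem of Chase an arbitrary direct product of flat modules over a Noetherian ring is flat. So $\prod_{I\times G/KZ}\oo\lb K\rb_\zeta$ is flat over $\oo\lb K\rb_\zeta$, and so too is its direct summand $P_{\mf{B}}$.

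The step I expect to require the most care is the rigorous justification of the pseudocompact decomposition $\oo\lb G\rb_\zeta \cong \prod_{gKZ\in G/KZ} g\cdot \oo\lb K\rb_\zeta$ of right $\oo\lb K\rb_\zeta$-modules. The plan is to establish it by writing $\oo\lb G\rb_\zeta$ as the pseudocompact inverse limit of the finite algebras $\oo[G/H]_\zeta$ as $H$ ranges over shrinking compact open normal subgroups of $K$; each such algebra admits an evident decomposition $\bigoplus_{gH\in G/H}\oo\cdot g$ by left cosets, and regrouping these summands by their image in $G/KZ$ and passing to the inverse limit produces the stated product. An alternative, possibly cleaner, route would be to prove the flatness via Pontryagin duality: show that $P_{\mf{B}}^\vee$ is injective as a smooth $K$-representation with appropriate central character, by combining the fact that $P_{\mf{B}}^\vee$ is injective as a smooth $G$-representation with central character $\zeta$ (known from Pa\v{s}k\=unas's work) with the observation that restriction from smooth $G$-representations to smooth $KZ$-representations preserves injectives, being right adjoint to the exact compact induction functor (exact because $KZ$ is open in $G$).
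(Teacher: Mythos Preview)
Your main approach has a genuine gap. The category $\mf{C}(\oo)$ is \emph{not} the category of pseudocompact right $\oo\lb G\rb_\zeta$-modules in the sense needed for your argument: since $G$ is not compact, the ring $\oo\lb G\rb_\zeta$ is not itself profinite (as a right $\oo\lb K\rb_\zeta$-module it decomposes as a countable \emph{direct sum} $\bigoplus_{gKZ\in G/KZ} g\cdot\oo\lb K\rb_\zeta$, not a product), and in particular $\oo\lb G\rb_\zeta$ is not an object of $\mf{C}(\oo)$. The category $\mf{C}(\oo)$ is defined as the Pontryagin dual of $\Mod_{G,\zeta}^{lfin}(\oo)$; its projective objects are duals of injectives on the smooth side, and there is no reason for them to be direct summands of products $\prod_I \oo\lb G\rb_\zeta$. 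So the step ``$P_{\mf{B}}$ is a direct summand of $\prod_I \oo\lb G\rb_\zeta$'' does not follow, and the Chase-theorem route stalls.

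Your alternative route at the end is exactly the paper's argument, and it works cleanly. By \cite[Cor.~5.18]{paskunas-image}, $P_{\mf{B}}^\vee$ is injective in $\Mod_{G,\zeta}^{sm}(\oo)$; since compact induction from $KZ$ to $G$ is an exact left adjoint to restriction, restriction preserves injectives, so $P_{\mf{B}}^\vee$ is injective as a smooth $K$-representation with central character $\zeta$. Dualizing, $P_{\mf{B}}$ is projective in the category of compact right $\oo\lb K\rb_\zeta$-modules (here $\oo\lb K\rb_\zeta$ \emph{is} profinite), hence exact for the completed tensor product; since completed and ordinary tensor agree on finitely generated modules and every module is a filtered colimit of finitely generated ones, $P_{\mf{B}}$ is flat over $\oo\lb K\rb_\zeta$. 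Your Tor computation is then fine, using that $\oo\lb G\rb_\zeta$ is free (hence flat) as a right $\oo\lb K\rb_\zeta$-module via the coset decomposition above.
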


\begin{proof}
By \cite[Cor.~5.18]{paskunas-image}, $P_{\mf{B}}^\vee$ is injective as a smooth $G$-representation with central character $\zeta$. As a consequence, the restriction to $K$ is also injective as a smooth $K$-representation with central character $\zeta$ (compact induction is an exact left adjoint to restriction). Dually, $P_{\mf{B}}$ is then projective as a compact right $\oo \lb K \rb_\zeta$-module, hence exact for the completed tensor product, and hence exact for the usual tensor product and finitely generated right $\oo \lb K \rb_\zeta$-modules. Hence $P_{\mf{B}}$ is a flat right $\oo \lb K \rb_\zeta$-module. The second part then follows since $\oo \lb G \rb_\zeta$ is flat as a (left and right) $\oo \lb K \rb_\zeta$-module.
\end{proof}

\begin{proposition}\label{LJ is an extension}
Write $\iota$ for the inclusion $\Mod_{G,\zeta}^{lfin}(\oo)_{\mf{B}} \sub \LMod(\oo \lb G \rb_\zeta)$ and its unbounded derived functor. Then $LJ_{ext}\circ \iota = J$. 
\end{proposition}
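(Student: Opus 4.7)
The plan is to reduce the identity $LJ_{ext} \circ \iota = J$ to a $\Tor$-vanishing statement. Both functors are triangulated and commute with filtered colimits, and $J$, being the derived extension of an exact functor between hearts, is t-exact. Combined with Proposition \ref{hom = tensor for ladm}, which identifies $J(\sigma)$ with $P_\mf{B} \otimes_{\oo\lb G\rb_\zeta} \sigma$ in degree zero, the identity reduces to showing
\[
\Tor_i^{\oo \lb G \rb_\zeta}(P_\mf{B}, \sigma) = 0 \quad \text{for all } i \geq 1 \text{ and all } \sigma \in \Mod_{G,\zeta}^{lfin}(\oo)_\mf{B}.
\]
Since $\Tor$ commutes with filtered colimits and every object of $\Mod_{G,\zeta}^{lfin}(\oo)_\mf{B}$ is a filtered colimit of its finite length subobjects, I would first reduce to $\sigma$ of finite length, and then, via the long exact sequence of $\Tor$ and induction on length, to $\sigma$ irreducible.

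Next, I would construct a finite resolution of each irreducible $\sigma$ by compact inductions from compact-mod-centre subgroups. The Bruhat--Tits tree $\mc{T}$ of $\PGL_2(\Q_p)$ provides a natural candidate: its $G$-equivariant augmented cellular chain complex, after tensoring with $\sigma$ (diagonal action) and applying Frobenius reciprocity, yields a short exact sequence
\[
0 \to \mathrm{ind}_{IZ}^G(\sigma|_{IZ}) \to \mathrm{ind}_{K_0Z}^G(\sigma|_{K_0Z}) \oplus \mathrm{ind}_{K_1Z}^G(\sigma|_{K_1Z}) \to \sigma \to 0,
\]
where $K_0, K_1$ represent the two classes of maximal compact subgroups and $I$ is the Iwahori. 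A mild strengthening of Lemma \ref{compact inductions are acyclic} then applies: restriction of smooth injective $G$-representations with central character $\zeta$ to $K_1Z$ (which is conjugate to $K_0Z = KZ$ in $G$) and to $IZ$ (via exactness of $\mathrm{ind}_{I}^K$) remains injective, so $P_\mf{B}$ is flat over each of $\oo\lb IZ\rb_\zeta$, $\oo\lb K_0Z\rb_\zeta$ and $\oo\lb K_1Z\rb_\zeta$. Thus every term of the tree resolution is $P_\mf{B}$-acyclic, and the long exact sequence of $\Tor$ gives $\Tor_i^{\oo\lb G\rb_\zeta}(P_\mf{B}, \sigma) = 0$ for $i \geq 2$ automatically.

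The hard part will be the vanishing of $\Tor_1$, which amounts to showing that the leftmost map in the tree resolution remains injective after applying $P_\mf{B} \otimes_{\oo\lb G\rb_\zeta} -$. To attack this I would use the change-of-rings identification
\[
P_\mf{B} \otimes_{\oo\lb G\rb_\zeta} \mathrm{ind}_{HZ}^G \tau \;=\; P_\mf{B} \otimes_{\oo\lb HZ\rb_\zeta} \tau
\]
to reinterpret the tensored complex as the chain complex of $\mc{T}$ with local coefficients built from $P_\mf{B}$ and $\sigma$, and then leverage the contractibility of $\mc{T}$. A cleaner alternative, if accessible, would be to prove outright that $P_\mf{B}$ is flat as a right $\oo\lb G\rb_\zeta$-module. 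Failing a uniform approach, a case-by-case verification is available: for each irreducible $\sigma$ in each block type the explicit descriptions of $P_\mf{B}$ and $\wt{E}_\mf{B}$ from \cite{paskunas-image}, together with ad hoc presentations of irreducibles (such as $0 \to \mathrm{ind}_K^G \sigma_r \xrightarrow{T} \mathrm{ind}_K^G \sigma_r \to \pi_r \to 0$ in the supersingular case), reduce the injectivity to a concrete claim that can be verified directly.
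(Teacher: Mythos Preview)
Your reduction to irreducibles is the same as the paper's, and your idea of resolving an irreducible $\sigma$ by compact inductions is also shared. The paper, however, does not use the tree: instead it invokes the fact that the category of finitely presented smooth representations is abelian (citing \cite{shotton}) to produce a (possibly infinite) resolution $\mathrm{ind}_{KZ}^G\tau_\bu \to \pi$ entirely by compact inductions from $KZ$. Lemma~\ref{compact inductions are acyclic} then already shows that this resolution is $P_{\mf{B}}$-acyclic, so no strengthening to other compact-open subgroups is needed.

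The substantive gap in your plan is the ``hard part''. Your suggestion to reinterpret the tensored tree complex as a local-coefficient chain complex and invoke contractibility does not work as stated: contractibility of $\mc{T}$ gives exactness of $0 \to C_1(\mc{T}) \to C_0(\mc{T}) \to \Z \to 0$, but after tensoring with $P_{\mf{B}} \otimes_\oo \sigma$ and passing to $G$-coinvariants you are precisely asking whether a Tor-group vanishes, which is the question at hand. The case-by-case fallback is feasible but unnecessary. What you are missing is the paper's key manoeuvre: since each term of the resolution is a finitely presented $\oo\lb G\rb_\zeta$-module, Lemma~\ref{hom = tensor for fp} identifies the tensored complex with $\Hom_G(\mathrm{ind}_{KZ}^G\tau_\bu,\,P_{\mf{B}}^\vee)^\vee$. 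Now injectivity of $P_{\mf{B}}^\vee$ in the category of smooth $G$-representations with central character $\zeta$ (\cite[Cor.~5.18]{paskunas-image}) makes $\Hom_G(-,P_{\mf{B}}^\vee)$ exact, and Pontryagin duality preserves exactness, so the homology is concentrated in degree~$0$. This single step replaces all of your proposed work on $\Tor_1$, and it would equally well close your tree-resolution argument.
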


\begin{proof}
At the level of abelian categories we have $J_{ext}\circ \iota = J$, so we have a natural transformation $J \to LJ_{ext}\circ \iota$. Both functors commute with colimits, so it suffices to check that the natural transformation is an isomorphism on irreducible objects, i.e.\ that $P_{\mf{B}} \otimes_{\oo \lb G \rb_\zeta }\pi = P_{\mf{B}} \otimes^L_{\oo \lb G \rb_\zeta }\pi$ for irreducible $\pi$. Pick such a $\pi$. Viewed as a smooth representation, $\pi$ is finitely presented, and the category of finitely presented smooth representations is abelian (see e.g.\ \cite[Thm.~1.2]{shotton}), so there is a resolution $\mathrm{ind}_{KZ}^G\tau_\bu \to \pi$ with the $\tau_i$ finitely presented smooth $K$-representations with central character $\zeta$. By Lemma \ref{compact inductions are acyclic}, we have
\[
P_{\mf{B}} \otimes^L_{\oo \lb G \rb_\zeta }\pi = P_{\mf{B}} \otimes_{\oo \lb G \rb_\zeta } \mathrm{ind}_{KZ}^G\tau_\bu.
\]
By Lemma \ref{hom = tensor for fp}, we have $P_{\mf{B}} \otimes_{\oo \lb G \rb_\zeta } \mathrm{ind}_{KZ}^G\tau_\bu = \Hom_G(\mathrm{ind}_{KZ}^G\tau_\bu,P_{\mf{B}}^\vee)^\vee$. Since $P_{\mf{B}}^\vee$ is injective as a smooth $G$-representation with central character $\zeta$ \cite[Cor.~5.18]{paskunas-image} and Pontryagin duality is exact, the homology of $\Hom_G(\mathrm{ind}_{KZ}^G\tau_\bu,P_{\mf{B}}^\vee)^\vee$ is concentrated in degree $0$, which finishes the proof.
\end{proof}

\begin{remark}
As a sanity check, we remark that $LJ_{ext}$ kills the other blocks in $\Mod_{G,\zeta}^{lfin}(\oo)$. Indeed, let $\mf{B}^\prime \neq \mf{B}$ be a block. To show that $LJ_{ext}(\sigma)=0$ for $\sigma \in \Mod_{G,\zeta}^{lfin}(\oo)_{\mf{B}^\prime}$, it suffices (as in the proof above) to show this for irreducible $\sigma$. But then (by Lemma \ref{hom = tensor for fp} again) we have $LJ_{ext}(\sigma)= \Hom_G(\sigma,P_{\mf{B}}^\vee)^\vee$, which vanishes.
\end{remark}

We can now extend $F_{disc}$ to $\D(\LMod(\oo \lb G \rb_\zeta))$. For simplicity, and since it is the only functor we need for the local-global formula, we will take the codomain of our extension to be $\D_{qcoh}(\mf{X}_{\mf{B}})$ instead of $\IndCoh(\mf{X}_{\mf{B}})$. Write $\ol{F}$ for $F$ composed with the natural functor $\IndCoh(\mf{X}_{\mf{B}}) \to \D_{qcoh}(\mf{X}_{\mf{B}})$, and define 
\[
F_{ext} : \D(\LMod(\oo \lb G \rb_\zeta)) \to \D_{qcoh}(\mf{X}_{\mf{B}})
\]
by $F_{ext} = \ol{F} \circ LJ_{ext}$. Explicitly, we have $F_{ext}(\sigma) = X_{\mf{B}}^\ast \otimes^L_{\wt{E}_{\mf{B}}} P_{\mf{B}} \otimes^L_{\oo \lb G \rb_\zeta} \sigma$ for $\sigma \in \D(\LMod(\oo \lb G \rb_\zeta))$.

In the rest of this subsection, we will compute $j^\ast F_{ext}(\sigma)$ for certain open immersions $j$, as preparation for the local-global formula. Our starting point is then a continuous representation $\rho : \Gamma_{\Qp} \to \GL_2(\Fpbar)$. We assume that $\End_{\Gamma_{\Qp}}(\rho) = \Fpbar$ and that if $\rho$ is reducible of the form
\begin{equation}
\label{rho extn form}
0 \to \chi_2 \to \rho \to \chi_1 \to 0,
\end{equation}
then $\chi_2 \chi_1^{-1} \neq \omega$. Note that the assumption on endomorphisms implies that $\chi_1 \neq \chi_2$. With $\rho$, we associate an irreducible $G$-representation $\pi$ via the recipe of \cite[Lem.~2.15(5)]{ceggps2} twisted by $\omega^{-1}$ (in particular $\pi$ is, up to twist, a quotient of the compact induction of a Serre weight for $\rho$). We make the twist in order to match our normalization of the bijection between blocks and semisimple two-dimensional $\Gamma_{\Qp}$-representation from \S \ref{sec: blocks for GL2}; it ensures that $\pi$ lies in the block $\mf{B}$ corresponding to the semisimplification of $\rho$. In particular, $\pi$ satisfies $\vc(\pi^\vee) = \rho$ when $\rho$ is irreducible; and when $\rho$ is reducible of the form \eqref{rho extn form}, it follows from \cite[Thm.~30]{barthel-livne} that $\pi = \Ind_B^G(\chi_1 \otimes \chi_2\omega^{-1})$. 

We let $P$ be the projective envelope of $\pi^{\vee}$ and let $R_{\rho}$ be the universal deformation ring of $\rho$ (with fixed determinant corresponding to the central character in $\mf{B}$). Writing $R$ for the universal pseudodeformation ring of the trace of $\rho$, we note that the natural map $R \to R_\rho$ is an isomorphism (see \S \ref{subsec: supersingular case} for the irreducible case and e.g.\ \cite[Cor.~B.16, Prop.~B.17]{paskunas-image} for the reducible case). In light of this, we will simply write $R$ for $R_\rho$. Any choice of representation in the strict equivalence class of the universal representation 
\[
\rho^{univ} : \Gamma_{\Qp} \to \GL_2(R)
\]
is a compatible representation, and hence determines a morphism
\begin{equation}
\label{eq: R-point}
\Spec R \to \mf{X}_\mf{B}
\end{equation}
which is a section to the map $\mf{X}_\mf{B} \to \Spec R$ sending a representation to its pseudorepresentation. Moreover, the map in (\ref{eq: R-point}) is independent of the choice of $\rho^{univ}$ up to $\SL_2$-conjugacy (and hence the choice in the strict equivalence class), so it factors through a map
\begin{equation}
\label{eq: j def} 
j : \mf{X}_{\rho} := [\Spec R /\mu_2] \to \mf{X}_\mf{B}.
\end{equation}
When $\rho$ is irreducible (i.e.\ $\mf{B}$ is a supersingular block), $j$ is simply the identity map, $\mf{X}_{\mf{B}} = [\Spec R /\mu_2]$, $X_{\mf{B}}^\ast = R(1)$ (i.e the $R$-module $R$, viewed as a $\Z/2$-graded $R$-module concentrated in degree $1$) and $P=P_{\mf{B}}$. Thus, we have the following formula:

\begin{proposition}\label{pullback formula ss}
Assume that $\mf{B}$ is supersingular. Then we have $j^\ast(F_{ext}(\sigma)) = P(1)\otimes^L_{\oo \lb G \rb_\zeta} \sigma$ for $\sigma \in \D(\LMod(\oo \lb G \rb_\zeta))$.
\end{proposition}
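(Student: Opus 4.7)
The plan is to simply unwind the definitions, using the explicit description of all the relevant objects in the supersingular case. Recall from \S\ref{subsubsec: geom ss} that when $\mf{B}$ is supersingular we have $\mf{X}_{\mf{B}} = [\Spec R/\mu_2]$ with $\mu_2$ acting trivially, $\wt{E}_{\mf{B}} = R$ by \cite[Prop.~6.2]{paskunas-image}, and the chosen object $X_{\mf{B}} = L_1$ is the free $R$-module $R$ placed in $\Z/2$-degree $1$. Moreover, since $\rho$ is absolutely irreducible the natural map $R \to R_\rho$ is an isomorphism, so by the discussion preceding the proposition the map $j : \mf{X}_\rho = [\Spec R/\mu_2] \to \mf{X}_{\mf{B}}$ is the identity. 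Hence $j^\ast$ is just the identity functor on $\D_{qcoh}(\mf{X}_{\mf{B}})$.

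The first step is to identify the coherent dual $X_{\mf{B}}^\ast$. Because the $\mu_2$-action on $R$ is trivial, tensor product and internal Hom in $\QCoh(\mf{X}_{\mf{B}})$ are just the tensor product and Hom of $\Z/2$-graded $R$-modules. Since $\oo_{\mf{X}_{\mf{B}}} = L_0$ and $L_m \otimes L_n \cong L_{m+n}$, we have $X_{\mf{B}}^\ast = \Homi(L_1, L_0) = L_{-1} = L_1 = R(1)$.

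The second step is to rewrite the formula for $F_{ext}$. By definition,
\[
F_{ext}(\sigma) \;=\; X_{\mf{B}}^\ast \otimes^L_{\wt{E}_{\mf{B}}} P_{\mf{B}} \otimes^L_{\oo\lb G\rb_\zeta} \sigma \;=\; R(1) \otimes^L_{R} P \otimes^L_{\oo\lb G\rb_\zeta} \sigma,
\]
and after applying $j^\ast = \mathrm{id}$ the goal becomes to identify $R(1) \otimes^L_R P$ with $P(1)$ in $\QCoh(\mf{X}_{\mf{B}})$. This is immediate: the underlying $R$-module of $R(1)$ is simply $R$, so the derived tensor product agrees with the underived one and equals $P$, while the $\Z/2$-grading picked up from $R(1)$ is precisely the shift by $1$. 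Thus $R(1) \otimes^L_R P = P(1)$ as $\Z/2$-graded $R$-modules, giving the claimed formula.

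There is no genuine obstacle here; the content of the statement is entirely packaged into the explicit identifications of $\wt{E}_{\mf{B}}$, $X_{\mf{B}}^\ast$ and the map $j$ supplied by \S\ref{subsec: supersingular case} and \S\ref{subsubsec: geom ss}. The only minor care needed is to track the $\Z/2$-grading through the dualization of $L_1$, which is what justifies the appearance of $P(1)$ rather than $P$ on the right-hand side.
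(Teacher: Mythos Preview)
Your proposal is correct and follows essentially the same approach as the paper: the paper itself does not give a separate proof for this proposition, but simply records (in the sentence immediately preceding it) that $j$ is the identity, $X_{\mf{B}}^\ast = R(1)$, and $P = P_{\mf{B}}$, from which the formula is immediate. Your write-up just spells out these identifications in slightly more detail.
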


Let us now analyze the map $j$ when $\rho$ is reducible. Our assumption on $\rho$ puts us in one of two cases: either $\mf{B}$ is generic principal series or non-generic II. While the concrete description of $j$ in \eqref{eq: R-point} and \eqref{eq: j def} characterizes it, it will be helpful to realize it as a case of a more general phenomenon studied in \cite[\S2.2]{WEthesis}, especially Corollary  2.2.4.3 \textit{ibid}. In \textit{loc.\ cit.}, the $R$-projectivity\footnote{That is, representability by projective scheme over the pseudodeformation ring.} of these substacks of moduli stacks of representations is emphasized, but these subspaces are also open in $\mf{X}_{\mf{B}}$, which is what is more relevant here. 
\begin{proposition}
\label{prop: open proj}
Adopting the notation for coordinates $E_{i,j}$ of $E$ from Proposition \ref{prop: GMA description}, the substack of $\mf{X}_{\mf{B}} = [\Spec S / \bG_m]$ of adapted representations of the form 
\[
\rho : E \to M_2(B), \qquad 
\rho = \begin{pmatrix}
\rho_{1,1} & \rho_{1,2} \\
\rho_{2,1} & \rho_{2,2}
\end{pmatrix} : \begin{pmatrix}
E_{1,1} & E_{1,2} \\
E_{2,1} & E_{2,2}
\end{pmatrix} \to M_2(B)
\]
such that $\rho_{2,1}(E_{2,1})$ generates $B$ (as a $B$-module) is represented by the fiber product of $\Proj_R E_{2,1}$ and $\mf{X}_{\mf{B}}$ over $[\Spec \Sym_R^* E_{2,1} / \bG_m]$ (where $E_{2,1}$ has graded degree $-2 \in X^*(\bG_m)$). This subspace of $\mf{X}_{\mf{B}}$ is open and is presentable as a $\Spec R$-projective scheme equipped with the trivial action of $\mu_2$. 
\end{proposition}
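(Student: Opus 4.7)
The plan is to realize the substack in question as a pullback, via a natural morphism $\mf{X}_{\mf{B}} \to [\Spec \Sym^*_R E_{2,1}/\bG_m]$, of the open subscheme $\Proj_R E_{2,1}$ of the ambient quotient stack, and then to read off projectivity and the triviality of the $\mu_2$-action from the fact that $E_{2,1}$ sits in even graded degree.

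First I would construct the map $\pi : \mf{X}_{\mf{B}} \to [\Spec \Sym^*_R E_{2,1}/\bG_m]$. By Proposition \ref{prop: GMA description}, we have $E_{2,1} \cong S_{-2}$ as a graded $R$-module, so the universal property of symmetric algebras yields a graded $R$-algebra map $\Sym^*_R E_{2,1} \to S$ (where $E_{2,1}$ sits in degree $-2$ on the left). This gives a $\bG_m$-equivariant morphism $\Spec S \to \Spec \Sym^*_R E_{2,1}$ and hence $\pi$. Inside the ambient stack, $\Proj_R E_{2,1}$ is the open substack cut out by the (vanishing of the) irrelevant ideal, so pulling back yields an open substack of $\mf{X}_{\mf{B}}$, and the fibre product in the statement is well-defined.

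Next I would identify this pullback, via the moduli interpretations, with the locus in the proposition. A $B$-point of $[\Spec \Sym^*_R E_{2,1}/\bG_m]$ is equivalent to the data of a line bundle $\mc{L}$ on $\Spec B$ together with an $R$-linear map $\phi : E_{2,1} \to \mc{L}^{-2}$ (the twist $-2$ reflecting that $E_{2,1}$ has graded degree $-2$), and such a point factors through $\Proj_R E_{2,1}$ precisely when $\phi \otimes_R B$ is surjective. For a $B$-point of $\mf{X}_{\mf{B}}$ given by an adapted representation $\rho : E \to \End_B(V_1 \oplus V_2)$ (with $V_1 \otimes_B V_2 \isoto B$), the component $\rho_{2,1} : E_{2,1} \to \Hom_B(V_1, V_2) \cong V_2^{\otimes 2}$ is precisely the datum $\phi$ under $\mc{L} = V_2$, so the surjectivity condition on the $\Proj$-side matches the generation condition ``$\rho_{2,1}(E_{2,1})$ generates $B$'' in the statement. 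This gives the fibre-product description.

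Finally I would check the remaining two claims. Since $S$ is finitely generated as an $R$-algebra (hence $E_{2,1} = S_{-2}$ is a finitely generated $R$-module), $\Sym^*_R E_{2,1}$ is generated in degree $1$ by a finitely generated $R$-module, so $\Proj_R E_{2,1}$ is $\Spec R$-projective; hence the pullback substack of $\mf{X}_{\mf{B}}$ is as well. For the $\mu_2$-structure, note that $\mu_2 \subset \bG_m$ is precisely the kernel of the $\bG_m$-action on $\Sym^*_R E_{2,1}$ because the generators $E_{2,1}$ sit in even degree $-2$; thus the quotient stack $[\Spec \Sym^*_R E_{2,1}/\bG_m]$ is canonically a trivial $\mu_2$-gerbe over its coarse space, and the same triviality passes to the open substack $\Proj_R E_{2,1}$, giving the claim.

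The main obstacle is the moduli-theoretic bookkeeping in the middle paragraph, i.e.\ matching up the line bundle $\mc{L}$ appearing in the description of $[\Spec \Sym^*_R E_{2,1}/\bG_m]$ with the line $V_2$ underlying the second idempotent of the adapted representation, and aligning the weight-$-2$ convention on both sides; once this is set up, the identification of the two open substacks is essentially tautological. The rest of the proof is formal and was carried out in the precise setup of \cite[\S 2.2]{WEthesis}, Corollary 2.2.4.3, which we are ultimately citing.
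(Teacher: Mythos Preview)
Your approach is the same as the paper's, only more detailed: the paper's proof simply says the fibre-product description follows from ``comparing the condition on $\rho_{2,1}(E_{2,1})$ to the definition of $\Proj_R M$ as a subgroupoid of $[\Spec \Sym_R^* M / \bG_m]$'' and that openness is inherited because $\Proj_R M$ is open (obtained by removing the origin). Your construction of $\pi$ via $\Sym^*_R E_{2,1} \to S$ and the line-bundle bookkeeping matching $\mc{L}$ with $V_2$ are exactly the content behind that sentence.

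There is one step that is not justified, by you or by the paper's short proof: the clause ``hence the pullback substack of $\mf{X}_{\mf{B}}$ is as well'' does not follow merely from $\Proj_R E_{2,1}$ being $R$-projective, since base-changing a projective morphism along an arbitrary map need not yield something projective. What is needed is that $\pi$ becomes a \emph{closed immersion} over $\Proj_R E_{2,1}$. This is easy: on the basic open $D(c)$ for $c \in E_{2,1} = S_{-2}$, every generator $b \in E_{1,2} = S_2$ satisfies $b = (bc)\cdot c^{-1}$ with $bc \in S_0 = R$ by the cross-diagonal GMA multiplication of Proposition~\ref{prop: GMA description}, so $(\Sym^*_R E_{2,1})[c^{-1}] \to S[c^{-1}]$ is surjective. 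Thus the open substack embeds as a closed substack of $\Proj_R E_{2,1}$ and inherits projectivity; the $\mu_2$-triviality then follows as you say (or directly, since $S$ itself is concentrated in even degrees). The paper leaves this to the citation of \cite[\S2.2]{WEthesis}, as you do in your last sentence.
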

We recall that in our description of the GMA structure, the character $\chi_1$ corresponds to the top left entry and $\chi_2$ to the bottom right. Note that the morphism $\mf{X}_{\mf{B}} \to [\Spec \Sym_R^* E_{2,1} / \bG_m]$ arises naturally from the presentation of $S$ stated in Proposition \ref{prop: GMA description}. We also remark that our use of $\Proj$ refers to the usual notion of a (closed substack of a) weighted projective stack. 

\begin{proof}
The representability of the stated moduli subgroupoid by the stated fiber product follows from comparing the condition on $\rho_{2,1}(E_{2,1})$ to the definition of $\Proj_R M$ as a subgroupoid of $[\Spec \Sym_R^* M / \bG_m]$. This is open because $\Proj_R M$ is open in $[\Spec \Sym_R^* M / \bG_m]$, having arisen by removing the origin. 
\end{proof}

What is common to the generic principal series and non-generic II cases is that $E_{2,1}$ is a free cyclic $R$-module generated by $c$. Therefore the condition that $\rho_{2,1}(E_{2,1})$ generates the $(2,1)$-coordinate amounts to $\rho$ being conjugate to a deformation of the unique (up to isomorphism of representations) non-trivial extension $\rho$ of $\chi_1$ by $\chi_2$, and this condition is cut out by inverting $c$. Thus our morphism $j$ is the base change of $\mf{X}_{\mf{B}} \to [\Spec \Sym_R^* E_{2,1} / \bG_m]$ along $\Proj_R E_{2,1}$. To summarize this analysis, we see that $\Proj_R cR = [\Spec R/ \mu_2]$ and state
\begin{corollary}
\label{cor: j is open}
$j : [\Spec R / \mu_2] \to \mf{X}_{\mf{B}}$ is an open immersion obtained by adjoining $c^{-1}$. 
\end{corollary}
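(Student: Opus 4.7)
The plan is to apply Proposition \ref{prop: open proj} with $(i,j) = (2,1)$ in the two relevant block types, compute the resulting open substack explicitly, and then match it with $j$. By Theorems \ref{computation of rings generic ps} and \ref{presentation of moduli stack non-generic 2}, in both the generic principal series and the non-generic II cases we have $E_{2,1} = Rc$ as a free cyclic $R$-module, with $c$ of graded degree $-2$. Consequently, Proposition \ref{prop: open proj} produces an open substack of $\mf{X}_{\mf{B}} = [\Spec S/\bG_m]$ cut out by the condition that $\rho_{2,1}(c)$ generate the structure sheaf, i.e.\ the open subscheme $[\Spec S[c^{-1}]/\bG_m]$.

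The first step is to identify $[\Spec S[c^{-1}]/\bG_m]$ with $[\Spec R/\mu_2]$. A direct inspection of the two explicit presentations of $S$ shows that every degree zero monomial in $b$ (resp.\ $b_0, b_1$) and $c^{\pm 1}$ is already a monomial in the degree zero elements $bc$ (resp.\ $b_0 c, b_1 c$) lying in $R$, so $(S[c^{-1}])_0 = R$ and hence $S[c^{-1}] \cong R[c,c^{-1}]$ as graded $R$-algebras. Because $c$ is a unit of graded degree $-2$, the $\bG_m$-action on $\Spec R[c,c^{-1}]$ has $\mu_2$ in the stabilizer of every point and acts freely modulo $\mu_2$, so the stack quotient is $[\Spec R/\mu_2]$ with trivial $\mu_2$-action.

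Next I would verify that $j$ factors through this open substack. The hypothesis $\End_{\Gamma_{\Qp}}(\rho) = \Fpbar$ together with $\chi_1 \neq \chi_2$ forces $\rho$ to be a non-split extension of $\chi_1$ by $\chi_2$, so one may pick a representative $\rho^{univ} \colon \Gamma_{\Qp} \to \GL_2(R)$ in the strict equivalence class of the universal deformation such that the $(2,1)$-entry $\rho^{univ}_{2,1}(c)$ reduces to a generator of the non-split extension class of $\rho$ and is therefore a unit in $R$ by Nakayama. For such a choice, the classifying map $\Spec R \to \mf{X}_{\mf{B}}$ factors through $[\Spec S[c^{-1}]/\bG_m]$, and hence so does $j$.

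Finally, the induced factorization $\tilde{j} \colon [\Spec R/\mu_2] \to [\Spec R/\mu_2]$ must be shown to be the identity. This is formal: both copies of $[\Spec R/\mu_2]$ classify the same functor (deformations of $\rho$ with the fixed determinant, modulo the trivial $\mu_2$-action from scalar automorphisms), and $\tilde{j}$ is by construction the classifying map for the universal deformation, which is tautologically the identity. Hence $j$ coincides with the open immersion of Proposition \ref{prop: open proj}, namely the localization $[\Spec S[c^{-1}]/\bG_m] \hookrightarrow [\Spec S/\bG_m]$. The only step requiring real computation is the identification $(S[c^{-1}])_0 = R$; the rest is formal from the universal properties, so I do not anticipate a significant obstacle.
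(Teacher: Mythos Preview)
Your proposal is correct and follows essentially the same route as the paper. The paper derives the corollary directly from the paragraph preceding it: since $E_{2,1} = Rc$ is free cyclic in both block types, Proposition~\ref{prop: open proj} identifies the open substack with $\Proj_R(Rc) = [\Spec R/\mu_2]$, and the universal deformation of the non-split extension lands there; your argument spells out the same steps in more detail, including the computation $(S[c^{-1}])_0 = R$ which the paper in fact carries out immediately \emph{after} the corollary when describing the graded map $\phi\colon S \to R[c,c^{-1}]$.
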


It will be helpful to make explicit computations with the graded $R$-algebra map corresponding to $j$ as we apply Proposition \ref{prop: open proj}, writing it using the generator $c$ as 
\[
\phi : S \to R[c,c^{-1}], \text{ uniquely determined by } S \ni c \mapsto c. 
\]
We begin with the generic principal series case, using the computation of $S$ of \S\ref{subsec: stacks generic ps GL2}. In odd degrees both sides are $0$. In degree $-2n$, for $n\geq 0$, $\phi$ is the identity $c^n R \to c^n R$. In degree $2n$, for $n\geq 1$, $\phi$ is given by the inclusion $b^n R \to c^{-n} R$. In particular, $\phi$ is injective and equates $R[c,c^{-1}]$ with $S[c^{-1}]$. 

To prove the analogue of Proposition \ref{pullback formula ss} in the generic principal series case, we will also need to understand $P$. From our choice of $\rho$, we have $\pi = \Ind_B^G(\chi_1 \otimes \chi_2\omega^{-1})$. Recall that we fixed an isomorphism $\wt{E}_{\mf{B}} \cong \left( \begin{smallmatrix} R & bR \\ cR & R \end{smallmatrix} \right)$ in \S \ref{subsubsec: geom gps}, and that under this isomorphism $P$ (the projective envelope of $\pi^\vee$) corresponds to the right $\wt{E}_{\mf{B}}$-module  $ \begin{pmatrix} cR & R \end{pmatrix}$. 

\begin{proposition}\label{pullback of universal module generic ps}
The pullback $j^\ast(X_{\mf{B}}^\ast \otimes_{\wt{E}_{\mf{B}}} P_{\mf{B}})$ is $P(1)$ (viewed as a $\Z/2$-graded $R$-module).
\end{proposition}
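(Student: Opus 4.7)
The plan is to compute the left-hand side directly, using Corollary~\ref{cor: j is open} which identifies $j$ with the open immersion of $\mf{X}_{\mf{B}}$ cut out by inverting $c$. Since $j$ is flat, the pullback commutes with the tensor product over $\wt{E}_{\mf{B}}$, reducing the problem to computing $j^\ast X_{\mf{B}}^\ast \otimes_{\wt{E}_{\mf{B}}[c^{-1}]} P_{\mf{B}}[c^{-1}]$. The key geometric observation is that $\wt{E}_{\mf{B}}[c^{-1}] \cong M_2(R[c^{\pm 1}])$: since $b = (bc) \cdot c^{-1}$ becomes expressible in $R[c^{\pm 1}]$ once $c$ is inverted, the off-diagonal structural constraint on $\wt{E}_{\mf{B}}$ disappears. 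This makes $\wt{E}_{\mf{B}}[c^{-1}]$ Morita-equivalent to $R[c^{\pm 1}]$, reflecting that $\mf{X}_\rho$ parameterizes genuine (non-pseudo) Galois representations.

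Next, I would compute $j^\ast V^\ast$ as a $\Z/2$-graded $R$-module using the equivalence $[\Spec R[c^{\pm 1}]/\G_m] \cong [\Spec R/\mu_2]$ established in the discussion after Theorem~\ref{computation of rings generic ps}. Since $V^\ast = L_{-1} \oplus L_1$ and each summand is concentrated in odd $\Z$-degrees, the pullback $j^\ast L_{\pm 1}$ gives a free rank-$1$ $R$-module in odd $\Z/2$-degree, so $j^\ast V^\ast$ has zero even part and rank-$2$ free odd part. To identify the right $\wt{E}_{\mf{B}}$-module structure on this, I would do a direct matrix computation: for $(x, yc^{-1})$ in the odd part of $V^\ast[c^{-1}]$ and $M = \sm{r_1}{br_2}{cr_3}{r_4} \in \wt{E}_{\mf{B}}$, the right action yields $(xr_1 + yr_3, ((bc)xr_2 + yr_4)c^{-1})$, which under the identification $Rc^{-1} \cong R$ matches the right $\wt{E}_{\mf{B}}$-action on $(cx, y) \in (cR, R) = P$. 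Thus $j^\ast V^\ast \cong P(1)$ as $\Z/2$-graded right $\wt{E}_{\mf{B}}$-modules.

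The final step is to argue that tensoring $j^\ast V^\ast$ with $P_{\mf{B}}$ over $\wt{E}_{\mf{B}}[c^{-1}]$ preserves the underlying $\Z/2$-graded $R$-module structure. This uses the Morita triviality of $\wt{E}_{\mf{B}}[c^{-1}]$ combined with the fact that $P_{\mf{B}}$ becomes, on the open, a rank-one progenerator under the Morita equivalence, so that tensoring with it corresponds under Morita to tensoring with $R[c^{\pm 1}]$ itself. The main obstacle will be making this last Morita argument precise: one must track the $\G_m$-equivariance of all identifications carefully to ensure no unexpected grading shifts appear when passing between $\wt{E}_{\mf{B}}[c^{-1}]$-modules and $R[c^{\pm 1}]$-modules.
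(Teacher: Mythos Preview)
Your first two paragraphs are on the right track and mirror what the paper does: pull $j^\ast$ inside, then identify the odd $\Z/2$-graded piece of $j^\ast X_{\mf{B}}^\ast$ with the right $\wt{E}_{\mf{B}}$-module $(cR,R)$. The explicit matrix check you sketch is correct.

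The genuine gap is in your last paragraph, and it stems from a category error already present in your first paragraph. You write the reduction as ``$j^\ast X_{\mf{B}}^\ast \otimes_{\wt{E}_{\mf{B}}[c^{-1}]} P_{\mf{B}}[c^{-1}]$'', but $P_{\mf{B}}$ is an object of $\mf{C}(\oo)_{\mf{B}}$ (a compact $G$-representation), not a sheaf on $\mf{X}_{\mf{B}}$ or an $S$-module. There is no $c$ acting on $P_{\mf{B}}$ to invert, and $c$ is not central in $\wt{E}_{\mf{B}}$, so ``$P_{\mf{B}}[c^{-1}]$'' has no meaning. Likewise, the phrase ``$P_{\mf{B}}$ becomes, on the open, a rank-one progenerator'' treats $P_{\mf{B}}$ as though it were a sheaf that restricts along $j$; it does not. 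Consequently your claim that tensoring with $P_{\mf{B}}$ ``preserves the underlying $\Z/2$-graded $R$-module structure'' is false: the odd part of $j^\ast X_{\mf{B}}^\ast$ has underlying $R$-module $R^2$, while the answer $P$ is a huge compact $\oo\lb G\rb_\zeta$-module.

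The correct (and much simpler) final step is purely on the $\wt{E}_{\mf{B}}$-module side, with no localization of $P_{\mf{B}}$ or of $\wt{E}_{\mf{B}}$ needed. Having identified each odd graded piece of $j^\ast X_{\mf{B}}^\ast$ with $(cR,R)$ as a right $\wt{E}_{\mf{B}}$-module, observe that $(cR,R)$ is the second row of $\wt{E}_{\mf{B}}$, hence a direct summand of $\wt{E}_{\mf{B}}$ as a right module. Therefore
\[
(cR,R)\otimes_{\wt{E}_{\mf{B}}} P_{\mf{B}}
\]
is the corresponding direct summand of $\wt{E}_{\mf{B}}\otimes_{\wt{E}_{\mf{B}}} P_{\mf{B}} = P_{\mf{B}} = P_2 \oplus P_1$, namely $P_1 = P$. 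This is exactly how the paper argues: it writes out the $\Z$-graded decomposition of $(L_{-1}\oplus L_1)[c^{-1}]$, notes that every graded piece is a copy of $(cR,R)$ shifted by a power of $c$, and applies $-\otimes_{\wt{E}_{\mf{B}}} P_{\mf{B}}$ piece-by-piece to obtain $P[c,c^{-1}](1)$, which is $P(1)$ as a $\Z/2$-graded $R$-module. Your Morita observation about $\wt{E}_{\mf{B}}[c^{-1}]\cong M_2(R)$ is correct but plays no role.
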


\begin{proof}
First note that $j^\ast(X_{\mf{B}}^\ast \otimes_{\wt{E}_{\mf{B}}} P_{\mf{B}}) = j^\ast(X_{\mf{B}}^\ast) \otimes_{\wt{E}_{\mf{B}}} P_{\mf{B}}$. Recall from \S \ref{subsubsec: geom gps} that $X_{\mf{B}}$ is the graded module $L_1 \oplus L_{-1}$, in the notation of \S \ref{subsec: stacks generic ps GL2}; the left $\wt{E}_{\mf{B}}$-module structure is then obtained by viewing $L_1 \oplus L_{-1}$ as column vectors. The right module $\wt{E}_{\mf{B}}$-module $X_{\mf{B}}^\ast$ is therefore $L_{-1} \oplus L_1$, now viewed as row vectors. We have a decomposition
\[
L_{-1} \oplus L_1 = \left( \bigoplus_{n=0}^\infty \begin{pmatrix} b^n R & b^{n+1}R \end{pmatrix} \right) \oplus \left( \bigoplus_{n=0}^\infty \begin{pmatrix} c^{n+1}R & c^n R \end{pmatrix} \right)
\]
into graded pieces, and these pieces are right $\wt{E}_{\mf{B}}$-modules. It follows that $j^\ast(X_{\mf{B}}^\ast)$ is the graded $S[c^{-1}]=R[c,c^{-1}]$-module
\[
(L_{-1} \oplus L_1)[c^{-1}] = \left( \bigoplus_{n=0}^\infty \begin{pmatrix} c^{-n}R & c^{-n-1}R \end{pmatrix} \right) \oplus \left( \bigoplus_{n=0}^\infty \begin{pmatrix} c^{n+1}R & c^n R \end{pmatrix} \right)
\]
Applying $-\otimes_{\wt{E}_{\mf{B}}}P_{\mf{B}}$, we see that $j^\ast(X_{\mf{B}}^\ast \otimes_{\wt{E}_{\mf{B}}} P_{\mf{B}})$ is the graded $R[c,c^{-1}]$-module $P[c,c^{-1}](1)$. This corresponds to the $\Z/2$-graded $R$-module in the statement of the proposition.
\end{proof}

In general, we have the following formula.

\begin{corollary}\label{pullback formula gps}
We have $j^\ast(F_{ext}(\sigma)) = P(1) \otimes^L_{\oo \lb G \rb_\zeta } \sigma$ for all $\sigma \in \D(\LMod(\oo \lb G \rb_\zeta))$ .
\end{corollary}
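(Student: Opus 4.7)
The plan is to promote the underived identification in Proposition \ref{pullback of universal module generic ps} to a derived statement by exploiting two flatness properties together with the explicit description of $j$ as an open immersion. By the very construction of $F_{ext}$, we have
\[
F_{ext}(\sigma) = X_{\mf{B}}^\ast \otimes^L_{\wt{E}_{\mf{B}}} P_{\mf{B}} \otimes^L_{\oo \lb G \rb_\zeta} \sigma.
\]
The first step is to observe that $X_{\mf{B}}^\ast = \mc{V}^\ast$ is projective (hence flat) as a right $\wt{E}_{\mf{B}}$-module by Proposition \ref{aux results for generic}, so that the outer derived tensor product may be replaced by an underived one, yielding $F_{ext}(\sigma) = X_{\mf{B}}^\ast \otimes_{\wt{E}_{\mf{B}}} \bigl(P_{\mf{B}} \otimes^L_{\oo \lb G \rb_\zeta} \sigma\bigr)$.

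Next, I would apply $j^\ast$. By Corollary \ref{cor: j is open}, $j$ is the open immersion obtained by inverting $c$, so on graded $S$-modules $j^\ast$ is simply $-\otimes_S S[c^{-1}]$; it is thus exact and commutes with the formation of $\otimes_{\wt{E}_{\mf{B}}}$. Moreover, because $c$ lies in the central subring $R$ of $\wt{E}_{\mf{B}}$, localizing at $c$ preserves flatness as a right $\wt{E}_{\mf{B}}$-module, so $j^\ast X_{\mf{B}}^\ast$ remains flat over $\wt{E}_{\mf{B}}$. Associating the tensor products then gives
\[
j^\ast F_{ext}(\sigma) = \bigl(j^\ast X_{\mf{B}}^\ast \otimes_{\wt{E}_{\mf{B}}} P_{\mf{B}}\bigr) \otimes^L_{\oo \lb G \rb_\zeta} \sigma,
\]
and substituting the identification $j^\ast X_{\mf{B}}^\ast \otimes_{\wt{E}_{\mf{B}}} P_{\mf{B}} \cong P(1)$ of Proposition \ref{pullback of universal module generic ps} would finish the proof.

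The only point that warrants attention is that Proposition \ref{pullback of universal module generic ps} is phrased as an identification of $\Z/2$-graded $R$-modules, whereas here I need it as an identification of left $\oo \lb G \rb_\zeta$-modules (so that the final derived tensor product has the correct meaning). This is, however, automatic from the framework: the functor $(-)\otimes_{\wt{E}_{\mf{B}}} P_{\mf{B}}$ is naturally valued in left $\oo\lb G\rb_\zeta$-modules by virtue of the $(\oo\lb G\rb_\zeta, \wt{E}_{\mf{B}})$-bimodule structure on $P_{\mf{B}}$, and the identification in the proof of that proposition is assembled from the graded pieces of $j^\ast X_{\mf{B}}^\ast$, each of which is a natural $\wt{E}_{\mf{B}}$-linear isomorphism; applying $(-)\otimes_{\wt{E}_{\mf{B}}} P_{\mf{B}}$ then produces a $\oo\lb G\rb_\zeta$-equivariant isomorphism with $P[c,c^{-1}](1)$, which descends to $P(1)$ after the equivalence $[\Spec R[c^{\pm 1}]/\G_m] \cong [\Spec R/\mu_2]$. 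Since everything reduces to bookkeeping of bimodule structures rather than any genuinely new computation, there is no real obstacle; the corollary follows formally from Propositions \ref{aux results for generic} and \ref{pullback of universal module generic ps}.
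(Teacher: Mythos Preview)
Your approach is essentially the same as the paper's---both reduce to Proposition \ref{pullback of universal module generic ps} by commuting $j^\ast$ past the tensor product---but you take an unnecessarily circuitous route and make one incorrect claim along the way.

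The paper simply writes $F_{ext}(\sigma) = (X_{\mf{B}}^\ast \otimes_{\wt{E}_{\mf{B}}} P_{\mf{B}}) \otimes^L_{\oo\lb G\rb_\zeta} \sigma$, observes that $j^\ast$ is exact (it is localization at $c$), and hence commutes with $-\otimes^L_{\oo\lb G\rb_\zeta}\sigma$; then Proposition \ref{pullback of universal module generic ps} finishes. Your detour through the flatness of $j^\ast X_{\mf{B}}^\ast$ over $\wt{E}_{\mf{B}}$ is not needed once you group the tensor product this way.

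More importantly, your justification for that flatness is wrong: $c$ has graded degree $-2$, so $c \notin R = S_0$, and in particular $c$ does not lie in $\wt{E}_{\mf{B}}$. The conclusion you want is nonetheless true, but for a different reason: $j^\ast(-) = (-)\otimes_S S[c^{-1}]$ is flat over $S$, and associativity gives $(j^\ast X_{\mf{B}}^\ast)\otimes_{\wt{E}_{\mf{B}}} M \cong j^\ast(X_{\mf{B}}^\ast \otimes_{\wt{E}_{\mf{B}}} M)$, which is exact in $M$ since $X_{\mf{B}}^\ast$ is $\wt{E}_{\mf{B}}$-flat. Either fix the justification, or---better---drop this step entirely and argue as the paper does.
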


\begin{proof}
Since $j^\ast$ is exact at the level of abelian categories, we have
\[
j^\ast(F_{ext}(\sigma)) = j^\ast((X_{\mf{B}}^\ast \otimes_{\wt{E}_{\mf{B}}} P_{\mf{B}}) \otimes^L_{\oo \lb G \rb_\zeta } \sigma) = j^\ast((X_{\mf{B}}^\ast \otimes_{\wt{E}_{\mf{B}}} P_{\mf{B}})) \otimes^L_{\oo \lb G \rb_\zeta } \sigma.
\]
The result then follows from Proposition \ref{pullback of universal module generic ps}.
\end{proof}

Finally, we come to the non-generic II case. Recall from \S\ref{subsec: non-gen II} the presentation $\mf{X}_{\mf{B}} = [\Spec S /\G_m]$ with $S= \oo \lb a_0,a_1^\prime, b_0c, b_1 c \rb [b_0,b_1,c] / (a_0 b_1 + a_1^\prime b_0)$. As in the generic principal series case, $\mf{X}_{\rho}$ is the open substack of $\mf{X}_{\mf{B}}$ given by the condition $c\neq 0$ according to Corollary \ref{cor: j is open}, and moreover $\pi = \pi_\alpha = \Ind_B^G(\omega \otimes \omega^{-1})$. Let us explicate the map $\phi: S \to S[c^{-1}]= R[c,c^{-1}]$ like we did in the generic principal series case. In odd degrees, both sides are $0$. In degrees $-2n$, $n\geq 0$, it is the identity $c^n R \to c^n R$, and in degrees $2n$, $n\geq 0$, it is the inclusion $(b_0 R + b_1 R)^n  \to c^{-n}R$.

We now aim to prove the analogue of Proposition \ref{pullback of universal module generic ps}. The object $X_{\mf{B}}$ is defined to be $L_{-1} \oplus L_1 \oplus Q$, in the notation of \S \ref{subsec: non-gen II coh sheaves}. First, recall from \S \ref{subsec: non-gen II coh sheaves} and \S \ref{subsubsec: geom ng2} that
\[
\wt{E}_{\mf{B}} = 
\begin{pmatrix}
\End(L_{-1}) & \Hom(L_1,L_{-1}) & \Hom(Q,L_{-1}) \\ \Hom(L_{-1},L_1) & \End(L_1) & \Hom(Q,L_1) \\ \Hom(L_{-1},Q) & \Hom(L_1,Q) & \End(Q)
\end{pmatrix}.
\]
As recalled in \S \ref{subsec: notation}, if $M$ is a finitely generated graded $S$-module, then its dual $M^\ast$ has grading given by $(M^\ast)_k = \Hom(M,L_k)$. In particular, we see that the first row in $\wt{E}_{\mf{B}}$ is the grade $-1$ part of $X_{\mf{B}}^\ast = (L_{-1} \oplus L_1 \oplus Q)^\ast = L_1 \oplus L_{-1} \oplus Q^\ast$. As a right $\wt{E}_{\mf{B}}$-module, it corresponds to $P = P_{\pi_\alpha^\vee} \in \mf{C}(\oo)_{\mf{B}}$ under the equivalence $\mf{C}(\oo)_{\mf{B}} \cong \RMod^{cpt}(\wt{E}_{\mf{B}})$. 

We now show that the maps $L_1 \to L_1[c^{-1}]$, $L_{-1} \to L_{-1}[c^{-1}]$ and $Q^\ast \to Q^\ast[c^{-1}]$ are isomorphisms in degree $-1$. For completeness, we say a bit more about them. First, note that they are all $0$ in even degrees, because both sides are $0$. Let $n$ be odd. From the description of the map $S \to S[c^{-1}]$, we see that $L_n \to L_n[c^{-1}]$  is a non-zero isomorphism in odd degrees $\leq -n$ and injective but not an isomorphism in odd degrees $> -n$. In particular, both $L_1 \to L_1[c^{-1}]$ and $L_{-1} \to L_{-1}[c^{-1}]$ are isomorphisms in degree $-1$. For $Q^\ast \to Q^\ast[c^{-1}]$, recall from Proposition \ref{prop:Qvee description} that $Q^\ast$ is the cokernel of
\[
\begin{pmatrix} b_0 & -a_0 \\ b_1 & \ap \end{pmatrix} : L_{-3} \oplus L_{-1} \to L_{-1} \oplus L_{-1}.
\] 
From the remark above about the map $L_n \to L_n[c^{-1}]$, it then follows that $Q^\ast \to Q^\ast[c^{-1}]$ is an isomorphism in odd degrees $\leq 1$. In particular, we now see that $X_{\mf{B}}^\ast \to j^\ast X_{\mf{B}}^\ast = X_{\mf{B}}^\ast [c^{-1}]$ is an isomorphism in degree $-1$. We can now prove the analogue of Proposition \ref{pullback of universal module generic ps}.

\begin{proposition}\label{pullback of universal module non-gen II}
The pullback $j^\ast(X_{\mf{B}}^\ast \otimes_{\wt{E}_{\mf{B}}} P_{\mf{B}})$ is $P(1)$ (viewed as a $\Z/2$-graded $R$-module).
\end{proposition}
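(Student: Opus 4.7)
The plan is to mimic the proof of Proposition \ref{pullback of universal module generic ps} closely, using the explicit graded description of $X_\mf{B}^\ast$ from \S\ref{subsec: non-gen II coh sheaves} together with the analysis of $X_\mf{B}^\ast[c^{-1}]$ already carried out in the paragraph immediately preceding the proposition.

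First I would note that, by Corollary \ref{cor: j is open}, $j$ corresponds to inverting $c$, so $j^\ast$ is exact and is computed on graded $S$-modules by $M \mapsto M[c^{-1}]$ (viewed as a $\Z/2$-graded $R$-module via the quotient $\Z \twoheadrightarrow \Z/2$, since $c$ has $\Z$-degree $-2$ and every $c^{\pm 1}$-multiplication becomes the identity in the $\Z/2$-grading, so odd $\Z$-degrees correspond to the nontrivial $\mu_2$-eigenspace). Thus
\[ j^\ast(X_\mf{B}^\ast \otimes_{\wt{E}_\mf{B}} P_\mf{B}) \;=\; X_\mf{B}^\ast[c^{-1}] \otimes_{\wt{E}_\mf{B}} P_\mf{B}. \]

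Next I would analyze the graded structure of $X_\mf{B}^\ast[c^{-1}]$ summand by summand. Since $S$ is concentrated in even $\Z$-degrees and each of $L_1, L_{-1}, Q^\ast$ is a shift (or, for $Q^\ast$, a cokernel of a map between shifts by odd integers, cf.\ Proposition \ref{prop:Qvee description}) of $S$ by an odd integer, the whole of $X_\mf{B}^\ast$ is concentrated in odd degrees. The key inputs---already established in the text immediately before the proposition---are: (i) the degree $-1$ piece $(X_\mf{B}^\ast)_{-1}$ equals the first row of $\wt{E}_\mf{B}$ in the GMA presentation, and hence, as a right $\wt{E}_\mf{B}$-module, is canonically identified with $P$ (this is the matching recorded in \S\ref{subsubsec: geom ng2}); and (ii) the localization map $X_\mf{B}^\ast \to X_\mf{B}^\ast[c^{-1}]$ is an isomorphism in degree $-1$. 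Combined with the fact that multiplication by $c^{\pm k}$ induces right-$\wt{E}_\mf{B}$-linear isomorphisms between graded pieces of $X_\mf{B}^\ast[c^{-1}]$ differing by $\mp 2k$ in degree, this yields, for every odd $2k-1 \in \Z$, an identification $(X_\mf{B}^\ast[c^{-1}])_{2k-1} \cong c^{-k} \cdot P \cong P$ as right $\wt{E}_\mf{B}$-modules. In graded form,
\[ X_\mf{B}^\ast[c^{-1}] \;\cong\; P \otimes_R R[c,c^{-1}] \quad\text{with } P \text{ placed in degree } -1. \]

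Applying $- \otimes_{\wt{E}_\mf{B}} P_\mf{B}$ then produces, by the same formal calculation as in the generic principal series case, the graded $R[c,c^{-1}]$-module $P[c,c^{-1}](1)$, which collapses to $P(1)$ as a $\Z/2$-graded $R$-module. The main obstacle here is establishing the degree-by-degree identification for the summand $Q^\ast$, since $Q^\ast$ is only presented as a cokernel rather than as a graded shift of $S$; however this has already been taken care of in the analysis preceding the proposition, which establishes that $Q^\ast \to Q^\ast[c^{-1}]$ is an isomorphism in all odd degrees $\leq 1$ and, in particular, in degree $-1$. Once this is in place, the remaining verification is formal and no new input is required.
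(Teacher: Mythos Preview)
Your proposal is correct and follows essentially the same approach as the paper's proof: both rely on the two facts established in the paragraph preceding the proposition, namely that $(X_{\mf{B}}^\ast)_{-1}$ is the right $\wt{E}_{\mf{B}}$-module corresponding to $P$ and that the localization map $X_{\mf{B}}^\ast \to X_{\mf{B}}^\ast[c^{-1}]$ is an isomorphism in degree $-1$. The only difference is cosmetic: the paper immediately reduces to checking degree $-1$ (since everything is concentrated in odd degrees, hence in the nonzero $\Z/2$-degree), whereas you write out the full $\Z$-graded picture $X_{\mf{B}}^\ast[c^{-1}] \cong P \otimes_R R[c,c^{-1}]$ before collapsing.
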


\begin{proof}
Since $X_{\mf{B}}^\ast \otimes_{\wt{E}_{\mf{B}}} P_{\mf{B}}$ is concentrated in odd degrees, we know that $j^\ast(X_{\mf{B}}^\ast \otimes_{\wt{E}_{\mf{B}}} P_{\mf{B}})$ will be concentrated in the non-zero degree (as a $\Z/2$-graded module), so it suffices to prove that $(j^\ast(X_{\mf{B}}^\ast \otimes_{\wt{E}_{\mf{B}}} P_{\mf{B}}))_{-1} = P$. But we have 
\[
j^\ast(X_{\mf{B}}^\ast \otimes_{\wt{E}_{\mf{B}}} P_{\mf{B}}) = (j^\ast X_{\mf{B}}^\ast) \otimes_{\wt{E}_{\mf{B}}} P_{\mf{B}}
\]
and above we have shown that $X_{\mf{B}}^\ast \to j^\ast X_{\mf{B}}^\ast$ is an isomorphism in degree $-1$, and that $(X^\ast_{\mf{B}})_{-1}$ is the right $\wt{E}_{\mf{B}}$-module corresponding to $P$. The result follows.
\end{proof}

We then get the analogue of Corollary \ref{pullback formula gps} from Proposition \ref{pullback of universal module non-gen II}, with the same proof.

\begin{corollary}\label{pullback formula non-gen II}
We have $j^\ast(F_{ext}(\sigma)) = P(1) \otimes^L_{\oo \lb G \rb_\zeta } \sigma$.
\end{corollary}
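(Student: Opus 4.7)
The plan is to mimic the proof of Corollary \ref{pullback formula gps} verbatim, substituting Proposition \ref{pullback of universal module non-gen II} for Proposition \ref{pullback of universal module generic ps}. Concretely, I would start by recalling the defining formula
\[
F_{ext}(\sigma) \;=\; (X_{\mf{B}}^\ast \otimes_{\wt{E}_{\mf{B}}} P_{\mf{B}}) \otimes^L_{\oo \lb G \rb_\zeta} \sigma,
\]
which was established in \S\ref{subsec: local considerations}, and then commute the pullback $j^\ast$ past the derived tensor product with $\sigma$.

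The key observation is that $j^\ast$ is exact on the abelian category of quasicoherent sheaves (since $j$ is an open immersion by Corollary \ref{cor: j is open}), so it commutes with the derived tensor product over $\oo \lb G \rb_\zeta$. Hence
\[
j^\ast(F_{ext}(\sigma)) \;=\; j^\ast(X_{\mf{B}}^\ast \otimes_{\wt{E}_{\mf{B}}} P_{\mf{B}}) \otimes^L_{\oo \lb G \rb_\zeta} \sigma.
\]
The proof then concludes immediately by applying Proposition \ref{pullback of universal module non-gen II}, which identifies $j^\ast(X_{\mf{B}}^\ast \otimes_{\wt{E}_{\mf{B}}} P_{\mf{B}})$ with $P(1)$ as a $\Z/2$-graded $R$-module.

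There is no hard step here; the nontrivial input (the identification $j^\ast(X_{\mf{B}}^\ast \otimes_{\wt{E}_{\mf{B}}} P_{\mf{B}}) \cong P(1)$) has already been done in Proposition \ref{pullback of universal module non-gen II}, where the work was in tracking the grading and showing that the map $X_{\mf{B}}^\ast \to j^\ast X_{\mf{B}}^\ast = X_{\mf{B}}^\ast[c^{-1}]$ is an isomorphism in degree $-1$ on each of the summands $L_1$, $L_{-1}$, and $Q^\ast$. Therefore a single-sentence proof suffices, pointing to exactness of $j^\ast$ and the cited proposition.
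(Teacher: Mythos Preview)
Your proposal is correct and matches the paper's approach exactly: the paper simply states that the result follows from Proposition \ref{pullback of universal module non-gen II} ``with the same proof'' as Corollary \ref{pullback formula gps}, i.e., using exactness of $j^\ast$ to commute it past the derived tensor product and then invoking the identification of $j^\ast(X_{\mf{B}}^\ast \otimes_{\wt{E}_{\mf{B}}} P_{\mf{B}})$ with $P(1)$.
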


\begin{remark}\label{kernel of functor}
Propositions \ref{pullback of universal module generic ps} and \ref{pullback of universal module non-gen II} suggests that the `kernel' $X_\mf{B}^\ast \otimes_{\wt{E}_\mf{B}}P_\mf{B}$ used to define $F_{ext}$ is an interpolation of the projective envelopes of the irreducibles over the moduli stack of Galois representations. In particular, it appears to be more `canonical' than $X_{\mf{B}}$ itself.
\end{remark}
\subsection{Recovering the Montr\'{e}al functor}\label{subsec:montreal}
This subsection contains a result that is proved using similar considerations to the previous subsection. It answers a question raised by Pa{\v s}k{\=u}nas in correspondence with us. We use the covariant functor $\vc: \mf{C}(\oo) \to  \Mod_{\Gamma_{\Qp}}^{cpt}(\oo)$ to continuous $\Gamma_{\Qp}$-representations on compact $\oo$-modules introduced in \cite[\S5.7]{paskunas-image}. On finite length objects it is defined as $\vc(M) = \vc(M^\vee)$, in terms of the renormalized Montr\'{e}al functor on smooth representations we recalled in \S\ref{sec: blocks for GL2}. It extends to $\mf{C}(\oo)$ by taking limits. 

Our first proposition will describe the Montr\'{e}al functor applied to projective envelopes in $\mf{C}(\oo)$, in terms of our functor $F_{cpt}$. In fact, we compose \[F_{cpt} : \D(\mf{C}(\oo)_{\mf{B}}) \to \ProCoh(\mf{X}_{\mf{B}})\] with the functor $\ProCoh(\mf{X}_{\mf{B}})\to \D_{qcoh}(\mf{X}_{\mf{B}})$ given by taking limits to get \[\ol{F}_{cpt}: \D(\mf{C}(\oo)_{\mf{B}}) \to \D_{qcoh}(\mf{X}_{\mf{B}}).\] For the projective envelope $\wt{P}_{\pi^\vee}$ of the dual of an absolutely irreducible representation, we have prescribed the image $\ol{F}_{cpt}(\wt{P}_{\pi^\vee})$ in \S\ref{chap: geometric interpretation}. When $\pi$ is infinite-dimensional, $\ol{F}_{cpt}(\wt{P}_{\pi^\vee})$ is a vector bundle. 
\begin{proposition}\label{rem:vytas question}
Fix a block $\mf{B}$ containing an absolutely irreducible representation $\pi$, and assume that $\pi$ is infinite-dimensional. Let $\mc{V}$ be the vector bundle on $\fX_{\mf{B}}$ carrying the universal Galois representation. Then we have a $R[\Gamma_{\Qp}]$-equivariant isomorphism
\[
R\Gamma(\fX_{\mf{B}},\mathcal{V}\otimes_{\OO_{\fX_{\mf{B}}}}\ol{F}_{cpt}(\wt{P}_{\pi^\vee})) \cong \vc(\wt{P}_{\pi^\vee}).
\]
\end{proposition}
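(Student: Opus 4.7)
The approach is to unfold the definition of $\ol{F}_{cpt}$ and use a projection-formula-style argument to reduce to a universal computation, which is then matched with the Montr\'{e}al functor via Pa\v{s}k\=unas's description of $\vc$ on projective envelopes.

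The first step is to rewrite $\ol{F}_{cpt}(\wt{P}_{\pi^\vee})$ via the formula $F_{cpt}(M) = M \otimes^L_{\wt{E}_{\mf{B}}} X_{\mf{B}}$ from \S\ref{sec: categorical constructions}. Since $\wt{P}_{\pi^\vee}$ is projective in $\mf{C}(\oo)_{\mf{B}}$, the derived tensor is an honest tensor, expressible as a cofiltered limit of tensor products with finite projective $\wt{E}_{\mf{B}}$-modules. The left $\wt{E}_{\mf{B}}$-action on $X_{\mf{B}}$ commutes with the $\OO_{\fX_{\mf{B}}}$-action, so it extends naturally to $\mc{V} \otimes_{\OO_{\fX_{\mf{B}}}} X_{\mf{B}}$. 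Using exactness of tensoring with the locally free sheaf $\mc{V}$ and the compatibility of $R\Gamma$ with cofiltered inverse limits of coherent sheaves on the Noetherian stack $\fX_{\mf{B}}$, I would deduce a natural isomorphism
\[
R\Gamma(\fX_{\mf{B}}, \mc{V} \otimes_{\OO_{\fX_{\mf{B}}}} \ol{F}_{cpt}(\wt{P}_{\pi^\vee})) \cong \wt{P}_{\pi^\vee} \otimes_{\wt{E}_{\mf{B}}} R\Gamma(\fX_{\mf{B}}, \mc{V} \otimes_{\OO_{\fX_{\mf{B}}}} X_{\mf{B}}).
\]

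The heart of the proof is then the key identification
\[
R\Gamma(\fX_{\mf{B}}, \mc{V} \otimes_{\OO_{\fX_{\mf{B}}}} X_{\mf{B}}) \cong \vc(\wt{E}_{\mf{B}}),
\]
where the right-hand side denotes $\bigoplus_\pi \vc(\wt{P}_{\pi^\vee})$ summed over isomorphism classes of simples in $\mf{B}$, using the decomposition $\wt{E}_{\mf{B}} = \bigoplus_\pi \wt{P}_{\pi^\vee}$ as a right $\wt{E}_{\mf{B}}$-module; the $\Gamma_{\Qp}$-action on the left arises from the universal representation carried by $\mc{V}$. I plan to verify this block by block. In the supersingular case, $X_{\mf{B}} = L_1$ lies in $\mu_2$-degree 1 and $\mc{V}$ also carries a degree-1 twist, so the $\mu_2$-invariants of $\mc{V} \otimes L_1$ compute the universal lift of $\rho_{\mf{B}}$, matching $\vc(\wt{P}_{\pi^\vee})$. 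In the generic principal series case, $X_{\mf{B}} = \mc{V}$, and self-duality of $\mc{V}$ (Proposition \ref{selfduality of universal vb}) identifies $\mc{V} \otimes X_{\mf{B}}$ with $\Homi_{\OO_{\fX_{\mf{B}}}}(\mc{V}, \mc{V})$, whose global sections are $\End(\mc{V}) = \wt{E}_{\mf{B}}$ by Theorem \ref{Ext groups of universal rep generic ps}; the Galois action transported through this identification matches Pa\v{s}k\=unas's description of $\vc$ on projective envelopes in \cite[\S\S 7--8]{paskunas-image}. In non-generic case II, the assumption that $\pi$ is infinite-dimensional means $\wt{P}_{\pi^\vee}$ corresponds to one of the $L_{\pm 1}$ summands of $X_{\mf{B}} = L_{-1}\oplus L_1 \oplus Q$ (cf.\ \S\ref{subsubsec: geom ng2}), so only the $L_{\pm 1}$ part of $R\Gamma(\mc{V}\otimes X_{\mf{B}})$ enters, and the match with the corresponding summand of $\vc(\wt{E}_{\mf{B}})$ follows from \cite[\S 10]{paskunas-image}.

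Combining the two steps, projectivity of $\wt{P}_{\pi^\vee}$ gives $\wt{P}_{\pi^\vee}\otimes_{\wt{E}_{\mf{B}}} \vc(\wt{E}_{\mf{B}}) = \vc(\wt{P}_{\pi^\vee})$, completing the argument. The main obstacle will be the $\Gamma_{\Qp}$-equivariance in the key identification of the second paragraph, particularly in non-generic case II where the explicit descriptions of both $X_{\mf{B}}$ and $\vc$ are intricate and the hypothesis on $\pi$ is crucial (the excluded case $\pi = \mathbf{1}_G$ would engage the sheaf $Q$, whose global sections do not match $\vc(\wt{P}_{\mathbf{1}_G^\vee}) = 0$). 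A secondary subtlety is topological: $\wt{P}_{\pi^\vee}$ is compact rather than discrete, so one needs to justify carefully the interchange of $R\Gamma$ with an inverse limit in the first step.
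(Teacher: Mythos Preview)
Your approach is workable but is organized rather differently from the paper's, and it has one genuine omission and one incorrect side claim.

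\textbf{Comparison with the paper.} The paper does not insert your intermediate ``universal'' object $R\Gamma(\mc{V}\otimes X_{\mf{B}})$; instead it works directly with the explicit summand $\ol{F}_{cpt}(\wt{P}_{\pi^\vee})$ of $X_{\mf{B}}$ in each case. In the generic principal series case (and for $\pi=\pi_\alpha$ in non-generic II) the paper does \emph{not} use $\End(\mc{V})=E$; rather, it uses the open immersion $j:\mf{X}_\rho\to\mf{X}_{\mf{B}}$ obtained by inverting $c$, observes that $\mc{V}\otimes L_{-1}=L_0\oplus L_{-2}$ is already isomorphic to its $c$-localization in graded degree~$0$, and thereby identifies the global sections with $\rho^{univ}$ on $[\Spec R_\rho/\mu_2]$, matching \cite[Cor.~8.7, Cor.~10.72]{paskunas-image} directly. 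For $\pi=\Sp$ the paper uses the short exact sequence $0\to L_1\xrightarrow{c}L_{-1}\to L_{-1}/cL_1\to 0$, tensors with $\mc{V}$, takes global sections, and compares with the exact sequence of \cite[Rem.~10.97]{paskunas-image}; your ``follows from \S10'' does not capture this step. Your $\End(\mc{V})=E$ route is essentially what the paper does in non-generic~I; the paper in fact proves the identification $\vc(P_{\mf{B}}^{\mathrm{inf}})\cong E$ (your ``key identification'') only \emph{after} this proposition, as Corollary~\ref{reinterpretation}, so you would need to verify it independently from Pa\v{s}k\=unas's results. Also, your projection-formula step and the attendant inverse-limit worry are unnecessary: $\wt{P}_{\pi^\vee}$ is a direct summand of $\wt{P}_{\mf{B}}$, so under $\mf{C}(\oo)_{\mf{B}}\cong\RMod_{cpt}(\wt{E}_{\mf{B}})$ it corresponds to a finitely generated projective module, and $\ol{F}_{cpt}(\wt{P}_{\pi^\vee})$ is literally one of the summands of $X_{\mf{B}}$.

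\textbf{Gap and error.} You omit non-generic case~I entirely from your case list; there $\pi=\Ind_B^G(\delta\otimes\delta\omega^{-1})$ is infinite-dimensional and must be treated. Your generic principal series argument via $\End(\mc{V})=E$ would apply verbatim since $X_{\mf{B}}=\mc{V}$ there too, but matching the $\Gamma_{\Qp}$-action requires unwinding Pa\v{s}k\=unas's construction in \cite[\S9]{paskunas-image} of the isomorphism $E^{op}\cong\wt{E}$ via a non-commutative deformation; the paper spells this out explicitly. Finally, your parenthetical claim that $\vc(\wt{P}_{\mathbf{1}_G^\vee})=0$ is not correct: the Montr\'eal functor kills the irreducible $\mathbf{1}_G$, but the projective envelope $\wt{P}_{\mathbf{1}_G^\vee}$ has many infinite-dimensional quotients and $\vc(\wt{P}_{\mathbf{1}_G^\vee})$ is nonzero. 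This does not affect your main argument since you correctly restrict to infinite-dimensional $\pi$, but the stated reason for the restriction is wrong.
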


\begin{proof}
	We split up into cases based on the type of block $\mf{B}$. 
	
	Firstly, suppose we are in the supersingular case. So $\mf{X}_{\mf{B}} = [\Spec R_\rho/\mu_2]$ for an irreducible $\rho$, $R \cong R_\rho$, and $\ol{F}_{cpt}(\wt{P}_{\pi^\vee}) = R(1)$.  We can identify $\mc{V}$ with $\rho^{univ}(1)$ (i.e.\ $\rho^{univ}$ concentrated in the non-zero degree). This identifies $R\Gamma(\fX_{\mf{B}},\mathcal{V}\otimes_{\OO_{\fX_{\mf{B}}}}\ol{F}_{cpt}(\wt{P}_{\pi^\vee}))$ with $\rho^{univ}$. On the other hand, $\vc(\wt{P}_{\pi^\vee})$ is also isomorphic to $\rho^{univ}$ \cite[Prop.~6.3]{paskunas-image}.
	
	Now suppose we are in the generic principal series case with $\pi = \Ind_B^G (\chi_1 \otimes \chi_2 \omega^{-1})$. As in the previous subsection, we let $\rho: \Gamma_{\Qp} \to \GL_2(\Fpbar)$ be a non-split extension of the form  
	\[
	0 \to \chi_2 \to \rho \to \chi_1\to 0,
	\] and consider the open immersion $j: \mf{X}_{\rho} \to \mf{X}_{\mf{B}}$ given by inverting $c$ (which has graded degree $-2$). We have $\ol{F}_{cpt}(\wt{P}_{\pi^\vee}) = L_{-1}$ and $\mc{V} = L_1 \oplus L_{-1}$. The tensor product is $L_0 \oplus L_{-2}$ and the map $L_0 \oplus L_{-2} \to (L_0 \oplus L_{-2})[c^{-1}]$ is an isomorphism in graded degree $0$. So we can identify $R\Gamma(\fX_{\mf{B}},\mathcal{V}\otimes_{\OO_{\fX_{\mf{B}}}}\ol{F}_{cpt}(\wt{P}_{\pi^\vee}))$ with $R\Gamma([\Spec(R_\rho) / \mu_2], j^*\mc{V}\otimes R_{\rho}(1))$. As in the supersingular case, this gives the universal deformation of $\rho$ and we conclude by \cite[Cor.~8.7]{paskunas-image}.
	
	Next, we suppose we are in the non-generic II case. After twisting, we can assume that $\mf{B}$ contains the trivial representation. Suppose $\pi = \pi_\alpha = \Ind_B^G(\omega\otimes\omega^{-1})$. The same argument as in the generic principal series case identifies $R\Gamma(\fX_{\mf{B}},\mathcal{V}\otimes_{\OO_{\fX_{\mf{B}}}}\ol{F}_{cpt}(\wt{P}_{\pi_{\alpha}^\vee}))$ with the universal deformation of $\rho$, a non-split extension of $\omega$ by the trivial character. Now we apply \cite[Cor.~10.72]{paskunas-image}, which shows that $\vc(\wt{P}_{\pi_{\alpha}^\vee})$ has the same description. The other possibility for $\pi$ is $\pi = \Sp$. Here we can follow the strategy of \cite[Remark 10.97]{paskunas-image}, which computes $\vc(\wt{P}_{\Sp^\vee})$ using knowledge of $\vc(\wt{P}_{\pi_{\alpha}^\vee})$ and a short exact sequence \[0 \to \vc(\wt{P}_{\Sp^\vee}) \to \vc(\wt{P}_{\pi_{\alpha}^\vee}) \to N_\omega \to 0 \] where $N_\omega$ is a deformation of $\omega$ to the reducible locus $R_{\rho}/(b_0c,b_1c)$, given by the `lower right' entry of the universal reducible deformation of $\rho$. We have a completely parallel story for our functor: there is a short exact sequence
	\[0 \to L_1 = \ol{F}_{cpt}(\wt{P}_{\Sp^\vee}) \xrightarrow{c} L_{-1}= \ol{F}_{cpt}(\wt{P}_{\pi_{\alpha}^\vee}) \to L_{-1}/cL_1 \to 0 \] which after tensoring with $\mc{V}$ and taking global sections gives a short exact sequence 
	\[0 \to \Gamma(\fX_{\mf{B}},\mc{V}\otimes_{\oo_{\fX_{\mf{B}}}}L_1) \to \rho^{univ} \to N \to 0\] where $N$ is a free rank one module over $R_{\rho}/(b_0c,b_1c)$. Moreover, $N$ comes from the first component $L_1$ in $\mc{V}$. This means that the Galois action on $N$ deforms $\omega$. We deduce that the surjective map $\rho^{univ}\otimes_{R_{\rho}}R_{\rho}/(b_0c,b_1c) \to N$ factors through a surjective map from $N_\omega$. We deduce from the freeness of $N$ that this map is an isomorphism. This finally shows that  $\Gamma(\fX_{\mf{B}},\mc{V}\otimes_{\oo_{\fX_{\mf{B}}}}L_1)$ isomorphic to $\vc(\wt{P}_{\Sp^\vee})$. There are no higher cohomology groups, since $\fX_{\mf{B}}$ is quotient of an affine scheme by a linearly reductive group. 
	
The remaining case is non-generic I. We have $\ol{F}_{cpt}(\pi) = \mc{V}\cong\mc{V}^*$.  So we have
\[
R\Gamma(\fX_{\mf{B}},\mathcal{V}\otimes_{\OO_{\fX_{\mf{B}}}}\mc{V}^*) = \End(\mc{V}) = E,
\] 
the Cayley--Hamilton algebra. The action of $\Gamma_{\Qp}$ is via left multiplication on $E$ (recall that we have a universal representation $\Gamma_{\Qp} \to E^\times$). On the other hand, Pa{\v s}k{\=u}nas shows that $\vc(\wt{P}_{\pi^\vee})$ is a (non-commutative) deformation of a one-dimensional representation of $\Gamma_{\Qp}$ over $k$ to $\wt{E}$, and uses this to produce a map $\oo\lb\mc{G}\rb^{op} \to \wt{E}$ which factors through an isomorphism from $E^{op}$ to $\wt{E}$ \cite[\S9]{paskunas-image}. After twisting, we may assume that the one-dimensional Galois representation is trivial. Then its universal (non-commutative) deformation is given by $\oo\lb \mc{G} \rb$, viewed as a left $\oo\lb\mc{G}\rb^{op}$-module by the right regular action, and with left regular $\Gamma_{\Qp}$-action. We may now identify $\vc(\wt{P}_{\pi^\vee})$ with $E^{op}\otimes_{\oo\lb\mc{G}\rb^{op}}\oo\lb\mc{G}\rb$, with $\Gamma_{\Qp}$-action given by the left regular action on $\oo\lb\mc{G}\rb$. This can in turn be identified with $E$, with the left regular action of $\Gamma_{\Qp}$. 
\end{proof}

When $\pi$ is finite dimensional, one can show that $R\Gamma(\fX_{\mf{B}},\mc{V} \otimes_{\oo_{\fX_{\mf{B}}}} \ol{F}_{cpt}(\pi^\vee))=0$ by direct computation. From this and Proposition \ref{rem:vytas question}, one can deduce that 
\[
\Gamma(\fX_{\mf{B}},\mc{V} \otimes_{\oo_{\fX_{\mf{B}}}} \ol{F}_{cpt}(\sigma^\vee)) = R\Gamma(\fX_{\mf{B}},\mc{V} \otimes_{\oo_{\fX_{\mf{B}}}} \ol{F}_{cpt}(\sigma^\vee)) \cong \vc(\sigma^\vee)
\]
for all $\sigma^\vee \in \mf{C}(\oo)_{\mf{B}}$. In particular, this recovers the renormalized Montr\'eal functor $\vc$ from the categorical embedding as a (spectral) Whittaker coefficient (with extra structure) in the sense of the geometric Langlands program. In keeping with our focus on the discrete functor, we will not give the details of the above assertions for $F_{cpt}$, but instead prove a version relating $F_{disc}$ and the original Montr\'eal functor $\V$. To start with, we give a (partial) reinterpretation of Proposition \ref{rem:vytas question}. Let $E$ be the universal Cayley--Hamilton algebra for a block $\mf{B}$. The canonical isomorphism $V \cong V^\ast \otimes \det(V)$, for any two-dimensional representation $V$, induces an isomorphism $E \to E^{op}$ which makes the diagram
\[
\xymatrix{
\oo \lb \Gamma_{\Qp} \rb \ar[r] \ar[d] & E \ar[d] \\
\oo \lb \Gamma_{\Qp} \rb^{op} \ar[r] & E^{op}
}
\]
commute, where the left vertical map sends $\gamma \in \Gamma_{\Qp}$ to $(\epsilon \zeta)(\gamma)\gamma^{-1}$. 

\begin{corollary}\label{reinterpretation}
Write $P_{\mf{B}}^{inf}$ for the direct sum of the projective envelopes of the Pontrygain duals of the infinite dimensional irreducible representations in $\mf{B}$. When $\mf{B}$ is not supersingular, we have $\End(P_{\mf{B}}^{inf}) = E$, and $\vc(P_{\mf{B}}^{inf})$ is isomorphic to $E$ as a $(\Gamma_{\Qp},E^{op})$-bimodule, where $\Gamma_{\Qp}$ acts on $E$ via the left $E$-action. As a consequence, we have $\vc(P_{\mf{B}}^{inf})^\ast(\epsilon \zeta) \cong E$ as $(\Gamma_{\Qp},E)$-bimodules as well.
\end{corollary}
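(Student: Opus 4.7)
The plan is to apply Proposition \ref{rem:vytas question} to each infinite-dimensional irreducible in $\mf{B}$ and collect the results, then track bimodule structures. First I would record the case-by-case identification $\End(P_{\mf{B}}^{inf}) = E$ in all three non-supersingular block types: for generic principal series and non-generic~I, $P_{\mf{B}}^{inf} = P_{\mf{B}}$ and $\End(P_{\mf{B}}) = \wt{E}_{\mf{B}}$ is identified with the Cayley--Hamilton algebra $E$ already in \S\ref{subsubsec: geom gps} and \S\ref{subsubsec: geom ng1}; for non-generic~II, $P_{\mf{B}}^{inf} = \wt{P}_{\mathrm{Sp}^\vee} \oplus \wt{P}_{\pi_\alpha^\vee}$ and $\End(P_{\mf{B}}^{inf})$ is the top-left $2 \times 2$ sub-matrix algebra of $\wt{E}_{\mf{B}}$ in \eqref{eq: 3x3 pres 3}, which under the comparison of \S\ref{subsubsec: geom ng2} coincides with $\End(L_{-1} \oplus L_1) = E$. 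A parallel check in the same three cases shows $\ol{F}_{cpt}(P_{\mf{B}}^{inf}) \cong \mc{V}$ as a left $E$-module: either this equals $F_{cpt}(P_{\mf{B}}) = X_{\mf{B}} = \mc{V}$ (generic PS, non-generic~I), or, in non-generic~II, the two infinite-dimensional summands of $P_{\mf{B}}$ contribute precisely the vector bundle part $L_{-1} \oplus L_1 = \mc{V}$ of $X_{\mf{B}} = L_{-1} \oplus L_1 \oplus Q$.

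Summing Proposition \ref{rem:vytas question} over the infinite-dimensional irreducibles, and using the additivity of $\vc$, $\ol{F}_{cpt}$, $\mc{V}\otimes-$ and $R\Gamma$, then gives a $\Gamma_{\Qp}$-equivariant isomorphism
\[
\vc(P_{\mf{B}}^{inf}) \cong R\Gamma\bigl(\fX_{\mf{B}},\, \mc{V} \otimes_{\OO_{\fX_{\mf{B}}}} \mc{V}\bigr).
\]
The self-duality $\mc{V} \cong \mc{V}^\ast$ of Proposition \ref{selfduality of universal vb} identifies $\mc{V} \otimes \mc{V}$ with $\underline{\End}(\mc{V})$, and the higher cohomology of this bundle vanishes in all three cases: in generic PS and non-generic~II because $\fX_{\mf{B}}$ is a quotient of an affine scheme by a linearly reductive group and $\mc{V}$ is projective in $\QCoh(\fX_{\mf{B}})$ (Theorems \ref{Ext groups of universal rep generic ps} and \ref{line bundles nongen 2}), and in non-generic~I by Proposition \ref{vanishing of exts non-generic I}. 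Combined with the identification $\End(\mc{V}) = E$ from Theorems \ref{Ext groups of universal rep generic ps}, \ref{endomorphisms in non-generic 1} and \ref{line bundles nongen 2}, this yields an abstract isomorphism $\vc(P_{\mf{B}}^{inf}) \cong E$ of $R$-modules.

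The remaining and most delicate step is to match bimodule structures. The $\Gamma_{\Qp}$-action originates on the first copy of $\mc{V}$ and, through $\Gamma_{\Qp} \to E^\times$, corresponds to left multiplication by $\rho^{univ}(\gamma)$ on $E = \End(\mc{V})$, which is the action in the statement. The left $E$-action on the second copy of $\mc{V}$ (inherited from $\End(P_{\mf{B}}^{inf}) = E$ by covariance of $\ol{F}_{cpt}$), transported through the wedge-pairing self-duality $\mc{V} \cong \mc{V}^\ast$ on a rank-2 bundle with trivialized determinant, will produce precisely the canonical anti-involution $E \cong E^{op}$ displayed just before the Corollary; under $\mc{V} \otimes \mc{V}^\ast \cong \underline{\End}(\mc{V})$ this becomes right multiplication on $E$, giving the claimed $(\Gamma_{\Qp}, E^{op})$-bimodule isomorphism. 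The consequence for $\vc(P_{\mf{B}}^{inf})^\ast(\epsilon\zeta)$ is then formal: $R$-linear duality swaps the two sides of a bimodule, and combining with the canonical isomorphism $E \cong E^{op}$ and the character twist $\gamma \mapsto (\epsilon\zeta)(\gamma)\gamma^{-1}$ converts the result into the asserted $(\Gamma_{\Qp}, E)$-bimodule isomorphism. The main obstacle I anticipate is precisely this last bookkeeping: a clean verification that transporting the $E$-action through the rank-$2$ self-duality produces exactly the displayed anti-involution of $E$, and not some other twist, which is essentially a short matrix calculation with the wedge-product pairing but which must be done carefully for the final $\epsilon\zeta$-twist to come out correctly.
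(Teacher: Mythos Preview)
Your proposal is correct and follows essentially the same route as the paper: identify $\ol{F}_{cpt}(P_{\mf{B}}^{inf})$ with $\mc{V}$, apply Proposition~\ref{rem:vytas question} and the self-duality $\mc{V}\cong\mc{V}^\ast$ to get $R\Gamma(\fX_{\mf{B}},\mc{V}\otimes\mc{V}^\ast)=\End(\mc{V})=E$, then check the bimodule structures. The only difference is that the paper singles out non-generic~I and refers back to the deformation-theoretic description at the end of the proof of Proposition~\ref{rem:vytas question} (via Pa\v{s}k\={u}nas's identification of $\vc(\wt{P}_{\pi^\vee})$ with $E$ carrying the left regular $\Gamma_{\Qp}$-action), whereas you treat all three non-supersingular cases uniformly through $\mc{V}\otimes\mc{V}^\ast$; both work, and your uniform treatment together with your explicit tracking of how the wedge-pairing self-duality converts the functorial $E$-action into the canonical anti-involution is in fact more detailed than the paper's ``one checks that the actions match''.
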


\begin{proof}
The second statement follows from the first, so it suffices to prove the first statement. When $\mf{B}$ is of type non-generic I, this follows from the last sentence of the proof of Proposition \ref{rem:vytas question}. For the other two cases, it follows from the fact that $\ol{F}_{cpt}(P_{\mf{B}})^{inf} \cong \mc{V} \cong \mc{V}^\ast$ and hence
\[
\vc(\wt{P}_{\mf{B}}^{inf}) \cong R\Gamma(\fX_{\mf{B}},\mathcal{V}\otimes_{\OO_{\fX_{\mf{B}}}}\ol{F}_{cpt}(\wt{P}^{inf}_{\mf{B}})) \cong R\Gamma(\fX_{\mf{B}},\mathcal{V}\otimes_{\OO_{\fX_{\mf{B}}}}\mc{V}^*) = \End(\mc{V}) = E
\]
by Proposition \ref{rem:vytas question}, and one checks that the actions match. 
\end{proof}

Now let $\mf{B}$ be any block and consider the functor
\[
H : \IndCoh(\mf{X}_{\mf{B}}) \to \D^L(E)
\]
given by $H(\mc{F}) = R\Gamma(\mf{X}_{\mf{B}},\mc{V}\otimes_{\oo_{\mf{X}_\mf{B}}}\mc{F})$, with the $E$-action coming from the left $E$-action on $\mc{V}$. Alternatively, we may write the functor as $H(\mc{F}) =  \RHom(\mc{V}^\ast,\mc{F})$. In particular, $H$ commutes with all colimits. 

\begin{lemma}\label{exactness of Whittaker coefficient}
The composition $H \circ F_{disc} : \D(\Mod_{G,\zeta}^{lfin}(\oo)_\mf{B}) \to \D^L(E)$ is t-exact, and hence induces an exact functor $H_0(H \circ F_{disc}) : \Mod_{G,\zeta}^{lfin}(\oo)_\mf{B} \to \LMod(E)$.
\end{lemma}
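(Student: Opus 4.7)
The plan is to rewrite $H \circ F_{disc}$ as tensoring with a flat right $\wt{E}_\mf{B}$-module, from which t-exactness becomes immediate. Recall that $F_{disc}$ is the composition of the t-exact equivalence $J : \D(\Mod_{G,\zeta}^{lfin}(\oo)_\mf{B}) \isoto \D^L_{disc}(\wt{E}_\mf{B})$ with the functor $\sigma' \mapsto X_\mf{B}^* \otimes^L_{\wt{E}_\mf{B}} \sigma'$. Since $X_\mf{B}$ is maximal Cohen--Macaulay and $\mc{V}$ is locally free, coherent duality gives a canonical identification $\mc{V} \otimes_{\oo_{\mf{X}_\mf{B}}} X_\mf{B}^* \cong \Homi(X_\mf{B}, \mc{V})$. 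Moreover, $\mc{V}^* \in \D^b_{coh}(\mf{X}_\mf{B})$ is compact in $\IndCoh(\mf{X}_\mf{B})$, so $H(-) = \RHom(\mc{V}^*, -)$ commutes with colimits, and in particular with the simplicial realization computing $X_\mf{B}^* \otimes^L_{\wt{E}_\mf{B}} J(\sigma)$ via a free resolution of $J(\sigma)$. This yields a projection formula
\begin{equation*}
H(F_{disc}(\sigma)) \cong N_\mf{B} \otimes^L_{\wt{E}_\mf{B}} J(\sigma), \qquad \text{where } N_\mf{B} := R\Hom_{\oo_{\mf{X}_\mf{B}}}(X_\mf{B}, \mc{V}).
\end{equation*}

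It then remains to verify, block by block, that $N_\mf{B}$ is concentrated in homological degree zero and is flat (in fact projective) as a right $\wt{E}_\mf{B}$-module. In the supersingular and generic principal series cases, $X_\mf{B}$ is a line bundle or $\mc{V}$ itself, and the relevant $\Ext$-vanishing is immediate from the projectivity results of Theorems \ref{Ext groups of universal rep generic ps} and \ref{line bundles nongen 2}. In the non-generic I case, $X_\mf{B} = \mc{V}$ and the vanishing $\Ext^i(\mc{V}, \mc{V}) = 0$ for $i \geq 1$ is Proposition \ref{vanishing of exts non-generic I}, giving $N_\mf{B} = E = \wt{E}_\mf{B}$, which is free over itself. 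In the non-generic II case, $X_\mf{B} = L_{-1} \oplus L_1 \oplus Q$, so $N_\mf{B} = \Hom(X_\mf{B}, L_{-1}) \oplus \Hom(X_\mf{B}, L_1)$; the vanishing $\Ext^i(Q, L_{\pm 1}) = 0$ comes from Proposition \ref{ext calcs 1 non-gen 2}, while the matrix presentation \eqref{eq: 3x3 pres 3} identifies $N_\mf{B}$ with the direct sum of the first two rows $e_1 \wt{E}_\mf{B} \oplus e_2 \wt{E}_\mf{B}$ (where $e_i$ is the idempotent picking out the $i$-th summand of $X_\mf{B}$), hence as a direct summand of $\wt{E}_\mf{B}$ and in particular a projective right module.

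Combining these ingredients, $H(F_{disc}(\sigma)) \cong N_\mf{B} \otimes_{\wt{E}_\mf{B}} J(\sigma)$ (underived and concentrated in degree zero). Since $J$ is t-exact and tensoring with the flat module $N_\mf{B}$ is exact, $H \circ F_{disc}$ is t-exact, and the induced functor $H_0(H \circ F_{disc}) : \Mod_{G,\zeta}^{lfin}(\oo)_\mf{B} \to \LMod(E)$ is exact.

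The main obstacle is the clean formulation of the projection formula at the $\infty$-categorical level; once this is in place, the block-by-block ingredients are essentially already assembled in \S\ref{chap: stacks for GL2Qp}. A more direct alternative would be to reduce, via commutation with filtered colimits and devissage, to checking that $H(F_{disc}(\pi))$ sits in degree zero for each irreducible $\pi \in \mf{B}$; this avoids the projection formula but forces a case analysis, which in non-generic II must show in particular that $H$ kills $F_{disc}(\mathbf{1}_G)$ (which lies in homological degree one), consistent with the Montr\'eal functor vanishing on finite-dimensional representations as in Proposition \ref{rem:vytas question}.
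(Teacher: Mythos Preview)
Your argument is correct. The projection-formula reduction to $H(F_{disc}(\sigma)) \cong N_\mf{B} \otimes^L_{\wt{E}_\mf{B}} J(\sigma)$ with $N_\mf{B} = \RHom(X_\mf{B},\mc{V})$ is sound: compactness of $\mc{V}^*$ in $\IndCoh(\mf{X}_\mf{B})$ lets you pull $H$ through filtered colimits and finite complexes of free $\wt{E}_\mf{B}$-modules, and coherent duality identifies $\RHom(\mc{V}^*,X_\mf{B}^*)$ with $\RHom(X_\mf{B},\mc{V})$ together with the correct $(E,\wt{E}_\mf{B})$-bimodule structure. The block-by-block identifications of $N_\mf{B}$ as a projective right $\wt{E}_\mf{B}$-module are all correct; in particular, in non-generic II the first two rows of $\wt{E}_\mf{B}$ really are $\Hom(X_\mf{B},L_{-1})\oplus\Hom(X_\mf{B},L_1)$, projective as $(e_1+e_2)\wt{E}_\mf{B}$.

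The paper proceeds somewhat differently. For supersingular, generic principal series, and non-generic I it is close to what you do (indeed, for non-generic I the paper computes exactly $\RHom(\mc{V}^*,\mc{V}^*)\otimes^L_E M \cong E\otimes^L_E M$). The genuine difference is in non-generic II: there the paper does \emph{not} compute $N_\mf{B}$ at all, but instead invokes the explicit values of $F_{disc}$ on the three irreducibles (Proposition \ref{computation of irreducibles non-gen II}), checks by hand that $H(F_{disc}(\mathbf{1}_G))=0$, and then runs the d\'evissage-plus-colimits argument you mention as an alternative. Your route is cleaner and more uniform---it avoids the heavy computation of $F_{disc}(\mathbf{1}_G)$ entirely---while the paper's route has the advantage of exhibiting directly that finite-dimensional representations are killed, which is used elsewhere in \S\ref{subsec:montreal}.
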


\begin{proof}
When $\mf{B}$ is supersingular or generic principal series, the individual functors are t-exact and the lemma follows. Assume that $\mf{B}$ is of type non-generic I. Then, by our definition of $F_{disc}$, we may write $H\circ F_{disc}$ as a composition
\[
\D(\Mod_{G,\zeta}^{lfin}(\oo)_\mf{B}) \to \D^L(E) \to \IndCoh(\mf{X}_{\mf{B}}) \to \D^L(E)
\]
and the first functor is t-exact, so it suffices to show that the composition $\D^L(E) \to \IndCoh(\mf{X}_{\mf{B}}) \to \D^L(E)$ is t-exact. This composition is given by the formula
\[
M \to \RHom(\mc{V}^\ast,\mc{V}^\ast \otimes^L_E M) \cong \RHom(\mc{V}^\ast,\mc{V}^\ast) \otimes^L_E M.
\]
By Theorem \ref{thm: good invariants}, $\RHom(\mc{V}^\ast,\mc{V}^\ast) = E$ as an $(E,E)$-bimodule (using the involution $E\cong E^{op}$), so we see that the composition is the identity functor, and hence t-exact.

It remains to treat the case when $\mf{B}$ is of type non-generic II (as always, we twist so that $\zeta$ is trivial). In this case $H$ is t-exact, though $F_{disc}$ is not. However, by Proposition \ref{computation of irreducibles non-gen II}, $H(F_{disc}(\pi_\alpha))$ and $H(F_{disc}(\mathrm{Sp}))$ are concentrated in degree $0$, and (by a short computation) $H(F_{disc}(\mbf{1}_G))=0$. Thus all the irreducibles get sent to complexes concentrated in degree $0$, and $H\circ F_{disc}$ commutes with all colimits. By the argument in the proof of Proposition \ref{nongen 1 is abelian!}, $H\circ F_{disc}$ is t-exact, as desired.
\end{proof}

By composing $H_0(H\circ F_{disc})$ with the map $\LMod(E) \to \Mod_{\Gamma_{\Qp}}(\oo)$ coming from $\oo\lb \Gamma_{\Qp} \rb \to E$, we get an exact functor $\mbf{W} : \Mod^{lfin}_{G,\zeta}(\oo)_{\mf{B}} \to \Mod_{\Gamma_{\Qp}}^{disc}(\oo)$, where $\Mod_{\Gamma_{\Qp}}^{disc}(\oo)$ is the category of discrete $\oo$-modules with a continuous $\Gamma_{\Qp}$-action. We may extend the Montr\'eal functor $\V : \Mod^{fin}_{G,\zeta}(\oo)_{\mf{B}} \to \Mod_{\Gamma_{\Qp}}^{fin}(\oo)$ to an exact functor $\V : \Mod^{lfin}_{G,\zeta}(\oo)_{\mf{B}} \to \Mod_{\Gamma_{\Qp}}^{disc}(\oo)$ by taking the $\Ind$-extension. Before proceeding, we note that the equivalence
\begin{equation}\label{eq: discrete equivalence}
\Mod_{G,\zeta}^{lfin}(\oo)_{\mf{B}} \cong \LMod_{disc}(\wt{E}_{\mf{B}})
\end{equation}
is given by the functors $\sigma \mapsto P_{\mf{B}} \otimes_{\oo \lb G \rb} \sigma$ and $M \mapsto \Hom_{\wt{E}_{\mf{B}}}(P_{\mf{B}},M)$. We then have the following comparison theorem.

\begin{theorem}
For any $\sigma \in \Mod_{G,\zeta}^{lfin}(\oo)_{\mf{B}}$, we have $\V(\sigma) \cong \mbf{W}(\sigma)$.  
\end{theorem}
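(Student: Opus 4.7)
The plan is as follows. Both $\V$ and $\mbf{W}$ are exact functors from $\Mod_{G,\zeta}^{lfin}(\oo)_\mf{B}$ to $\Mod_{\Gamma_{\Qp}}^{disc}(\oo)$ that commute with arbitrary direct limits: for $\V$ this is the defining property of its ind-extension, and for $\mbf{W}$ it holds because $F_{disc}$ is built from (derived) tensor products and $H = \RHom(\mc{V}^\ast,-)$ preserves colimits (as $\mc{V}^\ast$ is a compact object of $\IndCoh(\mf{X}_\mf{B})$), while $H_0$ preserves filtered colimits. Under the equivalence $\Mod_{G,\zeta}^{lfin}(\oo)_\mf{B} \cong \LMod_{disc}(\wt{E}_\mf{B})$ of \eqref{eq: discrete equivalence}, any such colimit-preserving exact functor is represented by tensoring with a $(\oo\lb\Gamma_{\Qp}\rb,\wt{E}_\mf{B})$-bimodule, so it will suffice to identify the two representing bimodules.

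Applying the projection formula to the defining expression $F_{disc}(\sigma) = X_\mf{B}^\ast \otimes^L_{\wt{E}_\mf{B}}(P_\mf{B}\otimes_{\oo\lb G\rb_\zeta}\sigma)$, combined with the vanishing $\Ext^i(X_\mf{B},\mc{V}) = 0$ for $i \geq 1$ (established block-by-block in Section \ref{chap: stacks for GL2Qp}) and the t-exactness from Lemma \ref{exactness of Whittaker coefficient}, yields
\[
\mbf{W}(\sigma) \;=\; \Hom_{\oo_{\mf{X}_\mf{B}}}(X_\mf{B},\mc{V}) \otimes_{\wt{E}_\mf{B}}\bigl(P_\mf{B}\otimes_{\oo\lb G\rb_\zeta}\sigma\bigr),
\]
with the $\Gamma_{\Qp}$-action induced from the left action of $E = \End(\mc{V})$ on $\Hom_{\oo_{\mf{X}_\mf{B}}}(X_\mf{B},\mc{V})$, pulled back along the universal representation $\oo\lb\Gamma_{\Qp}\rb \to E$. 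A parallel bimodule description of $\V$ is obtained from the equivalence $\mf{C}(\oo)_\mf{B} \cong \RMod_{cpt}(\wt{E}_\mf{B})$ together with the right-exactness of $\vc$ (which gives $\vc(M) \cong \vc(P_\mf{B})\otimes_{\wt{E}_\mf{B}}\Hom_\mf{C}(P_\mf{B},M)$) and the identity $\V(\sigma) = \vc(\sigma^\vee)^\vee \otimes (\varepsilon\zeta)^{-1}$ valid on finite-length $\sigma$; here the role of $\Hom_{\oo_{\mf{X}_\mf{B}}}(X_\mf{B},\mc{V})$ is played by an appropriate dual and twist of $\vc(\wt{P}_\mf{B})$, and by Proposition \ref{rem:vytas question} combined with Corollary \ref{reinterpretation} the two bimodules can be matched on the infinite-dimensional summands of $\wt{P}_\mf{B}$ (where both are identified with the Cayley--Hamilton algebra $E$).

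The main obstacle is the careful bookkeeping of these bimodule structures: the left/right conventions, the (op-)direction of the various actions, the twist by $\varepsilon\zeta$ entering the comparison between $\V$ and $\vc$, and, in non-generic case II, the extra summand of $X_\mf{B}$ (the $Q$-factor corresponding to the finite-dimensional $\wt{P}_{\mbf{1}_G^\vee}$), whose contribution must be shown to match the vanishing $\V(\mbf{1}_G) = 0$ in the tensor product. This is most efficiently done by verifying the isomorphism on irreducibles $\pi \in \mf{B}$ (using the reduction of the first paragraph), where our explicit computations of $F_{disc}(\pi)$ from Section \ref{chap: geometric interpretation} --- namely Proposition \ref{computation of irreducible nongen 1}, Proposition \ref{computation of irreducibles non-gen II}, and the parallel computations in the supersingular and generic principal series cases --- combined with Pa{\v s}k{\=u}nas's descriptions of $\V(\pi)$ give the required matching.
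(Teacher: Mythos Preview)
Your overall strategy matches the paper's: reduce to a bimodule comparison over $\wt{E}_{\mf{B}}$, identify the $\mbf{W}$-side bimodule via the computations of \S\ref{chap: stacks for GL2Qp}, and match with the $\V$-side via Corollary~\ref{reinterpretation}. For the supersingular, generic principal series, and non-generic I blocks this is essentially the paper's argument (there $X_{\mf{B}}=\mc{V}$, so $\Hom(X_{\mf{B}},\mc{V})=E$ and the comparison is immediate).

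The gap is in non-generic case II. Your third paragraph proposes to handle the $Q$-summand by ``verifying the isomorphism on irreducibles $\pi\in\mf{B}$''. This does not work: knowing $\V(\pi)\cong\mbf{W}(\pi)$ as $\Gamma_{\Qp}$-modules for each simple $\pi$ does not produce a natural isomorphism of functors. In bimodule language, it tells you nothing about how the $\wt{E}_{\mf{B}}$-action interrelates the three columns, and in particular does not produce a bimodule map. The ``reduction of the first paragraph'' says the functors are determined by their bimodules; it does not say the bimodules are determined by their values on simples.

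The paper's resolution is different and uses exactly the observation you make but then abandon: both $\V$ and $\mbf{W}$ kill $\mbf{1}_G$ (for $\mbf{W}$ this is the computation $H(F_{disc}(\mbf{1}_G))=0$ in Lemma~\ref{exactness of Whittaker coefficient}). Hence both factor through the Serre quotient $\Mod_{G,\zeta}^{lfin}(\oo)_{\mf{B}}/\mc{S}\cong\LMod_{disc}(E)$, where $E=\End(P_{\mf{B}}^{inf})$ is the Cayley--Hamilton algebra. On that quotient the representing bimodule collapses to $E$ on both sides (for $\mbf{W}$ one checks $X_{\mf{B}}^\ast\otimes_{\wt{E}_{\mf{B}}}\Hom_E(N,E)\cong\mc{V}^\ast$, so $\mbf{W}(M)\cong\Hom(\mc{V}^\ast,\mc{V}^\ast)\otimes_E M$), and Corollary~\ref{reinterpretation} finishes as in the other reducible cases. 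So the correct step is ``factor through $\LMod_{disc}(E)$'', not ``check on irreducibles''.
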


\begin{proof}
We will use the equivalence (\ref{eq: discrete equivalence}) to view $\V$ and $\mbf{W}$ as functors on $\LMod_{disc}(\wt{E}_{\mf{B}})$ whenever convenient (and similarly for $\vc$). We start with the case when $\mf{B}$ is supersingular. Using notation as in the proof of Proposition \ref{rem:vytas question}, the functor $\mbf{W}$ is given by
\[
M \mapsto R\Gamma(\mf{X}_{\mf{B}},\mc{V}\otimes_{\oo_{\mf{X}_{\mf{B}}}} \oo_{\mf{X}_{\mf{B}}}(1) \otimes_R M) = R^2\otimes_R M,
\]
where $R^2$ is the universal deformation of $\rho_{\mf{B}}$. By Proposition \ref{rem:vytas question}, we get $\mbf{W}(M) \cong \vc(P_{\mf{B}}) \otimes_R M$, so it remains to show that $\V(M) \cong \vc(P_{\mf{B}}) \otimes_R M$. By definition and \cite[Lem. 5.53]{paskunas-image}, we have
\[
\V(M) = \vc(M^\vee)^\vee (\epsilon \zeta) \cong (M^\vee \wh{\otimes}_R \vc(P_{\mf{B}}))^\vee (\epsilon \zeta) \cong \Hom_R(\vc(P_{\mf{B}}),M) (\epsilon \zeta) \cong \vc(P_{\mf{B}})^\ast (\epsilon \zeta) \otimes_R M,
\]
and the result then follows since $\vc(P_{\mf{B}})^\ast (\epsilon \zeta) \cong  \vc(P_{\mf{B}})$. 

The proofs of the remaining cases are similar. Assume first that $\mf{B}$ is a generic principal series or non-generic I block, and identify $\wt{E}_{\mf{B}}$ and $E$. In both cases, arguing as in the case of non-generic I in the proof of Lemma \ref{exactness of Whittaker coefficient} and using Corollary \ref{reinterpretation}, we have
\[
\mbf{W}(M) \cong \RHom(\mc{V}^\ast,\mc{V}^\ast) \otimes_{E} M \cong \vc(P_{\mf{B}})^\ast (\epsilon \zeta) \otimes_E M.
\]
As in the supersingular case, one then computes that $\V(M) \cong \vc(P_{\mf{B}})^\ast (\epsilon \zeta) \otimes_E M$ to conclude. 

This leaves the non-generic II case. The projective object $P_{\mf{B}}^{inf} = P_{\mathrm{Sp}^\vee} \oplus P_{\pi^\vee_\alpha}$ corresponds to $N := \Hom(P_\mf{B},P_\mf{B}^{inf}) \in \RMod_{cpt}(\wt{E}_{\mf{B}})$, which carries a left action of $E$. Consider the Serre subcategory of $\Mod_{G,\zeta}^{fin}(\oo)_{\mf{B}}$ consisting of finite-dimensional representations, and take its closure $\mc{S}$ under filtered colimits in $\Mod_{G,\zeta}^{lfin}(\oo)_{\mf{B}}$. Under the equivalence $\Mod_{G,\zeta}^{lfin}(\oo)_{\mf{B}} \cong \LMod_{disc}(\wt{E}_{\mf{B}})$, the quotient category $\Mod_{G,\zeta}^{lfin}(\oo)_{\mf{B}}/\mc{S}$ corresponds to $\LMod_{disc}(E)$ under the functor
\[
M \mapsto N \otimes_{\wt{E}_{\mf{B}}} M
\]
by the dual of \cite[Lem. 10.84, Cor. 10.85]{paskunas-image}. Since $\V$ and $\mbf{W}$ both kill $\mbf{1}_G$ (see the proof of Lemma \ref{exactness of Whittaker coefficient} for $\mbf{W}$), they factor through $\LMod_{disc}(E)$. The proof that $\V \cong \mbf{W}$ then follows the same pattern as above: since $\V$ and $\mbf{W}$ factor through $\LMod_{disc}(E)$, we may treat them as functors on $\LMod_{disc}(E)$ and conflate $\LMod_{disc}(E)$ with its image in $\LMod_{disc}(\wt{E}_{\mf{B}})$ under the right adjoint
\[
M \mapsto \Hom^{cts}_E(N,M).
\]
Because $N$ is a finitely generated $E$-module, this functor commutes with filtered colimits. Then one computes that, for $M\in \LMod_{fin}(E)$,
\[
\mbf{W}(M) \cong \Hom(\mc{V}^\ast,X_{\mf{B}}^\ast \otimes_{\wt{E}_\mf{B}} \Hom_E(N,M)) \cong \Hom(\mc{V}^\ast,X_{\mf{B}}^\ast \otimes_{\wt{E}_\mf{B}} \Hom_E(N,E) \otimes_E M) \cong
\]
\[
\Hom(\mc{V}^\ast,\mc{V}^\ast \otimes_E M) \cong \Hom(\mc{V}^\ast,\mc{V}^\ast) \otimes_E M,
\]
and the formula extends to all $M\in \LMod_{disc}(E)$ since both sides commute with filtered colimits. As before, one computes that $\V(M) \cong \vc(P_{\mf{B}})^\ast (\epsilon \zeta) \otimes_E M$ for $\LMod_{disc}(E)$, and then Corollary \ref{reinterpretation} finishes the proof as before.
\end{proof}

\subsection{Recollections on $p$-arithmetic homology}\label{subsec: p-arithmetic hom} In this subsection we will recall $p$-arithmetic (co)homology in the adelic setting and its comparison with arithmetic homology from \cite{tarrach2022sarithmetic}, and prove a formula computing completed homology as a $p$-arithmetic homology group.

Let $\mbf{G}$ be a connected reductive group over $\Q$. In this subsection only, we set $G= \mbf{G}(\Qp)$ and we let $X_p$ be the Bruhat--Tits building of $G$ over $\Qp$. We recall a few facts about $X_p$ that we will need. First, $X_p$ carries a left action of $G$ and a $G$-invariant metric $d$; also, $X_p$ is contractible and any two points in $X_p$ are connected by a unique geodesic\footnote{We recall that if $(X,d)$ is a metric space and $x,y\in X$, then a geodesic from $x$ to $y$ is an isometric embedding $f : [0,d(x,y)] \to X$ satisfying $f(0)=x$ and $f(d(x,y))=y$.} \cite[\S 2.5]{bruhat-tits}. In particular, for $a,b\in X_p$, we may consider the renormalized geodesic $j_{a,b} : [0,1] \to X_p$ from $a$ to $b$. Finally, given a compact subgroup $K_p \sub G$, there is a point $\alpha \in X_p$ which is fixed by all elements of $K_p$.

We will also need some considerations at $\infty$. Let $\mbf{G}(\R)^+$ denote the identity component of $\mbf{G}(\R)$ and set $\mbf{G}(\Q)^+ = \mbf{G}(\Q) \cap \mbf{G}(\R)^+$. We let $K_\infty \sub \mbf{G}(\R)^+$ be a maximal compact subgroup and let $\mbf{A}$ be the maximal $\Q$-split torus in the center of $\mbf{G}$. Set $X_\infty = \mbf{G}(\R)^+/(\mbf{A}(\R)^+ K_\infty)$, and let $\ol{X}_\infty$ be the Borel--Serre bordification of $X_\infty$ \cite{borel-serre}, which carries a left action by $\mbf{G}(\Q)^+$. Given a compact open subgroup $K^p \sub \mbf{G}(\A^{p,\infty})$, we define
\[
\mc{X} := \mbf{G}(\Q)^+ \backslash X_\infty \times \mbf{G}(\A^\infty) / K^p, \,\,\,\,\, \ol{\mc{X}} := \mbf{G}(\Q)^+ \backslash \ol{X}_\infty \times \mbf{G}(\A^\infty) / K^p
\]
and
\[
\mc{X}_p := \mbf{G}(\Q)^+ \backslash X_\infty \times X_p \times \mbf{G}(\A^\infty) / K^p, \,\,\,\,\, \ol{\mc{X}}_p := \mbf{G}(\Q)^+ \backslash \ol{X}_\infty \times X_p \times \mbf{G}(\A^\infty) / K^p.
\]
Here we equip $\mbf{G}(\A^\infty)$ with the discrete topology rather than its locally profinite topology, so that the maps $X_\infty \times \mbf{G}(\A^\infty) \to \mc{X}$, etc., are all covering maps. The action of $\mbf{G}(\Q)^+$ is always diagonal (from the left) and $K^p$ acts by right translation on $\mbf{G}(\A^\infty)$ and trivially on the other components. We remark that $\mc{X}$, $\ol{\mc{X}}$, $\mc{X}_p$ and $\ol{\mc{X}}_p$ all carry right actions of $G$, induced by right translation on $\mbf{G}(\A^\infty)$. If $Y$ is any topological space, we let $C_\bu (Y)$ denote the complex of singular chains of $Y$. Since $\ol{X}_\infty \setminus X_\infty$ is the boundary of the topological manifold with boundary $\ol{X}_\infty$, the inclusion $X_\infty \to \ol{X}_\infty$ is a homotopy equivalence. It follows that $C_\bu (\mc{X}) \to C_\bu (\ol{\mc{X}})$ and $C_\bu (\mc{X}_p) \to C_\bu (\ol{\mc{X}}_p)$ are $G$-chain homotopy equivalences. Moreover, they are also equivariant for the action of Hecke operators away from $p$. Let us indicate this (standard) construction on $C_\bu(\mc{X})$; the actions on the other complexes are similar. We may think of $\mc{X}$ as the quotient of $\mc{X}^\prime := \mbf{G}(\Q)^+ \backslash X_\infty \times \mbf{G}(\A^\infty)$ by the free action of $K^p$. The natural map
\[
C_\bu(\mc{X}^\prime) \otimes_{\Z[K^p]} \Z \to C_\bu(\mc{X})
\] 
is then an isomorphism\footnote{In general, if $X$ is a topological space with a free right action of a discrete group $K$, then $C_\bu(X/K) = C_\bu(X) \otimes_{\Z[K]}\Z$. We will use this without further comment.}, and since $C_\bu(\mc{X}^\prime)$ carries a right action of $\mbf{G}(\A^\infty)$ we get a (right) Hecke action on $C_\bu(\mc{X})$ by the standard recipe, cf.\ \cite[Lem.~2.6.1]{tarrach2022sarithmetic}.

Let $K_p \sub G$ be a compact open subgroup. We recall the construction of a Hecke- and $K_p$-equivariant chain homotopy equivalence between $C_\bu(\mc{X})$ and $C_\bu(\mc{X}_p)$ from \cite[\S 5.2]{tarrach2022sarithmetic}. First, we have the projection map
\[
f : X_\infty \times X_p \times \mbf{G}(\A^\infty) \to X_\infty \times \mbf{G}(\A^\infty),
\]
which is $\mbf{G}(\Q)^+ \times \mbf{G}(\A^\infty)$-equivariant. Now choose $\alpha \in X_p$ which is fixed by all elements of $K_p$, and consider the map
\[
h_\alpha : X_\infty \times \mbf{G}(\A^\infty) \to X_\infty \times X_p\times \mbf{G}(\A^\infty)
\]
given by $h_\alpha(z,g) = (z, g_p \alpha, g)$, where $g_p$ is the $p$-component of $g$. One checks directly that this is $\mbf{G}(\Q)^+ \times \mbf{G}(\A^{p,\infty}) \times K_p$-equivariant. We see directly that $f \circ h_\alpha$ is the identity. Moreover, the map 
\[
H_\alpha : X_\infty \times X_p \times \mbf{G}(\A^\infty) \times [0,1] \to X_\infty \times X_p \times \mbf{G}(\A^\infty)
\]
given by $H_\alpha(z,q,g,t) = (z,j_{q,\alpha}(t),g)$ is a $\mbf{G}(\Q)^+ \times \mbf{G}(\A^{p,\infty}) \times K_p$-equivariant homotopy from the identity to $h_\alpha \circ f$. It follows that $f$ induces a Hecke- and $K_p$-equivariant chain homotopy equivalence from $C_\bu(\mc{X}_p)$ to $C_\bu(\mc{X})$, with inverse (induced by) $h_\alpha$. We can then define $p$-arithmetic (co)homology.

\begin{definition} Let $M$ be a complex of left $G$-modules, and let $N$ be a complex of right $G$-modules.
\begin{enumerate}
\item We define the \emph{$p$-arithmetic homology} of $M$ to be the homology $H_\ast(K^p,M)$ of the complex $C_\bu(K^p,M) := C_\bu(\mc{X}_p) \otimes^L_{\Z[G]}M$. 

\item We define the \emph{$p$-arithmetic cohomology} of $N$ to be the cohomology $H^\ast(K^p,N)$ of the complex $C^\bu(K^p,N) := \RHom_{\Z[G]}(C_\bu(\mc{X}_p),N)$.
\end{enumerate} 

\end{definition}

For completeness, we also recall the definition of arithmetic (co)homology.

\begin{definition} Let $M$ be a complex of left $K_p$-modules, and let $N$ be a complex of right $K_p$-modules. Set $K=K^p K_p$.
\begin{enumerate}
\item We define the \emph{arithmetic homology} of $M$ to be the homology $H_\ast(K,M)$ of the complex $C_\bu(K,M) := C_\bu(\mc{X}) \otimes^L_{\Z[K_p]}M$. 

\item We define the \emph{arithmetic cohomology} of $N$ to be the cohomology $H^\ast(K,N)$ of the complex $C^\bu(K,N) := \RHom_{\Z[K_p]}(C_\bu(\mc{X}),N)$.
\end{enumerate} 

\end{definition}

We make no assumption on the action of $G$ on $\mc{X}_p$, or $K_p$ on $\mc{X}$, being free. If $G$ acts freely on $\mc{X}_p$, then $C_\bu(\mc{X}_p)$ is a (bounded above) complex of free $\Z[G]$-modules (this will be true for $K^p$ sufficiently small). Similarly, if $K_p$ acts freely on $\mc{X}$, then $C_\bu(\mc{X})$ is a (bounded above) complex of free $\Z[K_p]$-modules. When the actions are free, we will use $C_\bu(K^p,M)$ to denote the actual complex $C_\bu(\mc{X}_p) \otimes_{\Z[G]}M$, and similarly for the other notations. Continue to set $K = K^p K_p$. We have the following comparison, which is a special case of \cite[Prop.~5.2.2]{tarrach2022sarithmetic}.

\begin{proposition}\label{shapiro}
Let $M$ be a complex of left $K_p$-modules and let $N$ be a complex of right $K_p$-modules. Then we have canonical Hecke-equivariant isomorphisms $C_\bu(K,M) \cong C_\bu(K^p, \Z[G]\otimes_{\Z[K_p]}M)$ and $C^\bu(K,N) \cong C^\bu(K^p, \Hom_{\Z[K_p]}(\Z[G],N))$ in the derived category of abelian groups (note that $\Z[G]$ is free over $\Z[K_p]$).
\end{proposition}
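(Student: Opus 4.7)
The plan is to deduce both isomorphisms from the tensor-hom adjunction together with the Hecke- and $K_p$-equivariant chain homotopy equivalence $C_\bu(\mc{X}_p) \simeq C_\bu(\mc{X})$ constructed in the preceding paragraphs via the maps $f$ and $h_\alpha$. The key point is that $\Z[G]$ is free as both a left and a right $\Z[K_p]$-module (choosing a set of coset representatives for $G/K_p$ or $K_p\backslash G$), so $\Z[G] \otimes_{\Z[K_p]} -$ is exact and $\Hom_{\Z[K_p]}(\Z[G], -)$ sends injectives to injectives, which means they pass to the derived categories without any fuss and also preserve quasi-isomorphisms on the module side.

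First I would treat the homological statement. Using the standard adjunction $(\Z[G] \otimes_{\Z[K_p]} -, \mathrm{res}_{K_p}^G)$ at the chain level, we obtain
\[
C_\bu(\mc{X}_p) \otimes_{\Z[G]} \bigl(\Z[G] \otimes_{\Z[K_p]} M\bigr) \;\cong\; C_\bu(\mc{X}_p) \otimes_{\Z[K_p]} M,
\]
naturally in $M$, and this carries over to the derived tensor product because $\Z[G] \otimes_{\Z[K_p]} M$ preserves $K_p$-projective resolutions of $M$ (equivalently, $\Z[G]$ is $\Z[K_p]$-flat). Next, I would invoke the equivariant chain homotopy equivalence $f_* : C_\bu(\mc{X}_p) \to C_\bu(\mc{X})$ with inverse induced by $h_\alpha$: this is $K_p$-equivariant (and Hecke-equivariant) by construction, so it induces a quasi-isomorphism
\[
C_\bu(\mc{X}_p) \otimes^L_{\Z[K_p]} M \;\simeq\; C_\bu(\mc{X}) \otimes^L_{\Z[K_p]} M \;=\; C_\bu(K,M),
\]
and chaining the two steps gives the first isomorphism. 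All arrows are Hecke-equivariant because both the adjunction isomorphism and the $f_*, h_\alpha$ maps are equivariant for the prime-to-$p$ Hecke action.

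The cohomological statement is dual and I would handle it in parallel. The adjunction $(\mathrm{res}_{K_p}^G, \Hom_{\Z[K_p]}(\Z[G], -))$ gives
\[
\Hom_{\Z[K_p]}(C_\bu(\mc{X}_p), N) \;\cong\; \Hom_{\Z[G]}(C_\bu(\mc{X}_p), \Hom_{\Z[K_p]}(\Z[G], N)),
\]
and this descends to $\RHom$ because $\Hom_{\Z[K_p]}(\Z[G], -)$ preserves injective resolutions (its left adjoint restriction is exact). Then the same $K_p$-equivariant chain homotopy equivalence $C_\bu(\mc{X}_p) \simeq C_\bu(\mc{X})$ yields $\RHom_{\Z[K_p]}(C_\bu(\mc{X}_p), N) \simeq \RHom_{\Z[K_p]}(C_\bu(\mc{X}), N) = C^\bu(K,N)$, completing the chain of isomorphisms.

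The only genuine content beyond formal adjunctions is the equivariant homotopy equivalence $C_\bu(\mc{X}_p) \simeq C_\bu(\mc{X})$, but this has already been established immediately before the proposition (the maps $f, h_\alpha$ are $\mathbf{G}(\Q)^+ \times \mathbf{G}(\A^{p,\infty}) \times K_p$-equivariant and $H_\alpha$ provides the homotopy). Hence there is no substantive obstacle: the main thing to write out carefully is the naturality/equivariance of the adjunction isomorphisms with respect to the prime-to-$p$ Hecke action, which is standard once one recalls that the Hecke action on $C_\bu(\mc{X}_p)$ comes from the $\mathbf{G}(\A^{p,\infty})$-action on $C_\bu(\mathbf{G}(\Q)^+ \backslash X_\infty \times X_p \times \mathbf{G}(\A^\infty))$ before quotienting by $K^p$, and similarly for $C_\bu(\mc{X})$.
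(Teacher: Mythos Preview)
Your proposal is correct and follows essentially the same approach as the paper: the paper's proof is a one-liner stating that the isomorphisms ``follow from the definitions, the chain homotopy equivalence $C_\bu(\mc{X}_p) \to C_\bu(\mc{X})$, and standard manipulations/adjunctions,'' and you have simply spelled out those standard manipulations (the tensor-hom adjunctions and the freeness of $\Z[G]$ over $\Z[K_p]$) together with the equivariant chain homotopy equivalence constructed just before the proposition.
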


\begin{proof}
These follow from the definitions, the chain homotopy equivalence $C_\bu(\mc{X}_p) \to C_\bu(\mc{X})$, and standard manipulations/adjunctions.
\end{proof}

In particular, all arithmetic (co)homology groups occur naturally as $p$-arithmetic (co)homology groups. Before discussing completed homology, we will discuss finiteness properties of arithmetic (co)homology\footnote{$p$-arithmetic (co)homology satisfies similar finiteness properties by the main result of \cite{borel-serre-II}.}. Choose $K_p$ small enough that the action on $\mc{X}$ is free. Then $\ol{\mc{X}}$ is a compact topological manifold with boundary, and hence may be triangulated. We fix such a triangulation. Refining it if necessary, we pull it back to $\mc{X}$ to obtain a $K_p$-equivariant triangulation of $\mc{X}$. The corresponding complex $C_\bu^{BS}(\mc{X})$ of simplicial chains is a bounded complex whose terms are finite free $\Z[K_p]$-modules, and it is $K_p$-equivariantly chain homotopic to $C_\bu(\mc{X})$. We fix a $K_p$-equivariant chain homotopy equivalence $C_\bu(\mc{X}) \to C^{BS}_\bu(\mc{X})$. Given a left $K_p$-module, we write $C_\bu^{BS}(K,M) := C_\bu^{BS}(\mc{X}) \otimes_{\Z[K_p]}M$. The formation of $C_\bu^{BS}(K,M)$ is obviously functorial in $M$. We record the following lemma:

\begin{lemma}\label{homology and inverse limits}
Assume that $K_p$ acts freely on $\mc{X}$. Let $(M_i)_{i\in I}$ be an inverse system of left $K_p$-modules with inverse limit $M$. Then the canonical map $C_\bu(K,M) \to \varprojlim_i C_\bu(K,M_i)$ is a chain homotopy equivalence. Moreover, if the $M_i$ are finite (as sets), then the induced map $H_\ast(K,M) \to \varprojlim_i H_\ast(K,M_i)$ is an isomorphism.
\end{lemma}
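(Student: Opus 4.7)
The plan is to reduce everything to the Borel--Serre complex $C_\bu^{BS}(\mc{X})$, whose terms are \emph{finite} free $\Z[K_p]$-modules, and for which the tensor product with a $K_p$-module commutes with inverse limits on the nose.

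First, I would fix once and for all a $K_p$-equivariant chain map $\phi : C_\bu(\mc{X}) \to C_\bu^{BS}(\mc{X})$, a chain-homotopy inverse $\psi$, and chain homotopies exhibiting $\psi\phi$ and $\phi\psi$ as homotopic to the respective identities. Applying $-\otimes_{\Z[K_p]} M$ produces a chain homotopy equivalence $\phi_M : C_\bu(K,M) \to C_\bu^{BS}(K,M)$ which is natural in $M$; in particular, the $\phi_{M_i}$ and all accompanying homotopy data are compatible with the transition maps in the inverse system (since they arise from a single pair of fixed maps on the ``free variable'' complexes $C_\bu(\mc{X})$ and $C_\bu^{BS}(\mc{X})$ by tensoring). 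Consequently $\varprojlim_i \phi_{M_i} : \varprojlim_i C_\bu(K,M_i) \to \varprojlim_i C_\bu^{BS}(K,M_i)$ is itself a chain homotopy equivalence, and it suffices to prove the first claim with $C_\bu$ replaced by $C_\bu^{BS}$. But writing $C_\bu^{BS}(\mc{X})_n \cong \Z[K_p]^{r_n}$ with $r_n$ finite, we have $C_\bu^{BS}(K,M)_n \cong M^{r_n}$; finite direct sums together with the identification $M = \varprojlim_i M_i$ commute with inverse limits, so the canonical map $C_\bu^{BS}(K,M) \to \varprojlim_i C_\bu^{BS}(K,M_i)$ is in fact an \emph{isomorphism} of chain complexes.

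For the second claim, when each $M_i$ is finite, each $C_\bu^{BS}(K,M_i)_n \cong M_i^{r_n}$ is finite, and since $C_\bu^{BS}(\mc{X})$ has only finitely many nonzero terms, each homology group $H_\ast(K,M_i)$ is a finite abelian group. The inverse system $(H_{\ast+1}(K,M_i))_i$ therefore automatically satisfies the Mittag--Leffler condition, so in the Milnor short exact sequence
\[
0 \to \varprojlim^1_i H_{\ast+1}(K,M_i) \to H_\ast\bigl(\varprojlim_i C_\bu^{BS}(K,M_i)\bigr) \to \varprojlim_i H_\ast(K,M_i) \to 0
\]
the $\varprojlim^1$-term vanishes. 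Combined with the chain homotopy equivalence of the first claim, this yields the desired isomorphism $H_\ast(K,M) \cong \varprojlim_i H_\ast(K,M_i)$.

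The only delicate point is the interaction between chain-homotopy equivalences and inverse limits, which is handled by insisting on a single fixed choice of comparison data between $C_\bu(\mc{X})$ and $C_\bu^{BS}(\mc{X})$ \emph{before} tensoring; once this choice is made, everything afterwards is naturality of tensor product plus the Mittag--Leffler argument.
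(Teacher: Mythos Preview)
Your proof is correct and follows essentially the same approach as the paper: reduce to the Borel--Serre complex via the fixed chain homotopy equivalence, use finite-freeness of its terms to identify the bottom map as an isomorphism, and deduce the top map is a chain homotopy equivalence. The only minor difference is in the second part: the paper appeals directly to exactness of inverse limits of finite abelian groups (i.e.\ in the category of compact Hausdorff abelian groups), whereas your Milnor sequence argument as written presumes a sequential index set $I$; for arbitrary cofiltered $I$ the cleaner justification is the paper's.
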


\begin{proof}
Using the fixed chain homotopy equivalence $C_\bu(\mc{X}) \to C^{BS}_\bu(\mc{X})$ we have a commutative square 
\[
\xymatrix{
C_\bu(K,M) \ar[r] \ar[d] &  \varprojlim_i C_\bu(K,M_i) \ar[d] \\
C_\bu^{BS}(K,M) \ar[r] & \varprojlim_i C^{BS}_\bu(K,M_i).
}
\]
The vertical maps are chain homotopy equivalences. The lower horizontal map is an isomorphism of complexes, since the terms in $C^{BS}_\bu(\mc{X})$ are finite free $\Z[K_p]$-modules. It follows that the upper horizontal map is a chain homotopy equivalence, as desired. To prove the last part, note that we have $H_\ast( \varprojlim_i C^{BS}_\bu(K,M_i) ) = \varprojlim_i H_\ast(C^{BS}_\bu(K,M_i))$ since the terms in the complexes $C^{BS}_\bu(K,M_i)$ are finite (as sets).
\end{proof}

Let us now discuss completed homology. By definition, completed homology for $\mbf{G}$ with tame level $K^p$ (and $\Zp$-coefficients) is 
\[
\wt{H}_\ast(K^p) := \varprojlim_{K^\prime_p} H_\ast(K^p K_p^\prime,\Zp),
\]
where $K_p^\prime$ runs over all compact open subgroups of $G$. It is a right $\Zp \lb G \rb $-module. In fact, our goal here is to prove that $\wt{H}_\ast(K^p) \cong H_\ast(K^p,\Zp \lb G \rb )$ as right $\Zp \lb G \rb $-modules, with the right $\Zp \lb G \rb $-module structure on $H_\ast(K^p,\Zp \lb G \rb )$ induced from the right $\Zp \lb G \rb $-module structure on $\Zp \lb G \rb $ itself. This is a $p$-arithmetic version of a theorem of Hill \cite{hill}, and is due to one of us (C.J.) and Guillem Tarrach.

From now on, fix $K_p \sub G$ acting freely on $\mc{X}$. We will only consider compact open normal subgroups $K_p^\prime \sub K_p$; these are cofinal, so it suffices to consider only these. Write $K^\prime = K^p K_p^\prime$. The $G$-action on $\wt{H}_\ast(K^p)$ may be described as follows. Let $g\in G$. To simplify notation, if $H\sub \mbf{G}(\A^\infty)$, we will set $^g H := g^{-1}Hg$. The action of $g$ on $\mc{X}$ induces isomorphisms
\begin{equation}\label{eq: G-action 1}
C_\bu(K^\prime, \Zp) \to C_\bu(^g K^\prime,\Zp)
\end{equation}
given by the formula $\sigma \otimes \lambda \mapsto \sigma g \otimes \lambda$. Taking the inverse limit at the level of homology, we get the $G$-action on $\wt{H}_\ast(K^p)$. We may rewrite the left hand side of (\ref{eq: G-action 1}) as
\[
C_\bu(K^\prime, \Zp) = C_\bu(\mc{X}) \otimes_{\Z[K^\prime_p]} \Zp \cong C_\bu(\mc{X}) \otimes_{\Z[K_p]} \Zp[K_p/K_p^\prime] = C_\bu (K,\Zp[K_p/K_p^\prime])
\]
and similarly for the right hand side. The action of $g$ from (\ref{eq: G-action 1}) then becomes an isomorphism 
\[
C_\bu (K,\Zp[K_p/K_p^\prime]) \to C_\bu (^g K,\Zp[^g K_p/^g K_p^\prime])
\]
given by $\sigma \otimes k \mapsto \sigma g \otimes g^{-1}kg$. Now consider the isomorphism
\begin{equation}\label{eq: G-action 2}
C_\bu (K,\Zp \lb K_p \rb ) \to C_\bu (^g K,\Zp \lb ^g K_p \rb )
\end{equation}
given by $\sigma \otimes \mu \mapsto \sigma g \otimes g^{-1}\mu g$, for $\mu \in \Zp \lb K_p \rb $. Note that if $g \in K_p$, then this is equal to the action of $K_p$ induced from the right $\Zp \lb K_p \rb$-module structure on $\Zp \lb K_p \rb$. We have a commutative square
\[
\xymatrix{
C_\bu (K,\Zp \lb K_p \rb ) \ar[r] \ar[d] &  \varprojlim C_\bu (K,\Zp[K_p/K_p^\prime]) \ar[d] \\
C_\bu (^g K,\Zp \lb ^g K_p \rb ) \ar[r] & \varprojlim C_\bu (^g K,\Zp[^g K_p/^g K_p^\prime])
}
\]
where the horizontal maps are chain homotopy equivalences by Lemma \ref{homology and inverse limits}; the Lemma also gives us that $H_\ast(K,\Zp \lb K_p \rb ) \cong \wt{H}_\ast(K^p)$. By Proposition \ref{shapiro}, we have chain homotopy equivalences $C_\ast(K^p,\Zp \lb G \rb) \cong C_\bu (K,\Zp \lb K_p \rb )$ and $C_\ast(K^p,\Zp \lb G \rb) \cong C_\bu (^g K,\Zp \lb ^g K_p \rb )$. Tracing through the definitions, it is tedious but straightforward to show that the `action' of $g$ from (\ref{eq: G-action 2}) is the natural right action of $g$ on $C_\ast(K^p,\Zp \lb G \rb)$ (up to chain homotopy equivalence). We state our conclusion in the following result.

\begin{proposition}
The complex $C_\bu (K^p,\Zp \lb G \rb)$ with its natural right $\Zp \lb G \rb $-module structure (and Hecke action) computes $\wt{H}_\ast(K^p)$ with its right $ \Zp \lb G \rb $-module structure (and Hecke action).
\end{proposition}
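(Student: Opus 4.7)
My plan is to string together the chain homotopy equivalences already established in the preceding discussion, and then verify that the $G$-action (and Hecke action) survives all the identifications. The hard part is not producing the isomorphism at the level of graded abelian groups — that follows essentially from the work already done — but rather pinning down that all the actions are compatible.

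First, I would invoke Lemma \ref{homology and inverse limits} applied to the inverse system $(\Zp[K_p/K_p'])_{K_p'}$, whose limit is $\Zp \lb K_p\rb$. This gives a Hecke-equivariant chain homotopy equivalence $C_\bu(K,\Zp\lb K_p\rb) \to \varprojlim_{K_p'} C_\bu(K,\Zp[K_p/K_p'])$ and, since each $\Zp[K_p/K_p']$ is finite, an isomorphism $H_\ast(K,\Zp\lb K_p\rb) \isoto \varprojlim_{K_p'} H_\ast(K',\Zp) = \wt H_\ast(K^p)$. Then I would apply Proposition \ref{shapiro} (with $M = \Zp\lb K_p\rb$) to obtain a Hecke-equivariant chain homotopy equivalence $C_\bu(K^p, \Zp\lb G\rb) \simeq C_\bu(K,\Zp\lb K_p\rb)$, using that $\Zp\lb G\rb = \Z[G] \otimes_{\Z[K_p]} \Zp\lb K_p\rb$ continuously.

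Second, I would check that the natural right $\Zp\lb G\rb$-module structure on $\wt H_\ast(K^p)$ corresponds, under these identifications, to the natural right $\Zp\lb G\rb$-action on $C_\bu(K^p,\Zp\lb G\rb)$ coming from right translation on the coefficient module $\Zp\lb G\rb$. For $g \in G$ the action on $\wt H_\ast$ is, by definition, assembled from the isomorphisms (\ref{eq: G-action 1}), which through the identification $C_\bu(K',\Zp) \cong C_\bu(K,\Zp[K_p/K_p'])$ become the maps $\sigma \otimes k \mapsto \sigma g \otimes g^{-1}kg$ between complexes computing homology at conjugate levels. Passing to the inverse limit and applying Lemma \ref{homology and inverse limits} identifies this with the map (\ref{eq: G-action 2}) at the level of $C_\bu(K,\Zp\lb K_p\rb)$. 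Under the Shapiro isomorphism $C_\bu(K^p,\Zp\lb G\rb) \simeq C_\bu(K,\Zp\lb K_p\rb)$, I would then verify directly from the explicit formula for this isomorphism that (\ref{eq: G-action 2}) matches the action of $g$ on $C_\bu(K^p,\Zp\lb G\rb)$ coming from right multiplication on $\Zp\lb G\rb$. This bookkeeping is where the ``tedious but straightforward'' check lies, but it is forced by the construction: for $g \in K_p$ the formulas visibly agree, and for general $g$ the shift in tame level on the $\mc X$-side is precisely what is absorbed by right multiplication on the $\mc X_p$-side.

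Finally, the Hecke-equivariance at every stage is immediate because Proposition \ref{shapiro} is Hecke-equivariant, the chain equivalence $C_\bu(\mc X_p) \simeq C_\bu(\mc X)$ is Hecke-equivariant, and the inverse system defining $\wt H_\ast(K^p)$ is Hecke-equivariant termwise. Concatenating the equivalences yields a Hecke- and $\Zp\lb G\rb$-equivariant quasi-isomorphism $C_\bu(K^p,\Zp\lb G\rb) \simeq C_\bu(K,\Zp\lb K_p\rb)$, whose induced map on $H_\ast$ is identified with $\wt H_\ast(K^p)$ as right $\Zp\lb G\rb$-modules, which is the desired conclusion. The single genuine obstacle I anticipate is the cofinality/commutation of inverse limits with the tensor products appearing on the $\mc X_p$-side — but here the freeness of $\Z[G]$ over $\Z[K_p]$ combined with the Borel--Serre finiteness behind Lemma \ref{homology and inverse limits} makes this routine.
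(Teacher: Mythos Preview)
Your proposal is correct and follows essentially the same approach as the paper: the proof is really the discussion immediately preceding the proposition, which you have accurately summarized—Lemma \ref{homology and inverse limits} for the inverse limit identification, Proposition \ref{shapiro} for the passage from $C_\bu(K,\Zp\lb K_p\rb)$ to $C_\bu(K^p,\Zp\lb G\rb)$, and the commutative square plus the ``tedious but straightforward'' tracing of the $g$-action through (\ref{eq: G-action 1}) and (\ref{eq: G-action 2}). Your identification of where the bookkeeping lies and why it works matches the paper's own account.
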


\begin{remark}\label{derived tensor = underived tensor}
We note that $C_\bu(\mc{X}_p) \otimes_{\Z[G]} \Zp \lb G \rb = C_\bu(\mc{X}_p) \otimes^L_{\Z[G]} \Zp \lb G \rb$, regardless of whether $G$ acts freely on $\mc{X}_p$ or not, so it makes sense to talk of $C_\ast(K^p,\Zp \lb G \rb)$ as a specific complex and not `just' an object in a derived category. Indeed, for $K_p$ as above, we see that
\[
C_\bu(\mc{X}_p) \otimes^L_{\Z[G]} \Zp \lb G \rb \cong C_\bu(\mc{X}_p) \otimes^L_{\Z[G]} \Z[G] \otimes^L_{\Z[K_p]} \Zp \lb K_p \rb \cong C_\bu(\mc{X}_p) \otimes^L_{\Z[K_p]} \Zp \lb K_p \rb.
\]
The right hand side is equal to $C_\bu(\mc{X}_p) \otimes_{\Z[K_p]} \Zp \lb K_p \rb$ since $K_p$ acts freely on $\mc{X}_p$, and this is just $C_\bu(\mc{X}_p) \otimes_{\Z[G]} \Zp \lb G \rb$.
\end{remark}

Let us now work over $\oo$. In light of the remark above, we may set
\[
\wt{C}_\bu := C_\bu(\mc{X}_p) \otimes_{\Z[G]} \oo \lb G \rb;
\]
this computes completed homology $\wt{H}_\ast(K^p,\oo) = \wt{H}_\ast(K^p) \otimes_{\Zp}\oo$ with coefficients in $\oo$. By the construction in \cite[\S 2.1.10]{gee-newton}, the unramified Hecke action on $\wt{C}_\bu$, viewed as endomorphisms in the derived category, factors through the action of a `big' Hecke algebra $\mb{T} = \mb{T}(K^p)$. The following result shows that completed homology is universal for $p$-arithmetic (co)homology of $\oo \lb G \rb$-modules.

\begin{proposition}\label{completed homology is universal}
Let $M$ be a complex of left $\oo \lb G \rb$-modules, and let $N$ be a complex of right $\oo \lb G \rb$-modules.
\begin{enumerate}
\item We have $C_\bu(K^p,M) \cong \wt{C}_\bu \otimes^L_{\oo \lb G \rb }M$. Moreover, the unramified Hecke action factors through a homomorphism $\mb{T} \to \End_{D(\Mod(\oo))}(C_\bu(K^p,M))$. 

\item We have $C^\bu(K^p,N) \cong \RHom_{\oo \lb G \rb }(\wt{C}_\bu ,N)$. Moreover, the unramified Hecke action factors through a homomorphism $\mb{T} \to \End_{D(\Mod(\oo))}(C^\bu(K^p,N))$.
\end{enumerate}
\end{proposition}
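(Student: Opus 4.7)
The plan is to deduce both parts from associativity of the derived tensor product (for part (1)) and the derived tensor-hom adjunction (for part (2)), together with the observation that $\wt{C}_\bu$ already equals the derived tensor product $C_\bu(\mc{X}_p) \otimes^L_{\Z[G]} \oo\lb G \rb$ by Remark \ref{derived tensor = underived tensor}.

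For part (1), the starting point is that $M$ is a complex of left $\oo\lb G \rb$-modules, so the canonical map $\oo\lb G \rb \otimes^L_{\oo\lb G \rb} M \to M$ is an equivalence. Tensoring with $C_\bu(\mc{X}_p)$ over $\Z[G]$ and applying associativity of derived tensor product (using that $\oo\lb G \rb$ carries a natural $(\Z[G],\oo\lb G \rb)$-bimodule structure via the canonical map $\Z[G] \to \oo\lb G \rb$), we obtain
\[
C_\bu(K^p,M) = C_\bu(\mc{X}_p) \otimes^L_{\Z[G]} M \cong \bigl( C_\bu(\mc{X}_p) \otimes^L_{\Z[G]} \oo\lb G \rb \bigr) \otimes^L_{\oo\lb G \rb} M.
\]
By Remark \ref{derived tensor = underived tensor}, the inner parenthesized object is $\wt{C}_\bu$, completing the identification. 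To make associativity genuinely applicable at the level of complexes (and not merely derived categories), one resolves $\wt{C}_\bu$ by a K-flat complex of $\oo\lb G \rb$-modules and $M$ by a K-flat resolution over $\oo\lb G \rb$, using that such resolutions always exist (cf.\ Spaltenstein).

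For part (2), the argument is the derived analogue: $N$ being a complex of right $\oo\lb G \rb$-modules gives $N \cong \RHom_{\oo\lb G \rb}(\oo\lb G \rb, N)$, and then the tensor-hom adjunction yields
\[
C^\bu(K^p,N) = \RHom_{\Z[G]}(C_\bu(\mc{X}_p), N) \cong \RHom_{\oo\lb G \rb}\bigl(C_\bu(\mc{X}_p) \otimes^L_{\Z[G]} \oo\lb G \rb,\, N\bigr) = \RHom_{\oo\lb G \rb}(\wt{C}_\bu, N).
\]
Again this is most cleanly executed after replacing $\wt{C}_\bu$ by a K-projective (or K-flat) resolution over $\oo\lb G \rb$ and $N$ by a K-injective resolution.

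For the Hecke-theoretic statements, the key input is the construction of $\mb{T}$ recalled before the proposition: the unramified Hecke action on $\wt{C}_\bu$ factors through $\mb{T}$ in the derived category. Since the constructions $\wt{C}_\bu \otimes^L_{\oo\lb G \rb} (-)$ and $\RHom_{\oo\lb G \rb}(\wt{C}_\bu, -)$ are both functorial in $\wt{C}_\bu$ with respect to morphisms in $D(\Mod(\oo\lb G \rb))$ (equivalently, any endomorphism of $\wt{C}_\bu$ in the derived category that commutes with the $\oo\lb G \rb$-structure induces an endomorphism of the output in $D(\Mod(\oo))$), the $\mb{T}$-action is transported to $C_\bu(K^p,M)$ and $C^\bu(K^p,N)$ through the natural isomorphisms just established. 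Compatibility with the definition of the Hecke action on $p$-arithmetic (co)homology then reduces to the evident fact that the isomorphisms above are constructed from maps that commute with the right $\mbf{G}(\A^{p,\infty})$-action used to define Hecke operators. The only mild subtlety to verify is that the associativity and adjunction isomorphisms can be chosen Hecke-equivariantly; this is automatic since all the maps involved are induced from the $\oo\lb G \rb$-bimodule structure on $\oo\lb G \rb$, which commutes with Hecke operators away from $p$.
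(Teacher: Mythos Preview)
Your proof is correct and follows essentially the same approach as the paper: both use associativity of the derived tensor product (and the tensor-hom adjunction for part (2)) together with Remark~\ref{derived tensor = underived tensor} to identify $\wt{C}_\bu$ with $C_\bu(\mc{X}_p)\otimes^L_{\Z[G]}\oo\lb G\rb$, and both deduce the Hecke statement from functoriality of these constructions in $\wt{C}_\bu$. Your write-up is simply more explicit about the resolutions and the Hecke compatibility than the paper's terse ``follows directly from the formula.''
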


\begin{proof}
We prove the first part; the second is similar. The formula for $C_\bu(K^p,M)$ follows from the computation
\[
C_\bu(K^p,M) = C_\bu(\mc{X}_p) \otimes_{\Z[G]}^L M \cong (C_\bu(\mc{X}_p) \otimes_{\Z[G]} \oo \lb G \rb ) \otimes^L_{\oo \lb G \rb }M
\]
(which relies on Remark \ref{derived tensor = underived tensor}) and the statement about the Hecke action follows directly from the formula.
\end{proof}

\subsection{The local-global formula}

We will now prove a formula for $p$-arithmetic homology of modular curves as the global sections of a sheaf on the moduli stack of global Galois representations. Let $r : \Gamma_{\Q} \to \GL_2(\Fpbar)$ be a continuous representation. If $\ell$ is any prime (including $p$), we write $r_\ell$ for $r |_{\Gamma_{\Q_\ell}}$. We assume that $r$ satisfies the following hypotheses: 
\begin{enumerate}

\item $\det r = \omega$;

\item $r_p$ is indecomposable, and not a twist of an extension of the form $0 \to \omega  \to r_p' \to \mbf{1} \to 0$;

\smallskip 

\item if $r_\ell$ is ramified for some $\ell \neq p$, then $\ell$ is not a vexing prime in the sense of \cite{diamond-ext};

\smallskip

\item $r |_{\Gamma_{\Q(\zeta_p)}}$ has adequate image (in particular, $r$ is irreducible), in the sense of \cite[Defn.~2.3]{thorne-adequate}.
\end{enumerate}

In particular, $r$ is odd and hence modular \cite{khare-wintenberger-1,khare-wintenberger-2,kisin-2-adic}, and we are in the setting of \cite[\S 7]{ceggps2} (except that we have a fixed determinant) and \cite[\S 5]{gee-newton} (except that we allow twists of  extensions of $\omega$ by $\mbf{1}$). The reason for our local assumptions is so that $r_p$ admits a universal deformation ring and we can work over a formally smooth quotient of the universal lifting ring for $r_l$ at ramified primes $l \ne p$.

Let $N$ be the prime-to-$p$ Artin conductor of $r$. We let $\wt{R}_p$ denote the deformation ring of $r_p$ with determinant $\varepsilon$. We let $\wt{R}_{\Q,N}$ denote the deformation ring of deformations of $r$ with determinant $\varepsilon$ which are minimally ramified at all primes $\ell \neq p$. We remark that, by \cite[Thm.~1]{allen-calegari}, the natural map $R_p \to R_{\Q,N}$ is finite.

We will consider arithmetic and $p$-arithmetic (co)homology for $\mbf{G}^{ad}=\PGL_{2/\Q}$ as recalled in \S \ref{subsec: p-arithmetic hom}, with tame level $K^p_1(N) \subseteq \PGL_2(\Z^p)$ (consisting of matrices whose bottom row is congruent to $( \begin{smallmatrix} 0 & 1 \end{smallmatrix})$ modulo $N$ and modulo center). When setting out our conventions and simplifications of notation we will only explicitly mention homology, but the analogous conventions will be in place for cohomology as well. We write $G^{ad}$ for $\PGL_2(\Qp)$. We will only consider $p$-arithmetic homology of left $\oo \lb G^{ad} \rb$-modules $\sigma$ (or complexes of such), and to simplify the notation we will write $H_\ast(N,\sigma)$ for $H_\ast(K^p_1(N),\sigma)$. Similarly, we write $\wt{H}_\ast(N,\oo)$ for completed homology of tame level $K^p_1(N)$ and $\oo$-coefficients. Consider the big Hecke algebra $\mb{T}$ as in \cite[\S 2.1.10]{gee-newton}. The representation $r$ defines a maximal ideal of $\mb{T}$, which we will denote by $\m$, and we have a surjection $R_{\Q,N} \to \T_{\m}$. The localized completed homology $\wt{H}_{\ast}(N,\oo)_{\m}$ is concentrated in degree $1$, and is a faithful $\T_{\m}$-module \cite[Lem.~3.4.20]{gee-newton}. Since the homology is isomorphic to \'etale homology, we also have an action of $\Gamma_{\Q}$ on $\wt{H}_{\ast}(N,\oo)$ and $\wt{H}_{\ast}(N,\oo)_{\m}$. Let $r^{univ} : \Gamma_{\Q} \to \GL_2(R_{\Q,N})$ denote the universal deformation. As in \S \ref{subsec: local considerations}, we let $\pi$ be the admissible $G^{ad}$-representation corresponding to $r_p$, and we let $P$ be the projective envelope of $\pi^\vee$. We then have the following description of completed homology.

\begin{theorem}\label{emerton-style local-global}
We have an isomorphism $\wt{H}_1(N,\oo)_{\m} \cong P \otimes_{R_p} r^{univ}$ of $R_{\Q,N}[G^{ad}\times \Gamma_{\Q}]$-modules. 
\end{theorem}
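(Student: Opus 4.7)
The plan is to use Taylor--Wiles patching in the style of \cite{ceggps2, gee-newton}. First I would choose a compatible system of Taylor--Wiles primes $(Q_n)$ and patch the $p$-arithmetic homology complexes for tame levels $K_1^p(NQ_n)$ (localized at the appropriate maximal ideals) to produce a compact module $M_\infty$ over $\oo \lb K_p \rb \wh{\otimes}_\oo R_\infty$, for $K_p \subseteq G^{ad}$ a suitable compact open subgroup and $R_\infty$ a patched global deformation ring, formally smooth over $R_p$ of some relative dimension $g$. By construction, $M_\infty$ carries a universal Galois representation $\rho_\infty : \Gamma_\Q \to \GL_2(R_\infty)$ lifting $r$, and quotienting by the augmentation ideal $\af_\infty = (x_1,\dots,x_g) \subset R_\infty$ recovers $R_{\Q,N}$, $r^{univ}$, and $\wt{H}_1(N,\oo)_\m$.

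Next I would establish the big $R = \T$ theorem, $R_{\Q,N} \isoto \T_\m$. In all cases covered by \cite{ceggps2, gee-newton} this is already available, but new work is required when $r_p$ is a twist of an extension $0 \to \mbf{1} \to r_p' \to \omega \to 0$ (which is permitted by our hypotheses and lies in the non-generic II block). Here I would combine the faithfulness of $\wt{H}_1(N,\oo)_\m$ as a $\T_\m$-module from \cite[Lem.~3.4.20]{gee-newton} with a dimension count using the explicit presentation of $\fX_{\mf{B}}$ from \S\ref{subsec: non-gen II}, showing that $R_\infty$ acts faithfully on $M_\infty$ and that the dimensions of source and target match.

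The core local-global step is then to prove that, as a $\oo \lb G^{ad} \rb \wh\otimes R_\infty$-module, $M_\infty$ is isomorphic to $P \wh\otimes_{R_p} \rho_\infty$, where $\rho_\infty$ is viewed as a free rank-$2$ $R_\infty$-module equipped with its $\Gamma_\Q$-action, and where $P$ is an $R_p$-module through the centre of $\wt{E}_{\mf{B}} = \End(P)$. For supersingular, generic principal series, and non-generic I blocks this is essentially already contained in the local-global compatibility of \cite{ceggps2, gee-newton} combined with Pa{\v s}k{\=u}nas's description of $\wt{E}_\mf{B}$. For non-generic II the new input is the explicit structure of $P$ (equivalently of $\wt{E}_\mf{B}$) from \S\ref{subsec: non-gen II coh sheaves}: even though $P$ is not $\oo \lb G^{ad} \rb$-projective in this case, the description of $\wt{E}_\mf{B}$ together with the Hecke action at $p$ is enough to pin $M_\infty$ down up to canonical isomorphism.

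Finally, specializing via $R_\infty \twoheadrightarrow R_\infty/\af_\infty = R_{\Q,N}$ and $\rho_\infty \otimes_{R_\infty} R_{\Q,N} = r^{univ}$ yields the desired isomorphism $\wt{H}_1(N,\oo)_\m \cong P \otimes_{R_p} r^{univ}$ of $R_{\Q,N}[G^{ad}\times\Gamma_\Q]$-modules, with the $G^{ad}$- and $\Gamma_\Q$-actions arising from the $P$ and $r^{univ}$ factors respectively (and the two induced $R_{\Q,N}$-actions coinciding by construction). The main obstacle, I expect, will be the local-global identification of $M_\infty$ in the non-generic II case: this is both the block where $P$ has the most intricate structure (cf.~\S\ref{subsec: non-gen II coh sheaves}) and precisely where the new $R = \T$ input is required.
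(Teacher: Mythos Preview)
The paper's proof is far simpler than your proposal: it directly cites \cite[Thm.~7.4]{ceggps2}, noting only (i) the fixed-determinant adjustment, (ii) that twisting by $\varep$ converts their $(\rho^{univ})^\ast$ into our $r^{univ}$ (since $\det\rho^{univ}=\varep^{-1}$ in their normalization), and (iii) that finiteness of $R_p \to R_{\Q,N}$ (from \cite{allen-calegari}) lets one replace the completed tensor product by an ordinary one. No new work is needed for the non-generic~II case: \cite[\S7]{ceggps2} already covers all the blocks permitted by the hypotheses on $r_p$.

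Your reconstruction of the argument contains several confusions. First, the big $R=\T$ theorem is \emph{not} an ingredient here; it is proved separately as Proposition~\ref{big R=T}, \emph{after} this theorem, and its proof in fact uses the patched description underlying \cite[Thm.~7.4]{ceggps2}. Second, in the patching setup of \cite{ceggps2} (recalled in the proof of Proposition~\ref{big R=T}), one has $R_\infty = R_p\lb x_1,\dots,x_d\rb$, built from the local-at-$p$ ring; it does not carry a global Galois representation $\rho_\infty$. The patched module $M_\infty$ is an $R_\infty[G^{ad}]$-module (no $\Gamma_\Q$-action), with $M_\infty \cong P\,\wh\otimes_{R_p}\,R_\infty$, and the $\Gamma_\Q$-action on $\wt H_1$ enters only after reducing modulo the augmentation ideal $\mf a$ via $(M_\infty/\mf a M_\infty)\otimes_{R_{\Q,N}} r^{univ} \cong \wt H_1(N,\oo)_\m$. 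So your claimed formula $M_\infty \cong P\,\wh\otimes_{R_p}\,\rho_\infty$ with $\rho_\infty$ a rank-$2$ module carrying a global $\Gamma_\Q$-action is not how the argument runs. Third, the explicit structure of $\wt E_{\mf B}$ from \S\ref{subsec: non-gen II coh sheaves} plays no role in this theorem; the identification of $M_\infty$ with $P\,\wh\otimes_{R_p}\,R_\infty$ is already in \cite{ceggps2}.
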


\begin{proof}
This is essentially \cite[Thm.~7.4]{ceggps2}, but with fixed central character and the added observation that $R_p \to R_{\Q,N}$ is finite, so we do not need a completed tensor product. The difference is that, in the setting of \emph{loc.\ cit.}\ (but with our notation), $\det r = \omega^{-1}$. Thus, our deformation problem is obtained from theirs by tensoring with $\varep$. If we denote their universal deformation by $\rho^{univ}$, then this means that 
\[
r^{univ} =  \rho^{univ}\otimes \varep = (\rho^{univ})^\ast,
\]
where $(-)^\ast$ denotes the $R_{\Q,N}$-linear dual, since $\det \rho^{univ} = \varep^{-1}$ (and we are dealing with two-dimensional representations). This explains why the dual occurs in \emph{loc.\ cit.}\ but not in our formulation.
\end{proof}

To go further, we also need the following result, which appears to be new when $r_p$ is a twist of an extension of $\omega$  by $\mbf{1}$.

\begin{proposition}\label{big R=T}
The map $R_{\Q,N} \to \mb{T}_{\m}$ is an isomorphism of complete intersection rings, and both rings have Krull dimension $3$.
\end{proposition}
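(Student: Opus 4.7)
The plan is to prove this by Taylor--Wiles--Kisin patching, adapting the arguments of \cite{ceggps2,gee-newton} to cover the new case where $r_p$ is (a twist of) an extension of $\mbf{1}$ by $\omega$, i.e., where the associated block $\mf{B}$ is of non-generic type II. Hypothesis (4) ensures, via \cite{thorne-adequate}, the existence of cofinal sequences of Taylor--Wiles data, so one patches the localized completed homology modules $\wt{H}_1(N Q_n, \OO)_{\m_{Q_n}}$ together with their Hecke actions to obtain a module $M_\infty$ over a patched ring $R_\infty = R_{p}^{loc,\Box}\lb x_1,\dotsc,x_g\rb$, where $R_{p}^{loc,\Box}$ is the completed tensor product of the framed local lifting ring at $p$ with the (formally smooth, by hypothesis (2) excluding vexing primes) minimally ramified lifting rings at the auxiliary bad primes.

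The first step would be to verify that $R_\infty$ is a complete intersection of the correct relative dimension over $\OO$. The only non-formally-smooth contribution comes from the framed lifting ring at $p$, which is a formally smooth torsor over $R_p = R_\rho$ via the identification $R \cong R_\rho$ recalled at the start of \S\ref{subsec: local considerations}. The explicit presentations of $R_p$ in \S\ref{subsec: supersingular case}, Theorem \ref{computation of rings generic ps}, Proposition \ref{pseudorep ring is GIT quotient}, and Theorem \ref{presentation of moduli stack non-generic 2} exhibit $R_p$ as a complete intersection of relative dimension $3$ over $\OO$ in all four block types; only the non-generic II case is non-smooth, being cut out by a single equation in four variables.

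Next, using a patched version of the local--global compatibility of Theorem \ref{emerton-style local-global}, I would obtain an isomorphism $M_\infty \cong P_{\infty} \otimes_{R_{p}^{loc,\Box}} r_\infty^{univ}$, where $P_\infty$ is a patched version of the projective envelope $P = \wt{P}_{\pi^\vee}$ in $\mf{C}(\OO)_\mf{B}$. Using the equivalence $\mf{C}(\OO)_{\mf{B}} \cong \RMod_{cpt}(\wt{E}_\mf{B})$ together with the structural results of \cite{paskunas-image} on the flatness of $P$ as a module over $\wt{E}_\mf{B}$ (with its $R_p$-algebra structure), the patched module $M_\infty$ is maximal Cohen--Macaulay over $R_\infty$. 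In particular $M_\infty$ has full support on $\Spec R_\infty$, and since $R_\infty$ acts faithfully through the patched Hecke algebra $\mb{T}_\infty$, the natural surjection $R_\infty \twoheadrightarrow \mb{T}_\infty$ must be an isomorphism. Unpatching by quotienting by the regular sequence consisting of the Taylor--Wiles variables $x_i$ and the additional framing variables then yields $R_{\Q,N} \cong \mb{T}_\m$, and $R_{\Q,N}$ inherits the complete intersection property and Krull dimension $3$ from $R_\infty$.

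The main obstacle will be verifying the Cohen--Macaulay property of $M_\infty$ in the new non-generic II case, where $R_p$ itself is singular and the projective envelope $P_\mf{B}$ has the more intricate $3 \times 3$ matrix structure described in \S\ref{subsubsec: geom ng2}. Here one cannot appeal directly to the projectivity-of-$P$ arguments used for the other three cases in \cite{ceggps2,gee-newton}; instead, one uses the explicit matrix presentation of $\wt{E}_\mf{B}$ and an auxiliary depth computation to show that $P_\infty$, despite failing to be projective over $R_{p}^{loc,\Box}$, still has maximal depth, sufficient to make $M_\infty$ Cohen--Macaulay over $R_\infty$.
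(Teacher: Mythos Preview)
Your overall patching strategy is sound, but your execution diverges from the paper's at the crucial step, and the divergence leaves a gap you have not filled. You propose to show that $M_\infty$ is maximal Cohen--Macaulay over $R_\infty$, which in the non-generic II case would require establishing a depth or flatness property of $P = \wt{P}_{\pi_\alpha^\vee}$ over the singular ring $R_p$. As you correctly note, this is not available from \cite{paskunas-image} in this case, and your proposed ``auxiliary depth computation'' is left unspecified. (A minor point: the new case is $r_p$ an extension of $\omega$ by $\mbf{1}$, i.e.\ with $\mbf{1}$ the subobject; the opposite extension is excluded by hypothesis (2).)

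The paper's argument circumvents this obstacle entirely by reversing the logic. Rather than proving $M_\infty$ is Cohen--Macaulay over $R_\infty$, it uses only that $M_\infty$ is finite free over $\oo_\infty\lb K_p\rb$ (automatic from patching), hence flat over $\oo_\infty$. This makes the augmented Koszul complex $K^{aug}_\bu(y_1,\dotsc,y_g; M_\infty)$ acyclic. One then applies the \emph{exact} functor $\Hom_{\mf{C}(\oo)}(P,-)$: using $M_\infty \cong P \,\wh{\otimes}_{R_p} R_\infty$ and $\End_{\mf{C}(\oo)}(P) = R_p$, one identifies $\Hom_{\mf{C}(\oo)}(P, M_\infty) \cong R_\infty$, so $K^{aug}_\bu(y_1,\dotsc,y_g; R_\infty)$ is acyclic, i.e.\ the $y_i$ form a regular sequence in $R_\infty$. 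Then $\Hom_{\mf{C}(\oo)}(P, M_\infty/\mf{a}M_\infty) \cong R_\infty/\mf{a}R_\infty$, and since the $R_\infty/\mf{a}R_\infty$-action on $M_\infty/\mf{a}M_\infty$ factors through $\mb{T}_\m$, one obtains $R_\infty/\mf{a}R_\infty = R_{\Q,N} = \mb{T}_\m$ directly. The only facts about $P$ used are its projectivity in $\mf{C}(\oo)$ and $\End_{\mf{C}(\oo)}(P) = R_p$, both of which hold uniformly across all block types (including non-generic II, where $\End(\wt{P}_{\pi_\alpha^\vee}) = R$ by \cite[Cor.~10.78]{paskunas-image}). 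No Cohen--Macaulayness of $P$ over $R_p$ is needed.
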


\begin{proof}
When $r_p$ is not a twist of an extension of $\omega$ by $\mbf{1}$, this follows from \cite[Prop.~5.1.4]{gee-newton}. We give a different proof that works uniformly for all cases. For this, we need the output of the patching construction from \cite[\S 7]{ceggps2}, so we recall this briefly. At the end of the patching procedure we have 
\begin{itemize}
\item Rings $\oo_\infty = \oo \lb y_1,\dots, y_g \rb$ and $R_\infty = R_p \lb x_1,\dots, x_d \rb$ and a local ring map $ \oo_\infty \to R_\infty$;

\smallskip 

\item A surjection $R_\infty / \mf{a}R_\infty \to R_{\Q,N}$, where $\mf{a} = (y_1,\dots,y_g) \sub \oo_\infty$;

\smallskip 

\item An $R_\infty[G^{ad}]$-module $M_\infty$ which lies in $\mf{C}(\oo)_{\mf{B}}$;

\smallskip

\item The action of $R_\infty/\mf{a}R_\infty$ on $M_\infty/\mf{a}M_\infty$ factors through $\mb{T}_{\m}$, and we have an isomorphism \[(M_\infty / \mf{a} M_\infty) \otimes_{R_{\Q,N}}(\rho^{univ})^\ast \cong \wt{H}_1(N,\oo)_{\m}\] of $R_{\Q,N}[G^{ad} \times \Gamma_{\Q}]$-modules.
\end{itemize}
Moreover, by the proof of \cite[Thm.~7.4]{ceggps2}, $M_\infty \cong P \wh{\otimes}_{R_p}R_\infty$ as $R_\infty[G^{ad}]$-modules. Also, if $K_p$ is a sufficiently small compact open subgroup of $G^{ad}$ then $M_\infty$ is a finitely generated free $\oo_\infty \lb K_p \rb$-module (this is essentially \cite[Prop.~2.10]{ceggps1}), and hence a flat $\oo_\infty$-module. We now prove that (the images of) $y_1,\dots,y_g$ form a regular sequence in $R_\infty$. For this, we need to check that the augmented Koszul complex $K^{aug}_\bu(y_1,\dots,y_g,R_\infty)$ is acyclic. Consider the (non-augmented) Koszul complex $K_\bu(y_1,\dots,y_g,M_\infty)$. Since $y_1,\dots,y_g$ form a regular sequence in $\oo_\infty$, $K_\bu(y_1,\dots,y_g,M_\infty)$ computes $(\oo_\infty/\mf{a}) \otimes_{\oo_\infty}^L M_\infty$. Since $M_\infty$ is $\oo_\infty$-flat, we conclude that the augmented Koszul complex $K^{aug}_\bu(y_1,\dots,y_g,M_\infty)$ is acyclic. Now apply $\Hom_{\mf{C}(\oo)}(P,-)$ to $K^{aug}_\bu(y_1,\dots,y_g,M_\infty)$; this gives us the augmented Koszul complex
\[
K^{aug}_\bu(y_1,\dots,y_g,\Hom_{\mf{C}(\oo)}(P,M_\infty))
\]
of $\Hom_{\mf{C}(\oo)}(P,M_\infty)$. Since $\Hom_{\mf{C}(\oo)}(P,-)$ is exact, this complex is acyclic. Moreover, we have 
\[
\Hom_{\mf{C}(\oo)}(P,M_\infty) \cong \Hom_{\mf{C}(\oo)}(P,P\wh{\otimes}_{R_p}R_\infty) \cong R_\infty
\]
as $R_\infty$-modules, using that $\End_{\mf{C}(\oo)}(P)=R_p$. So this is the augmented Koszul complex for $R_\infty$, and hence $y_1,\dots,y_g$ form a regular sequence in $R_\infty$ as desired.  Since $\Hom_{\mf{C}(\oo)}(P,M_\infty)\cong R_\infty$, we have $\Hom_{\mf{C}(\oo)}(P,M_\infty) \otimes_{R_\infty} R_\infty / \mf{a}R_\infty \cong R_\infty /\mf{a}R_\infty$. But 
\[
\Hom_{\mf{C}(\oo)}(P,M_\infty) \otimes_{R_\infty} R_\infty / \mf{a}R_\infty \cong \Hom_{\mf{C}(\oo)}(P,M_\infty / \mf{a}M_\infty)
\]
by exactness of $\Hom_{\mf{C}(\oo)}(P,-)$, and the action of $R_\infty /\mf{a}R_\infty$ on the right hand side factors through $\mb{T}_{\mf{m}}$. It follows that $R_\infty / \mf{a} R_\infty = R_{\Q,N} = \mb{T}_{\m}$, proving that $R_{\Q,N} = \mb{T}_{\m}$ and that both are complete intersections. Finally, the statement about the dimension of $R_{\Q,N}$ then follows as usual from the known values of $d$, $g$ and the dimension of $R_p$.
\end{proof}

We will now rewrite the isomorphism of Theorem \ref{emerton-style local-global} using the material from \S \ref{subsec: local considerations}. Let $\mf{X}_r$ denote the algebraization of the moduli stack of continuous $\Gamma_{\Q}$-representations with (semisimplified) reduction $r$. Explicitly, we just have $\mf{X}_r = [ \Spec R_{\Q,N} /\mu_2]$. We let $\mf{B}$ be the block corresponding to $r_p^{ss}$; we will then freely use the objects and notation established in \S \ref{subsec: local considerations}. Restriction to $\Gamma_{\Qp}$ gives us a morphism
\[
f : \mf{X}_r \to \mf{X}_{\mf{B}}
\] 
which factors through the algebraic stack $\mf{X}_{r_p} = [\Spec R_p /\mu_2]$. Our goal now is to show that 
\begin{equation}\label{eq: pullback of kernel sheaf}
f^!(X^\ast_{\mf{B}} \otimes_{\wt{E}_{\mf{B}}}P_{\mf{B}}) \cong (R_{\Q,N}\otimes_{R_p}P(1))[1],
\end{equation}
where, as usual, we view quasicoherent sheaves on $\mf{X}_r$ as $\Z/2$-graded $R_{\Q,N}$-modules. Let $g : \mf{X}_r \to \mf{X}_{r_p}$ be the restriction map and let $j : \mf{X}_{r_p} \to \mf{X}_{\mf{B}}$ denote the open immersion. Then $f^! = g^! \circ j^!$ and $j^! = j^\ast$. By Propositions \ref{pullback of universal module generic ps} and \ref{pullback of universal module non-gen II}, we have $j^\ast(X_{\mf{B}}^\ast \otimes_{\wt{E}_{\mf{B}}}P_{\mf{B}}) = P(1)$, so it remains to show that $g^!(P(1)) = (R_{\Q,N}\otimes_{R_p}P(1))[1]$. The map $g$ is the descent (modulo $\mu_2$) of the finite map $R_p \to R_{\Q,N}$, and the exceptional pullback $\D(R_p) \to \D(R_{\Q,N})$ is given by 
\[
C \mapsto \RHom_{R_p}(R_{\Q,N},C),
\]
as it is the right adjoint to pushforward. Note that $R_{\Q,N}$ is a perfect complex of $R_p$-modules since $R_{\Q,N}$ is a relative complete intersection over $R_p$\footnote{Indeed, it is the quotient of $R_\infty$ by a regular sequence. In particular, the corresponding Koszul complex is a finite resolution of $R_{\Q,N}$ by finite free $R_\infty$-modules, and hence by flat $R_p$-modules. It follows that the flat dimension of $R_{\Q,N}$ as an $R_p$-module is finite, and since $R_p$ is Noetherian this implies that $R_{\Q,N}$ has a finite resolution by finitely generated projective $R_p$-modules.}. Thus, the natural map
\[
C \otimes^L_{R_p} \RHom_{R_p}(R_{\Q,N},R_p) \to \RHom_{R_p}(R_{\Q,N},C)
\]
is an isomorphism (since both sides are exact functors that commute with filtered colimits, it is enough to check this for $C=R_p$). Since $R_p$ is a complete intersection of Krull dimension $4$, $R_p$ is a dualizing complex for $R_p$ and $R_p[-4]$ is a normalized dualizing complex. It then follows that $\RHom_{R_p}(R_{\Q,N},R_p[-4])$ is a normalized dualizing complex for $R_{\Q,N}$ \cite[\href{https://stacks.math.columbia.edu/tag/0AX1}{Tag 0AX1}]{stacks-project}. Since $R_{\Q,N}$ is a complete intersection of dimension $3$ we deduce that $\RHom_{R_p}(R_{\Q,N},R_p[-4])\cong R_{\Q,N}[-3]$ by uniqueness of normalized dualizing complexes (over Noetherian local rings), and hence that $\RHom_{R_p}(R_{\Q,N},R_p) \cong R_{\Q,N}[1]$. Thus the exceptional pullback along $R_p \to R_{\Q,N}$ is given by
\[
C \mapsto C \otimes^L_{R_p} R_{\Q,N}[1].
\]
Descending, it follows that $g^!(P(1)) = (R_{\Q,N}\otimes^L_{R_p}P(1))[1]$.

\begin{proposition}
We have $R_{\Q,N}\otimes^L_{R_p}P \cong R_{\Q,N}\otimes_{R_p}P$, i.e.\ $\Tor_i^{R_p}(R_{\Q,N},P)=0$ for $i\geq 1$. In particular, equation (\ref{eq: pullback of kernel sheaf}) holds.
\end{proposition}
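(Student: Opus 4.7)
The plan is to combine Auslander--Buchsbaum with a Mittag--Leffler limit argument that lifts the $\oo_\infty$-flatness of the patched module $M_\infty$ to the desired Tor vanishing.

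First, I would reduce to a single Tor group. By Proposition \ref{big R=T}, $R_p$ is a complete intersection of dimension $4$, $R_{\Q,N}$ is a complete intersection of dimension $3$, and $R_{\Q,N}$ is finite over $R_p$ by Allen--Calegari. Auslander--Buchsbaum then yields $\mathrm{pd}_{R_p}(R_{\Q,N}) = 4 - 3 = 1$, so $\Tor_i^{R_p}(R_{\Q,N}, P) = 0$ automatically for $i \geq 2$, and it suffices to show $\Tor_1^{R_p}(R_{\Q,N}, P) = 0$.

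Next, writing $P_n = P/\m_{R_p}^n P$ so that $P = \varprojlim_n P_n$ as a profinite $R_p$-module, and choosing a finite free $R_p$-resolution $G_\bullet \to R_{\Q,N}$, the fact that each $G_i$ is finite free gives $G_i \otimes_{R_p} P = \varprojlim_n G_i \otimes_{R_p} P_n$; left-exactness of $\varprojlim$ then yields
\[
\Tor_1^{R_p}(R_{\Q,N}, P) \;=\; \varprojlim_n \Tor_1^{R_p}(R_{\Q,N}, P_n).
\]
For each $n$, the Koszul complex $K_\bullet(\underline{y}, R_\infty)$ produced in the proof of Proposition \ref{big R=T} is a resolution of $R_{\Q,N}$ by finite free $R_\infty$-modules and hence, since $R_\infty$ is flat over $R_p$, a resolution by $R_p$-flat modules. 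So $\Tor_1^{R_p}(R_{\Q,N}, P_n) = H_1(K_\bullet(\underline{y}, R_\infty \otimes_{R_p} P_n))$.

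Finally, I would pass to the limit. Using $M_\infty = \varprojlim_n R_\infty \otimes_{R_p} P_n$, I aim to identify $\varprojlim_n H_1(K_\bullet(\underline{y}, R_\infty \otimes_{R_p} P_n))$ with $H_1(K_\bullet(\underline{y}, M_\infty))$, which vanishes because the flatness of $M_\infty$ over $\oo_\infty$ (established in the proof of Proposition \ref{big R=T}) makes $y_1, \ldots, y_g$ a regular sequence on $M_\infty$. The main obstacle will be justifying this interchange of $\varprojlim$ with Koszul homology, which I would handle through the Milnor $\lim^1$ exact sequence: each $H_i(K_\bullet(\underline{y}, R_\infty \otimes_{R_p} P_n))$ is a finite-length $R_{\Q,N}$-module (the Koszul complex is bounded with finitely generated terms over $R_\infty$, and $P_n$ is killed by $\m_{R_p}^n$), so the inverse systems are Mittag--Leffler and the relevant $\lim^1$ terms vanish.
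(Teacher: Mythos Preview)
Your argument has a genuine gap in the final paragraph, and it is precisely the difficulty that the paper's proof is designed to avoid.

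The module $P_n = P/\m_{R_p}^n P$ is \emph{not} of finite length: $P$ is the projective envelope of $\pi^\vee$ in $\mf{C}(\oo)$, so it is a compact (profinite) $R_p$-module but is far from finitely generated over $R_p$. Its quotient $P_n$ is a compact $R_p/\m_{R_p}^n$-module which is typically infinite as a set. Consequently the terms $R_\infty \otimes_{R_p} P_n$ of the Koszul complex are not finitely generated over $R_\infty$, and the homology groups $H_i(K_\bullet(\underline{y}, R_\infty \otimes_{R_p} P_n))$ have no reason to be of finite length over $R_{\Q,N}$. Your justification (``finitely generated terms over $R_\infty$, and $P_n$ is killed by $\m_{R_p}^n$'') conflates being killed by a power of $\m_{R_p}$ with being Artinian. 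So the Mittag--Leffler condition is not established.

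The second gap is the assertion $M_\infty = \varprojlim_n R_\infty \otimes_{R_p} P_n$. By definition $M_\infty \cong R_\infty \widehat{\otimes}_{R_p} P$ is a \emph{completed} tensor product, which takes the limit over finite quotients of both factors. Your inverse limit only runs over the $P$-side and uses the ordinary tensor product with the full $R_\infty$; since ordinary tensor products do not commute with inverse limits, this identification is unjustified (and fails already in toy examples where $P$ modulo $\m_{R_p}$ is an infinite product of copies of the residue field).

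The paper handles this cleanly by passing to Brumer's derived completed tensor product $\mathcal{T}or^{R_p}_i$ on the category of compact $R_p$-modules. Because $R_{\Q,N}$ is finite over $R_p$, ordinary $\Tor_i^{R_p}(R_{\Q,N},P)$ agrees with $\mathcal{T}or_i^{R_p}(R_{\Q,N},P)$; then the Koszul complex $K_\bullet(\underline{y},R_\infty)$ is a resolution by \emph{pro-free} $R_p$-modules, so $K_\bullet(\underline{y}, R_\infty \widehat{\otimes}_{R_p} P) = K_\bullet(\underline{y}, M_\infty)$ computes $\mathcal{T}or_\ast$ directly, and vanishes in positive degrees because $M_\infty$ is pro-free over $\oo_\infty$. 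Your Auslander--Buchsbaum reduction to $\Tor_1$ is correct but unnecessary once one works in this framework.
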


\begin{proof}
If $r_p$ is not a twist of an extension of $\omega$ by $\mbf{1}$, then $P$ is a flat $R_p$-module; this follows from \cite[Cor.~3.12]{paskunas-image} (the formalism of \cite[\S 3]{paskunas-image} applies by \cite[Prop.~6.1 and 8.3]{paskunas-image}). In general, one may argue as follows. Recall that the completed tensor product on the category of compact $R_p$-modules has derived functors, which we will denote by $\mc{T}or_i^{R_p}(-,-)$ \cite{brumer}. We also denote the corresponding total derived functor by $-\wh{\otimes}_{R_p}^L-$, and we will use similar notation for other rings. Note that both $R_{\Q,N}$ and $P$ are compact $R_p$-modules. Since $R_{\Q,N}$ is finite over $R_p$ and $R_p$ is Noetherian, it follows that $\Tor_i^{R_p}(R_{\Q,N},P)= \mc{T}or_i^{R_p}(R_{\Q,N},P)$ for all $i$. In particular, it suffices to prove that $\mc{T}or_i^{R_p}(R_{\Q,N},P)=0$ for all $i\geq 1$.

To do this, we will use the notation and facts established in the proof of Proposition \ref{big R=T} freely. Since $R_\infty/\mf{a}R_\infty = R_{\Q,N}$ and $y_1,\dots,y_g$ is a regular sequence in $R_\infty$, the Koszul complex $K_\bu(y_1,\dots,y_g,R_\infty)$ is a resolution of $R_{\Q,N}$ by finite free $R_\infty$-modules, hence by pro-free $R_p$-modules, so $K_\bu(y_1,\dots,y_g,R_\infty \wh{\otimes}_{R_p} P)$ computes $R_{\Q,N}\wh{\otimes}^L_{R_p}P$. But $K_\bu(y_1,\dots,y_g,R_\infty \wh{\otimes}_{R_p} P)$ also computes $(\oo_\infty/\mf{a})\wh{\otimes}_{\oo_\infty}^L (R_\infty \wh{\otimes}_{R_p}P)$, since $y_1,\dots,y_g$ is a regular sequence in $\oo_\infty$. We have $R_\infty \wh{\otimes}_{R_p}P \cong M_\infty$ and we know that $M_\infty$ is a finite free $\oo_\infty \lb K_p \rb$-module for small $K_p$, hence a pro-free $\oo_\infty$-module. It follows that $\mc{T}or_i^{\oo_\infty}(\oo_\infty/\mf{a},M_\infty) = 0$ for $i\geq 1$, which finishes the proof.
\end{proof}

We can now prove the following local-global formula, which is modelled on the statement of \cite[Conj. 4.7.9]{zhu-coherent} and \cite[Exp.~Thm.~9.4.2]{egh}. Unsurprisingly, our proof is also similar to the proof sketched in \cite{egh}. Recall the functor $F_{ext}$ from \S \ref{subsec: local considerations}.

\begin{theorem}\label{local-global formula}
Let $\sigma$ be a complex of left $\oo \lb G^{ad} \rb$-modules.
\begin{enumerate}
\item We have $f^!(F_{ext}(\sigma)) \cong (R_{\Q,N} \otimes_{R_p} P \otimes_{\oo \lb G^{ad} \rb}^L \sigma)(1)[1]$ in $D_{qcoh}(\mf{X}_r)$, functorially in $\sigma$.

\smallskip

\item We have $C_\bu(N,\sigma)_{\mf{m}} \cong R\Gamma(\mf{X}_r,r^{univ}(1) \otimes_{R_{\Q,N}} f^!(F_{ext}(\sigma))[-2])$ in $D(R_{\Q,N})$, functorially in $\sigma$.

\smallskip

\item If $\sigma \in D(\Mod_{G^{ad}}^{lfin}(\oo)_{\mf{B}})$, then we have $C_\bu(N,\sigma)_{\mf{m}} \cong R\Gamma(\mf{X}_r,r^{univ}(1) \otimes_{R_{\Q,N}} f^!(F_{disc}(\sigma))[-2])$ in $D(R_{\Q,N})$, functorially in $\sigma$.
\end{enumerate}
\end{theorem}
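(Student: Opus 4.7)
The first statement is essentially contained in the discussion preceding the theorem. Unwinding the definition $F_{ext}(\sigma) \cong (X_{\mf{B}}^\ast \otimes_{\wt{E}_{\mf{B}}} P_{\mf{B}}) \otimes^L_{\oo \lb G^{ad} \rb_\zeta} \sigma$, one needs only to observe that $f$ is finite with perfect relative dualizing complex (since Proposition \ref{big R=T} shows $R_{\Q,N}$ is a relative complete intersection over $R_p$). Consequently $f^!$ commutes with colimits, and in particular with derived tensor products against $\sigma$, the $\oo \lb G^{ad} \rb_\zeta$-action being external to the sheaf structure. Plugging in the already-established identity \eqref{eq: pullback of kernel sheaf} yields part (1).

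For part (2) the plan is to use Proposition \ref{completed homology is universal} to write $C_\bu(N,\sigma)_{\mf{m}} \cong \wt{C}_{\bu,\mf{m}} \otimes^L_{\oo \lb G^{ad} \rb} \sigma$, and then to identify the complex $\wt{C}_{\bu,\mf{m}}$ as an object of the derived category of $R_{\Q,N}\otimes_{\oo} \oo\lb G^{ad} \rb$-modules. By \cite[Lem.~3.4.20]{gee-newton} its cohomology is concentrated in degree $1$, hence it is canonically quasi-isomorphic to $\wt{H}_1(N,\oo)_{\mf{m}}[-1]$, and Theorem \ref{emerton-style local-global} identifies the latter with $(P \otimes_{R_p} r^{univ})[-1]$. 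Substituting yields
\[
C_\bu(N,\sigma)_{\mf{m}} \cong (r^{univ} \otimes_{R_p} P \otimes^L_{\oo \lb G^{ad} \rb} \sigma)[-1].
\]
On the other side, part (1) gives $f^!(F_{ext}(\sigma)) \cong (R_{\Q,N} \otimes_{R_p} P \otimes^L_{\oo \lb G^{ad} \rb} \sigma)(1)[1]$, so tensoring with $r^{univ}(1)$ and shifting by $[-2]$ produces the same underlying complex with the two $\Z/2$-grading shifts $(1)$ summing to $0$. Since $\mf{X}_r = [\Spec R_{\Q,N}/\mu_2]$ with $\mu_2$ acting trivially on $R_{\Q,N}$, $R\Gamma(\mf{X}_r,-)$ has no higher cohomology and simply returns the $\Z/2$-degree-$0$ part, yielding exactly the previous expression. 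Functoriality in $\sigma$ is manifest, since every operation commutes with colimits in $\sigma$.

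Part (3) is immediate from part (2): for $\sigma \in D(\Mod_{G^{ad}}^{lfin}(\oo)_{\mf{B}})$, Proposition \ref{LJ is an extension} shows $LJ_{ext}(\sigma) \cong J(\sigma)$, so $F_{ext}(\sigma)$ and $F_{disc}(\sigma)$ agree under the canonical functor $\IndCoh(\mf{X}_{\mf{B}}) \to D_{qcoh}(\mf{X}_{\mf{B}})$, and the formula of (2) carries over without change.

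The hard part will be the bookkeeping: verifying that the quasi-isomorphism $\wt{C}_{\bu,\mf{m}} \simeq (P \otimes_{R_p} r^{univ})[-1]$ is genuinely natural in the relevant derived category of $R_{\Q,N}$-$\oo\lb G^{ad}\rb$-bimodules with compatible $\Gamma_\Q$-action, and that every manipulation above respects the $(R_{\Q,N}, \Gamma_\Q, G^{ad}, \mathbb{T})$-equivariance required by the statement. Once these formal compatibilities are in hand, each step reduces to a previously established result.
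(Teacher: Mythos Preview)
Your proposal is correct and follows essentially the same route as the paper: unwind $F_{ext}$, pull $f^!$ through the external tensor with $\sigma$, apply \eqref{eq: pullback of kernel sheaf} for part (1); then combine Proposition \ref{completed homology is universal}, the concentration of $\wt{C}_{\bu,\mf{m}}$ in degree $1$, and Theorem \ref{emerton-style local-global} for part (2); and invoke Proposition \ref{LJ is an extension} for part (3). One small imprecision: $f$ itself is not finite, since it factors as the open immersion $j:\mf{X}_{r_p}\hookrightarrow\mf{X}_{\mf{B}}$ followed by the finite map $g$; the paper instead uses the explicit formula $f^!(\mc{F})\cong f^\ast(\mc{F})[1]$ (from $j^!=j^\ast$ and the dualizing computation for $g$) to see that $f^!$ commutes with the external tensor, which is what you actually need.
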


Here we use the notation $C_\bu(N,\sigma)$ for $C_\bu(K^p_1(N),\sigma)$, and we clarify that $-(1)$ always denotes a grading shift (and never a Tate twist). We also remark that $r^{univ}(1)$ is the universal representation on $\mf{X}_r$.

\begin{proof}
We start with part (1). By definition, we have $F_{ext}(\sigma) = (X_{\mf{B}}^\ast \otimes_{\wt{E}_{\mf{B}}} P_{\mf{B}})\otimes_{\oo \lb G^{ad} \rb }^L \sigma$ and by our calculations in this subsection, we have $f^!(\mc{F}) = f^\ast(\mc{F})[1]$ for $\mc{F} \in D_{qcoh}(\mf{X}_{\mf{B}})$. It follows that 
\[
f^!(F_{ext}(\sigma)) \cong f^!(X_{\mf{B}}^\ast \otimes_{\wt{E}_{\mf{B}}} P_{\mf{B}}) \otimes_{\oo \lb G^{ad} \rb }^L \sigma   \cong (R_{\Q,N} \otimes_{R_p} P \otimes_{\oo \lb G^{ad} \rb}^L \sigma)(1)[1]
\]
as desired, using equation (\ref{eq: pullback of kernel sheaf}). For part (2), we have 
\[
C_\bu(N,\sigma)_{\mf{m}} \cong \wt{H}_1(N,\oo)_{\mf{m}}[-1] \otimes^L_{\oo \lb G^{ad} \rb} \sigma \cong r^{univ}(1) \otimes_{R_{\Q,N}} R_{\Q,N} \otimes_{R_p} P(1) [-1] \otimes^L_{\oo \lb G^{ad} \rb} \sigma
\]
by Proposition \ref{completed homology is universal}(1) and Theorem \ref{emerton-style local-global}. Part (2) then follows from part (1) (note that global sections of a quasicoherent sheaf on $\mf{X}_r$, i.e.\ a $\Z/2$-graded $R_{\Q,N}$-module, is just the grade $0$ part). Finally, part (3) follows from part (2) and Proposition \ref{LJ is an extension}.
\end{proof}

From Proposition \ref{shapiro}, we also get a formula for arithmetic homology. Using Poincar\'e duality, we can get a formula for arithmetic cohomology. It would be more canonical to formulate it using compactly supported cohomology, but since compactly supported cohomology agrees with usual cohomology after localization at $\mf{m}$ we can phrase it in terms of usual cohomology to avoid introducing extra notation.

\begin{corollary}\label{local-global cohomology}
Let $K_p \sub G^{ad}$ be a compact open subgroup and let $\tau$ be a left $\oo \lb K_p \rb$-module. Then we have 
\[
H^i(K_1^p(N)K_p,\tau)_{\mf{m}} \cong H_{-i}(\mf{X}_r,r^{univ}(1) \otimes_{R_{\Q,N}} f^!(F_{ext}(\oo \lb G^{ad} \rb \otimes_{\oo \lb K_p \rb} \tau))
\]
as $R_{\Q,N}$-modules for all $i$, and both sides vanish if $i\neq 1$.
\end{corollary}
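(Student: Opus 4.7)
The plan is to deduce the corollary from Theorem \ref{local-global formula}(2) by combining Poincar\'e duality on the $\PGL_2$-modular variety with Shapiro's lemma for $p$-arithmetic homology, paying attention to the shift conventions.

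First I would set up the Poincar\'e duality step. The arithmetic cohomology $H^i(K_1^p(N)K_p,\tau)_\mf{m}$ computes the localized cohomology of the local system attached to $\tau$ on the $\PGL_2$-modular (ind-)variety $Y_K$ of level $K = K_1^p(N)K_p$, a real $2$-dimensional orbifold. Since $r$ is absolutely irreducible, the ideal $\mf{m}$ is non-Eisenstein, so the standard boundary vanishing identifies the localized cohomology with the compactly supported version. Since $p \geq 5$, the orbifold torsion primes ($2$ and $3$) are invertible and Poincar\'e--Lefschetz duality applies in the expected form: one obtains a canonical $R_{\Q,N}[\Gamma_\Q]$-equivariant isomorphism
\[
H^i(K_1^p(N)K_p,\tau)_\mf{m} \;\cong\; H_{2-i}(K_1^p(N)K_p,\tau)_\mf{m},
\]
both sides being concentrated in (co)degree $1$.

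Next I would apply Shapiro's lemma from Proposition \ref{shapiro} to pass to $p$-arithmetic homology: setting $\sigma := \oo\lb G^{ad}\rb \otimes_{\oo\lb K_p\rb}\tau$, Proposition \ref{shapiro} gives a Hecke-equivariant quasi-isomorphism
\[
C_\bu(K_1^p(N)K_p,\tau) \;\simeq\; C_\bu(K_1^p(N),\sigma)
\]
in the derived category, hence $H_{2-i}(K_1^p(N)K_p,\tau)_\mf{m} \cong H_{2-i}(N,\sigma)_\mf{m}$. Now Theorem \ref{local-global formula}(2) rewrites this latter group as $H_{2-i}$ of $R\Gamma(\mf{X}_r,\, r^{univ}(1)\otimes_{R_{\Q,N}} f^!(F_{ext}(\sigma))[-2])$. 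Using the homological shift identity $H_n(C_\bu[-2]) = H_{n-2}(C_\bu)$, this equals
\[
H_{-i}\bigl(\mf{X}_r,\, r^{univ}(1)\otimes_{R_{\Q,N}} f^!(F_{ext}(\sigma))\bigr),
\]
which is the desired formula. The vanishing for $i \neq 1$ on the right then follows from the vanishing on the left established in Step 1.

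I expect the main obstacle to be the Poincar\'e duality step: the adelic orbifold setting for $\PGL_2$ requires care to ensure that all isomorphisms are equivariant for the Hecke algebra (and thus for $R_{\Q,N}$) and for $\Gamma_\Q$, and that the boundary/Eisenstein contributions really do vanish after localization at $\mf{m}$. These ingredients are standard once $r$ is known to be irreducible with adequate image, but the orbifold structure means one has to work on an appropriate level cover (or use the fact that $p \geq 5$ so that the torsion stabilizers act invertibly on all coefficient modules) before invoking the classical duality isomorphism. Once this is in place, Steps 2--4 are formal manipulations with the already-established Shapiro isomorphism and the main local--global theorem.
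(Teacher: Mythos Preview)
Your proposal is correct and follows essentially the same route as the paper: Poincar\'e duality (using that $\mf{m}$ is non-Eisenstein) together with Theorem \ref{local-global formula}(2), with Shapiro's lemma (Proposition \ref{shapiro}) passing from arithmetic to $p$-arithmetic homology. The only minor difference is that the paper argues vanishing on the right hand side directly from the $\oo\lb K_p\rb$-flatness of $P$ (Lemma \ref{compact inductions are acyclic}), whereas you deduce it from vanishing on the left via the isomorphism; both are valid.
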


\begin{proof}
Vanishing on the left hand side when $i\neq 1$ is well known, and vanishing on the right hand side follows from $\oo\lb K_p \rb$-flatness of $P$. The isomorphism then follows from Theorem \ref{local-global formula}(2) and the general form of Poincar\'e duality for local systems, see e.g.\ \cite[Thm.~III.3.11]{bellaiche-eigenbook}.
\end{proof}

\begin{remark}
Let $\sigma$ be a complex of left $\oo \lb G^{ad} \rb$-modules. Proposition \ref{completed homology is universal}(1) equips $C_\bu(N,\sigma)_{\m}$ (and hence $H_\ast(N,\sigma)_{\m}$) with a $\Gamma_{\Q}$-action, functorial in $\sigma$, via the action on $\wt{H}_1(N,\oo)$ (even though these are not even the homology of a manifold in general). With this $\Gamma_{\Q}$-action, the isomorphism in Theorem \ref{local-global formula}(2) is $\Gamma_\Q$-equivariant when the right hand side is given the $\Gamma_{\Q}$-action coming from $r^{univ}$. When $\sigma = \oo \lb G^{ad} \rb \otimes_{\oo \lb K_p \rb} \tau$ for some profinite $\oo \lb K_p \rb$-module, then this action agrees with the usual one defined via the Artin comparison isomorphism with \'etale homology (since the action on completed homology is defined via Artin comparison).
\end{remark}

\begin{remark}
We have elected to use $f^!$ instead of $f^\ast$ in our formulas to get the shifts to match up in Corollary \ref{local-global cohomology}, and because this is used in \cite[Conj.\ 9.3.2, Exp.~Thm.~9.4.2]{egh} and in \cite{zhu-coherent} (see e.g.\ Example 4.7.14 of \emph{loc. cit}).
\end{remark}

Theorem \ref{local-global formula} and Corollary \ref{local-global cohomology} have many interesting special cases, including $\sigma = \oo \lb G^{ad} \rb \otimes_{\oo \lb K_p \rb} (\Sym^{k-2}A^2)(\det)^{(2-k)/2}$ (or $\tau = (\Sym^{k-2}A^2)(\det)^{(2-k)/2}$), for $k\geq 2$ even\footnote{Since we have restricted ourselves to $\PGL_{2/\Q}$, we need to have $k$ even.} and $A$ any $\oo$-algebra. Other interesting cases involve taking $\sigma$ to be a representation corresponding to a two-dimensional mod $p$ or $p$-adic representation of $G_{\Qp}$ via the mod $p$ or $p$-adic local Langlands correspondence. We refer to \cite{tarrach2023parithmetic} for a direct approach in the mod $p$ situation, which does not use local-global compatibility for completed homology.

Finally, a different set of interesting coefficient systems are those appearing in the theory of eigenvarieties. We spell this out for the eigenvarieties constructed in \cite{hansen-eigenvarieties} (using locally analytic functions instead of distributions) and \cite{tarrach2022sarithmetic}. Consider the upper triangular Borel subgroup $B^{ad} \subseteq G^{ad}$ and its diagonal torus $T^{ad}$. We may then look at the moduli space $\ms{X}_{T^{ad}}$ of continuous characters of $T^{ad}$ over $L$, as in e.g.\ \cite[Lem.~6.11]{tarrach2022sarithmetic}. For every open affinoid $U \sub  \ms{X}_{T^{ad}}$, let $\ka_U : T^{ad} \to \oo(U)^\times$ denote the corresponding character and let $\Ind_{B^{ad}}^{G^{ad}}(\ka_U)^{la}$ be the locally analytic induction to $G^{ad}$. Tarrach shows that the assignment
\[
U \mapsto H_\ast(N, \Ind_{B^{ad}}^{G^{ad}}(\ka_U)^{la})
\]
defines a (graded) coherent sheaf $\mathcal{H}_\ast$ on $T^{ad}$, which agrees with the coherent sheaf on the eigencurve (implicitly) constructed using modules of locally analytic functions in \cite{hansen-eigenvarieties} (cf. \cite[\S 6.3-6.4]{tarrach2022sarithmetic}). We can then obtain the following:

\begin{corollary}\label{local-global eigenvarieties}
With notation as above, we have 
\[
H_i(N, \Ind_{B^{ad}}^{G^{ad}}(\ka_U)^{la})_{\mf{m}} \cong H_i(\mf{X}_r,r^{univ}(1) \otimes_{R_{\Q,N}} f^!(F_{ext}(\Ind_{B^{ad}}^{G^{ad}}(\ka_U)^{la}))[-2])
\]
for all $i$ and all affinoid $U$ (with both sides being $0$ unless $i=1$) as $R_{\Q,N}\otimes_{\oo}\oo(U)$-modules. Setting $\ms{M} = \mathrm{R}\varprojlim_U \Ind_{B^{ad}}^{G^{ad}}(\ka_U)^{la}$, the global sections of $\mathcal{H}_\ast$ are
\[
\varprojlim_U H_\ast(N, \Ind_{B^{ad}}^{G^{ad}}(\ka_U)^{la})_{\mf{m}} \cong H_\ast(N, \ms{M})_{\m} \cong H_\ast(\mf{X}_r,r^{univ}(1) \otimes_{R_{\Q,N}} f^!(F_{ext}(\ms{M}))[-2]),
\]
viewed as an $R_{\Q,N}\otimes_{\oo}\oo(\ms{X}_{T^{ad}})$-module.
\end{corollary}

\begin{proof}
We need to prove the isomorphism $\varprojlim_U H_\ast(N, \Ind_{B^{ad}}^{G^{ad}}(\ka_U)^{la})_{\mf{m}} \cong H_\ast(N, \ms{M})_{\m}$; the rest follows from Theorem \ref{local-global formula}. It suffices to prove this before localizing at $\mf{m}$. For simplicity, set $\ms{M}_U = \Ind_{B^{ad}}^{G^{ad}}(\ka_U)^{la}$.   The complex $C_\bu(\mc{X}_p)\otimes_{\oo}L$ is a perfect complex of $L[G^{ad}]$-modules by \cite[6.2 Thm]{borel-serre-II} (this shows that $C_\bu(\mc{X}_p)$ is a perfect complex of $\oo[G^{ad}]$-modules for sufficiently small $(K^p)' \triangleleft K^p$; taking $K^p/(K^p)'$-coinvariants gives the statement we need). So $C_\bu(N,\ms{M}) = \mathrm{R}\varprojlim_U C_\bu(N,\ms{M}_U)$. Since $H_\ast(N,\ms{M}_U)$ form a coherent sheaf on $\ms{X}_{T^{ad}}$, we have $\mathrm{R}^i\varprojlim_U H_\ast(N,\ms{M}_U) =0$ for all $i \geq 1$, and the hypercohomology spectral sequence then gives the desired isomorphism $\varprojlim_U H_\ast(N, \ms{M}_U) \cong H_\ast(N, \ms{M})$. 
\end{proof}

\begin{remark}
This can be viewed as a version of \cite[Conj. 9.6.27]{egh} for $G^{ad}$ (and after localizing at $\mf{m}$). With a few extra arguments (using \cite{pan2022note}), we expect that one can upgrade this to an isomorphism of $\oo(\Spf(R_{\Q,N})^{rig})\wh{\otimes}_{L}\oo(\ms{X}_{T^{ad}})$-modules. Moreover, we expect that the representation $\ms{M}$ can be computed more explicitly in terms of the universal character of $\ms{X}_{T^{ad}}$.
\end{remark}

\begin{remark}
We have used \cite[Thm.~7.4]{ceggps1} as the basis for our results here, but one could also use the local-global compatibility results of \cite{emerton-lg} instead, and we expect that similar arguments to the above would prove a different version of Theorem \ref{local-global formula} that allows for non-minimal ramification at places dividing $N$. The reason that we do not carry this out here is that the main extra work, compared to what we have done here, would be at the places $\ell \mid N$, which is orthogonal to the main subject of this paper. In that case, as remarked in \cite[Rem.~7.2]{ceggps2}, one should work with infinite level at places dividing $N$ as well, and consider the universal deformation ring $R_{\Q,S}^{univ}$ for all continuous $G_{\Q,S}$-deformations of $r$ ($S$ is the set of places dividing $Np$). On the automorphic side, this means looking at $S$-arithmetic homology, and using coefficient systems that are (external) tensor products of $\oo \lb G^{ad} \rb$-modules with modules for the Hecke algebras $\mc{H}_{\ell}^\infty$ of compactly supported locally constant functions on $\PGL_2(\Q_\ell)$, for all $\ell \mid N$. The universal $S$-arithmetic homology group for these coefficient systems, in the sense of generalizing Proposition \ref{completed homology is universal}, is completed homology with infinite level at primes $\ell \mid N$ as well.
\end{remark}

\bibliographystyle{alpha}
\bibliography{SL2bib}

\newcommand{\etalchar}[1]{$^{#1}$}
\begin{thebibliography}{BZCHN24}

\bibitem[AB10]{arinkin-bezrukavnikov}
Dmitry Arinkin and Roman Bezrukavnikov.
\newblock Perverse coherent sheaves.
\newblock {\em Mosc. Math. J.}, 10(1):3--29, 271, 2010.

\bibitem[AC14]{allen-calegari}
Patrick~B. Allen and Frank Calegari.
\newblock Finiteness of unramified deformation rings.
\newblock {\em Algebra Number Theory}, 8(9):2263--2272, 2014.

\bibitem[AG15]{AG}
D.~Arinkin and D.~Gaitsgory.
\newblock Singular support of coherent sheaves and the geometric {L}anglands
  conjecture.
\newblock {\em Selecta Math. (N.S.)}, 21(1):1--199, 2015.

\bibitem[AGK{\etalchar{+}}22]{agkrrv}
Dima Arinkin, Dennis Gaitsgory, David Kazhdan, Sam Raskin, Nick Rozenblyum, and
  Yakov Varshavsky.
\newblock The stack of local systems with restricted variation and geometric
  {L}anglands theory with nilpotent singular support.
\newblock arXiv:2010.01906v2 [math.AG], 2022.

\bibitem[AHR23]{alper-hall-rydh-etale-local}
Jarod Alper, Jack Hall, and David Rydh.
\newblock The \'{e}tale local structure of algebraic stacks.
\newblock arXiv:1912.06162v3 [math.AG], 2023.

\bibitem[AOV08]{aov}
Dan Abramovich, Martin Olsson, and Angelo Vistoli.
\newblock Tame stacks in positive characteristic.
\newblock {\em Ann. Inst. Fourier (Grenoble)}, 58(4):1057--1091, 2008.

\bibitem[BC09]{BC2009}
Jo{\"e}l Bella{\"{\i}}che and Ga{\"e}tan Chenevier.
\newblock Families of {G}alois representations and {S}elmer groups.
\newblock {\em Ast\'erisque}, (324):xii+314, 2009.

\bibitem[Bel21]{bellaiche-eigenbook}
Jo\"{e}l Bella\"{\i}che.
\newblock {\em The {E}igenbook---eigenvarieties, families of {G}alois
  representations, {$p$}-adic {$L$}-functions}.
\newblock Pathways in Mathematics. Birkh\"{a}user/Springer, Cham, [2021]
  \copyright 2021.

\bibitem[BIP23]{BIP2023}
Gebhard B\"{o}ckle, Ashwin Iyengar, and Vytautas Pa\v{s}k\={u}nas.
\newblock On local {G}alois deformation rings.
\newblock {\em Forum Math. Pi}, 11:Paper No. e30, 54, 2023.

\bibitem[BL94]{barthel-livne}
L.~Barthel and R.~Livn\'{e}.
\newblock Irreducible modular representations of {${\rm GL}_2$} of a local
  field.
\newblock {\em Duke Math. J.}, 75(2):261--292, 1994.

\bibitem[B{\"{o}}c00]{bockle-demuskin}
Gebhard B{\"{o}}ckle.
\newblock Demu\v{s}kin groups with group actions and applications to
  deformations of {G}alois representations.
\newblock {\em Compositio Math.}, 121(2):109--154, 2000.

\bibitem[Bru66]{brumer}
Armand Brumer.
\newblock Pseudocompact algebras, profinite groups and class formations.
\newblock {\em J. Algebra}, 4:442--470, 1966.

\bibitem[BS73]{borel-serre}
A.~Borel and J.-P. Serre.
\newblock Corners and arithmetic groups.
\newblock {\em Comment. Math. Helv.}, 48:436--491, 1973.

\bibitem[BS76]{borel-serre-II}
A.~Borel and J.-P. Serre.
\newblock Cohomologie d'immeubles et de groupes {$S$}-arithm\'{e}tiques.
\newblock {\em Topology}, 15(3):211--232, 1976.

\bibitem[BT72]{bruhat-tits}
F.~Bruhat and J.~Tits.
\newblock Groupes r\'{e}ductifs sur un corps local.
\newblock {\em Inst. Hautes \'{E}tudes Sci. Publ. Math.}, (41):5--251, 1972.

\bibitem[BZCHN24]{bzchn}
David Ben-Zvi, Harrison Chen, David Helm, and David Nadler.
\newblock Coherent {S}pringer theory and the categorical {D}eligne-{L}anglands
  correspondence.
\newblock {\em Invent. Math.}, 235:255--344, 2024.

\bibitem[CEG{\etalchar{+}}16]{ceggps1}
Ana Caraiani, Matthew Emerton, Toby Gee, David Geraghty, Vytautas
  Pa\v{s}k\={u}nas, and Sug~Woo Shin.
\newblock Patching and the {$p$}-adic local {L}anglands correspondence.
\newblock {\em Camb. J. Math.}, 4(2):197--287, 2016.

\bibitem[CEG{\etalchar{+}}18]{ceggps2}
Ana Caraiani, Matthew Emerton, Toby Gee, David Geraghty, Vytautas
  Pa\v{s}k\={u}nas, and Sug~Woo Shin.
\newblock Patching and the {$p$}-adic {L}anglands program for {${\rm GL}_2(\Bbb
  Q_p)$}.
\newblock {\em Compos. Math.}, 154(3):503--548, 2018.

\bibitem[Che14]{chenevier-determinant}
Ga\"{e}tan Chenevier.
\newblock The $p$-adic analytic space of pseudocharacters of a profinite group,
  and pseudorepresentations over arbitrary rings.
\newblock In {\em Automorphic Forms and {G}alois Representations: {V}ol.\ {I}},
  volume 414 of {\em London Mathematical Society Lecture Note Series}, pages
  221--285. Cambridge Univ. Press, Cambridge, 2014.
\newblock We follow the numbering of the online version
  \url{https://arxiv.org/abs/0809.0415v2}, which differs from the print
  version.

\bibitem[Cis19]{cisinski}
Denis-Charles Cisinski.
\newblock {\em Higher categories and homotopical algebra}, volume 180 of {\em
  Cambridge Studies in Advanced Mathematics}.
\newblock Cambridge University Press, Cambridge, 2019.

\bibitem[Coh03]{cohn}
P.~M. Cohn.
\newblock {\em Further algebra and applications}.
\newblock Springer-Verlag London, Ltd., London, 2003.

\bibitem[Col10]{colmez-functor}
Pierre Colmez.
\newblock Repr\'{e}sentations de {${\rm GL}_2(\bold Q_p)$} et
  {$(\phi,\Gamma)$}-modules.
\newblock {\em Ast\'{e}risque}, (330):281--509, 2010.

\bibitem[DEG]{dotto-emerton-gee}
Andrea Dotto, Matthew Emerton, and Toby Gee.
\newblock A categorical $p$-adic {L}anglands correspondence for
  $\mathrm{{G}{L}}_2(\mathbb{{Q}}_p)$, in preparation.

\bibitem[Dia97]{diamond-ext}
Fred Diamond.
\newblock An extension of {W}iles' results.
\newblock In {\em Modular forms and {F}ermat's last theorem ({B}oston, {MA},
  1995)}, pages 475--489. Springer, New York, 1997.

\bibitem[Don81]{donkin-filtration}
Stephen Donkin.
\newblock A filtration for rational modules.
\newblock {\em Math. Z.}, 177(1):1--8, 1981.

\bibitem[Don92]{donkin-invariants}
Stephen Donkin.
\newblock Invariants of several matrices.
\newblock {\em Invent. Math.}, 110(2):389--401, 1992.

\bibitem[EGH25]{egh}
Matthew Emerton, Toby Gee, and Eugen Hellmann.
\newblock An introduction to the categorical p-adic {L}anglands program.
\newblock arXiv:2210.01404v3 [math.NT], 2025.

\bibitem[Eme]{emerton-lg}
Matthew Emerton.
\newblock Local-global compatibility in the $p$-adic {L}anglands programme for
  {$\mathrm{GL}_2/\mathbb{\Q}$}.
\newblock Preprint.

\bibitem[Eme18]{emerson}
Kathleen Emerson.
\newblock {\em Comparison of Different Definitions of Pseudocharacter}.
\newblock PhD thesis, Princeton University, 2018.

\bibitem[FR22]{faergeman2022nonvanishing}
Joakim Faergeman and Sam Raskin.
\newblock {Non-vanishing of geometric Whittaker coefficients for reductive
  groups}, 2022.

\bibitem[FS24]{fargues-scholze}
Laurent Fargues and Peter Scholze.
\newblock Geometrization of the local {L}anglands correspondence.
\newblock arXiv:2102.13459v4 [math.RT]. To appear in Ast\'erisque, 2024.

\bibitem[Gab62]{gabriel}
Pierre Gabriel.
\newblock Des cat\'{e}gories ab\'{e}liennes.
\newblock {\em Bull. Soc. Math. France}, 90:323--448, 1962.

\bibitem[Gai13]{gaitsgory-indcoh}
Dennis Gaitsgory.
\newblock Ind-coherent sheaves.
\newblock {\em Mosc. Math. J.}, 13(3):399--528, 553, 2013.

\bibitem[GN22]{gee-newton}
Toby Gee and James Newton.
\newblock Patching and the completed homology of locally symmetric spaces.
\newblock {\em J. Inst. Math. Jussieu}, 21(2):395--458, 2022.

\bibitem[Gro61]{ega3-1}
A.~Grothendieck.
\newblock {\'El\'ements de g\'eom\'etrie alg\'ebrique. {III}. \'Etude
  cohomologique des faisceaux coh\'erents. {I}.}
\newblock {\em Inst. Hautes \'Etudes Sci. Publ. Math.}, (11):167, 1961.

\bibitem[GS]{M2}
Daniel~R. Grayson and Michael~E. Stillman.
\newblock Macaulay2, a software system for research in algebraic geometry.
\newblock Available at \url{http://www2.macaulay2.com}.

\bibitem[Han17]{hansen-eigenvarieties}
David Hansen.
\newblock Universal eigenvarieties, trianguline {G}alois representations, and
  {$p$}-adic {L}anglands functoriality.
\newblock {\em J. Reine Angew. Math.}, 730:1--64, 2017.
\newblock With an appendix by James Newton.

\bibitem[Hel23]{hellmann-derived}
Eugen Hellmann.
\newblock On the derived category of the {I}wahori-{H}ecke algebra.
\newblock {\em Compos. Math.}, 159(5):1042--1110, 2023.

\bibitem[Hil10]{hill}
Richard Hill.
\newblock On {E}merton's {$p$}-adic {B}anach spaces.
\newblock {\em Int. Math. Res. Not. IMRN}, (18):3588--3632, 2010.

\bibitem[HNR19]{hall-neeman-rydh}
Jack Hall, Amnon Neeman, and David Rydh.
\newblock One positive and two negative results for derived categories of
  algebraic stacks.
\newblock {\em J. Inst. Math. Jussieu}, 18(5):1087--1111, 2019.

\bibitem[Jan03]{jantzen}
Jens~Carsten Jantzen.
\newblock {\em Representations of algebraic groups}, volume 107 of {\em
  Mathematical Surveys and Monographs}.
\newblock American Mathematical Society, Providence, RI, second edition, 2003.

\bibitem[JL24]{johansson-ludwig}
Christian Johansson and Judith Ludwig.
\newblock Endoscopy on {$\rm SL_{2}$}-eigenvarieties.
\newblock {\em J. Reine Angew. Math.}, 813:1--79, 2024.

\bibitem[Kis09]{kisin-2-adic}
Mark Kisin.
\newblock Modularity of 2-adic {B}arsotti-{T}ate representations.
\newblock {\em Invent. Math.}, 178(3):587--634, 2009.

\bibitem[Kis10]{kisin-def}
Mark Kisin.
\newblock Deformations of {$G_{\Bbb Q_p}$} and {${\rm GL}_2(\Bbb Q_p)$}
  representations.
\newblock {\em Ast\'{e}risque}, (330):511--528, 2010.

\bibitem[Koh17]{kohlhaase}
Jan Kohlhaase.
\newblock Smooth duality in natural characteristic.
\newblock {\em Adv. Math.}, 317:1--49, 2017.

\bibitem[KW09a]{khare-wintenberger-1}
Chandrashekhar Khare and Jean-Pierre Wintenberger.
\newblock Serre's modularity conjecture. {I}.
\newblock {\em Invent. Math.}, 178(3):485--504, 2009.

\bibitem[KW09b]{khare-wintenberger-2}
Chandrashekhar Khare and Jean-Pierre Wintenberger.
\newblock Serre's modularity conjecture. {II}.
\newblock {\em Invent. Math.}, 178(3):505--586, 2009.

\bibitem[{Lud}18]{ludwig}
Judith {Ludwig}.
\newblock {On endoscopic \(p\)-adic automorphic forms for \(\mathrm{SL}_2\)}.
\newblock {\em {Doc. Math.}}, 23:383--406, 2018.

\bibitem[Luo23]{luo-embedding}
Jinyue Luo.
\newblock Pseudorepresentations not arising from genuine representations.
\newblock arXiv:2310.16953v1 [math.NT], 2023.

\bibitem[Lur09]{lurie-htt}
Jacob Lurie.
\newblock {\em Higher topos theory}, volume 170 of {\em Annals of Mathematics
  Studies}.
\newblock Princeton University Press, Princeton, NJ, 2009.

\bibitem[Lur17]{lurie-ha}
Jacob Lurie.
\newblock Higher algebra, 2017.
\newblock Preprint.

\bibitem[Maz89]{mazur1989}
B.~Mazur.
\newblock Deforming {G}alois representations.
\newblock In {\em Galois groups over {${\bf Q}$} ({B}erkeley, {CA}, 1987)},
  volume~16 of {\em Math. Sci. Res. Inst. Publ.}, pages 385--437. Springer, New
  York, 1989.

\bibitem[ML98]{maclane}
Saunders Mac~Lane.
\newblock {\em Categories for the working mathematician}, volume~5 of {\em
  Graduate Texts in Mathematics}.
\newblock Springer-Verlag, New York, second edition, 1998.

\bibitem[NSW08]{NSW2008}
J{\"u}rgen Neukirch, Alexander Schmidt, and Kay Wingberg.
\newblock {\em Cohomology of number fields}, volume 323 of {\em Grundlehren der
  Mathematischen Wissenschaften [Fundamental Principles of Mathematical
  Sciences]}.
\newblock Springer-Verlag, Berlin, second edition, 2008.

\bibitem[Pan22]{pan2022note}
Lue Pan.
\newblock A note on some $p$-adic analytic {H}ecke actions.
\newblock arXiv:2012.11845v2 [math.NT], 2022.

\bibitem[Pa{\v s}13]{paskunas-image}
Vytautas Pa{\v s}k{\=u}nas.
\newblock The image of {C}olmez's {M}ontreal functor.
\newblock {\em Publ. Math. Inst. Hautes \'{E}tudes Sci.}, 118:1--191, 2013.

\bibitem[Pa{\v s}14]{paskunas-blocks}
Vytautas Pa{\v s}k{\=u}nas.
\newblock Blocks for {${\rm mod}\,p$} representations of {${\rm GL}_2(\Bbb
  Q_p)$}.
\newblock In {\em Automorphic forms and {G}alois representations. {V}ol. 2},
  volume 415 of {\em London Math. Soc. Lecture Note Ser.}, pages 231--247.
  Cambridge Univ. Press, Cambridge, 2014.

\bibitem[Pa{\v s}16]{paskunas-2}
Vytautas Pa{\v s}k{\=u}nas.
\newblock On 2-dimensional 2-adic {G}alois representations of local and global
  fields.
\newblock {\em Algebra Number Theory}, 10(6):1301--1358, 2016.

\bibitem[Pro87]{procesi}
Claudio Procesi.
\newblock A formal inverse to the {C}ayley-{H}amilton theorem.
\newblock {\em J. Algebra}, 107(1):63--74, 1987.

\bibitem[PT21]{paskunas-tung}
Vytautas Pa\v{s}k\={u}nas and Shen-Ning Tung.
\newblock Finiteness properties of the category of {${\rm mod}\,p$}
  representations of {${\rm GL}_2 (\Bbb Q_p)$}.
\newblock {\em Forum Math. Sigma}, 9:Paper No. e80, 39, 2021.

\bibitem[Sch15]{Schraen-presentation}
Benjamin Schraen.
\newblock Sur la pr\'{e}sentation des repr\'{e}sentations supersinguli\`eres de
  {${\rm GL}_2(F)$}.
\newblock {\em J. Reine Angew. Math.}, 704:187--208, 2015.

\bibitem[Sho20]{shotton}
Jack Shotton.
\newblock The category of finitely presented smooth {${\rm mod}\,p$}
  representations of {$GL_2(F)$}.
\newblock {\em Doc. Math.}, 25:143--157, 2020.

\bibitem[Spa88]{spaltenstein}
N.~Spaltenstein.
\newblock Resolutions of unbounded complexes.
\newblock {\em Compositio Math.}, 65(2):121--154, 1988.

\bibitem[{Sta}18]{stacks-project}
The {Stacks Project Authors}.
\newblock \textit{Stacks Project}.
\newblock \url{https://stacks.math.columbia.edu}, 2018.

\bibitem[Tar23a]{tarrach2023parithmetic}
Guillem Tarrach.
\newblock The $p$-arithmetic homology of mod $p$ representations of
  $\mathrm{GL}_2(\mathbb{Q}_p)$.
\newblock arXiv:2301.10676v1 [math.NT], 2023.

\bibitem[Tar23b]{tarrach2022sarithmetic}
Guillem Tarrach.
\newblock {$S$}-arithmetic (co)homology and $p$-adic automorphic forms.
\newblock arXiv:2207.04554v2 [math.NT], 2023.

\bibitem[Tho12]{thorne-adequate}
Jack Thorne.
\newblock On the automorphy of {$l$}-adic {G}alois representations with small
  residual image.
\newblock {\em J. Inst. Math. Jussieu}, 11(4):855--920, 2012.
\newblock With an appendix by Robert Guralnick, Florian Herzig, Richard Taylor
  and Thorne.

\bibitem[vdK15]{vanderkallen}
Wilberd van~der Kallen.
\newblock Good {G}rosshans filtration in a family.
\newblock In {\em Autour des sch\'{e}mas en groupes. {V}ol. {III}}, volume~47
  of {\em Panor. Synth\`eses}, pages 111--129. Soc. Math. France, Paris, 2015.

\bibitem[Vig11]{vigneras-foncteur-de-colmez}
Marie-France Vigneras.
\newblock Le foncteur de {C}olmez pour {$GL(2,F)$}.
\newblock In {\em Arithmetic geometry and automorphic forms}, volume~19 of {\em
  Adv. Lect. Math. (ALM)}, pages 531--557. Int. Press, Somerville, MA, 2011.

\bibitem[WE13]{WEthesis}
Carl Wang-Erickson.
\newblock {\em Moduli of {G}alois Representations}.
\newblock PhD thesis, Harvard University, April 2013.
\newblock Available at \url{https://dash.harvard.edu/handle/1/11108709}.

\bibitem[WE18]{wang-erickson-algebraic}
Carl Wang-Erickson.
\newblock Algebraic families of {G}alois representations and potentially
  semi-stable pseudodeformation rings.
\newblock {\em Math. Ann.}, 371(3-4):1615--1681, 2018.

\bibitem[WE20]{wang-erickson-A-inf}
Carl Wang-Erickson.
\newblock Presentations of non-commutative deformation rings via
  {$A_\infty$}-algebras and applications to deformations of {G}alois
  representations and pseudorepresentations.
\newblock arXiv:1809.02484v2 [math.NT], 2020.

\bibitem[Wei94]{weibel}
Charles~A. Weibel.
\newblock {\em An introduction to homological algebra}, volume~38 of {\em
  Cambridge Studies in Advanced Mathematics}.
\newblock Cambridge University Press, Cambridge, 1994.

\bibitem[Wu21]{Wu-presentation}
Zhixiang Wu.
\newblock A note on presentations of supersingular representations of
  {$\mathrm{GL}_2(F)$}.
\newblock {\em Manuscripta Math.}, 165(3-4):583--596, 2021.

\bibitem[Zhu21]{zhu-coherent}
Xinwen Zhu.
\newblock Coherent sheaves on the stack of {L}anglands parameters.
\newblock arXiv:2008.02998v2 [math.AG], 2021.

\bibitem[Zhu25]{zhu-tame}
Xinwen Zhu.
\newblock Tame categorical local {L}anglands correspondence.
\newblock arXiv:2504.07482v1 [math.RT], 2025.

\end{thebibliography}

\end{document}